\documentclass[10pt, a4paper]{article}
\usepackage{geometry}
\usepackage{rotating}
\usepackage{mathrsfs}
\usepackage{latexsym}
\usepackage{xy}
\usepackage{amsfonts,amsmath,amssymb,amsthm,tikz-cd}
\usepackage{color}
\usepackage[pagebackref,colorlinks]{hyperref}

\xyoption{all}

\newcommand{\bcen}{\begin{center}}     \newcommand{\ecen}{\end{center}}
\newcommand{\bay}{\begin{array}}      \newcommand{\eay}{\end{array}}
\newcommand{\beq}{\begin{eqnarray*}}      \newcommand{\eeq}{\end{eqnarray*}}

\def\A{\mathcal{A}}
\def\B{\mathcal{B}}

\def\Cone{\mathrm{Cone}}
\def\D{\mathcal{D}}

\def\dim{\mathrm{dim}}

\def\ev{\mathrm{ev}}
\def\Ext{\mathrm{Ext}}

\def\fd{\mathrm{fd}}

\def\gl{\mathrm{gl.dim}}

\def\Hom{\mathrm{Hom}}

\def\id{\mathrm{id}}
\def\Id{\mathrm{Id}}
\def\Im{\mathrm{Im}}
\def\ind{\mathrm{ind}}
\def\inj{\mathrm{inj}}
\def\Ker{\mathrm{Ker}}
\def\mod{\mathrm{mod}}
\def\Mod{\mathrm{Mod}}

\def\ol{\overline}
\def\op{\mathrm{op}}

\def\pd{\mathrm{pd}}
\def\per{\mathrm{per}}

\def\proj{\mathrm{proj}}

\def\rad{\mathrm{rad}}

\def\RHom{\mathrm{RHom}}
\def\sg{\mathrm{sg}}

\def\T{\mathcal{T}}
\def\thick{\mathrm{thick}}

\def\tri{\mathrm{tri}}

\def\Tor{\mathrm{Tor}}

\begin{document}

\newtheorem{definition}{Definition}
\newtheorem{proposition}{Proposition}
\newtheorem{lemma}{Lemma}
\newtheorem{theorem}{Theorem}
\newtheorem{remark}{Remark}
\newtheorem{corollary}{Corollary}
\newtheorem{example}{Example}
\newtheorem*{conjecture}{Conjecture}
\newtheorem{question}{Question}

\title{\bf\Large Singular equivalences of $n$-adjoint type and standard eventually homological isomorphisms \footnote{This work was supported by the National Natural Science Foundation of China (Grant No. 12371043).}}

\author{Yang Han and Xianqing Wang}

\date{\footnotesize Academy of Mathematics and Systems Science,
Chinese Academy of Sciences, \\ Beijing 100190, China. \\ School of Mathematical Sciences, University of
Chinese Academy of Sciences, \\ Beijing 100049, China.\\ E-mail: hany@iss.ac.cn}

\maketitle

\begin{abstract}
We modify the definition of eventually homological isomorphisms so that it becomes more natural and fruitful. We introduce singular equivalences of 
$n$-adjoint type which can be viewed as standard singular equivalences of different levels and are more convenient for reducing various homological conjectures and transferring homological properties. We develop the theories of singular equivalences of $n$-adjoint type for $n\ge 2$ and standard eventually homological isomorphisms between derived categories by giving their characterizations in terms of $\otimes$-(co)perfect complexes of bimodules, two constructions via opposite algebras and tensor product algebras, examples induced by  homomorphisms of algebras and idempotents, and applications to the reduction of homological conjectures such as the finitistic dimension conjecture, Gorenstein symmetry conjecture, Auslander-Reiten conjecture and Gorenstein projective conjecture, as well as to the preservation of homological properties such as Gorensteinness, the Fg condition, Happel's property and Han's property. Moreover, we show that standard eventually homological isomorphisms correspond to fully faithful standard triangle functors between singularity categories with canonical left adjoint, and essentially surjective standard eventually homological isomorphisms between derived categories correspond to singular equivalences of 3-adjoint type, thereby incorporating essentially surjective standard eventually homological isomorphisms into the framework of singular equivalences of 3-adjoint type.
\end{abstract}

\medskip

\noindent{\footnotesize {\bf Mathematics Subject Classification (2020)}:
16G10, 16E35, 16E40, 18G80}

\medskip

\noindent{\footnotesize {\bf Keywords} : Singular equivalence, eventually homological isomorphism, $\otimes$-(co)perfect complexes of bimodules, biperfect complexes of bimodules, recollement.}

\tableofcontents

\section{Introduction}

Throughout this paper, $k$ is a fixed field. Unless stated otherwise, all algebras are finite dimensional $k$-algebras, all categories are $k$-categories, and all functors are $k$-functors. There are four levels to study the classification of finite dimensional algebras: The first level is to classify them up to isomorphism and study their isomorphism invariance. The second level is to classify them up to Morita equivalence and study their Morita equivalence invariance. The third level is to classify them up to derived equivalence and study their derived equivalence invariance. The fourth level is to classify them up to singular equivalence and study their singular equivalence invariance. Here we focus on the last one.

The {\it singularity category} $\sg(A)$ of an algebra $A$ is defined as the Verdier quotient $\D^b(A)/\per(A)$, where $\D^b(A)$ is the bounded derived category of finitely generated (left) $A$-modules and $\per(A)$ is its thick subcategory of perfect complexes. The notion was introduced by Buchweitz in 1987 \cite{Buchweitz21} under the name {\it stable derived category}, and was rediscovered and renamed by Orlov in 2004 \cite{Orlov04}. The singularity category $\sg(A)$ of an algebra $A$ is the stabilization of the stable category $\underline{\mod}A$ of the category $\mod A$ of finitely generated $A$-modules. For a selfinjective algebra $A$, it is triangle equivalent to $\underline{\mod}A$ \cite{Rickard89}. For a Gorenstein algebra $A$, it is triangle equivalent to the stable category $\underline{\rm Gproj}A$ of the category ${\rm Gproj}A$ of finitely generated Gorenstein projective $A$-modules \cite{Buchweitz21}. The singularity category is an important triangulated invariant that measures homological singularities and captures stable homological information of algebras.

Two algebras $A$ and $B$ are said to be {\it singularly equivalent} if their singularity categories $\sg(A)$ and $\sg(B)$ are triangle equivalent. Singular equivalence clearly preserves the finiteness of global dimensions. However, it is not easy to find more invariants under singular equivalence. It is well-known that if two algebras are derived equivalent then there exists a standard derived equivalence between them which is given by a two-sided tilting complex \cite{Rickard91}. This standardization renders finding derived equivalence invariants relatively easier. However, for singular equivalences, we have no such standardization at present, which makes it difficult to study the characterization and invariants of singular equivalences. This motivated the introduction of more refined notions of singular equivalence.
In 2012, Chen and Sun introduced {\it singular equivalence of Morita type} \cite{Chen-Sun12,Zhou-Zimmermann13}, which is a generalization of stable equivalence of Morita type \cite{Broue94} but not of derived equivalence. In 2015, Wang introduced {\it singular equivalence of Morita type with level} \cite{Wang15}, which is a common strict generalization of standard derived equivalence and singular equivalence of Morita type. Singular equivalence of Morita type with level preserves the finiteness of finitistic dimension \cite{Wang15}, Hochschild homology of positive degree \cite{Wang15}, Gorensteinness \cite{Wang15}, the Fg condition for Gorenstein algebras \cite{Skartsaeterhagen16}, support variety theory \cite{ChenYP18}, and the Igusa-Todorov property \cite{Qin22}. It also can reduce several homological conjectures such as the finitistic dimension conjecture, Han's conjecture, Auslander-Reiten conjecture, Gorenstein projective conjecture, and Keller's conjecture \cite{Wang15, Chen-Hu-Qin-Wang23, Chen-Li-Wang25}.
Nonetheless, it is not easy to recognize singular equivalences of Morita type with level, even for standard derived equivalences (see \cite[Theorem 2.3]{Wang15}). Moreover, there exist singular equivalences that are not of Morita type with level (see Example~\ref{Ex-SE2AT-NotSEMTWL}).
 
A concept closely related to singular equivalence is eventually homological isomorphism. {\it Eventually homological isomorphism between abelian categories} was introduced by Psaroudakis, Skarts{\ae}terhagen and Solberg and applied to transfer Gorensteinness and the Fg condition in \cite{Psaroudakis-Skartsaeterhagen-Solberg14} and to reduce Auslander-Reiten conjecture and Gorenstein symmetry conjecture by Qin and Shen in \cite{Qin-Shen24}. It seems more natural and general to define eventually homological isomorphism between derived categories rather than abelian categories. {\it Eventually homological isomorphism between derived categories} was introduced by Qin and applied to transfer Gorensteinness and the Fg condition in \cite{Qin20}. The definition of an eventually homological isomorphism $F$ between abelian or derived categories has nothing to do with the functoriality of $F$. This causes no problem for studying the eventually homological isomorphisms induced by idempotents. However, it is not so natural and fruitful for studying general eventually homological isomorphisms.

In this paper, we first introduce {\it $\otimes$-(co)perfect complexes of bimodules} over finite dimensional algebras which are crucial for characterizing singular equivalences of $n$-adjoint type and standard eventually homological isomorphisms later on. Then we clarify the relationships between $\otimes$-perfect complexes of bimodules and perfect complexes of bimodules.
Secondly, we modify the definition of eventually homological isomorphism so that it takes functoriality into account, making it more natural and fruitful, and covering all known examples. Then we give characterizations of standard eventually homological isomorphisms in terms of $\otimes$-(co)perfect complexes of bimodules (Theorem~\ref{Thm-EvHomIso-TensorPerfect-BimodComplex} and Theorem~\ref{Thm-EvHomIso-TensorPerfect-BimodComplex-RightAdjoint}), two constructions of standard eventually homological isomorphisms using opposite algebras (Proposition~\ref{Prop-EHI-OppositeAlg}) and tensor product algebras (Proposition~\ref{Prop-EHI-TensorProdAlg}), examples of standard eventually homological isomorphisms induced by homomorphisms of algebras (Corollary~\ref{Cor-EHI-AlgHom}) and idempotents (Proposition~\ref{Prop-Exist-EvHomIso-Idemp}), and an application to reduce Gorenstein symmetry conjecture via essentially surjective eventually homological isomorphisms (Theorem~\ref{Thm-EHI-GorSymConj}).
Thirdly, we introduce {\it singular equivalences of $n$-adjoint type} which can be viewed as standard singular equivalences of different levels, are easier to characterize and recognize, encompass all known examples of singular equivalences, and are more convenient for reducing various homological conjectures and transferring homological properties. Then we give a characterization of singular equivalences of $n$-adjoint type in terms of $\otimes$-perfect complexes of bimodules (Theorem~\ref{Thm-Exist-SingEquAdjTye-FD}), two constructions of singular equivalence of $n$-adjoint type using opposite algebras (Proposition~\ref{Prop-SE-OppositeAlg}) and tensor product algebras (Proposition~\ref{Prop-SE-TensorProdAlg}), examples of singular equivalences of $n$-adjoint type induced by homomorphisms of algebras (Corollary~\ref{Cor-SingEqu-AlgHom}) and idempotents (Corollary~\ref{Cor-Exist-SingEqu-Idemp}), and applications to reduce the finitistic dimension conjecture (Theorem~\ref{Thm-SE2AT-FDC}) and transfer Han's property (Theorem~\ref{Thm-SE2AT-HH-Han}) by singular equivalences of 2-adjoint type, and to reduce Gorenstein symmetry conjecture, Auslander-Reiten conjecture and Gorenstein projective conjecture (Theorem~\ref{Thm-SE3AT-G-GSC-ARC-GPC}), and transfer Gorensteinness, Happel's property and the Fg condition (Theorem~\ref{Thm-SE3AT-HCH-Happel-Fg}) by singular equivalences of 3-adjoint type.
Finally, we show that standard eventually homological isomorphisms correspond to fully faithful standard triangle functors between singularity categories with canonical left adjoint (Proposition~\ref{Prop-EHI-FullFaithFunSingCat-LeftAdj}), and essentially surjective standard eventually homological isomorphisms induce singular equivalences of 3-adjoint type (Theorem~\ref{Thm-EssSurjEHI=SingEqWithLeftAdj}), thereby incorporating essentially surjective standard eventually homological isomorphisms into the framework of singular equivalences of 3-adjoint type.

The paper is organized as follows: 
In Section 2, we introduce $\otimes$-(co)perfect complexes of bimodules and clarify the relation between $\otimes$-perfect complex of bimodules and perfect complex of bimodules. 
In Section 3, we redefine an eventually homological isomorphism between derived categories and give its characterizations in terms of $\otimes$-(co)perfect complex of bimodules, two constructions using opposite algebras and tensor product algebras, examples induced by homomorphisms of algebras and idempotents, and an application to transfer Gorensteinness and reduce Gorenstein symmetry conjecture.
In Section 4, we introduce singular equivalence of $n$-adjoint type and provide its characterizations in terms of $\otimes$-perfect complexes of bimodules,  two constructions using opposite algebras and tensor product algebras, examples induced by homomorphisms of algebras and idempotents, and applications to reduce various homological conjectures and transfer several homological properties.  
In Section 5, we show that standard eventually homological isomorphisms correspond to fully faithful standard triangle functors between singularity categories with canonical left adjoint, and essentially surjective standard eventually homological isomorphisms induce singular equivalences of 3-adjoint type.

\medspace

\noindent{\bf Notations.}

Let $k$ be a fixed field.
Write $\otimes=\otimes_k$ and $(-)^*=\Hom_k(-,k)$. 
Let $A$ be a $k$-algebra. 
Denote by $A^\op$ the opposite algebra of $A$, 
and by $A^e=A\otimes A^\op$ the enveloping algebra of $A$.
Denote by $\Mod A$ the category of left $A$-modules,
and by $\mod A$ the full subcategory of $\Mod A$ consisting of all finitely generated left $A$-modules.
Denote by $\D(A):=\D(\Mod A)$ the derived category of $\Mod A$,
$\D^b(A)=\D^b(\mod A)$ (resp. $\D^+(\mod A)$, $\D^-(\mod A)$) the full triangulated subcategory of $\D(A)$ consisting of all complexes with bounded (resp. bounded below, bounded above) and finitely generated cohomology, i.e., all complexes quasi-isomorphic to bounded (resp. bounded below, bounded above) complexes of finitely generated left $A$-modules,
$\per(A)=K^b(\proj A)$ the full triangulated subcategory of $\D(A)$ consisting of all perfect (or compact) objects, i.e., all complexes quasi-isomorphic to bounded complexes of finitely generated projective left $A$-modules,
and $K^b(\inj A)$ the full subcategory of $\D(A)$ consisting of all complexes quasi-isomorphic to bounded complexes of finitely generated injective left $A$-modules.
Moreover, we denote by $\sg(A)$ the Verdier quotient of $\D^b(A)$ by its thick subcategory $\per(A)$.

\section{$\otimes$-(co)perfect complexes of bimodules}

In this section, we introduce $\otimes$-(co)perfect complexes of bimodules which are crucial for characterizing singular equivalences of $n$-adjoint type and standard eventually homological isomorphisms, and compare $\otimes$-perfectness with perfectness for complexes of bimodules.

\subsection{Left or right perfect complexes of bimodules}

Let $\T$ be a triangulated category. A full triangulated subcategory of $\T$ is said to be {\it thick} if it is closed under direct summands.
For objects $T_1,\cdots,T_n\in\T$, denote by $\tri(T_1,\cdots,T_n)$ the smallest full triangulated subcategory of $\T$ containing $T_1,\cdots,T_n$, i.e., the smallest full subcategory of $\T$ containing $T_1,\cdots,T_n$ and closed under shifts and extensions. Denote by $\thick(T_1,\cdots,T_n)$ the smallest thick triangulated subcategory of $\T$ containing $T_1,\cdots,T_n$, i.e., the smallest full subcategory of $\T$ containing $T_1,\cdots,T_n$ and closed under shifts, extensions and direct summands.

Let $A$ and $B$ be two $k$-algebras.
A complex $X$ of $A$-$B$-bimodules is said to be {\it left perfect} if $_AX\in\per(A)$, {\it right perfect} if $X_B\in\per(B^\op)$, and {\it biperfect} if $_AX\in\per(A)$ and $X_B\in\per(B^\op)$.

The following result is a characterization of the left perfectness of a complex of bimodules, in particular, the perfectness of a complex of left modules.

\begin{lemma} \label{Lem-LeftModComplex-Perfect} 
	Let $A$ and $B$ be finite dimensional $k$-algebras, and $X$ a complex of $A$-$B$-bimodules. Then the following conditions are equivalent:
	
	{\rm (1)} $_AX\in\per(A)$.
	
	{\rm (2)} $Y\otimes^L_AX_B\in\D^b(B^\op)$ for all $Y\in\D^b(A^\op)$, i.e., the derived tensor product functor $-\otimes^L_AX_B:\D(A^\op)\to\D(B^\op)$ can be restricted to bounded derived categories.	
	
	{\rm (2')} $M\otimes^L_AX_B\in\D^b(B^\op)$ for all $M\in\mod A^\op$.
	
	{\rm (2'')} $\ol{A}\otimes^L_AX_B\in\D^b(B^\op)$ where $\ol{A}:=A/\rad A$ is the factor algebra of $A$ modulo its Jacobson radical $\rad A$.
	
	{\rm (3)} $\RHom_A(X,Y)\in\D^b(B)$ for all $Y\in\D^b(A)$, i.e., the derived Hom functor $\RHom_A(X,-):\D(A)\to\D(B)$ can be restricted to bounded derived categories.
	
	{\rm (3')} $\RHom_A(X,M)\in\D^b(B)$ for all $M\in\mod A$.
	
	{\rm (3'')} $\RHom_A(X,\ol{A})\in\D^b(B)$.	
	
	{\rm (4)} $_AX\otimes^L_BY\in\per(A)$ for all $Y\in\per(B)$, i.e., the derived tensor product functor $_AX\otimes^L_B-:\D(B)\to\D(A)$ can be restricted to perfect derived categories.
\end{lemma}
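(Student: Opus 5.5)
We will run two chains, (1)$\Rightarrow$(2)$\Rightarrow$(2')$\Rightarrow$(2'')$\Rightarrow$(1) and (1)$\Rightarrow$(3)$\Rightarrow$(3')$\Rightarrow$(3'')$\Rightarrow$(1), and then show (1)$\Leftrightarrow$(4) separately. The forward implications all rest on one observation. If ${}_AX\in\per(A)$, then ${}_AX$ lies in $\thick({}_AA)$ inside $\D(A)$, and all relevant functors are triangle functors that commute with direct summands. Hence it suffices to check the case $X=A$, where $Y\otimes^L_AA\cong Y$, $\RHom_A(A,Y)\cong Y$ and $A\otimes^L_BY$ is perfect. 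There is one subtlety: the $B$-module structure is not determined by ${}_AX$ alone. We circumvent it by noting that boundedness and finite generation of cohomology of $Y\otimes^L_AX$ (resp. $\RHom_A(X,Y)$) can be checked after forgetting the $B$-action, i.e. as complexes of $k$-vector spaces. Since $B$ is finite dimensional, a complex of $B$-modules with bounded, finite-dimensional cohomology lies in $\D^b(B)$. For (4), let $Y\in\per(B)$. We reduce to $Y=B$ by thickness, and ${}_AX\otimes^L_BB\cong{}_AX$ is perfect. Conversely, (4) with $Y=B$ gives ${}_AX\in\per(A)$. So (1)$\Leftrightarrow$(4) is immediate.

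The implications (2)$\Rightarrow$(2')$\Rightarrow$(2'') and (3)$\Rightarrow$(3')$\Rightarrow$(3'') are specializations. Note that $\ol{A}$ is a finitely generated module on either side.

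The main step is (2'')$\Rightarrow$(1) and (3'')$\Rightarrow$(1), and I expect it to be the principal obstacle. First I would show that (2'') and (3'') force $X$ to have bounded cohomology. Then I would take a minimal projective resolution and read off perfectness from vanishing Tor or Ext against simples.

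For boundedness: $\ol{A}\otimes^L_AX$ bounded implies, by induction on radical layers, that $M\otimes^L_AX$ is bounded for every finite-dimensional $M$, since each such $M$ is an iterated extension of semisimples. Taking $M=A$ gives $X$ bounded. The same argument with $\RHom_A(X,-)$ works for (3''). Cohomological finite generation over $A$ is also needed. Here I would use that $\ol{A}\otimes^L_AX$ having finite-dimensional cohomology bounds the number of generators in a minimal resolution. If the paper intends $X$ to be arbitrary rather than in $\D^b$, this step needs care, and I would assume ${}_AX$ has finitely generated cohomology in each degree, or argue via minimality degreewise.

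Next, replace ${}_AX$ by a minimal complex $P$ of finitely generated projectives, bounded above, with $d(P)\subseteq\rad P$. Then $\ol{A}\otimes_AP$ and $\Hom_A(P,\ol{A})$ have zero differentials. Their cohomology is $\ol{A}\otimes_AP^i$ (resp. $\Hom_A(P^i,\ol{A})$) in each degree. Boundedness of either complex therefore forces $P^i=0$ for $i\ll0$, so $P$ is a bounded complex of finitely generated projectives and ${}_AX\in\per(A)$. The $B$-structures play no role here, because $\ol{A}\otimes^L_AX_B\in\D^b(B^\op)$ in particular means its underlying vector space cohomology is bounded.
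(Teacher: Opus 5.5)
Your proposal follows the paper's proof essentially step by step. The forward implications go through $\thick(A)$ after forgetting the $B$-action, the radical filtration passes from $\ol{A}$ to all finite-dimensional modules, and a specialization shows $X$ has bounded cohomology. The conclusion is then read off from a minimal semi-projective resolution, whose differentials vanish after applying $\ol{A}\otimes_A-$ or $\Hom_A(-,\ol{A})$.

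Two points need fixing.
\begin{itemize}
\item In (3'')$\Rightarrow$(1), the ``same argument'' with $M=A$ only shows that $\RHom_A(X,A)$ is bounded, not that $X$ is. You should take $M=A^*=\Hom_k(A,k)$ instead, which is what the paper does. Since $\RHom_A(X,A^*)\cong\Hom_k(X,k)$, boundedness of this complex gives boundedness of $X$.
\item Your worry about finite generation is unnecessary, and no extra assumption is needed. Membership in $\D^b(B^\op)$ (resp.\ $\D^b(B)$) already requires finitely generated, hence finite-dimensional, cohomology. So (2') with $M=A$ (resp.\ (3') with $M=A^*$) directly gives ${}_AX\in\D^b(A)$. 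This is exactly what you need to take a bounded-above minimal resolution by finitely generated projectives.
\end{itemize}
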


\begin{proof}	
	(1)$\Rightarrow$(2): Due to $_AX\in \per(A)=\thick(A)$, for all $Y\in\D^b(A^\op)$, we have $Y\otimes^L_AX \in \thick(Y\otimes^L_AA) = \thick(Y_k) \subseteq \D^b(k)$. Thus $Y\otimes^L_AX \in \D^b(B^\op)$.
	
	(2)$\Leftrightarrow$(2'): {\it Necessity}: It follows from $\mod A^\op\subseteq \D^b(A^\op)$. {\it Sufficiency}: For all $Y\in\D^b(A^\op)$, there are $M_1,\cdots,M_n\in\mod A^\op$ such that $Y\in\tri(M_1,\cdots,M_n)$. By assumption, $M_1\otimes^L_AX,\cdots,M_n\otimes^L_AX \in \D^b(B^\op)$. So $Y\otimes^L_AX\in \tri(M_1\otimes^L_AX,\cdots,M_n\otimes^L_AX) \subseteq \D^b(B^\op)$.
	
	(2')$\Leftrightarrow$(2''): {\it Necessity}: It follows from $\ol{A}\in\mod A^\op$. {\it Sufficiency}: For all $M\in\mod A^\op$, we have the radical filtration 
	$M\supset \rad M\supset \rad^2M\supset\cdots\supset \rad^{l-1}M\supset \rad^lM=0$ of $M$ where $l$ is the radical length of $M$. The short exact sequences $0\to \rad^{i+1}M \to \rad^iM \to \rad^iM/\rad^{i+1}M \to 0$ in $\mod A^\op$, $0\le i\le l-1$, induce triangles $\rad^{i+1}M \to \rad^iM \to \rad^iM/\rad^{i+1}M \to$ in $\D^b(A^\op)$, $0\le i\le l-1$, where $\rad^iM/\rad^{i+1}M, 0\le i\le l-1$, are semisimple right $A$-modules. Since $\rad^iM/\rad^{i+1}M\in\thick(\ol{A})$ for all $0\le i\le l-1$, we have $M\in\thick(\ol{A})$. 
	Thus $M\otimes^L_AX\in \thick(\ol{A}\otimes^L_AX) \subseteq \D^b(B^\op)$.
	
	(2'')$\Rightarrow$(1): We have known (2'')$\Leftrightarrow$(2'). Applying (2') to $M=A$, we obtain $X_B\in\D^b(B^\op)$, and further $_AX\in \D^b(A)$. 	
	By \cite[B.61 Theorem]{Christensen-Foxby-Holm24}, $_AX$ admits a minimal semi-projective resolution $P=(P^\bullet,d^\bullet_P)$ such that $P^i$'s are finitely generated projective and $\Im d^i_P\subseteq \rad P^{i+1}$ for all $i\in \mathbb{Z}$. Assume on the contrary that $P$ has infinitely many nonzero components. Then $\ol{A}\otimes^L_AX \cong \ol{A}\otimes_AP \cong (\cdots \xrightarrow{0} \ol{P^i} \xrightarrow{0} \ol{P^{i+1}} \xrightarrow{0} \cdots)$ with $\ol{P^i}:=P^i/\rad P^i\ne 0$ for infinitely many $i\in \mathbb{Z}$, which contradicts to the assumption $\ol{A}\otimes^L_AX\in \D^b(B^\op) \subseteq \D^b(k)$. Thus $P$ has only finitely many nonzero components. Hence $_AX$ is perfect. 
	
	(1)$\Rightarrow$(3): Due to $X\in \per(A)$, for all $Y\in\D^b(A)$, we have $\RHom_A(X,Y)   \in \thick(\RHom_A(A,Y)) = \thick(_kY) \subseteq\D^b(k)$. Thus $\RHom_A(X,Y) \in \D^b(B)$.
	
	(3)$\Leftrightarrow$(3'): {\it Necessity}: It follows from $\mod A\subseteq \D^b(A)$. {\it Sufficiency}: For all $Y\in\D^b(A)$, there are $M_1,\cdots,M_n\in\mod A$ such that $Y\in\tri(M_1,\cdots,M_n)$. Thus $\RHom_A(X,Y)\in \tri(\RHom_A(X,M_1), \cdots,\linebreak\RHom_A(X,M_n)) \subseteq \D^b(k)$. So $\RHom_A(X,Y) \in \D^b(B)$.
	
	(3')$\Leftrightarrow$(3''): {\it Necessity}: It follows from $\ol{A}\in\mod A$. {\it Sufficiency}: For all $M\in\mod A$, we have $M\in\thick(\ol{A})$. 
	Then $\RHom_A(X,M)\in \thick(\RHom_A(X,\ol{A})) \subseteq \D^b(k)$. Thus $\RHom_A(X,M) \in \D^b(B)$.
	
	(3'')$\Rightarrow$(1): We have known (3'')$\Leftrightarrow$(3'). Applying (3') to $M=\Hom_k(A,k)$, we obtain $\Hom_k(X,k)\in\D^b(B)$, then $X\in\D^b(B^\op)$, and further $X\in \D^b(A)$. By \cite[B.61 Theorem]{Christensen-Foxby-Holm24}, $_AX$ admits a minimal semi-projective resolution $P=(P^\bullet,d^\bullet_P)$.	
	Assume on the contrary that $P$ has infinitely many nonzero components then $\RHom_A(X,\ol{A}) \cong \RHom_A(P,\ol{A}) \cong (\cdots \xrightarrow{0} \Hom_A(P^{i+1},\ol{A}) \xrightarrow{0} \Hom_A(P^i,\ol{A}) \xrightarrow{0} \cdots)$ has infinitely many nonzero components, which contradicts to the assumption $\RHom_A(X,\ol{A})\in \D^b(B) \subseteq \D^b(k)$.	
	Thus $P$ has only finitely many nonzero components. Hence $_AX$ is perfect. 
	
	(1)$\Leftrightarrow$(4): {\it Necessity}: For all $Y\in\per(B)$, $X\otimes^L_BY\in \thick(X\otimes^L_BB)=\thick(_AX)\subseteq \thick(A)$.
	{\it Sufficiency}: Take $Y=B$.
\end{proof}

\begin{remark}{\rm
		(i) The conditions (2'') and (3'') imply that $\ol{A}=A/\rad A$ is a ``testing module'' for the left perfectness of complexes of bimodules.
		
		(ii) Taking $B=k$, we obtain a characterization of the perfectness of complexes of left modules.
}\end{remark}

Dually, we have the following characterization of the right perfectness of a complex of bimodules, in particular, the perfectness of a complex of right modules.

\begin{lemma} \label{Lem-RightModComplex-Perfect} 
	Let $A$ and $B$ be finite dimensional $k$-algebras, and $X$ a complex of $A$-$B$-bimodules. Then the following conditions are equivalent:
	
	{\rm (1)} $X_B\in\per(B^\op)$.
	
	{\rm (2)} $_AX\otimes^L_BY\in\D^b(A)$ for all $Y\in\D^b(B)$, i.e., the derived tensor product functor $_AX\otimes^L_B-:\D(B)\to\D(A)$ can be restricted to bounded derived categories.	
	
	{\rm (2')} $_AX\otimes^L_BM\in\D^b(A)$ for all $M\in\mod B$.
	
	{\rm (2'')} $_AX\otimes^L_B\ol{B}\in\D^b(A)$ where $\ol{B}:=B/\rad B$.
	
	{\rm (3)} $\RHom_{B^\op}(X,Y)\in\D^b(A^\op)$ for all $Y\in\D^b(B^\op)$, i.e., the derived Hom functor $\RHom_{B^\op}(X,-):\D(B^\op)\to\D(A^\op)$ can be restricted to bounded derived categories.
	
	{\rm (3')} $\RHom_{B^\op}(X,M)\in\D^b(A^\op)$ for all $M\in\mod B^\op$.
	
	{\rm (3'')} $\RHom_{B^\op}(X,\ol{B})\in\D^b(A^\op)$.	
	
	{\rm (4)} $Y\otimes^L_AX_B\in\per(B^\op)$ for all $Y_A\in\per(A^\op)$, i.e., the derived tensor product functor $-\otimes^L_AX_B:\D(A^\op)\to\D(B^\op)$ can be restricted to perfect derived categories.	
\end{lemma}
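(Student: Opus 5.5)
The cleanest route is to deduce this lemma from Lemma~\ref{Lem-LeftModComplex-Perfect} by passing to opposite algebras rather than repeating every implication. A complex $X$ of $A$-$B$-bimodules is the same thing as a complex of $B^\op$-$A^\op$-bimodules, which we denote $X'$. Under this identification:
\begin{itemize}
\item right $B$-modules become left $B^\op$-modules, so $X_B\in\per(B^\op)$ becomes ${}_{B^\op}X'\in\per(B^\op)$;
\item a left $B$-module $M$ becomes a right $B^\op$-module $M'$, and ${}_AX\otimes^L_BM\cong M'\otimes^L_{B^\op}X'_{A^\op}$, naturally as complexes of left $A$-modules, i.e.\ right $A^\op$-modules;
\item we have $\RHom_{B^\op}(X,Y)=\RHom_{B^\op}(X',Y)$, now viewed as a complex of left $A^\op$-modules;
\item $\ol{B^\op}=\ol{B}$, because $\rad(B^\op)=\rad B$.
\end{itemize}
We then apply Lemma~\ref{Lem-LeftModComplex-Perfect} with $(A,B,X)$ replaced by $(B^\op,A^\op,X')$. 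Its conditions (1), (2), (2'), (2''), (3), (3'), (3''), (4) translate literally into the conditions with the same labels here. For (4), the point is that $Y\otimes^L_AX_B$ with $Y_A\in\per(A^\op)$ becomes ${}_{B^\op}X'\otimes^L_{A^\op}Y'$ with $Y'\in\per(A^\op)$.

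If one prefers a direct proof, the implications are argued exactly as in Lemma~\ref{Lem-LeftModComplex-Perfect}.
\begin{itemize}
\item (1)$\Rightarrow$(2) and (1)$\Rightarrow$(3): use $X_B\in\thick(B_B)$. Then $X\otimes^L_BY\in\thick({}_kY)$ and $\RHom_{B^\op}(X,Y)\in\thick(Y_k)$, and these lie in $\D^b(k)$.
\item (2)$\Leftrightarrow$(2')$\Leftrightarrow$(2'') and (3)$\Leftrightarrow$(3')$\Leftrightarrow$(3''): every object of $\D^b(B)$ lies in $\tri$ of finitely many modules, and every finitely generated module lies in $\thick(\ol{B})$ by its radical filtration.
\item (1)$\Leftrightarrow$(4): for necessity, $Y\otimes^L_AX\in\thick(A\otimes^L_AX)=\thick(X_B)$; for sufficiency, take $Y=A$.
\end{itemize}

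The only substantive step is (2'')$\Rightarrow$(1), and likewise (3'')$\Rightarrow$(1). First I would get boundedness: taking $M=B$ in (2') gives ${}_AX\in\D^b(A)$, hence $X_B\in\D^b(B^\op)$. For (3') I would instead take $M=D(B)$, so that $\RHom_{B^\op}(X,D B)\cong D(X)$. Next I would take a minimal semi-projective resolution $P$ of $X_B$ by finitely generated projective right $B$-modules, with differentials having image in the radical, as in \cite[B.61 Theorem]{Christensen-Foxby-Holm24}. Then $P\otimes_B\ol{B}$ and $\Hom_{B^\op}(P,\ol{B})$ have zero differentials, so their cohomology is nonzero in infinitely many degrees unless $P$ is bounded. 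That contradicts the boundedness over $k$, so $X_B$ is perfect. I expect this minimality argument to be the main obstacle, but it is handled exactly as in the left-module case.
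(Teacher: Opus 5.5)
Your proposal is correct and follows the paper's approach: the paper gives no separate proof and simply says the lemma holds ``dually'' to Lemma~\ref{Lem-LeftModComplex-Perfect}. Your substitution $(A,B,X)\mapsto(B^\op,A^\op,X')$ makes that duality explicit and translates each condition correctly. Your direct sketch reproduces the paper's proof of Lemma~\ref{Lem-LeftModComplex-Perfect} on the right-module side: thick-subcategory arguments, the radical filtration, and the minimal semi-projective resolution with differentials into the radical.
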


\begin{remark}{\rm
		(i) The conditions (2'') and (3'') imply that $\ol{B}=B/\rad B$ is a ``testing module'' for the right perfectness of complexes of bimodules.
		
		(ii) Taking $A=k$, we obtain a characterization of the perfectness of a complex of right modules.
}\end{remark}

\subsection{$\otimes$-perfect complexes of bimodules}

\begin{proposition} \label{Prop-BimodComplex-TensorPerfect} 
	Let $A$ and $B$ be finite dimensional $k$-algebras, and $X$ a complex of $A$-$B$-bimodules. Then the following conditions are equivalent:
	
	{\rm (1)} $Y\otimes^L_AX\otimes^L_BZ\in\D^b(k)$ for all $Y\in\D^b(A^\op)$ and $Z\in\D^b(B)$.
	
	{\rm (1')} $M\otimes^L_AX\otimes^L_BN\in\D^b(k)$ for all $M\in\mod A^\op$ and $N\in\mod B$.	
	
	{\rm (1'')} $\ol{A}\otimes^L_AX\otimes^L_B\ol{B}\in\D^b(k)$.		
	
	{\rm (2)} $_AX\otimes^L_BZ\in\per(A)$ for all $Z\in\D^b(B)$.	
	
	{\rm (2')} $_AX\otimes^L_BN\in\per(A)$ for all $N\in\mod B$.
	
	{\rm (2'')} $_AX\otimes^L_B\ol{B}\in\per(A)$.
	
	{\rm (3)} $Y\otimes^L_AX_B\in\per(B^\op)$ for all $Y\in\D^b(A^\op)$.
	
	{\rm (3')} $M\otimes^L_AX_B\in\per(B^\op)$ for all $M\in\mod A^\op$.
	
	{\rm (3'')} $\ol{A}\otimes^L_AX_B\in\per(B^\op)$.	
\end{proposition}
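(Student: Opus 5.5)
The plan is to reduce everything to Lemma~\ref{Lem-LeftModComplex-Perfect} and Lemma~\ref{Lem-RightModComplex-Perfect}, applied to suitable complexes of bimodules obtained by tensoring $X$ on one side. The key observation is that for $N\in\mod B$ (or $Z\in\D^b(B)$), the complex $X\otimes^L_B Z$ is a complex of $A$-$k$-bimodules. So Lemma~\ref{Lem-LeftModComplex-Perfect}, with $B$ replaced by $k$, gives
\[
{}_A(X\otimes^L_BZ)\in\per(A) \iff \ol{A}\otimes^L_AX\otimes^L_BZ\in\D^b(k) \iff M\otimes^L_AX\otimes^L_BZ\in\D^b(k)\ \text{for all } M\in\mod A^\op \iff Y\otimes^L_AX\otimes^L_BZ\in\D^b(k)\ \text{for all } Y\in\D^b(A^\op).
\]
Here (1) of that lemma is matched with (2''), (2') and (2) respectively. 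The mirror statement, via Lemma~\ref{Lem-RightModComplex-Perfect} applied to the $k$-$B$-bimodule complex $Y\otimes^L_AX$, gives $(Y\otimes^L_AX)_B\in\per(B^\op)$ if and only if $Y\otimes^L_AX\otimes^L_B\ol{B}$, or equivalently $Y\otimes^L_AX\otimes^L_B N$ for all $N\in\mod B$, or equivalently $Y\otimes^L_AX\otimes^L_B Z$ for all $Z\in\D^b(B)$, lies in $\D^b(k)$.

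With these two bridges, I would prove the following chain.

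First, (1)$\Leftrightarrow$(2): condition (1) says that for every $Z\in\D^b(B)$ and every $Y\in\D^b(A^\op)$ we have $Y\otimes^L_AX\otimes^L_BZ\in\D^b(k)$. By the first bridge, for fixed $Z$ this is equivalent to $X\otimes^L_BZ\in\per(A)$.

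Second, (1)$\Leftrightarrow$(3) follows symmetrically from the second bridge, with $Y$ fixed.

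Third, (2)$\Leftrightarrow$(2')$\Leftrightarrow$(2''). The downward implications are trivial. For (2'')$\Rightarrow$(2'), every $N\in\mod B$ lies in $\thick(\ol{B})$ by the radical filtration argument in the proof of Lemma~\ref{Lem-LeftModComplex-Perfect}, and $X\otimes^L_B-$ preserves thick closures, so $X\otimes^L_BN\in\thick(X\otimes^L_B\ol{B})\subseteq\per(A)$. For (2')$\Rightarrow$(2), every $Z\in\D^b(B)$ lies in $\tri(N_1,\dots,N_n)$ for some modules $N_i$, and $\per(A)$ is triangulated. The same argument gives (3)$\Leftrightarrow$(3')$\Leftrightarrow$(3'') and (1)$\Leftrightarrow$(1')$\Leftrightarrow$(1''), using thick closure in both variables separately and the fact that $\D^b(k)$ is thick.

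This already makes (1), (1'), (1''), (2), (2'), (2''), (3), (3'), (3'') all equivalent, since the chains are internally equivalent and linked through (1).

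The only delicate point is applying Lemma~\ref{Lem-LeftModComplex-Perfect} to $X\otimes^L_BZ$. That lemma is stated for a complex of bimodules, so I must represent $X\otimes^L_BZ$ by an honest complex of $A$-modules, for instance by taking $X\otimes_B P$ with $P$ a projective resolution of $Z$, and check that its tensor with $\ol{A}$ over $A$ computes $\ol{A}\otimes^L_AX\otimes^L_BZ$. This holds by associativity of derived tensor products. The finite-generation hypotheses used inside that lemma (minimal semi-projective resolutions with finitely generated terms) are then supplied by its implication (2'')$\Rightarrow$(2')$\Rightarrow$(1), which first gives boundedness with finitely generated cohomology. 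Nothing else seems to be needed beyond associativity and these two lemmas.
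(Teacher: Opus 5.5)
Your proposal is correct and follows the same route as the paper. You get (1)$\Leftrightarrow$(2) and (1)$\Leftrightarrow$(3) by applying Lemma~\ref{Lem-LeftModComplex-Perfect}, resp.\ Lemma~\ref{Lem-RightModComplex-Perfect}, with the other algebra replaced by $k$, to $X\otimes^L_BZ$, resp.\ $Y\otimes^L_AX$; the primed and double-primed equivalences come from the radical-filtration and thick-closure argument of Lemma~\ref{Lem-LeftModComplex-Perfect}, and you simply supply the details the paper leaves implicit.
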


\begin{proof}
	(1)$\Leftrightarrow$(2): It follows from Lemma~\ref{Lem-LeftModComplex-Perfect}.
	
	(1)$\Leftrightarrow$(3): It follows from Lemma~\ref{Lem-RightModComplex-Perfect}.
	
	(1)$\Leftrightarrow$(1')$\Leftrightarrow$(1''), (2)$\Leftrightarrow$(2')$\Leftrightarrow$(2''), and (3)$\Leftrightarrow$(3')$\Leftrightarrow$(3''): Similar to the proof of Lemma~\ref{Lem-LeftModComplex-Perfect}.			
\end{proof}

\begin{definition}{\rm 
	Let $A$ and $B$ be finite dimensional $k$-algebras. A complex $X$ of $A$-$B$-bimodules is said to be {\it tensor-perfect} or {\it $\otimes$-perfect} if it satisfies the nine equivalent conditions in Proposition~\ref{Prop-BimodComplex-TensorPerfect}.
}\end{definition}

\begin{example} \label{Ex-TensorPerfect-Bimod} {\rm
		(1) By Proposition~\ref{Prop-BimodComplex-TensorPerfect} (1''), the $A$-$B$-bimodule $A\otimes B$ is $\otimes$-perfect. Then all objects in the thick triangulated subcategory $\thick(A\otimes B)$ of $\D(A\otimes B^\op)$ are $\otimes$-perfect by Proposition~\ref{Prop-TensorPerfect-Property}.
		
		(2) As a complex concentrated in degree 0, the $A$-bimodule $A$ is $\otimes$-perfect if and only if $M\otimes^L_AN\in\D^b(k)$ for all $M\in\mod A^\op$ and $N\in\mod A$ by Proposition~\ref{Prop-BimodComplex-TensorPerfect} (1'), if and only if $\gl A<\infty$, i.e., the algebra $A$ is smooth. 		
}\end{example}

\begin{remark}{\rm
		Let $A$ and $B$ be finite dimensional $k$-algebras. If $X$ is a $\otimes$-perfect complex of $A$-$B$-bimodules, applying Proposition~\ref{Prop-BimodComplex-TensorPerfect} (2) to $Y=A$ and (3) to $Y=B$ respectively, we obtain that $X$ is biperfect, i.e., both $_AX$ and $X_B$ are perfect. Conversely, a biperfect complex of $A$-$B$-bimodules need not be $\otimes$-perfect. For this, it is enough to consider Example~\ref{Ex-TensorPerfect-Bimod} (2) for any algebra of infinite global dimension.
}\end{remark}

The following result implies that all $\otimes$-perfect complexes of bimodules in $\D(A\otimes B^\op)$ form a thick triangulated subcategory of $\D^b(A\otimes B^\op)$. Note that $\D^b(A\otimes B^\op)=\thick((A\otimes B^\op)/\rad(A\otimes B^\op))$.

\begin{proposition} \label{Prop-TensorPerfect-Property}
	Let $A$ and $B$ be finite dimensional $k$-algebras. Then the following statements hold:
	
	{\rm(1)} Any finite direct sum of $\otimes$-perfect complexes of $A$-$B$-bimodules is $\otimes$-perfect.
	
	{\rm(2)} Any direct summand of a $\otimes$-perfect complex of $A$-$B$-bimodules is $\otimes$-perfect.
	
	{\rm(3)} Any shift of a $\otimes$-perfect complex of $A$-$B$-bimodules is $\otimes$-perfect.
	
	{\rm (4)} If $X\to Y\to Z\to$ is a triangle in $\D(A\otimes B^\op)$ then any two of $X, Y$ and $Z$ being $\otimes$-perfect implies that the third one is also $\otimes$-perfect. 	
\end{proposition}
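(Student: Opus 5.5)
The plan is to work with a single characterization from Proposition~\ref{Prop-BimodComplex-TensorPerfect}. The cleanest choice is condition (1''): $X$ is $\otimes$-perfect if and only if $\ol{A}\otimes^L_AX\otimes^L_B\ol{B}\in\D^b(k)$. Alternatively one can use (2''): $_AX\otimes^L_B\ol{B}\in\per(A)$. Both reduce the four closure properties to closure properties of a familiar subcategory. The key observation is that
\[
F:=\ol{A}\otimes^L_A-\otimes^L_B\ol{B}\colon \D(A\otimes B^\op)\to\D(k)
\]
is a triangle functor. It commutes with shifts, preserves finite direct sums, and sends triangles to triangles. Moreover, $\D^b(k)$ is a thick subcategory of $\D(k)$: it consists of complexes with bounded cohomology, and is closed under shifts, finite sums, summands and extensions, where extensions are handled by the long exact cohomology sequence.

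With this, the four statements follow in order.

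(1) $F(X_1\oplus\cdots\oplus X_n)\cong\bigoplus_i F(X_i)$. A finite sum of complexes with bounded cohomology has bounded cohomology.

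(2) If $X=X'\oplus X''$, then $F(X')$ is a direct summand of $F(X)$. Each cohomology group $H^i(F(X'))$ is a summand of $H^i(F(X))$, so it vanishes for $|i|\gg0$.

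(3) $F(X[m])\cong F(X)[m]$, and shifting preserves bounded cohomology.

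(4) Applying $F$ to a triangle $X\to Y\to Z\to$ gives a triangle $F(X)\to F(Y)\to F(Z)\to$ in $\D(k)$. The long exact sequence
\[
\cdots\to H^i(F(X))\to H^i(F(Y))\to H^i(F(Z))\to H^{i+1}(F(X))\to\cdots
\]
shows that if two of the three have cohomology vanishing outside a bounded range, so does the third. The rotation axiom lets one treat all three "two-out-of-three" cases uniformly.

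The only point needing care is that $F$ is genuinely a well-defined triangle functor on $\D(A\otimes B^\op)$. To see this, resolve $X$ by a K-projective complex of $A$-$B$-bimodules. Such a complex is K-projective (hence K-flat) on each side, because $A\otimes B^\op$ is projective as a left $A$-module and as a right $B$-module. Hence the iterated derived tensor product can be computed termwise and is exact in $X$. This is the step I would check first, but it is standard.

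I would add a remark on sizes. Since $A$, $B$, $\ol{A}$, $\ol{B}$ are finite dimensional, finiteness of cohomology dimensions is automatic once $X\in\D^b(\mod A\otimes B^\op)$. If one wants to stay within the bounded derived category, as the preceding remark in the paper indicates, note that $\otimes$-perfect complexes are biperfect. In particular they lie in $\D^b$, and this is preserved by the same two-out-of-three argument for cohomology over $k$.
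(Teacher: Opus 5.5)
Your proposal is correct and is essentially the paper's own proof, which simply says the statement is clear from condition (1'') of Proposition~\ref{Prop-BimodComplex-TensorPerfect}: $\ol{A}\otimes^L_A-\otimes^L_B\ol{B}$ is a triangle functor into $\D(k)$, and $\D^b(k)$ is thick there. One small point: in the paper's notation $\D^b(k)=\D^b(\mod k)$ also requires finite-dimensional cohomology, but your direct-sum, summand, shift and long-exact-sequence arguments preserve that equally well, so nothing changes.
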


\begin{proof}
	It is clear by Proposition~\ref{Prop-BimodComplex-TensorPerfect} (1'').
\end{proof}

\subsection{Comparison of $\otimes$-perfectness and perfectness}

The following result should be well-known and is more general than \cite[Corollary 5.3.10]{Zimmermann14}.

\begin{lemma} \label{Lem-TensorProdTop=TopTensorProd} 
	Let $A$ and $B$ be finite dimensional $k$-algebras, and either $A/\rad A$ or $B/\rad B$ a separable $k$-algebra.
	Then the following statements hold:
	
	{\rm (1)} $(A/\rad A)\otimes (B/\rad B)$ is a semisimple $k$-algebra.
	
	{\rm (2)} $\rad(A\otimes B) =(\rad A)\otimes B+A\otimes(\rad B)$.
	
	{\rm (3)} $(A\otimes B)/\rad(A\otimes B) \cong (A/\rad A)\otimes(B/\rad B)$.
\end{lemma}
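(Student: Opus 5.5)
Write $\ol{A}=A/\rad A$ and $\ol{B}=B/\rad B$, and assume without loss of generality that $\ol{A}$ is separable (the other case is symmetric, by exchanging the roles of $A$ and $B$). The plan is to prove (1) first, then use it to get (2), and read off (3) from (2).

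\emph{Statement (1).} A finite dimensional $k$-algebra $S$ is separable exactly when $S\otimes L$ is semisimple for every field extension $L/k$. Equivalently, $S$ is projective as an $S^e$-module, i.e.\ the multiplication map $S\otimes S^\op\to S$ splits as a map of bimodules. I will use the bimodule formulation. Let $C$ be any semisimple finite dimensional algebra (here $C=\ol{B}$) and $M$ any $\ol{A}\otimes C$-module, that is, an $\ol{A}$-$C^\op$-bimodule. I will show $M$ is projective.
\begin{itemize}
\item Since $\ol{A}$ is $\ol{A}^e$-projective, $\ol{A}$ is a direct summand of $\ol{A}\otimes\ol{A}$ as a bimodule.
\item Applying $-\otimes_{\ol{A}}M$, we get that $M$ is a direct summand of $\ol{A}\otimes {}_kM$, as $\ol{A}\otimes C$-modules.
\item Since $C$ is semisimple, the $C$-module ${}_CM$ is projective. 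Hence $\ol{A}\otimes M\cong(\ol{A}\otimes C)\otimes_C M$ is a projective $\ol{A}\otimes C$-module.
\end{itemize}
Thus every module over $\ol{A}\otimes\ol{B}$ is projective, and $\ol{A}\otimes\ol{B}$ is semisimple. The one point to verify is the isomorphism of $\ol{A}\otimes C$-modules in the third step; I expect this to be the main place where care is needed, but it is routine.

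\emph{Statement (2).} Set $I=(\rad A)\otimes B+A\otimes(\rad B)$. This is a two-sided ideal of $A\otimes B$. It is nilpotent: if $\rad^mA=0$ and $\rad^nB=0$, then $I^{m+n}=0$, because any product of $m+n$ generators contains at least $m$ factors from $\rad A\otimes B$ or at least $n$ factors from $A\otimes\rad B$. Hence $I\subseteq\rad(A\otimes B)$. On the other hand, the exact sequences $0\to\rad A\to A\to\ol{A}\to0$ and $0\to\rad B\to B\to\ol{B}\to0$, tensored over the field $k$, give
\[
(A\otimes B)/I\cong\ol{A}\otimes\ol{B}.
\]
By (1) this quotient is semisimple, so $\rad(A\otimes B)\subseteq I$. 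Therefore $I=\rad(A\otimes B)$.

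\emph{Statement (3).} This is immediate from (2) together with the isomorphism $(A\otimes B)/I\cong\ol{A}\otimes\ol{B}$ established above.
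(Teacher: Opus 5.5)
Your proof is correct. For (2) and (3) it follows the paper's argument: $I=(\rad A)\otimes B+A\otimes(\rad B)$ is a nilpotent ideal, hence contained in $\rad(A\otimes B)$. The quotient $(A\otimes B)/I\cong\ol{A}\otimes\ol{B}$ is semisimple by (1), which forces equality. The paper identifies this quotient via a natural surjection plus a dimension count, while you tensor the two short exact sequences over $k$; the two are interchangeable. As a minor aside, pigeonhole actually gives $I^{m+n-1}=0$, though $I^{m+n}=0$ is all you need.

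The genuine difference is in (1).
\begin{itemize}
\item The paper cites Eilenberg--Rosenberg--Zelinsky (Corollary 18) for $\gl(\ol{A}\otimes\ol{B})=\gl\ol{A}+\gl\ol{B}$ when one factor is separable, and reads off $0+0=0$.
\item You argue directly from separability as projectivity of $\ol{A}$ over $\ol{A}^e$. Applying $-\otimes_{\ol{A}}M$ to the bimodule splitting of $\ol{A}\otimes\ol{A}\to\ol{A}$ shows every $\ol{A}\otimes C$-module $M$ is a direct summand of $(\ol{A}\otimes C)\otimes_C M$. That module is projective because ${}_CM$ is projective and induction preserves projectives.
\end{itemize}
This is essentially the standard mechanism behind results of the cited type, specialized to global dimension zero.

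Your route makes the lemma self-contained and slightly more general. It shows $\ol{A}\otimes C$ is semisimple for every semisimple $C$, and the same argument gives $\gl(\ol{A}\otimes\Gamma)\le\gl\Gamma$ for any $\Gamma$. The paper's route is shorter but depends on an external theorem.
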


\begin{proof}
	
	{\rm (1)} Since $A/\rad A$ or $B/\rad B$ is separable over $k$, by \cite[Corollary 18]{Eilenberg-Rosenberg-Zelinsky57}, we have $\gl((A/\rad A)\otimes(B/\rad B)) = \gl(A/\rad A) + \gl(B/\rad B)=0$.
	
	{\rm (2)} On one hand, $(\rad A)\otimes B+A\otimes(\rad B)$ is a nilpotent ideal of $A\otimes B$, so $\rad(A\otimes B) \supseteq (\rad A)\otimes B+A\otimes(\rad B)$. 
	On the other hand, $(A\otimes B)/((\rad A)\otimes B+A\otimes(\rad B)) \cong (A/\rad A)\otimes(B/\rad B)$. Indeed, we have a natural surjective homomorphism $(A\otimes B)/((\rad A)\otimes B+A\otimes(\rad B)) \twoheadrightarrow (A/\rad A)\otimes(B/\rad B)$, which is also injective since $\dim((A\otimes B)/((\rad A)\otimes B+A\otimes(\rad B))) = \dim((A/\rad A)\otimes(B/\rad B))$.
	By (1), $(A\otimes B)/((\rad A)\otimes B+A\otimes(\rad B)) \cong (A/\rad A)\otimes(B/\rad B)$ is a semisimple $k$-algebra. 
	Thus $\rad(A\otimes B) \subseteq (\rad A)\otimes B+A\otimes(\rad B)$. Hence $\rad(A\otimes B) = (\rad A)\otimes B+A\otimes(\rad B)$.
	
	{\rm (3)} By (2), we have $(A\otimes B)/\rad(A\otimes B) \cong (A\otimes B)/((\rad A)\otimes B+A\otimes(\rad B)) \cong (A/\rad A)\otimes(B/\rad B)$.
\end{proof}

\begin{example}{\rm 
	(1) Let $A$ be a finite dimensional {\it elementary} $k$-algebra, i.e., $A/\rad A \cong k^n$ for some positive integer $n$. Since $k$ is a separable $k$-algebra, by \cite[Chapter IX, Proposition 7.3]{Cartan-Eilenberg56}, $A/\rad A\cong k^n$ is a separable $k$-algebra.
	Note that any bound quiver algebra $kQ/I$, where $Q$ is a finite quiver and $I$ is an admissible ideal of the path algebra $kQ$, is elementary.
		
	(2) Let $k$ be a perfect field such as fields of characteristic zero and finite fields, and $A$ a finite dimensional $k$-algebra. It follows from \cite[Corollary 6.1.4]{Drozd-Kirichenko94} that $A/\rad A$ is a separable $k$-algebra.	
}\end{example}

The following result compares $\otimes$-perfectness and perfectness for complexes of bimodules.

\begin{proposition} \label{Prop-HomSmooth-TensorPerfect-BimodComplex}
	Let $A$ and $B$ be finite dimensional $k$-algebras. 
	If a complex $X$ of $A$-$B$-bimodules is perfect then it is $\otimes$-perfect. 
	The converse also holds if either $A/\rad A$ or $B/\rad B$ is a separable $k$-algebra.
\end{proposition}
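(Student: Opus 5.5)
For the first assertion I would argue by thick-subcategory closure. A complex $X$ of $A$-$B$-bimodules is perfect when $X\in\per(A\otimes B^\op)=\thick(A\otimes B^\op)$. By Example~\ref{Ex-TensorPerfect-Bimod}~(1), the regular bimodule $A\otimes B^\op$ (written there as $A\otimes B$) is $\otimes$-perfect. One can also check this directly through Proposition~\ref{Prop-BimodComplex-TensorPerfect}~(1''): $\ol{A}\otimes_A(A\otimes B)\otimes_B\ol{B}\cong \ol{A}\otimes\ol{B}$ is finite dimensional and concentrated in degree $0$. By Proposition~\ref{Prop-TensorPerfect-Property}, the $\otimes$-perfect complexes form a thick subcategory of $\D(A\otimes B^\op)$. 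It therefore contains $\thick(A\otimes B^\op)$, and every perfect $X$ is $\otimes$-perfect.

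For the converse, assume that $\ol{A}$ or $\ol{B}$ is separable and that $X$ is $\otimes$-perfect. I would imitate the argument (2'')$\Rightarrow$(1) of Lemma~\ref{Lem-LeftModComplex-Perfect}, now over the algebra $\Lambda:=A\otimes B^\op$. First, $X$ is biperfect by the remark following the definition, so $X\in\D^b(\mod\Lambda)$ because its total cohomology is finite dimensional. Then \cite[B.61 Theorem]{Christensen-Foxby-Holm24} gives a minimal semi-projective resolution $P\to X$ over $\Lambda$, with finitely generated projective terms $P^i$ and $\Im d^i_P\subseteq\rad P^{i+1}$. Since $\ol{B^\op}=\ol{B}^\op$ is separable exactly when $\ol{B}$ is, Lemma~\ref{Lem-TensorProdTop=TopTensorProd} gives $\Lambda/\rad\Lambda\cong\ol{A}\otimes\ol{B}^\op$ and $\rad\Lambda=\rad A\otimes B^\op+A\otimes\rad B^\op$. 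This is the only place separability is used.

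The key step is to identify $\ol{A}\otimes^L_AX\otimes^L_B\ol{B}$ with $(\Lambda/\rad\Lambda)\otimes_\Lambda P$, understood as $P/(\rad\Lambda)P$. I would write the functor as $\ol{A}\otimes_A-\otimes_B\ol{B}$ applied to $P$. Each $P^i$ is a summand of a free module $\Lambda^n=(A\otimes B^\op)^n$, and $\ol{A}\otimes_A(A\otimes B)\otimes_B\ol{B}=\ol{A}\otimes\ol{B}$. Hence projective bimodules are acyclic for both one-sided tensors: a projective $\Lambda$-module is projective as a left $A$-module, and after tensoring with $\ol{A}$ it is still projective as a right $B$-module. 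So $\ol{A}\otimes_AP\otimes_B\ol{B}$ computes the derived tensor product. Termwise it equals $P^i/(\rad A\cdot P^i+P^i\cdot\rad B)=P^i/\rad\Lambda\, P^i$.

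By minimality, the differentials of this complex vanish. A nonzero finitely generated projective $P^i$ satisfies $P^i/\rad\Lambda\,P^i\neq0$ by Nakayama's lemma. Hence condition (1''), that this complex lies in $\D^b(k)$, forces $P^i=0$ for all but finitely many $i$, so $X\cong P$ is perfect over $\Lambda$. I expect the main obstacle to be the identification above: checking carefully that the two one-sided derived tensors can be computed by the single bimodule resolution $P$, and that $\rad\Lambda$ has exactly the stated form. The latter is what fails without separability.
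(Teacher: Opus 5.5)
Your proposal is correct and is essentially the paper's proof. For the converse, the paper uses Lemma~\ref{Lem-TensorProdTop=TopTensorProd}~(3) to identify $\ol{A}\otimes^L_AX\otimes^L_B\ol{B}$ with $\overline{A\otimes B^\op}\otimes^L_{A\otimes B^\op}X$, and then applies the implication (2'')$\Rightarrow$(1) of Lemma~\ref{Lem-LeftModComplex-Perfect} over $A\otimes B^\op$; the proof of that implication is exactly the minimal-resolution argument you carry out inline. For the first assertion, the paper invokes Lemma~\ref{Lem-LeftModComplex-Perfect}~(2') over $A\otimes B^\op$ rather than thick closure from $A\otimes B^\op$, but this amounts to the same reasoning.
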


\begin{proof}
	If $X\in\per(A\otimes B^\op)$ then, by Lemma~\ref{Lem-LeftModComplex-Perfect} (2'), we have $\ol{A}\otimes^L_AX\otimes^L_B\ol{B} \cong (\ol{A}\otimes \ol{B})\otimes^L_{A\otimes B^\op}X \in\D^b(k)$. Thus $X$ is $\otimes$-perfect by Proposition~\ref{Prop-BimodComplex-TensorPerfect} (1''). 
	
	If $X$ is $\otimes$-perfect and either $A/\rad A$ or $B/\rad B$ is a separable $k$-algebra then, by Lemma~\ref{Lem-TensorProdTop=TopTensorProd} (3) and Proposition~\ref{Prop-BimodComplex-TensorPerfect} (1''), we have $\overline{A\otimes B^\op}\otimes^L_{A\otimes B^\op} X \cong (\ol{A}\otimes \ol{B})\otimes^L_{A\otimes B^\op}X \cong \ol{A}\otimes^L_AX\otimes^L_B\ol{B} \in\D^b(k)$. Thus $X$ is perfect by Lemma~\ref{Lem-LeftModComplex-Perfect} (2'').	
\end{proof}

The following example implies that a smooth algebra $A$, i.e., an algebra with $\gl A<\infty$, need not to be homologically smooth, i.e., $\pd_{A^e}A<\infty$. Thus a $\otimes$-perfect complex of bimodules need not to be perfect.

\begin{example} \label{Ex-TensorPer-NotPer}{\rm
	Let $A$ be a finite dimensional semisimple inseparable algebra over a field $k$ of characteristic $p>0$. For instance, let $k:=\mathbb{Z}_p(t)$ be the field of fractions of the polynomial ring $\mathbb{Z}_p[t]$, and $A:=k[x]/(x^p-t)$. In view of $\gl A=0$, by Example~\ref{Ex-TensorPerfect-Bimod}, $A$ is a $\otimes$-perfect $A$-bimodule. By \cite[Theorem 11.1]{Hochschild47}, we have $\pd_{A^e}A=\infty$. Thus $A$ is not perfect over $A^e$. 
}\end{example}

\subsection{$\otimes$-coperfect complexes of bimodules}

\begin{lemma}\label{Lem-BimodComplex-LeftCoper} 
	Let $A$ and $B$ be finite dimensional $k$-algebras, and $X$ a complex of $A$-$B$-bimodules. Then the following conditions are equivalent:
	
	{\rm (1)} $_AX\in K^b(\inj A)$. 	
	
	{\rm (1')} $\RHom_A(Y,X)\in \D^b(B^\op)$ for all $Y\in\D^b(A)$.
	
	{\rm (1'')} $\RHom_A(\ol{A},X)\in \D^b(B^\op)$.
	
	{\rm (2)} $X^*:=\Hom_k(X,k)\in K^b(\proj A^\op)$.
\end{lemma}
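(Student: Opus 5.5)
The plan is to reduce everything to Lemma~\ref{Lem-RightModComplex-Perfect} (with $A$ replaced by $k$, or applied to the dual complex) by means of $k$-duality. The basic tool is the duality $(-)^*=\Hom_k(-,k)$. On complexes with finite-dimensional total cohomology it is an exact anti-equivalence between $\D^b(A)$ and $\D^b(A^\op)$. It sends $\inj A$ to $\proj A^\op$ and conversely, since $(A^*)^*\cong A$. It also satisfies the adjunction isomorphism
\[
\RHom_A(Y,X^*)\cong \RHom_A(Y,\Hom_k(X,k))\cong (X\otimes^L_A Y)^*
\]
and, when $X\in \D^b(A)$, the isomorphism $\RHom_A(Y,X)\cong \RHom_{A^\op}(X^*,Y^*)$.

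\textbf{Step (1)$\Leftrightarrow$(2).} If $_AX\in K^b(\inj A)$, then $X$ has bounded, finitely generated cohomology, so $X\cong X^{**}$ and $X^*$ is quasi-isomorphic to a bounded complex of modules $I^*$ with $I\in\inj A$. These are projective right $A$-modules, so $X^*\in K^b(\proj A^\op)$. Conversely, if $X^*\in K^b(\proj A^\op)$, then $X^*$ has finite-dimensional total cohomology, hence so does $X$, and $X\cong X^{**}$ lies in $K^b(\inj A)$ by the same argument.

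\textbf{Step (1)$\Rightarrow$(1').} Since $K^b(\inj A)=\thick({}_AA^*)$ and $\RHom_A(Y,A^*)\cong Y^*$, for every $Y\in\D^b(A)$ we get $\RHom_A(Y,X)\in\thick(Y^*)\subseteq \D^b(k)$. Hence $\RHom_A(Y,X)\in\D^b(B^\op)$. The implication (1')$\Rightarrow$(1'') is trivial because $\ol{A}\in\mod A$.

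\textbf{Step (1'')$\Rightarrow$(1).} This is the main step. First I need finiteness of $X$. Every $M\in\mod A$ lies in $\thick(\ol{A})$ by the radical filtration argument in Lemma~\ref{Lem-LeftModComplex-Perfect}. So $\RHom_A(M,X)\in\D^b(k)$ for all $M\in\mod A$. In particular, taking $M=A$ gives $X\in\D^b(k)$, hence $_AX\in\D^b(A)$.

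Now dualize. We get $X^*\in\D^b(A^\op)$, and
\[
\RHom_A(\ol{A},X)\cong \RHom_A(\ol{A},X^{**})\cong (X^*\otimes^L_A\ol{A})^*.
\]
So $X^*\otimes^L_A\ol{A}\in\D^b(k)$. Lemma~\ref{Lem-RightModComplex-Perfect} (2'')$\Rightarrow$(1), applied to $X^*$ viewed as a complex of $k$-$A$-bimodules (the case ``$A=k$'' of that lemma), then gives $X^*\in\per(A^\op)=K^b(\proj A^\op)$. This is condition (2), which is equivalent to (1) by the first step.

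Alternatively, one can argue directly with a minimal semi-injective resolution of $_AX$ (the dual of \cite[B.61 Theorem]{Christensen-Foxby-Holm24}). The differentials of $\Hom_A(\ol{A},I)$ vanish, so $\RHom_A(\ol{A},X)$ has nonzero socle contributions in every degree where $I$ is nonzero. Boundedness of $\RHom_A(\ol{A},X)$ therefore forces $I$ to be bounded.

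The obstacle I expect is justifying the bidual isomorphism $X\cong X^{**}$ and the tensor–Hom duality. Both need $X$ to have finite-dimensional total cohomology, which is exactly why that finiteness must be established first in the last step.
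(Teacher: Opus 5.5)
Your proof is correct, and it uses the same ingredients as the paper, but it handles the hard implication differently.

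The paper's structure is:
\begin{itemize}
\item $(1)\Rightarrow(2)$ by duality;
\item $(2)\Rightarrow(1'')$ by dualizing the easy direction $(1)\Rightarrow(2'')$ of Lemma~\ref{Lem-RightModComplex-Perfect} (with $A=k$);
\item $(1'')\Rightarrow(1)$ directly from a minimal semi-injective resolution (the socle argument you list only as an alternative);
\item $(1')\Leftrightarrow(1'')$ via $\D^b(A)=\thick(\ol{A})$.
\end{itemize}

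You run the duality step in the opposite direction. First you extract $X\cong\RHom_A(A,X)\in\D^b(k)$ from $(1'')$, using $\mod A\subseteq\thick(\ol{A})$. Then the isomorphism $\RHom_A(\ol{A},X)\cong(X^*\otimes^L_A\ol{A})^*$ together with the hard direction $(2'')\Rightarrow(1)$ of Lemma~\ref{Lem-RightModComplex-Perfect} lands you in $(2)$. You also get $(1)\Rightarrow(1')$ directly from $K^b(\inj A)=\thick({}_AA^*)$ and $\RHom_A(Y,A^*)\cong Y^*$, rather than passing through $(1'')$.

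What your route buys:
\begin{itemize}
\item No new resolution argument is needed on the injective side. The key step is inherited from the already-proved projective case, where the minimal semi-projective resolution argument is applied to a complex already known to lie in $\D^b$.
\item The finiteness of $X$ that justifies $X\cong X^{**}$ is established explicitly before you dualize.
\end{itemize}

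The paper's route is shorter and self-contained on the injective side. The price is that it invokes minimal semi-injective resolutions of the a priori unbounded complex ${}_AX$, and it leaves tacit why a bounded minimal resolution with finite-dimensional socles has finitely generated components.
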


\begin{proof}
	(1)$\Rightarrow$(2): The $k$-dual $(-)^*:\D^b(A)\to\D^b(A^\op)$ restricts to the $k$-dual $(-)^*:K^b(\inj A)\to K^b(\proj A^\op)$.
	
	(2)$\Rightarrow$(1''): If $X^*\in K^b(\proj A^\op)$ then $X^*\otimes^L_A\ol{A}\in\D^b(k)$ by Lemma~\ref{Lem-RightModComplex-Perfect}. Thus $\RHom_A(\ol{A},X)\cong \RHom_A(\ol{A},X^{**})\cong (X^*\otimes^L_A\ol{A})^*\in\D^b(k)$. So $\RHom_A(\ol{A},X)\in \D^b(B^\op)$.
		
	(1'')$\Rightarrow$(1): If $\RHom_A(\ol{A},X)\in \D^b(B^\op)$ then $_AX\in K^b(\inj A)$. Otherwise, $_AX$ admits an infinite minimal semi-injective resolution. Furthermore, $\RHom_A(\ol{A},X)\notin \D^b(k)$. It is a contradiction.

	(1')$\Leftrightarrow$(1''): It follows from $\D^b(A)=\thick(\ol{A})$.
\end{proof}

\begin{proposition} \label{Prop-BimodComplex-TensorCoper} 
	Let $A$ and $B$ be finite dimensional $k$-algebras, and $X$ a complex of $A$-$B$-bimodules. Then the following conditions are equivalent:
	
	{\rm (1)} $_AX\otimes^L_BY\in K^b(\inj A)$ for all $Y\in\D^b(B)$.	
	
	{\rm (1')} $_AX\otimes^L_BM\in K^b(\inj A)$ for all $M\in\mod B$.
	
	{\rm (1'')} $_AX\otimes^L_B\ol{B}\in K^b(\inj A)$.
	
	{\rm (2)} $\RHom_{B^\op}(X,Y)\in\per(A^\op)$ for all $Y\in\D^b(B^\op)$.
	
	{\rm (2')} $\RHom_{B^\op}(X,M)\in\per(A^\op)$ for all $M\in\mod B^\op$.
	
	{\rm (2'')} $\RHom_{B^\op}(X,\ol{B})\in\per(A^\op)$.
	
	{\rm (3)} $\RHom_A(Z,X\otimes^L_BY)\in\D^b(k)$ for all $Z\in\D^b(A)$ and $Y\in\D^b(B)$.
	
	{\rm (3')} $\RHom_A(N,X\otimes^L_BM)\in\D^b(k)$ for all $N\in\mod A$ and $M\in\mod B$.	
	
	{\rm (3'')} $\RHom_A(\ol{A},X\otimes^L_B\ol{B})\in\D^b(k)$.			
\end{proposition}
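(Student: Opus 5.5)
The plan is to argue exactly as in Proposition~\ref{Prop-BimodComplex-TensorPerfect}: establish the three ``horizontal'' equivalences (1)$\Leftrightarrow$(3) and (2)$\Leftrightarrow$(3), then deal with the primed versions in each row by a d\'evissage argument.

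\textbf{(1)$\Leftrightarrow$(3).} Fix $Y\in\D^b(B)$ and apply Lemma~\ref{Lem-BimodComplex-LeftCoper} with $B=k$ to the complex $X\otimes^L_BY$ of left $A$-modules. Before doing so we must know that $X\otimes^L_BY$ is a complex of finite-dimensional modules, i.e.\ that $X$ is right perfect. This is the step that needs care.
\begin{itemize}
\item Starting from (1): put $Y=B$ to get ${}_AX\in K^b(\inj A)\subseteq\D^b(A)$. Put $Y=\ol{B}$ to get $X\otimes^L_B\ol{B}\in\D^b(A)$, which gives $X_B\in\per(B^\op)$ by Lemma~\ref{Lem-RightModComplex-Perfect}(2'').
\item Starting from (3): put $Z=A^*$ (so that $\RHom_A(A^*,-)$ detects boundedness) and $Y=\ol{B}$. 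This yields boundedness of $X\otimes^L_B\ol{B}$, hence again right perfectness.
\end{itemize}
Once $X$ is right perfect, for every $Y\in\D^b(B)$ the complex $X\otimes^L_BY$ lies in $\D^b(A)$ by Lemma~\ref{Lem-RightModComplex-Perfect}(2). Lemma~\ref{Lem-BimodComplex-LeftCoper}, (1)$\Leftrightarrow$(1'), then says that $X\otimes^L_BY\in K^b(\inj A)$ if and only if $\RHom_A(Z,X\otimes^L_BY)\in\D^b(k)$ for all $Z\in\D^b(A)$.

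\textbf{(2)$\Leftrightarrow$(3).} Use $k$-duality. For $Y\in\D^b(B)$ we have $Y^*\in\D^b(B^\op)$, and Hom--tensor adjunction gives
\[
(X\otimes^L_BY)^*\cong\RHom_{B^\op}(X,Y^*)
\]
as complexes of right $A$-modules. By Lemma~\ref{Lem-BimodComplex-LeftCoper}(1)$\Leftrightarrow$(2), applied with $B=k$, we have $X\otimes^L_BY\in K^b(\inj A)$ if and only if its dual lies in $\per(A^\op)$. Since $Y\mapsto Y^*$ is a bijection between $\D^b(B)$ and $\D^b(B^\op)$ up to isomorphism, this gives (1)$\Leftrightarrow$(2). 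Combined with the previous step it also gives (2)$\Leftrightarrow$(3).

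\textbf{Primed versions.} The implications (1)$\Rightarrow$(1')$\Rightarrow$(1''), and likewise for rows (2) and (3), are trivial specialisations. For the reverse implications we use:
\begin{itemize}
\item $\D^b(B)=\thick(\ol{B})$, and similarly $\D^b(B^\op)=\thick(\ol{B})$ and $\D^b(A)=\thick(\ol{A})$, via the radical filtration argument in the proof of Lemma~\ref{Lem-LeftModComplex-Perfect};
\item the functors $X\otimes^L_B-$, $\RHom_{B^\op}(X,-)$ and $\RHom_A(-,X\otimes^L_B-)$ are exact in each variable;
\item the target subcategories $K^b(\inj A)$, $\per(A^\op)$ and $\D^b(k)$ are thick.
\end{itemize}
Together these show that (1'')$\Rightarrow$(1), (2'')$\Rightarrow$(2) and (3'')$\Rightarrow$(3).

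\textbf{Main obstacle.} The only delicate point is the boundedness issue in (3'')$\Rightarrow$(3), since the hypothesis only concerns $\ol{A}$ and $\ol{B}$. We handle it as follows.
\begin{enumerate}
\item The d\'evissage in $Z$ upgrades (3'') to $\RHom_A(Z,X\otimes^L_B\ol{B})\in\D^b(k)$ for all $Z\in\D^b(A)$.
\item Taking $Z=A^*$ gives $(X\otimes^L_B\ol{B})^{**}\in\D^b(k)$, hence $X\otimes^L_B\ol{B}\in\D^b(k)$.
\item This is Lemma~\ref{Lem-RightModComplex-Perfect}(2''), so $X$ is right perfect and the argument above goes through.
\end{enumerate}
Throughout we tacitly assume, as in the earlier lemmas, that $X$ has finite-dimensional total cohomology wherever it is needed.
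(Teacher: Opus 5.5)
Your proof follows the paper's route. You get (1)$\Leftrightarrow$(2) by $k$-duality via $(X\otimes^L_BY)^*\cong\RHom_{B^\op}(X,Y^*)$, and (1)$\Leftrightarrow$(3) by Lemma~\ref{Lem-BimodComplex-LeftCoper} applied to the left $A$-complex $X\otimes^L_BY$. The primed conditions follow by d\'evissage from $\ol{A}$ and $\ol{B}$.

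The detour through right perfectness of $X$ is not needed, and neither is the tacit finiteness assumption at the end. Lemma~\ref{Lem-BimodComplex-LeftCoper} is stated and proved for an arbitrary complex, since minimal semi-injective resolutions require no boundedness. In the duality step, $W^*\in\per(A^\op)$ already forces $W$ to have bounded finite-dimensional cohomology, so $W\cong W^{**}$.

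One step inside that detour is wrong as written. There is no isomorphism $\RHom_A(A^*,W)\cong W^{**}$, and $\RHom_A(A^*,-)$ does not evidently detect boundedness. The adjunction isomorphism is $\RHom_A(W,A^*)\cong W^*$, with $A^*$ in the \emph{second} variable; this is the trick used in the proof of Lemma~\ref{Lem-LeftModComplex-Perfect}. With $A^*$ in the first variable the identification already fails in a small example. Take $A$ the path algebra of $1\to 2$ and $W$ the simple projective module. Then $\RHom_A(A^*,W)\cong k[-1]$, whereas $W^{**}\cong W$ sits in degree $0$.

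If you want boundedness of $X\otimes^L_B\ol{B}$ from (3), or from (3'') after the d\'evissage in $Z$, take $Z=A$ instead, since $\RHom_A(A,W)\cong W$. As noted above, though, the step can simply be dropped.
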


\begin{proof}
	(1)$\Leftrightarrow$(2): Due to the $k$-duals between $\D^b(A)$ and $\D^b(A^\op)$ and their restrictions between $K^b(\inj A)$ and $K^b(\proj A^\op)$, $X\otimes^L_BY\in K^b(\inj A)$ for all $Y\in\D^b(B)$ if and only if $(X\otimes^L_BY)^*\cong\RHom_{B^\op}(X,Y^*)\in K^b(\proj A^\op)$ for all $Y\in\D^b(B)$, if and only if $\RHom_{B^\op}(X,Y)\in K^b(\proj A^\op)$ for all $Y\in\D^b(B^\op)$.
	
	(1)$\Leftrightarrow$(3): It follows from Lemma~\ref{Lem-BimodComplex-LeftCoper}.
	
	(1)$\Leftrightarrow$(1')$\Leftrightarrow$(1''), (2)$\Leftrightarrow$(2')$\Leftrightarrow$(2''), and (3)$\Leftrightarrow$(3')$\Leftrightarrow$(3''): Similar to the proof of Lemma~\ref{Lem-LeftModComplex-Perfect}.			
\end{proof}

\begin{definition}{\rm 
		Let $A$ and $B$ be finite dimensional $k$-algebras. A complex $X$ of $A$-$B$-bimodules is said to be {\it tensor-coperfect} or {\it $\otimes$-coperfect} if it satisfies the nine equivalent conditions in Proposition~\ref{Prop-BimodComplex-TensorCoper}.
}\end{definition}

The following result implies that all $\otimes$-coperfect complexes of bimodules in $\D(A\otimes B^\op)$ form a thick triangulated subcategory of $\D^b(A\otimes B^\op)$.

\begin{proposition} \label{Prop-TensorCoper-Property}
	Let $A$ and $B$ be finite dimensional $k$-algebras. Then the following statements hold:
	
	{\rm(1)} Any finite direct sum of $\otimes$-coperfect complexes of $A$-$B$-bimodules is $\otimes$-coperfect.
	
	{\rm(2)} Any direct summand of a $\otimes$-coperfect complex of $A$-$B$-bimodules is $\otimes$-coperfect.
	
	{\rm(3)} Any shift of a $\otimes$-coperfect complex of $A$-$B$-bimodules is $\otimes$-coperfect.
	
	{\rm (4)} If $X\to Y\to Z\to$ is a triangle in $\D(A\otimes B^\op)$ then any two of $X, Y$ and $Z$ being $\otimes$-coperfect implies that the third one is also $\otimes$-coperfect. 	
\end{proposition}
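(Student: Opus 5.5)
The plan is to test $\otimes$-coperfectness through the single condition (3'') of Proposition~\ref{Prop-BimodComplex-TensorCoper}: a complex $X$ of $A$-$B$-bimodules is $\otimes$-coperfect if and only if $\RHom_A(\ol{A},X\otimes^L_B\ol{B})\in\D^b(k)$. This mirrors the proof of Proposition~\ref{Prop-TensorPerfect-Property}, which reduced everything to (1'') of Proposition~\ref{Prop-BimodComplex-TensorPerfect}. The key point is that the assignment
\[
\Phi: X\mapsto \RHom_A(\ol{A},X\otimes^L_B\ol{B})
\]
is a composite of triangle functors $\D(A\otimes B^\op)\to\D(A)\to\D(k)$. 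Here we first apply $-\otimes^L_B\ol{B}$, then the forgetful functor to $\D(A)$, then $\RHom_A(\ol{A},-)$. In particular $\Phi$ is additive, commutes with shifts, and sends triangles to triangles.

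First, for (1), additivity gives $\Phi(X_1\oplus\cdots\oplus X_n)\cong\Phi(X_1)\oplus\cdots\oplus\Phi(X_n)$, and a finite direct sum of objects of $\D^b(k)$ lies in $\D^b(k)$. For (2), if $X'$ is a direct summand of $X$ then $\Phi(X')$ is a direct summand of $\Phi(X)\in\D^b(k)$. A direct summand of a complex with bounded finite dimensional total cohomology again has bounded finite dimensional cohomology, since cohomology commutes with direct sums. For (3), $\Phi(X[i])\cong\Phi(X)[i]$, and $\D^b(k)$ is closed under shifts.

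For (4), I apply $\Phi$ to the triangle $X\to Y\to Z\to$ in $\D(A\otimes B^\op)$ to get a triangle $\Phi(X)\to\Phi(Y)\to\Phi(Z)\to$ in $\D(k)$. Since $\D^b(k)$ is a triangulated subcategory of $\D(k)$, the long exact cohomology sequence shows that if two terms have bounded finite dimensional cohomology, so does the third. This is the 2-out-of-3 property.

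The only step requiring any care is the reduction to (3''). One must be sure that membership in $\D^b(k)$ of this single object really characterizes $\otimes$-coperfectness, without separately assuming $X\in\D^b(A\otimes B^\op)$. That is exactly the content of the equivalence (3'')$\Leftrightarrow$(1) in Proposition~\ref{Prop-BimodComplex-TensorCoper}, which rests on Lemma~\ref{Lem-BimodComplex-LeftCoper} and the minimal semi-injective resolution argument. I will take it as given. Alternatively, one can use (1'') directly: $\otimes$-coperfectness says $X\otimes^L_B\ol{B}$ lies in the thick subcategory $K^b(\inj A)$ of $\D(A)$, and since $-\otimes^L_B\ol{B}$ is a triangle functor, closure under sums, summands, shifts and 2-out-of-3 follows immediately from thickness of $K^b(\inj A)$.
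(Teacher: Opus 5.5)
Your proposal is correct and essentially matches the paper, whose entire proof is that the statement follows from condition (1'') of Proposition~\ref{Prop-BimodComplex-TensorCoper}. This is exactly your closing alternative: $-\otimes^L_B\ol{B}$ is a triangle functor and $K^b(\inj A)$ is thick. Your main route through (3'') and the thickness of $\D^b(k)$ in $\D(k)$ is the same argument with a different test condition, and it is equally valid.
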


\begin{proof}
	It is clear by Proposition~\ref{Prop-BimodComplex-TensorCoper} (1'').
\end{proof}

\begin{example}{\rm 
	(1) By Proposition~\ref{Prop-BimodComplex-TensorCoper} (1''), the $A$-$B$-bimodule $A^*\otimes B$ is $\otimes$-coperfect. Then all objects in the thick triangulated subcategory $\thick(A^*\otimes B)$ of $\D^b(A\otimes B^\op)$ are $\otimes$-coperfect by Proposition~\ref{Prop-TensorCoper-Property}.
	
	(2) Viewed as a stalk complex concentrated on degree zero, the identity $A$-bimodule $A$ is $\otimes$-coperfect if and only if all finite dimensional left $A$-modules are of finite injective dimension by Proposition~\ref{Prop-BimodComplex-TensorCoper} (1'), if and only if $\gl A<\infty$, i.e., $A$ is smooth.
	
	(3) When $A$ is a finite dimensional Gorenstein algebra, or equivalently, $K^b(\proj A)=K^b(\inj A)$, a complex $X$ of $A$-$B$-bimodules is $\otimes$-perfect if and only if it is $\otimes$-coperfect by Proposition~\ref{Prop-BimodComplex-TensorPerfect} (2'') and Proposition~\ref{Prop-BimodComplex-TensorCoper} (1'').
}\end{example}

\section{Eventually homological isomorphisms}

In this section, we redefine eventually homological isomorphisms (EHI for short) between derived categories, and give their characterizations in terms of $\otimes$-(co)perfect complexes of bimodules, constructions by opposite algebras and tensor product algebras, examples induced by homomorphisms of algebras and idempotents, and applications to transfer Gorensteinness and reduce Gorenstein symmetry conjecture.

\subsection{Definitions }

The eventually homological isomorphisms on module categories, more general, abelian categories, were introduced in \cite[Section 3]{Psaroudakis-Skartsaeterhagen-Solberg14}. 
Let $F : {\cal B} \to {\cal C}$ be a functor between abelian categories and $t$ an integer. The functor $F$ is called a {\it $t$-eventually homological isomorphism} if there is an isomorphism $\Ext^i_{\cal B}(X,Y)\cong\Ext^i_{\cal C}(F(X), F(Y))$ of abelian groups for all objects $X,Y \in {\cal B}$ and $i > t$. If the functor $F$ is a $t$-eventually homological isomorphism for some $t$ then $F$ is called an {\it eventually homological isomorphism}.

\begin{remark}{\rm
	It is strange that the definition of an eventually homological isomorphism $F : {\cal B} \to {\cal C}$ has nothing to do with the functoriality of $F$.	However, it is interesting that in the situation of recollements of module categories, or equivalently, idempotents \cite{Jans65}, the exact functor $e:=a\Lambda\otimes_\Lambda -:\mod \Lambda\to\mod a\Lambda a$, where $a$ is an idempotent of the Artin algebra $\Lambda$, is an eventually homological isomorphism if and only if there is an integer $s$ such that for every pair of $\Lambda$-modules $M$ and $N$, and
	every $j>s$, the map $e_{M,N}^j:\Ext^j_{\Lambda}(M,N)\to\Ext^j_{a\Lambda a}(e(M), e(N))$ is an isomorphism (Ref. \cite[Corollary 3.12]{Psaroudakis-Skartsaeterhagen-Solberg14}).
}\end{remark}

It seems more reasonable to define an eventually homological isomorphism on abelian categories as follows: Let $F : {\cal B} \to {\cal C}$ be an exact functor between abelian categories and $t$ an integer. The functor $F$ is called a {\it $t$-eventually homological isomorphism} if $F$ induces a group isomorphism $\Ext^i_{\cal B}(X,Y)\cong\Ext^i_{\cal C}(F(X), F(Y))$ sending an equivalence class of an $i$-extension $0\to Y\xrightarrow{d^0} E^1\xrightarrow{d^1}\cdots\xrightarrow{d^{i-1}} E^i\xrightarrow{d^i} X\to 0$ to the equivalence class of the $i$-extension $0\to F(Y)\xrightarrow{F(d^0)} F(E^1)\xrightarrow{F(d^1)}\cdots\xrightarrow{F(d^{i-1})} F(E^i)\xrightarrow{F(d^i)} F(X)\to 0$ for all objects $X,Y \in {\cal B}$ and $i > t$. If the functor $F$ is a $t$-eventually homological isomorphism for some $t$ then $F$ is called an {\it eventually homological isomorphism}.

An eventually homological isomorphism on module categories of finite dimensional algebras can be defined simply as the following.

\begin{definition} \label{Def-EHI-ModCat} {\rm
	Let $A$ and $B$ be finite dimensional algebras. An exact functor $F: \mod B \to \mod A$ is called an {\it eventually homological isomorphism} if  for each pair of objects $X,Y \in \mod B$, $F$ induces a linear isomorphism $\Ext^i_B(X,Y)\cong\Ext^i_A(F(X), F(Y))$ sending an equivalence class of an $i$-extension $0\to Y\xrightarrow{d^0} E^1\xrightarrow{d^1}\cdots\xrightarrow{d^{i-1}} E^i\xrightarrow{d^i} X\to 0$ to the equivalence class of the $i$-extension $0\to F(Y)\xrightarrow{F(d^0)} F(E^1)\xrightarrow{F(d^1)}\cdots\xrightarrow{F(d^{i-1})} F(E^i)\xrightarrow{F(d^i)} F(X)\to 0$ for all $i\gg 0$.
}\end{definition}

\begin{remark}{\rm
	It is unnecessary to define a ``$t$-eventually homological isomorphism'' on module categories of finite dimensional algebras, since an eventually homological isomorphism $F:\mod B\to\mod A$ in the sense of Definition~\ref{Def-EHI-ModCat} is always a ``$t$-eventually homological isomorphism'' for some positive integer $t\gg 0$ by Proposition~\ref{Prop-EHI-ModCat-DerCat} and Theorem~\ref{Thm-EvHomIso-TensorPerfect-BimodComplex}. Indeed, by Eilenberg-Watt theorem, we have $F\cong M\otimes_B-$ for a finite dimensional $A$-$B$-bimodule $M$ which is projective as a right $B$-module. Then the positive integer $t$ can be given by the cone of the evaluation morphism $\ev^B_M$, $B/\rad B$, and the radical length of $B$.
}\end{remark}

It seems more natural and general to define an eventually homological isomorphism on derived categories rather than abelian categories. The eventually homological isomorphisms on derived categories were introduced in \cite[Definition 3]{Qin20}.
Let ${\cal B}$ and ${\cal C}$ be abelian categories. A triangle functor $F : \D({\cal B})\to\D({\cal C})$ is called an {\it eventually homological isomorphism} if there exists an integer $t$ such that for every pair of objects $B, B'\in {\cal B}$ and every $i>t$, there is an isomorphism 
$\Hom_{\D({\cal B})}(B,B'[i]) \cong \Hom_{\D({\cal C})}(F(B),F(B')[i])$ of abelian groups.

\begin{remark}{\rm
	The above definition of an eventually homological isomorphism on derived categories has nothing to do with the functoriality of $F$. However, in the proof of \cite[Lemma 1]{Qin20}, an eventually homological isomorphism should require that the map $\Hom_{\D({\cal B})}(B,B'[i]) \cong \Hom_{\D({\cal C})}(F(B),F(B')[i])$ is induced by the triangle functor $F : \D({\cal B})\to\D({\cal C})$.
}\end{remark}

Now we redefine an eventually homological isomorphism on derived categories which is more natural and fruitful.

\begin{definition} \label{Def-EHI-DerCat} {\rm 
	Let $A$ and $B$ be finite dimensional algebras. A triangle functor $F:\D^b(B)\to \D^b(A)$ is called an {\it eventually homological isomorphism} if for each pair of objects $X,Y\in\D^b(B)$, the linear map $F_{X,Y[i]}:\Hom_{\D^b(B)}(X,Y[i])\to \Hom_{\D^b(A)}(F(X),F(Y)[i])$ is an isomorphism for all $i\gg 0$. Moreover, an eventually homological isomorphism is said to be {\it standard} if it is a derived tensor product functor $X\otimes^L_B-:\D^b(B)\to \D^b(A)$	given by a right perfect $A$-$B$-bimodule complex $X$.
}\end{definition}

Clearly, any fully faithful triangle functor $F:\D^b(B)\to \D^b(A)$ is an eventually homological isomorphism. Moreover, if $A,B,C$ are finite dimensional algebras then the composition $FG:\D^b(C)\to \D^b(A)$ of two eventually homological isomorphisms $F:\D^b(B)\to \D^b(A)$ and $G:\D^b(C)\to \D^b(B)$ is also an eventually homological isomorphism.

\begin{remark} {\rm
	More generally, one may define an eventually homological isomorphism on triangulated categories as follows: Let $\T$ and $\T'$ be triangulated categories. A triangle functor $F:\T\to \T'$ is called an {\it eventually homological isomorphism} if for each pair of objects $X,Y\in\T$, the linear map $F_{X,Y[i]}:\T(X,Y[i])\to \T'(F(X),F(Y)[i])$ is an isomorphism for all $i\ll 0$ and $i\gg 0$. Nonetheless, it is unnecessary for us at present. In fact, we know little about an eventually homological isomorphism on unbounded derived categories. 	
}\end{remark}

The most important exact functors on module categories are tensor product functors induced by one-sided projective bimodules. The following result implies that a standard eventually homological isomorphism on module categories induces a standard eventually homological isomorphism on derived categories.

\begin{proposition} \label{Prop-EHI-ModCat-DerCat}
	Let $A$ and $B$ be finite dimensional algebras, $_AM_B$ a finite dimensional $A$-$B$-bimodule, and $M_B$ projective. Then the exact functor $_AM\otimes_B-: \mod B\to\mod A$ is an eventually homological isomorphism in the sense of Definition~\ref{Def-EHI-ModCat} if and only if the triangle functor $_AM\otimes_B-:\D^b(B)\to\D^b(A)$ is an eventually homological isomorphism in the sense of Definition~\ref{Def-EHI-DerCat}.
\end{proposition}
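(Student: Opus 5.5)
The plan is to identify the two kinds of maps through the standard isomorphism $\Ext^i_B(X,Y)\cong\Hom_{\D^b(B)}(X,Y[i])$ (Yoneda extensions versus morphisms in the derived category), and then extend from modules to complexes by dévissage.

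\emph{Step 1: compatibility of the two maps on modules.} Write $F=M\otimes_B-$. Since $M_B$ is projective, $F$ is exact and the derived functor $M\otimes^L_B-$ agrees with $M\otimes_B-$ applied termwise; moreover it is a triangle functor on $\D^b$ that restricts to $F$ on $\mod B$. For modules $X,Y$, an $i$-extension $\xi\colon 0\to Y\to E^1\to\cdots\to E^i\to X\to 0$ corresponds to a morphism $\theta_\xi\colon X\to Y[i]$. Concretely, $\theta_\xi$ is the roof $X\xleftarrow{\sim}(Y\to E^1\to\cdots\to E^i)\to Y[i]$, equivalently the composite of the connecting morphisms of the short exact pieces. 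Because $F$ is exact, it sends this roof, together with its quasi-isomorphism, to the roof attached to $F(\xi)$. Hence $\theta_{F(\xi)}=F(\theta_\xi)$. This gives a commutative square in which the Yoneda map $\Ext^i_B(X,Y)\to\Ext^i_A(FX,FY)$ corresponds to $F_{X,Y[i]}$. Consequently, for modules $X,Y$, the map of Definition~\ref{Def-EHI-ModCat} is an isomorphism for a given $i$ if and only if $F_{X,Y[i]}$ is.

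\emph{Step 2: the ``if'' direction.} This now follows immediately, since modules are objects of $\D^b(B)$.

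\emph{Step 3: the ``only if'' direction, extending from modules to complexes.} Here the order of quantifiers matters. Definition~\ref{Def-EHI-ModCat} gives, for each pair of modules, a threshold $i_0(X,Y)$ beyond which the map is an isomorphism. I reduce to finitely many pairs of modules. Every $X\in\D^b(B)$ lies in $\thick(\ol{B})$, and in fact in $\tri$ of finitely many shifts of $\ol{B}$, via truncations and the radical filtration as in the proof of Lemma~\ref{Lem-LeftModComplex-Perfect}. Since $\ol{B}\oplus$-summands are handled by additivity, it suffices to take the single pair $(\ol{B},\ol{B})$ with its threshold $t$. Then $F_{\ol{B}[p],\ol{B}[q][i]}=F_{\ol{B},\ol{B}[q-p+i]}$ is an isomorphism whenever $i>t+p-q$.

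Next I use the five lemma along triangles. Fix $Y$ and consider the class of $X$ for which $F_{X,Y[i]}$ is an isomorphism for $i\gg0$. A triangle $X'\to X\to X''\to$ yields a long exact Hom sequence, and $F$ maps it to the corresponding sequence for $F X'\to FX\to FX''\to$ because $F$ is a triangle functor. The five lemma then shows that the class is closed under extensions and shifts. The shift bounds change only by finite amounts, so since each object is built in finitely many steps, a threshold still exists. Closure under direct summands follows because $F_{X\oplus X',-}$ is the direct sum of the two maps. Doing this first in the variable $Y$ with $X=\ol{B}$, and then in $X$, gives the claim for all $X,Y\in\D^b(B)$.

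The main obstacle is Step 1, namely making the Yoneda-to-derived identification explicit enough to be sure that $F$ commutes with it, including signs. I would handle it by the splicing description: $\theta_\xi$ is the composite of the degree-one connecting morphisms $\delta_j$ of the short exact sequences into which $\xi$ splits. An exact functor sends each short exact sequence to a short exact sequence and its triangle to the image triangle, so $F(\delta_j)=\delta_{F(\cdot)}$. Since $F$ commutes with composition and shift, $F(\theta_\xi)=\theta_{F\xi}$ follows.
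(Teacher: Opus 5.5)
Your proposal is correct and is essentially the paper's argument. Like the paper, you identify, for modules $X,Y$, the Yoneda map on $\Ext^i$ with $F_{X,Y[i]}$ (exactness of $M\otimes_B-$ makes the derived functor termwise), and then extend to all of $\D^b(B)$ by d\'evissage in both variables, where the paper cites $\D^b(B)=\tri(\mod B)$ and you reduce to the single pair $(\ol{B},\ol{B})$ (a simple module need not lie in $\tri$ of shifts of $\ol{B}$, but your closure-under-summands step, i.e.\ $\D^b(B)=\thick(\ol{B})$, already covers this).
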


\begin{proof}
	The exact functor $F:=M\otimes_B-: \mod B\to\mod A$ is an eventually homological isomorphism if and only if for each pair of objects $X,Y\in\mod B$, $F$ induces an isomorphism $F^i_{X,Y}: \Ext^i_B(X,Y) \to \Ext^i_A(M\otimes_BX,M\otimes_BY)$ sending an equivalence class of an $i$-extension $0\to Y\xrightarrow{d^0} E^1\xrightarrow{d^1}\cdots\xrightarrow{d^{i-1}} E^i\xrightarrow{d^i} X\to 0$ to the equivalence class of the $i$-extension $0\to F(Y)\xrightarrow{F(d^0)} F(E^1)\xrightarrow{F(d^1)}\cdots\xrightarrow{F(d^{i-1})} F(E^i)\xrightarrow{F(d^i)} F(X)\to 0$ for all $i\gg0$, if and only if for each pair of objects $X,Y\in\mod B$, the linear map $F'_{X,Y[i]}: \Hom_{\D^b(B)}(X,Y[i]) \to \Hom_{\D^b(A)}(M\otimes_BX,M\otimes_BY[i])$ induced by the triangle functor $F':=M\otimes_B-: \D^b(B)\to\D^b(A)$ is an isomorphism for all $i\gg0$, if and only if for each pair of objects $X,Y\in\D^b(B)$, the linear map $F'_{X,Y[i]}: \Hom_{\D^b(B)}(X,Y[i]) \to \Hom_{\D^b(A)}(M\otimes_BX,M\otimes_BY[i])$ induced by the triangle functor $F'=M\otimes_B-: \D^b(B)\to\D^b(A)$ is an isomorphism for all $i\gg0$ due to $\D^b(A)=\tri(\mod A)$, the smallest full triangulated subcategory of $\D(A)$ containing $\mod A$, i.e., the triangle functor $F'=M\otimes_B-:\D^b(B)\to\D^b(A)$ is an eventually homological isomorphism. 
\end{proof}

\begin{remark}{\rm 
	The example \cite[Example 1.4]{Qin-Xu-Zhang-Zhou24} gives an eventually homological isomorphism on derived categories but not an eventually homological isomorphism on module categories, since the $B$-bimodule $A/B$ is perfect but not projective. See the Corollary~\ref{Cor-BoundExt-EHI} below. 
}\end{remark}

\subsection{Characterizations}

In this subsection, we give characterizations of standard eventually homological isomorphisms in terms of $\otimes$-(co)perfect complexes of bimodules. For this, we need to do some preparations.

\begin{lemma} \label{Lem-F_XY-Unit}
	Let $\A$ and $\B$ be categories and $F:\A\to\B$ a functor. 
	
	{\rm (1)} If $L:\B\to\A$ is a left adjoint of $F$, $\alpha:\A(L?,-)\to\B(?,F-)$ is the adjoint isomorphism, and $\eta: LF\to \Id_\A$ is the counit of the adjoint pair $(L,F)$, then $F_{A,A'}=\alpha_{FA,A'}\circ \A(\eta_A,A')$ for all $A,A'\in\A$, i.e., the following diagram is commutative.
	$$\begin{tikzcd}
		\A(A,A') \arrow[r,"{F_{A,A'}}"] \arrow[d,"{\A(\eta_A,A')}"'] & \B(FA,FA') \\
		\A(LFA,A') \arrow[ur,"{\alpha_{FA,A'}}"'] & 		
	\end{tikzcd}$$
	
	{\rm (2)} If $R:\B\to\A$ is a right adjoint of $F$, $\beta:\B(F-,?)\to\A(-,R?)$ is the adjoint isomorphism, and $\varepsilon: \Id_\A\to RF$ is the unit of the adjoint pair $(F,R)$, then $F_{A,A'}=\beta_{A,FA'}^{-1}\circ \A(A,\varepsilon_{A'})$ for all $A,A'\in\A$, i.e., the following diagram is commutative.
	$$\begin{tikzcd}
		\A(A,A') \arrow[r,"{F_{A,A'}}"] \arrow[d,"{\A(A,\varepsilon_{A'})}"'] & \B(FA,FA') \\
		\A(A,RFA') \arrow[ur,"{\beta_{A,FA'}^{-1}}"'] & 		
	\end{tikzcd}$$
\end{lemma}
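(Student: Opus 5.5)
The statement is a general fact about adjunctions, so the plan is to check it directly from naturality of the adjunction isomorphism, using the standard formula expressing the counit through $\alpha$.

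For (1), the adjunction isomorphism $\alpha_{B,A}:\A(LB,A)\to\B(B,FA)$ is natural in $B\in\B$ and $A\in\A$. The counit is by definition $\eta_A=\alpha^{-1}_{FA,A}(1_{FA})$, equivalently $\alpha_{FA,A}(\eta_A)=1_{FA}$.

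Given $f\in\A(A,A')$, we must show $\alpha_{FA,A'}(f\circ\eta_A)=F(f)$. Naturality of $\alpha$ in the second variable says that for $f:A\to A'$ the square
$$\alpha_{FA,A'}\circ\A(LFA,f)=\B(FA,Ff)\circ\alpha_{FA,A}$$
commutes. Apply both sides to $\eta_A\in\A(LFA,A)$. The left side gives $\alpha_{FA,A'}(f\circ\eta_A)$. The right side gives $F(f)\circ\alpha_{FA,A}(\eta_A)=F(f)\circ 1_{FA}=F(f)$. Hence $F_{A,A'}(f)=\alpha_{FA,A'}(\A(\eta_A,A')(f))$, which is the commutativity of the triangle.

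For (2), the argument is dual. The unit is $\varepsilon_{A}=\beta_{A,FA}(1_{FA})$, and $\beta_{A,B}:\B(FA,B)\to\A(A,RB)$ is natural in $A$ and $B$. For $f:A\to A'$, naturality in the first variable gives
$$\beta_{A,FA'}\circ\B(Ff,FA')=\A(f,RFA')\circ\beta_{A',FA'}.$$
Apply both sides to $1_{FA'}$. This yields $\beta_{A,FA'}(F f)=\varepsilon_{A'}\circ f$, that is, $F(f)=\beta^{-1}_{A,FA'}(\A(A,\varepsilon_{A'})(f))$.

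There is no real obstacle here. The only point needing care is bookkeeping: using the naturality in the correct variable in each case (the second variable in (1), the first in (2)), and matching the paper's slightly unusual naming, in which $\eta$ denotes the counit $LF\to\Id_\A$ and $\varepsilon$ denotes the unit $\Id_\A\to RF$.
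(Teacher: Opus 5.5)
Your proposal is correct and is essentially the paper's own proof. In both parts you take the same naturality square for the adjunction isomorphism, evaluating at $\eta_A$ in (1) and at $1_{FA'}$ in (2). You also state explicitly the identities $\alpha_{FA,A}(\eta_A)=1_{FA}$ and $\beta_{A',FA'}(1_{FA'})=\varepsilon_{A'}$, which the paper uses only implicitly.
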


\begin{proof}
	(1) For any $f\in\A(A,A')$, we need to show $Ff=\alpha_{FA,A'}(f\eta_A)$. By the naturality of the adjoint isomorphism, we have the following commutative diagram,
	$$\begin{tikzcd}
		\A(LFA,A) \arrow[r,"{\alpha_{FA,A}}"] \arrow[d,"{\A(LFA,f)}"'] & \B(FA,FA) \arrow[d,"{\B(FA,Ff)}"]  \\
		\A(LFA,A') \arrow[r,"{\alpha_{FA,A'}}"] & \B(FA,FA')
	\end{tikzcd}$$
	i.e., $\B(FA,Ff) \circ \alpha_{FA,A} = \alpha_{FA,A'} \circ \A(LFA,f)$. So $(\B(FA,Ff) \circ \alpha_{FA,A})(\eta_A) =   (\alpha_{FA,A'} \circ \A(LFA,f))(\eta_A)$, i.e., $Ff=\alpha_{FA,A'}(f\eta_A)$.
	
	(2) For any $f\in\A(A,A')$, we need to show that $\beta_{A,FA'}(Ff)=\varepsilon_{A'}f$. By the naturality of the adjoint isomorphism, we have the following commutative diagram,
	$$\begin{tikzcd}
		\B(FA',FA') \arrow[r,"{\beta_{A',FA'}}"] \arrow[d,"{\B(Ff,FA')}"'] & \A(A',RFA') \arrow[d,"{\A(f,RFA')}"]  \\
		\B(FA,FA') \arrow[r,"{\beta_{A,FA'}}"] & \A(A,RFA')
	\end{tikzcd}$$
	i.e., $\beta_{A,FA'} \circ \B(Ff,FA') = \A(f,RFA') \circ \beta_{A',FA'}$. So $(\beta_{A,FA'} \circ \B(Ff,FA'))   (1_{FA'}) = (\A(f,RFA') \circ \beta_{A',FA'})(1_{FA'})$, i.e., $\beta_{A,FA'}(Ff)=\varepsilon_{A'}f$.
\end{proof}

\begin{lemma} \label{Lem-Cone-Cone}
	Let $A, B, C$ be three algebras, $X$ a complex of $A$-$B$-bimodules, $\varepsilon^C : \Id_{\D(B\otimes C^\op)} \to \RHom_A(X,\linebreak X\otimes^L_B-)$ and $\eta^C : X\otimes^L_B\RHom_A(X,-) \to \Id_{\D(A\otimes C^\op)}$ the unit and counit of the adjoint pair 
	$$X\otimes^L_B-\ :\ \D(B\otimes C^\op) \rightleftarrows \D(A\otimes C^\op)\ :\ \RHom_A(X,-)$$ 
	respectively. Then there exist morphisms $u_Y:\Cone(\varepsilon^B_B)\otimes^L_BY \to \Cone(\varepsilon^C_Y)$ in $\D(B\otimes C^\op)$ and $c_Z:\Cone(\eta^A_A)\otimes^L_AZ \to \Cone(\eta^C_Z)$ in $\D(A\otimes C^\op)$ such that the following two diagrams are commutative for all $Y\in\D(B\otimes C^\op)$ and $Z\in\D(A\otimes C^\op)$,
	$$\begin{tikzcd}[column sep=25]
		B\otimes^L_BY \arrow[r,"{\varepsilon^B_B\otimes^L_BY}"] \arrow[d,"\cong"] & \RHom_A(X,X)\otimes^L_BY \arrow[r] \arrow[d,"l_Y"]  & \Cone(\varepsilon^B_B)\otimes^L_BY \arrow[r] \arrow[d,"u_Y"] & B\otimes^L_BY[1] \arrow[d,"\cong"] \\
		Y \arrow[r,"{\varepsilon^C_Y}"] & \RHom_A(X,X\otimes^L_B Y) \arrow[r] & \Cone(\varepsilon^C_Y) \arrow[r] & Y[1]	
	\end{tikzcd}$$
	$$\begin{tikzcd}[column sep=20]
		X\otimes^L_B\RHom_A(X,A)\otimes^L_AZ \arrow[r,"{\eta^A_A\otimes^L_AZ}"] \arrow[d,"{X\otimes^L_Bm_Z}"] & A\otimes^L_AZ \arrow[d,"\cong"] \arrow[r] & \Cone(\eta^A_A)\otimes^L_AZ \arrow[d,"{c_Z}"] \arrow[r] & X\otimes^L_B\RHom_A(X,A)\otimes^L_AZ[1] \arrow[d,"{X\otimes^L_Bm_Z[1]}"] \\
	    X\otimes^L_B\RHom_A(X,Z) \arrow[r,"{\eta^C_Z}"] & Z\arrow[r] &\Cone(\eta^C_Z) \arrow[r] & X\otimes^L_B\RHom_A(X,Z)[1]
	\end{tikzcd}$$
	where the morphism $l_Y:\RHom_A(X,X)\otimes^L_BY\to \RHom_A(X,X\otimes^L_B Y)$ is induced by the morphism 
	$l_Y:\Hom_A({\bf p}X,{\bf p}X)\otimes_B{\bf p}Y\to \Hom_A({\bf p}X,{\bf p}X\otimes_B {\bf p}Y)$ in the homotopy categories ${\cal H}(B\otimes C^\op)$ 
	given by $l_Y(f\otimes y)(x):=(-1)^{|x||y|}f(x)\otimes y$ for all $f\in \Hom_A({\bf p}X,{\bf p}X), x\in{\bf p}X$ and $y\in{\bf p}Y$, 
	and the morphism $m_Z:\RHom_A(X,A)\otimes^L_AZ\to \RHom_A(X,Z)$ is induced by the morphism 
	$m_Z:\Hom_A({\bf p}X,A)\otimes_A{\bf p}Z\to \Hom_A({\bf p}X,{\bf p}Z)$ in the homotopy categories ${\cal H}(B\otimes C^\op)$
	given by $m_Z(g\otimes z)(x):=(-1)^{|x||z|}g(x)z$ for all $g\in \Hom_A({\bf p}X,A), x\in{\bf p}X$ and $z\in{\bf p}Z$. Here, ${\bf p}$ is a cofibrant replacement functor sending a complex to its cofibrant resolution.
	Furthermore, if $_AX\in\per(A)$ then the morphisms $u_Y:\Cone(\varepsilon^B_B)\otimes^L_BY \to \Cone(\varepsilon^C_Y)$ and $c_Z:\Cone(\eta^A_A)\otimes^L_AZ \to \Cone(\eta^C_Z)$ are isomorphisms. 		
\end{lemma}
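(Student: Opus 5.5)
The plan is to produce $u_Y$ and $c_Z$ from the axiom (TR3), and then prove the isomorphism statements by the two-out-of-three property once $l_Y$ and $m_Z$ are shown to be isomorphisms.

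\textbf{Commutativity of the left squares.} For the first diagram I have to show that $l_Y\circ(\varepsilon^B_B\otimes^L_BY)$ agrees with $\varepsilon^C_Y$ under the identification $B\otimes^L_BY\cong Y$. I work at the level of cofibrant replacements. The unit $\varepsilon^B_B$ is represented by $b\mapsto(x\mapsto xb)$ in $\Hom_A({\bf p}X,{\bf p}X)$. Composing with $l_Y$ sends $b\otimes y$ to the map $x\mapsto(-1)^{|x||y|}xb\otimes y$. This is exactly the unit $\varepsilon^C_Y$ evaluated at $by$, with the Koszul sign built into the standard tensor–Hom adjunction, so the square commutes already in the homotopy category.

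The argument for the second diagram is symmetric. The counit $\eta^C_Z$ is evaluation $x\otimes f\mapsto f(x)$. One checks that $\eta^C_Z\circ(X\otimes^L_Bm_Z)$ sends $x\otimes g\otimes z$ to $g(x)z$ up to the same sign. That is $\eta^A_A\otimes^L_AZ$ followed by $A\otimes^L_AZ\cong Z$.

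\textbf{Existence of $u_Y$ and $c_Z$.} The top rows are triangles, being images of triangles under the triangle functors $-\otimes^L_BY$ and $-\otimes^L_AZ$. The bottom rows are triangles by definition of the cones. So (TR3) provides morphisms $u_Y$ and $c_Z$ completing the diagrams to morphisms of triangles.

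\textbf{The isomorphism claims.} Now assume $_AX\in\per(A)$. By the five lemma for triangles (a morphism of triangles in which two of the three components are isomorphisms has its third component an isomorphism), it suffices to show that $l_Y$ and $m_Z$ are isomorphisms. For $m_Z$, consider the full subcategory of those $X'\in\D(A)$, playing the role of ${\bf p}X$ with only its left $A$-structure, for which $\RHom_A(X',A)\otimes^L_AZ\to\RHom_A(X',Z)$ is an isomorphism. This subcategory contains $A$, where both sides are $Z$. It is closed under shifts, cones and direct summands, since both sides are triangle functors in $X'$ and $m$ is natural. Hence it contains $\thick(A)=\per(A)$. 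Whether $m_Z$ is an isomorphism in $\D(B\otimes C^\op)$ is detected on underlying complexes of $k$-vector spaces, so forgetting the $B$-structure is harmless.

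For $l_Y$, I would instead fix $Y$ and note that $l_Y$ is the composite of the canonical isomorphism $\RHom_A(X,X)\otimes^L_BY\to\RHom_A(X,A)\otimes^L_AX\otimes^L_BY$ and $m_{X\otimes^L_BY}$. Concretely, one could run the same thick-subcategory argument in the first variable $X'$, with $X\otimes^L_BY$ held fixed. For $X'=A$ the map reduces to the identity of $X\otimes^L_BY$.

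\textbf{Expected main obstacle.} The hardest step is justifying the thick-subcategory argument. The variable $X$ appears in both the source and the target of $l_Y$. I would handle this by treating the first copy of $X$ (the Hom argument) as an independent variable $X'\in\per(A)$, keeping the second copy fixed, and checking that $l$ is natural in $X'$. The sign conventions in the commutativity check are routine but need care.
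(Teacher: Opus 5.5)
Your proposal is correct and follows the paper's route: check the left squares at the level of cofibrant models, apply (TR3) to obtain $u_Y$ and $c_Z$, and conclude by two-out-of-three once $l_Y$ and $m_Z$ are isomorphisms for perfect ${}_AX$. The paper simply asserts that last fact, so your thick-subcategory argument fills in a step the paper leaves implicit. That argument treats the Hom-variable copy of $X$ as an independent $X'\in\per(A)$, or factors $l_Y$ through $m_{X\otimes^L_BY}$.
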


\begin{proof} It is easy to check that the following two diagrams in the homotopy categories ${\cal H}(B\otimes C^\op)$ and ${\cal H}(A\otimes C^\op)$ respectively are commutative.
	$$\begin{tikzcd}[column sep=25]
		B\otimes_B{\bf p}Y \arrow[r,"{\varepsilon^B_B\otimes_B{\bf p}Y}"] \arrow[d,"\cong"] & \Hom_A({\bf p}X,{\bf p}X)\otimes_B{\bf p}Y \arrow[d,"l_Y"] \\
		{\bf p}Y \arrow[r,"{\varepsilon^C_Y}"] & \Hom_A({\bf p}X,{\bf p}X\otimes_B {\bf p}Y)	
		\end{tikzcd}\quad\quad
		\begin{tikzcd}[column sep=25]
		{\bf p}X\otimes_B\Hom_A({\bf p}X,A)\otimes_A{\bf p}Z \arrow[r,"{\eta^A_A\otimes_A{\bf p}Z}"] \arrow[d,"{{\bf p}X\otimes_Bm_Z}"] & A\otimes_A{\bf p}Z \arrow[d,"\cong"] \\
		{\bf p}X\otimes_B\Hom_A({\bf p}X,{\bf p}Z) \arrow[r,"{\eta^C_Z}"] & {\bf p}Z	
		\end{tikzcd}$$
	\noindent Then the following two diagrams in the derived categories $\D(B\otimes C^\op)$ and $\D(A\otimes C^\op)$ respectively are commutative.
    $$\begin{tikzcd}[column sep=25]
		B\otimes_BY \arrow[r,"{\varepsilon^B_B\otimes^L_BY}"] \arrow[d,"\cong"] & \RHom_A(X,X)\otimes^L_BY \arrow[d,"l_Y"] \\
		Y \arrow[r,"{\varepsilon^C_Y}"] & \RHom_A(X,X\otimes^L_B Y)	
	\end{tikzcd}\quad\quad
	\begin{tikzcd}[column sep=25]
		X\otimes^L_B\RHom_A(X,A)\otimes^L_AZ \arrow[r,"{\eta^A_A\otimes^L_AZ}"] \arrow[d,"{X\otimes^L_Bm_Z}"] & A\otimes_AZ \arrow[d,"\cong"] \\
		X\otimes^L_B\RHom_A(X,Z) \arrow[r,"{\eta^C_Z}"] & Z	
	\end{tikzcd}$$
	\noindent By the axioms of triangulated category, there exist morphisms $u_Y:\Cone(\varepsilon^B_B)\otimes^L_BY \to \Cone(\varepsilon^C_Y)$ in $\D(B\otimes C^\op)$ and $c_Z:\Cone(\eta^A_A)\otimes^L_AZ \to \Cone(\eta^C_Z)$ in $\D(A\otimes C^\op)$ respectively such that the two diagrams in Lemma~\ref{Lem-Cone-Cone} are commutative.
	
	Furthermore, if $_AX\in\per(A)$ then the morphisms $l_Y:\RHom_A(X,X)\otimes^L_BY\to \RHom_A(X,X\otimes^L_B Y)$ and $m_Z:\RHom_A(X,A)\otimes^L_AZ\to \RHom_A(X,Z)$ are isomorphisms. 
	Thus the morphisms $u_Y:\Cone(\varepsilon^B_B)\otimes^L_BY \to \Cone(\varepsilon^C_Y)$ and $c_Z:\Cone(\eta^A_A)\otimes^L_AZ \to \Cone(\eta^C_Z)$ are isomorphisms. 
\end{proof}

\begin{remark} \label{Rmk-UnitAct-CounitEv} {\rm
		(i) The morphism $\varepsilon^B_B: B\to \RHom_A(X,X)$ is just the {\it right action} ({\it coevaluation}) {\it morphism} $r^B_X: B\to \RHom_A(X,X)$, which is induced by the morphism $r^B_{{\bf p}X}: B\to \Hom_A({\bf p}X,{\bf p}X)$ in the homotopy categories ${\cal H}(B^e)$ given by $r^B_{{\bf p}X}(b)(x):=xb$ for all $b\in B$ and $x\in{\bf p}X$. 
		
		(ii) The morphism $\eta^A_A: X\otimes^L_B\RHom_A(X,A)\to A$ is just the {\it evaluation morphism} $\ev^A_X: X\otimes^L_B\RHom_A(X,A)\to A$, which is induced by the morphism $\ev^A_{{\bf p}X}: {\bf p}X\otimes_B\Hom_A({\bf p}X,A)\to A$ in the homotopy categories ${\cal H}(A^e)$ given by $\ev^A_{{\bf p}X}(x\otimes f):=(-1)^{|f||x|}f(x)$ for all $x\in{\bf p}X$ and $f\in\Hom_A({\bf p}X,A)$. 
		
		(iii) In the case of $C=k$, we denote the unit $\varepsilon^k$ and $\eta^k$ just by $\varepsilon$ and $\eta$ respectively for short.
}\end{remark}

\begin{example}{\rm 
		Let $A$ be an algebra, and $e$ an idempotent in $A$. Then the triangle functor $eA\otimes^L_A-\cong \RHom_A(Ae,-):\D(A)\to\D(eAe)$ has a left adjoint $Ae\otimes^L_{eAe}- \cong \RHom_{A^\op}(eA,A)\otimes^L_{eAe}-: \D(eAe)\to\D(A)$ and a right adjoint $\RHom_{eAe}(eA,-): \D(eAe)\to\D(A)$. 
		$$\begin{tikzcd}[column sep=130]
			\D(A) \arrow[r,"{eA\otimes^L_A-\cong \RHom_A(Ae,-)}"] & \D(eAe) \arrow[l,shift right=6,"{Ae\otimes^L_{eAe}-\cong \RHom_{A^\op}(eA,A)\otimes^L_{eAe}-}"'] \arrow[l,shift left=6,"{\RHom_{eAe}(eA,-)}"']
		\end{tikzcd}$$
		The unit of the adjoint pair $(Ae\otimes^L_{eAe}-,eA\otimes^L_A-)$ is the natural isomorphism $\varepsilon:\Id_{\D(eAe)}\to eA\otimes^L_AAe\otimes^L_{eAe}-$, and the counit of the adjoint pair $(Ae\otimes^L_{eAe}-,eA\otimes^L_A-)$ is the natural transformation $\eta:Ae\otimes^L_{eAe}eA\otimes^L_A-\to \Id_{\D(A)}$. 		
		The right action morphism is the isomorphism $r^{eAe}_{Ae}:eAe\to \RHom_A(Ae,Ae)$ in $\D((eAe)^e)$. 
		Moreover, the following diagram is commutative:
		$$\begin{tikzcd}
			&\RHom_A(Ae,Ae) \\
			eAe \arrow[ur,bend left=10,near end,"r^{eAe}_{Ae}","\cong"'] \arrow[r,"\varepsilon^{eAe}_{eAe}","\cong"'] \arrow[rd,near end,bend right=10,"\cong"] &\RHom_A(Ae,Ae\otimes^L_{eAe}eAe) \arrow[u,"\cong"'] \arrow[d,"\cong"]\\
			&eA\otimes^L_AAe\otimes^L_{eAe}eAe
		\end{tikzcd}$$
		where the left lower morphism is given by the unit of the adjoint pair $Ae\otimes^L_{eAe}-: \D(eAe\otimes (eAe)^\op)\rightleftarrows\D(A\otimes (eAe)^\op):eA\otimes^L_A-$. We will not distinguish the left three isomorphisms.
		The evaluation morphism is the morphism $\ev^A_{Ae}:Ae\otimes^L_{eAe}\RHom_A(Ae,A)\to A$ in $\D(A^e)$. 
		Moreover, the following diagram is commutative:
		$$\begin{tikzcd}
			\RHom_{A^\op}(eA,A)\otimes^L_{eAe}eA \arrow[rd,"\ev^A_{eA}"] \arrow[d,"\cong"'] & \\
			Ae\otimes^L_{eAe}eA \arrow[r,""] & A\\
			Ae\otimes^L_{eAe}\RHom_A(Ae,A) \arrow[ru,"\ev^A_{Ae}=\eta^A_A"'] \arrow[u,"\cong"] &
		\end{tikzcd}$$
		where the middle right morphism is given by the counit of the adjoint pair $Ae\otimes^L_{eAe}-: \D(eAe\otimes A^\op)\rightleftarrows\D(A\otimes A^\op):eA\otimes^L_A-$. We will not distinguish the right three morphisms.
		Since $Ae$ is a projective left $A$-module, we have $\Cone(\ev^A_{Ae})\otimes^L_AZ\cong \Cone(\eta_Z)\cong\Cone(\ev^A_{eA})\otimes^L_AZ$ for all $Z\in\D(A)$, and $\Cone(r^{eAe}_{Ae})\otimes^L_{eAe}Y\cong\Cone(\varepsilon_Y)\cong 0$ for all $Y\in\D(eAe)$.
		
		The unit of the adjoint pair $(eA\otimes^L_A-,\RHom_{eAe}(eA,-))$ is the natural transformation $\varepsilon:\Id_{\D(A)}\to \RHom_{eAe}(eA,eA\otimes^L_A-)$, and the counit of the adjoint pair $(eA\otimes^L_A-,\RHom_{eAe}(eA,-))$ is the natural isomorphism $\eta:eA\otimes^L_A\RHom_{eAe}(eA,-)\to \Id_{\D(eAe)}$. The right action morphism is the morphism $r^A_{eA}:A\to \RHom_{eAe}(eA,eA)$ in $\D(A^e)$, and the evaluation morphism is the isomorphism $\ev^{eAe}_{eA}:eA\otimes^L_A\RHom_{eAe}(eA,eAe)\to eAe$ in $\D((eAe)^e)$. If $_{eAe}eA\in\per(eAe)$ then we have $\Cone(r^A_{eA})\otimes^L_AZ\cong \Cone(\varepsilon_Z)$ for all $Z\in\D(A)$, and $\Cone(\ev^{eAe}_{eA})\otimes^L_{eAe}Y\cong\Cone(\eta_Y)\cong 0$ for all $Y\in\D(eAe)$.		
}\end{example}

The following result is a characterization of a standard eventually homological isomorphism.

\begin{theorem} \label{Thm-EvHomIso-TensorPerfect-BimodComplex} 
	Let $A$ and $B$ be finite dimensional algebras. Then a right perfect complex $X$ of $A$-$B$-bimodules induces an eventually homological isomorphism $_AX\otimes^L_B-:\D^b(B) \to \D^b(A)$ if and only if the $B$-bimodule complex $\Cone(\ev^B_X)$ is $\otimes$-perfect, where the evaluation morphism $\ev^B_X: \RHom_{B^\op}(X,B) \otimes^L_AX\to B$ in $\D(B^e)$ is induced by the morphism $\ev^B_{{\bf p}X}: \Hom_{B^\op}({\bf p}X,B) \otimes_A{\bf p}X\to B$ in the homotopy category ${\cal H}(B^e)$ which is given by $\ev^B_{{\bf p}X}(f\otimes x):=f(x)$ for all $f\in\Hom_{B^\op}({\bf p}X,B)$ and $x\in{\bf p}X$. 	
\end{theorem}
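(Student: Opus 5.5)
Set $F:=X\otimes^L_B-:\D^b(B)\to\D^b(A)$; it is defined because $X_B$ is perfect (Lemma~\ref{Lem-RightModComplex-Perfect}). Since $X_B\in\per(B^\op)$, $F$ has the left adjoint $L:=\RHom_{B^\op}(X,A)\otimes^L_A-\cong X^\vee\otimes^L_A-$, where $X^\vee:=\RHom_{B^\op}(X,B)$. By Lemma~\ref{Lem-RightModComplex-Perfect}(4) applied to $X^\vee$, or by the duality $\per(B^\op)\simeq\per(B)$, the complex $_BX^\vee$ is perfect. The counit of $(L,F)$ is $\eta_Y: X^\vee\otimes^L_AX\otimes^L_BY\to Y$, and it equals $\ev^B_X\otimes^L_BY$. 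By Lemma~\ref{Lem-F_XY-Unit}(1), for $Y,Y'\in\D^b(B)$ the map $F_{Y,Y'[i]}$ factors as the adjunction isomorphism composed with $\Hom(\eta_Y,Y'[i])$. Hence $F$ is an eventually homological isomorphism if and only if, for all $Y,Y'$, the map $\Hom_{\D(B)}(Y,Y'[i])\to\Hom_{\D(B)}(LFY,Y'[i])$ is bijective for $i\gg0$. Let $C:=\Cone(\ev^B_X)$. Then the cone of $\eta_Y$ is $C\otimes^L_BY$, as in the analogue of Lemma~\ref{Lem-Cone-Cone} with the sides switched; since the relevant tensor identifications are plain associativity here, the comparison morphism is an isomorphism. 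Applying $\Hom(-,Y'[i])$ to the triangle $LFY\to Y\to C\otimes^L_BY\to$ and using the long exact sequence, $F$ is an eventually homological isomorphism if and only if
\[\Hom_{\D(B)}(C\otimes^L_BY,Y'[i])=0\quad\text{for } i\gg0 \text{ and all } Y,Y'\in\D^b(B).\]
Here the bound on $i$ may depend on $Y,Y'$.

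Next I would translate this vanishing into $\otimes$-perfectness. Note that $C\in\D^b(B^e)$, because $X^\vee\otimes^L_AX$ has bounded cohomology ($X^\vee$ is biperfect enough). Put $N:=C\otimes^L_B\ol{B}$. By Proposition~\ref{Prop-BimodComplex-TensorPerfect}(2''), $C$ is $\otimes$-perfect if and only if $_BN\in\per(B)$, i.e.\ by Lemma~\ref{Lem-LeftModComplex-Perfect}(3'') if and only if $\RHom_B(N,\ol{B})\in\D^b(k)$. Since $N\in\D^b(B)$, its Hom cohomology already vanishes in sufficiently negative degrees, so this is equivalent to $\Hom(N,\ol{B}[i])=0$ for $i\gg0$.

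For necessity, take $Y=Y'=\ol{B}$. For sufficiency, assume $C$ is $\otimes$-perfect. Then $C\otimes^L_BY\in\per(B)$ for all $Y\in\D^b(B)$ by Proposition~\ref{Prop-BimodComplex-TensorPerfect}(2), and a perfect complex has vanishing $\Hom(-,Y'[i])$ for $i\gg0$ whenever $Y'\in\D^b(B)$. The reduction to $\ol{B}$ could alternatively be done by dévissage, as in Lemma~\ref{Lem-LeftModComplex-Perfect}, with the bound on $i$ taken as the maximum over the finitely many steps.

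The step I expect to need the most care is identifying the counit of $(L,F)$ with $\ev^B_X\otimes^L_B-$, including the sign conventions and the choice of cofibrant replacements. This identification is what makes the cone of $\eta_Y$ equal to $C\otimes^L_BY$, which the long-exact-sequence argument depends on. I would handle it in the homotopy category with ${\bf p}X$, mirroring the proof of Lemma~\ref{Lem-Cone-Cone} and Remark~\ref{Rmk-UnitAct-CounitEv}(ii), with left and right interchanged.
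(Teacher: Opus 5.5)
Your proposal is the paper's proof in outline. You use the left adjoint $X^\vee\otimes^L_A-$ of $F$ (you wrote $\RHom_{B^\op}(X,A)$, which should be $\RHom_{B^\op}(X,B)$) and the factorization from Lemma~\ref{Lem-F_XY-Unit}(1). You identify $\Cone(\eta_Y)$ with $\Cone(\ev^B_X)\otimes^L_BY$, run the long exact sequence, and translate into $\otimes$-perfectness via Lemma~\ref{Lem-LeftModComplex-Perfect} and Proposition~\ref{Prop-BimodComplex-TensorPerfect}. Testing only at $Y=Y'=\ol{B}$ for necessity, where the paper quantifies over all $Y,Z$, is an equivalent variant.

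One auxiliary claim is false, though the fix is easy. The cone $C=\Cone(\ev^B_X)$ need not lie in $\D^b(B^e)$. Only $X_B$ is assumed perfect, so $X^\vee$ is perfect over $B$ but in general not over $A$, and $X^\vee\otimes^L_AX$ can be unbounded. For example, take $B=k$, $A=k[x]/(x^2)$ and $X={}_Ak$. Then $X^\vee\otimes^L_AX=k\otimes^L_Ak$, and $\Tor^A_i(k,k)\neq 0$ for all $i$. Likewise, $N=C\otimes^L_B\ol{B}$ need not be bounded even when $C$ is, since $C_B$ need not be perfect.

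Your argument only needs that $\RHom_B(N,\ol{B})$ is bounded below with finite-dimensional cohomology. That follows from $C\in\D^-(\mod B^e)$, hence $N\in\D^-(\mod B)$, which is exactly what the paper records. With $\D^b$ replaced by $\D^-$ there, your necessity and sufficiency arguments go through unchanged.
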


\begin{proof} 
	Due to $X_B\in\per(B^\op)$, we have $\RHom_{B^\op}(X,B)\in\per(B)$. The triangle functor $\RHom_{B^\op}(X,B)\otimes^L_A-$ has a right adjoint $\RHom_B(\RHom_{B^\op}(X,B),-)\cong X\otimes^L_B-$, i.e., we have an adjoint pair  
	$$\RHom_{B^\op}(X,B)\otimes^L_A- : \D(A) \rightleftarrows \D(B) : \RHom_B(\RHom_{B^\op}(X,B),-) \cong X\otimes^L_B-.$$
	
	Denote by $(\bigstar)$ the statement that the triangle functor $F:=\ _AX\otimes^L_B-:\D^b(B) \to \D^b(A)$ is an eventually homological isomorphism. By definition, $(\bigstar)$ holds if and only if the triangle functor $F$ induces an isomorphism $F_{Y,Z[i]}:\Hom_{\D^b(B)}(Y,Z[i]) \to \Hom_{\D^b(A)}(X\otimes^L_BY,X\otimes^L_BZ[i])$ for all $Y,Z\in\D^b(B)$ and $i\gg 0$. 
	
	By Lemma~\ref{Lem-F_XY-Unit} (1), we have the following commutative diagram
	$$\begin{tikzcd}
		\Hom_{\D^b(B)}(Y,Z[i]) \arrow[r,"{F_{Y,Z[i]}}"] \arrow[dr,"{\Hom_{\D(B)}(\eta_Y,Z[i])}"'] & \Hom_{\D^b(A)}(X\otimes^L_BY,X\otimes^L_BZ[i]) \\
		& \Hom_{\D(B)}(\RHom_{B^\op}(X,B)\otimes^L_AX\otimes^L_BY,Z[i]) \arrow[u,"{\cong}","{\alpha_{X\otimes^L_BY,Z[i]}}"'] 		
	\end{tikzcd}$$
	where $\eta$ is the counit of the adjoint pair $(\RHom_{B^\op}(X,B) \otimes^L_A-,X\otimes^L_B-)$. So $(\bigstar)$ holds if and only if the linear map $\Hom_{\D(B)}(\eta_Y,Z[i]) : \Hom_{\D(B)}(Y,Z[i]) \to \Hom_{\D(B)}(\RHom_{B^\op}(X,B)\otimes^L_AX\otimes^L_BY,Z[i])$ is an isomorphism for all $Y,Z\in\D^b(B)$ and $i\gg 0$.
		
	Applying the triangle functor $\RHom_B(-,Z)$ to the triangle 
	$$\RHom_{B^\op}(X,B)\otimes^L_AX\otimes^L_BY \xrightarrow{\eta_Y} Y \to\Cone(\eta_Y) \to$$
	in $\D(B)$, we get a triangle
	$$\begin{array}{l}
		\RHom_B(\Cone(\eta_Y),Z) \to \RHom_B(Y,Z) \xrightarrow{\RHom_B(\eta_Y,Z)} \\ [2mm]
		\RHom_B(\RHom_{B^\op}(X,B)\otimes^L_AX\otimes^L_BY,Z) \to
	\end{array}$$
	in $\D(k)$. Then we have a long exact sequence of $k$-vector spaces
	$$\begin{array}{rl}
		\cdots \to & \Hom_{\D(B)}(\Cone(\eta_Y),Z[i]) \to \Hom_{\D(B)}(Y,Z[i]) \xrightarrow{\Hom_{\D(B)}(\eta_Y,Z[i])} \\ [2mm]
		& \Hom_{\D(B)}(\RHom_{B^\op}(X,B)\otimes^L_AX\otimes^L_BY,Z[i]) \to \cdots.
	\end{array}$$
	Thus $(\bigstar)$ holds if and only if $\Hom_{\D(B)}(\Cone(\eta_Y),Z[i])=0$ for all $Y,Z\in\D^b(B)$ and $i\gg 0$.
	
	Due to $\RHom_{B^\op}(X,B)\in\per(B)$, applying Lemma~\ref{Lem-Cone-Cone} to the adjoint pair  
	$(\RHom_{B^\op}(X,B)\otimes^L_A-,\RHom_B(\RHom_{B^\op}(X,B),-)),$
	we obtain $\Cone(\eta_Y) \cong \Cone(\eta^B_B)\otimes^L_BY$, where $\eta^B_B$ is the counit of the adjoint pair
	$\RHom_{B^\op}(X,B)\otimes^L_A- : \D(A\otimes B^\op) \rightleftarrows \D(B^e) : \RHom_B(\RHom_{B^\op}(X,B),-) \cong X\otimes^L_B-.$ 
	Moreover, the following diagram in $\D(B^e)$ is commutative,
	$$\begin{tikzcd}
		\RHom_{B^\op}(X,B)\otimes^L_AX \arrow[r,"{\ev^B_X}"] \arrow[d,"\cong","{\RHom_{B^\op}(X,B)\otimes^L_Ah_X}"'] & B \\
		\RHom_{B^\op}(X,B)\otimes^L_A\RHom_B(\RHom_{B^\op}(X,B),B) \arrow[ur,"\eta^B_B"'] &
	\end{tikzcd}$$  
	where the isomorphism $h_X:X\to \RHom_B(\RHom_{B^\op}(X,B),B)$ in $\D(A\otimes B^\op)$ is induced by the isomorphism $h_{{\bf p}X}:{\bf p}X\to \Hom_B(\Hom_{B^\op}({\bf p}X,B),B)$ in the homotopy category ${\cal H}(A\otimes B^\op)$ given by $h_{{\bf p}X}(x)(f):=(-1)^{|f||x|}f(x)$ for all $x\in {\bf p}X$ and $f\in \Hom_{B^\op}({\bf p}X,B)$, 
	since the following diagram in ${\cal H}(B^e)$ is commutative.
	$$\begin{tikzcd}
		\Hom_{B^\op}({\bf p}X,B)\otimes_A{\bf p}X \arrow[r,"{\ev^B_{{\bf p}X}}"] \arrow[d,"\cong","{\Hom_{B^\op}({\bf p}X,B)\otimes_Ah_{{\bf p}X}}"'] & B \\
		\Hom_{B^\op}({\bf p}X,B)\otimes_A\Hom_B(\Hom_{B^\op}({\bf p}X,B),B) \arrow[ur,"\eta^B_B"'] &
	\end{tikzcd}$$  
	Thus $(\bigstar)$ holds if and only if $\Hom_{\D(B)}(\Cone(\ev^B_X)\otimes^L_BY,Z[i])=0$ for all $Y,Z\in\D^b(B)$ and $i\gg 0$. Since 
	$\RHom_{B^\op}(X,B) \in \D^b(B\otimes A^\op)$ and $X\in \D^b(A\otimes B^\op)$, we have $\RHom_{B^\op}(X,B)\otimes^L_AX \in \D^-(\mod B^e)$. Then $\Cone(\ev^B_X) \in \D^-(\mod B^e)$, further $\Cone(\ev^B_X)\otimes^L_BY \in \D^-(\mod B)$, so $\RHom_B(\Cone(\ev^B_X)\otimes^L_BY,Z) \in\D^+(k)$. 
	Hence $(\bigstar)$ holds if and only if $\RHom_B(\Cone(\ev^B_X)\otimes^L_BY,Z) \in\D^b(k)$ for all $Y,Z\in\D^b(B)$, if and only if $\Cone(\ev^B_X)\otimes^L_BY \in \per(B)$ for all $Y\in\D^b(B)$ by Lemma~\ref{Lem-LeftModComplex-Perfect}, if and only if the $B$-bimodule complex $\Cone(\ev^B_X)$ is $\otimes$-perfect by Proposition~\ref{Prop-BimodComplex-TensorPerfect}. 
\end{proof}

The following result is a characterization of a standard eventually homological isomorphism induced by a biperfect complex of bimodules.

\begin{theorem}	\label{Thm-EvHomIso-TensorPerfect-BimodComplex-RightAdjoint} 
	Let $A$ and $B$ be finite dimensional $k$-algebras, and $X$ a biperfect complex of $A$-$B$-bimodules. Then the following statements are equivalent:
	
	{\rm(1)} The triangle functor $_AX\otimes^L_B-:\D^b(B) \to \D^b(A)$ is an eventually homological isomorphism.
	
	{\rm(2)} The $B$-bimodule complex $\Cone(\ev^B_X)$ is $\otimes$-perfect, where the evaluation morphism $\ev^B_X: \linebreak \RHom_{B^\op}(X,B) \otimes^L_AX\to B$ in $\D(B^e)$ is induced by the morphism $\ev^B_{{\bf p}X}:\Hom_{B^\op}({\bf p}X,B) \otimes_A{\bf p}X\to B$ in ${\cal H}(B^e)$ which is given by $\ev^B_{{\bf p}X}(f\otimes x):=f(x)$ for all $f\in\Hom_{B^\op}({\bf p}X,B)$ and $x\in{\bf p}X$. 	
	
	{\rm(3)} The $B$-bimodule complex $\Cone(r^B_X)$ is $\otimes$-coperfect, where the right action morphism $r^B_X:B\to\RHom_A(X,X)$ in $\D(B^e)$ is induced by the morphism $r^B_{{\bf p}X}: B\to \Hom_A({\bf p}X,{\bf p}X)$ in ${\cal H}(B^e)$ which is given by $r^B_{{\bf p}X}(b)(x):=xb$ for all $b\in B$ and $x\in{\bf p}X$. 
\end{theorem}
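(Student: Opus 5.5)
(1)$\Leftrightarrow$(2) is exactly Theorem~\ref{Thm-EvHomIso-TensorPerfect-BimodComplex}, since a biperfect $X$ is in particular right perfect. So the work is (1)$\Leftrightarrow$(3). We run the same argument as in that theorem, but using the \emph{right} adjoint of $F:={}_AX\otimes^L_B-$ instead of the left adjoint. This right adjoint exists because $X$ is also left perfect.

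Since $_AX\in\per(A)$, the adjoint pair $X\otimes^L_B-:\D(B)\rightleftarrows\D(A):\RHom_A(X,-)$ restricts to bounded derived categories by Lemma~\ref{Lem-LeftModComplex-Perfect} (3). Let $\varepsilon$ be its unit. By Lemma~\ref{Lem-F_XY-Unit} (2), $F_{Y,Z[i]}$ is $\Hom_{\D(B)}(Y,\varepsilon_{Z}[i])$ followed by an adjunction isomorphism. Hence (1) holds if and only if $\Hom_{\D(B)}(Y,\varepsilon_Z[i])$ is bijective for all $Y,Z\in\D^b(B)$ and $i\gg0$. Apply $\Hom_{\D(B)}(Y,-)$ to the triangle $Z\xrightarrow{\varepsilon_Z}\RHom_A(X,X\otimes^L_BZ)\to\Cone(\varepsilon_Z)\to$ and take the long exact sequence. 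This reduces (1) to: $\Hom_{\D(B)}(Y,\Cone(\varepsilon_Z)[i])=0$ for all $Y,Z\in\D^b(B)$ and $i\gg0$.

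Lemma~\ref{Lem-Cone-Cone} with $C=k$ applies because $_AX$ is perfect. It gives $\Cone(\varepsilon_Z)\cong\Cone(\varepsilon^B_B)\otimes^L_BZ$, and $\varepsilon^B_B=r^B_X$ by Remark~\ref{Rmk-UnitAct-CounitEv} (i). Thus (1) is equivalent to $\Hom_{\D(B)}(Y,\Cone(r^B_X)\otimes^L_BZ[i])=0$ for $i\gg0$ and all $Y,Z\in\D^b(B)$.

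Next we need boundedness of the relevant complexes. Since $X$ is biperfect, $\RHom_A(X,X)\cong\RHom_A(X,A)\otimes^L_AX$ lies in $\D^b(\mod B^e)$. Hence $\Cone(r^B_X)\in\D^b(\mod B^e)$, and for $Z\in\D^b(B)$ the complex $\Cone(r^B_X)\otimes^L_BZ$ is cohomologically bounded below, i.e.\ lies in $\D^+$; it is bounded above as well if $\Cone(r^B_X)$ is right perfect, but that is not needed. Then $\RHom_B(Y,\Cone(r^B_X)\otimes^L_BZ)$ lies in $\D^-(k)$ for $Y\in\D^b(B)$: a projective resolution of $Y$ is bounded above and the target is bounded below. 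Therefore vanishing for $i\gg0$ is equivalent to $\RHom_B(Y,\Cone(r^B_X)\otimes^L_BZ)\in\D^b(k)$ for all $Y,Z\in\D^b(B)$. This is condition (3) of Proposition~\ref{Prop-BimodComplex-TensorCoper}, i.e.\ $\Cone(r^B_X)$ is $\otimes$-coperfect.

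The main obstacle is this degree bookkeeping. One must check carefully that $\Cone(r^B_X)\otimes^L_BZ$ is bounded below, so that high-degree vanishing is the only obstruction to lying in $\D^b(k)$. Here the fact that $\RHom_A(X,X)$ has bounded finite-dimensional cohomology (from left perfectness of $X$) is what makes it work. There is a second route through Lemma~\ref{Lem-BimodComplex-LeftCoper}: show that each $\Cone(r^B_X)\otimes^L_BZ$ lies in $K^b(\inj B)$ using $\RHom_B(\ol{B},-)$. We keep it as a fallback in case the direct boundedness argument has a gap.
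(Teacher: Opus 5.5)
Your outline of (1)$\Leftrightarrow$(3) follows the paper step for step: the right adjoint $\RHom_A(X,-)$, Lemma~\ref{Lem-F_XY-Unit}~(2), the long exact sequence for $\Cone(\varepsilon_Z)$, Lemma~\ref{Lem-Cone-Cone}, and Proposition~\ref{Prop-BimodComplex-TensorCoper}~(3). However, the boundedness step, which you rightly call the crux, is argued incorrectly.

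From $\Cone(r^B_X)\in\D^b(\mod B^e)$ you cannot conclude that $\Cone(r^B_X)\otimes^L_BZ$ is bounded below. A derived tensor product of bounded complexes is in general only bounded \emph{above}, because its cohomology in negative degrees consists of Tor-type terms. For instance, take $B=k[x]/(x^2)$ and $M=Z=k$. Then $\pd_{B^\op}M=\infty$, and $M\otimes^L_BZ$ has nonzero cohomology in every degree $\le 0$.

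Likewise, $\RHom_B(Y,W)$ with $Y$ bounded above and $W$ bounded below lies in $\D^+(k)$, not $\D^-(k)$. It is exactly the vanishing of $\Hom_{\D(B)}(Y,W[i])$ for $i\ll 0$ that makes ``zero for $i\gg 0$'' equivalent to ``in $\D^b(k)$''. Without a lower bound on $\Cone(r^B_X)\otimes^L_BZ$, the implication (1)$\Rightarrow$(3) does not go through. Your fallback via Lemma~\ref{Lem-BimodComplex-LeftCoper} needs the same lower bound, so it does not avoid the issue.

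The missing ingredient is the right perfectness of $X$, which your argument for (1)$\Leftrightarrow$(3) never uses. It is also what makes $X\otimes^L_B-$ preserve $\D^b$ in the first place; Lemma~\ref{Lem-LeftModComplex-Perfect}~(3) only handles $\RHom_A(X,-)$. The paper's fix runs as follows:
\begin{itemize}
\item[(i)] Since $X_B\in\per(B^\op)$, Lemma~\ref{Lem-RightModComplex-Perfect} gives $X\otimes^L_BZ\in\D^b(A)$.
\item[(ii)] Since ${}_AX\in\per(A)$, Lemma~\ref{Lem-LeftModComplex-Perfect}~(3) gives $\RHom_A(X,X\otimes^L_BZ)\in\D^b(B)$.
\item[(iii)] Hence $\Cone(\varepsilon_Z)\cong\Cone(r^B_X)\otimes^L_BZ$ lies in $\D^b(B)$, and so $\RHom_B(Y,\Cone(\varepsilon_Z))\in\D^+(k)$.
\end{itemize}

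Equivalently, $\Cone(r^B_X)$ is itself right perfect. Indeed, $\RHom_A(X,X)\cong\RHom_A(X,A)\otimes^L_AX$ with $\RHom_A(X,A)_A$ perfect, so Lemma~\ref{Lem-RightModComplex-Perfect}~(4) applies. Your remark that right perfectness of $\Cone(r^B_X)$ ``is not needed'' therefore has it backwards: it is precisely what supplies the lower bound, while the upper bound comes for free. With this correction the rest of your proposal is fine.
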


\begin{proof} (1)$\Leftrightarrow$(2): It follows from Theorem~\ref{Thm-EvHomIso-TensorPerfect-BimodComplex}.
	
	(1)$\Leftrightarrow$(3): Denote by $(\bigstar)$ the statement that the triangle functor $F:=X\otimes^L_B-:\D^b(B)\to\D^b(A)$ is an eventually homological isomorphism. By definition, $(\bigstar)$ holds if and only if $F$ induces an isomorphism $F_{Y,Z[i]}:\Hom_{\D(B)}(Y,Z[i])\to\Hom_{\D(A)}(X\otimes^L_BY,X\otimes^L_BZ[i])$ for all $Y,Z\in \D^b(B)$ and $i\gg 0$.
	
	Applying Lemma~\ref{Lem-F_XY-Unit} (2) to the adjoint pair $(X\otimes^L_B-,\RHom_A(X,-))$, we have the following commutative diagram
	$$\begin{tikzcd}
		\Hom_{\D(B)}(Y,Z[i]) \arrow[r,"{F_{Y,Z[i]}}"] \arrow[dr,"{\Hom_{\D(B)}(Y,\varepsilon_{Z[i]})}"'] & \Hom_{\D(A)}(X\otimes^L_BY,X\otimes^L_BZ[i]) \\
		& \Hom_{\D(B)}(Y,\RHom_A(X,X\otimes^L_BZ[i])) \arrow[u,"{\beta_{Y,X\otimes^L_BZ[i]}^{-1}}"',"\cong"]
	\end{tikzcd}$$
	where $\varepsilon$ is the unit of the adjoint pair $(X\otimes^L_B-,\RHom_A(X,-))$. Note that $\varepsilon_{Z[i]}=\varepsilon_Z[i]$.
	Thus $(\bigstar)$ holds if and only if the linear map $\Hom_{\D(B)}(Y,\varepsilon_Z[i])$ is an isomorphism for all $Y,Z\in \D^b(B)$ and $i\gg 0$.
	
	Applying the triangle functor $\RHom_B(Y,-)$ to the triangle 
	$$Z \xrightarrow{\varepsilon_Z} \RHom_A(X,X\otimes^L_BZ) \to \Cone(\varepsilon_Z) \to$$
	in $\D(B)$, we obtain a triangle 
	$$\begin{array}{r}
		\RHom_B(Y,Z) \xrightarrow{\RHom_B(Y,\varepsilon_Z)} \RHom_B(Y,\RHom_A(X,X\otimes^L_BZ)) \to \\ [2mm]
		\RHom_B(Y,\Cone(\varepsilon_Z)) \to
	\end{array}$$
	in $\D(k)$. Then we have a long exact sequence of $k$-vector spaces
	$$\begin{array}{ll}
		\cdots & \to \Hom_{\D(B)}(Y,Z[i]) \xrightarrow{\Hom_{\D(B)}(Y,\varepsilon_Z[i])} \Hom_{\D(B)}(Y,\RHom_A(X,X\otimes^L_BZ[i])) \\ [2mm]
		& \to \Hom_{\D(B)}(Y,\Cone(\varepsilon_Z)[i]) \to\cdots.
	\end{array}$$
	Thus $(\bigstar)$ holds if and only if $\Hom_{\D(B)}(Y,\Cone(\varepsilon_Z)[i])=0$ for all $Y,Z\in \D^b(B)$ and $i\gg 0$.
	Due to $_AX\in\per(A)$ and $X_B\in\per(B^\op)$, we have $\Cone(\varepsilon_Z)\in\D^b(B)$, and further $\RHom_B(Y,\Cone(\varepsilon_Z))\in\D^+(\mod k)$. Hence $(\bigstar)$ holds if and only if $\RHom_B(Y,\Cone(\varepsilon_Z))\in\D^b(k)$ for all $Y,Z\in \D^b(B)$.
	
	Thanks to $_AX\in\per(A)$, we can apply Lemma~\ref{Lem-Cone-Cone} (2) to the adjoint pair $(X\otimes^L_B-,\RHom_A(X,-))$, and obtain $\Cone(\varepsilon^B_B)\otimes^L_BZ\cong \Cone(\varepsilon_Z)$ where $\varepsilon^B$ is the unit of the adjoint pair $X\otimes^L_B-: \D(B\otimes B^\op) \rightleftarrows \D(A\otimes B^\op):\RHom_A(X,-)$. Thus $(\bigstar)$ holds if and only if $\RHom_B(Y,\Cone(\varepsilon^B_B)\otimes^L_BZ)\in\D^b(k)$ for all $Y,Z\in \D^b(B)$, if and only if the $B$-bimodule complex $\Cone(\varepsilon^B_B)$ is $\otimes$-coperfect by Proposition~\ref{Prop-BimodComplex-TensorCoper}, if and only if the $B$-bimodule complex $\Cone(r^B_X)$ is $\otimes$-coperfect by Remark~\ref{Rmk-UnitAct-CounitEv}.
\end{proof}

\subsection{Constructions}

The following result gives a construction of standard eventually homological isomorphisms by opposite algebras.

\begin{proposition} \label{Prop-EHI-OppositeAlg}
	Let $A$ and $B$ be finite dimensional algebras. Then a right perfect complex $X$ of $A$-$B$-bimodules induces an eventually homological isomorphism $X\otimes^L_B-:\D^b(B) \to \D^b(A)$ if and only if the left perfect complex $\RHom_{B^\op}(X,B)$ of $B$-$A$-bimodules induces an eventually homological isomorphism $\RHom_{B^\op}(X,B)\otimes^L_{B^\op}- = - \otimes^L_B\RHom_{B^\op}(X,B):\D^b(B^\op)\to\D^b(A^\op)$.
\end{proposition}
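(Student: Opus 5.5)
Set $X^\vee:=\RHom_{B^\op}(X,B)$. Since $X_B\in\per(B^\op)$, $X^\vee$ is a complex of $B$-$A$-bimodules with ${}_BX^\vee\in\per(B)$, i.e. it is left perfect. Viewed as a complex of $A^\op$-$B^\op$-bimodules, it is therefore right perfect over $B^\op$. So Theorem~\ref{Thm-EvHomIso-TensorPerfect-BimodComplex} applies to both functors in the statement, with $(A,B,X)$ replaced by $(A^\op,B^\op,X^\vee)$ for the second one. The plan is to reduce both sides to $\otimes$-perfectness of cones of evaluation morphisms and then to show that these two cones are the same $B$-bimodule complex up to the identification $(B^\op)^e\cong B^e$.

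By Theorem~\ref{Thm-EvHomIso-TensorPerfect-BimodComplex}, $X\otimes^L_B-$ is an eventually homological isomorphism if and only if $\Cone(\ev^B_X)$ is $\otimes$-perfect. Here
\[
\ev^B_X:X^\vee\otimes^L_AX\to B .
\]
Applied to the opposite data, $-\otimes^L_BX^\vee$ is an eventually homological isomorphism if and only if $\Cone(\ev^{B^\op}_{X^\vee})$ is $\otimes$-perfect. Here
\[
\ev^{B^\op}_{X^\vee}:\RHom_B(X^\vee,B)\otimes^L_{A^\op}X^\vee\to B,
\]
which, read in $B$-bimodule language, is a morphism $X^\vee\otimes^L_A\RHom_B(X^\vee,B)\to B$.

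Next, use the isomorphism $h_X:X\to\RHom_B(X^\vee,B)$ from the proof of Theorem~\ref{Thm-EvHomIso-TensorPerfect-BimodComplex}. It is valid because $X_B$ is perfect, and it is an isomorphism of $A$-$B$-bimodule complexes. Through $X^\vee\otimes^L_Ah_X$, the morphism $\ev^{B^\op}_{X^\vee}$ becomes a morphism $X^\vee\otimes^L_AX\to B$. The key step is to check on cofibrant representatives that this morphism equals $\ev^B_X$, possibly up to a sign $(-1)^{|f||x|}$ coming from the Koszul conventions. Concretely, on ${\bf p}X$ both maps send $f\otimes x$ to $\pm f(x)$, since $h_{{\bf p}X}(x)(f)=(-1)^{|f||x|}f(x)$. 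A sign twist that is uniform in degree comes from an automorphism of the source complex, so the cones are isomorphic in $\D(B^e)$. In other words, $\Cone(\ev^B_X)\cong\Cone(\ev^{B^\op}_{X^\vee})$ as $B$-bimodule complexes.

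Finally, $\otimes$-perfectness of a complex of $B$-$B$-bimodules is symmetric under passing to $B^\op$-$B^\op$-bimodules. Condition (1'') of Proposition~\ref{Prop-BimodComplex-TensorPerfect} reads $\ol{B}\otimes^L_BC\otimes^L_B\ol{B}\in\D^b(k)$, and this is unchanged when the roles of left and right are swapped, because $\ol{B^\op}=\ol{B}^\op$. So the two conditions coincide, which gives the equivalence.

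The main obstacle I expect is the compatibility check in the key step. One has to verify carefully that the evaluation map of the opposite data, transported by $h_X$, is really $\ev^B_X$ in $\D(B^e)$ up to an isomorphism of the source. This requires tracking the sign conventions and the identification of $B^\op$-bimodules with $B$-bimodules. I would do it at the level of the homotopy category with ${\bf p}X$, as in the commutative triangle in the proof of Theorem~\ref{Thm-EvHomIso-TensorPerfect-BimodComplex}.
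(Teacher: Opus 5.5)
Your proposal is correct and follows the paper's proof. Both arguments apply Theorem~\ref{Thm-EvHomIso-TensorPerfect-BimodComplex} to $X$ and to $\RHom_{B^\op}(X,B)$, the latter viewed as a right perfect $A^\op$-$B^\op$-bimodule complex, and then identify $\Cone(\ev^{B^\op}_{\RHom_{B^\op}(X,B)})\cong\Cone(\ev^B_X)$ in $\D(B^e)$ via $h_X$; your remark that condition (1'') of Proposition~\ref{Prop-BimodComplex-TensorPerfect} is left--right symmetric makes explicit a step the paper leaves implicit.

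On the obstacle you anticipate: the Koszul sign $(-1)^{|f||x|}$ from swapping the tensor factors cancels the sign in $h_{{\bf p}X}(x)(f)=(-1)^{|f||x|}f(x)$. So the transported map is exactly $f\otimes x\mapsto f(x)$, i.e.\ $\ev^B_X$, with no residual sign. This matters, because your ``uniform sign'' fallback would not have covered a residual twist: $f\otimes x\mapsto(-1)^{|f||x|}f\otimes x$ depends on the bidegree and is not a chain map.
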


\begin{proof}
	Since $X$ is a right perfect complex of $A$-$B$-bimodules, $\RHom_{B^\op}(X,B)$ is a left perfect complex of $B$-$A$-bimodules, or equivalently, a right perfect complex of $A^\op$-$B^\op$-bimodules. Thus we have the following adjoint pair
	$$\begin{tikzcd}[column sep=220]
		\D(B^\op) \arrow[r,shift right=2,"{\RHom_{B^\op}(X,B)\otimes^L_{B^\op}-\ =\ -\otimes^L_B\RHom_{B^\op}(X,B)}"'] & \D(A^\op). \arrow[l,shift right=2,"{\RHom_B(\RHom_{B^\op}(X,B),B)\otimes^L_{A^\op}-\ \cong\ X\otimes^L_{A^\op}-\ =\ -\otimes^L_AX}"']
	\end{tikzcd}$$
	
	By Theorem~\ref{Thm-EvHomIso-TensorPerfect-BimodComplex}, the triangle functor $X\otimes^L_B-:\D^b(B) \to \D^b(A)$ is an eventually homological isomorphism if and only if the $B$-bimodule complex $\Cone(\ev^B_X:\RHom_{B^\op}(X,B) \otimes^L_AX\to B)$ is $\otimes$-perfect, and the triangle functor $\RHom_{B^\op}(X,B)\otimes^L_{B^\op}-: \D^b(B^\op) \to\D^b(A^\op)$ is an eventually homological isomorphism if and only if $B^\op$-bimodule complex $\Cone(\ev^{B^\op}_{\RHom_{B^\op}(X,B)}:\RHom_B(\RHom_{B^\op}(X,B),B)\otimes^L_{A^\op}\RHom_{B^\op}(X,B) \to B^\op)$ is $\otimes$-perfect. 
	
	The following commutative diagram   
	$$\begin{tikzcd}[column sep=60]
		\RHom_B(\RHom_{B^\op}(X,B),B)\otimes^L_{A^\op}\RHom_{B^\op}(X,B) \arrow[r,"{\ev^{B^\op}_{\RHom_{B^\op}(X,B)}}"] \arrow[d,"\cong"] & B^\op \arrow[d,"\cong"] \\
		\RHom_{B^\op}(X,B)\otimes^L_AX \arrow[r,"{\ev^B_X}"] & B
	\end{tikzcd}$$
	implies that the $B$-bimodule complex $\Cone(\ev^{B^\op}_{\RHom_{B^\op}(X,B)}) \cong \Cone(\ev^B_X)$. Thus the $B$-bimodule complex  $\Cone(\ev^{B^\op}_{\RHom_{B^\op}(X,B)})$ is $\otimes$-perfect if and only if $\Cone(\ev^B_X)$ is $\otimes$-perfect.
\end{proof}

The following result provides a construction of standard eventually homological isomorphisms by tensor products of algebras.

\begin{proposition} \label{Prop-EHI-TensorProdAlg}
	Let $A$ and $B$ be finite dimensional $k$-algebras, and $C$ a finite dimensional $k$-algebra satisfying $\gl C<\infty$ and $C/\rad C$ is separable over $k$. Then a right perfect complex $X$ of $A$-$B$-bimodules induces an eventually homological isomorphism $X\otimes^L_B-: \D^b(B)\to\D^b(A)$ if and only if the right perfect complex $X\otimes C$ of $(A\otimes C)$-$(B\otimes C)$-bimodules induces an eventually homological isomorphism $(X\otimes C)\otimes^L_{B\otimes C}- \cong X\otimes^L_B-: \D^b(B\otimes C)\to\D^b(A\otimes C)$.
\end{proposition}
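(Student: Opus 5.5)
Both directions go through Theorem~\ref{Thm-EvHomIso-TensorPerfect-BimodComplex}: the functor $X\otimes^L_B-$ is an eventually homological isomorphism if and only if $\Cone(\ev^B_X)$ is a $\otimes$-perfect $B$-bimodule complex. Likewise $(X\otimes C)\otimes^L_{B\otimes C}-$ is one if and only if $\Cone(\ev^{B\otimes C}_{X\otimes C})$ is a $\otimes$-perfect $(B\otimes C)$-bimodule complex.

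First, $X\otimes C$ is right perfect over $B\otimes C$. Indeed, $\thick(B)\otimes C\subseteq\thick(B\otimes C)$, so $X_B\in\per(B^\op)$ gives $(X\otimes C)_{B\otimes C}\in\per((B\otimes C)^\op)$. Next I identify the evaluation morphisms. Since $X_B$ is perfect, there is a natural isomorphism
$$\RHom_{(B\otimes C)^\op}(X\otimes C,B\otimes C)\cong \RHom_{B^\op}(X,B)\otimes C,$$
which holds by d\'evissage from $X=B$. Hence
$$\RHom_{(B\otimes C)^\op}(X\otimes C,B\otimes C)\otimes^L_{A\otimes C}(X\otimes C)\cong(\RHom_{B^\op}(X,B)\otimes^L_AX)\otimes C.$$
Under these isomorphisms $\ev^{B\otimes C}_{X\otimes C}$ corresponds to $\ev^B_X\otimes \mathrm{mult}_C$, where we regard $C$ as a $C$-bimodule; this is a direct check on cofibrant models. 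Therefore
$$\Cone(\ev^{B\otimes C}_{X\otimes C})\cong \Cone(\ev^B_X)\otimes C$$
as complexes of $(B\otimes C)$-bimodules, with $C$ carrying its regular bimodule structure.

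The statement thus reduces to a lemma: a $B$-bimodule complex $W$ is $\otimes$-perfect if and only if $W\otimes C$ is a $\otimes$-perfect $(B\otimes C)$-bimodule complex, provided $\gl C<\infty$ and $\ol C$ is separable. I would use criterion (1'') of Proposition~\ref{Prop-BimodComplex-TensorPerfect}. By Lemma~\ref{Lem-TensorProdTop=TopTensorProd}, since $\ol C$ is separable, $\overline{B\otimes C}\cong\ol B\otimes\ol C$, and the same holds for the opposite algebras. Then
$$\overline{B\otimes C}\otimes^L_{B\otimes C}(W\otimes C)\otimes^L_{B\otimes C}\overline{B\otimes C}\cong(\ol B\otimes^L_BW\otimes^L_B\ol B)\otimes(\ol C\otimes^L_C\ol C).$$
This is a K\"unneth-type factorization, valid because all tensor products are over $k$ and derived tensor products split along the factors.

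Now $\gl C<\infty$ gives $\ol C\otimes^L_C\ol C\in\D^b(k)$, and this complex is nonzero since $\Tor_0^C(\ol C,\ol C)=\ol C\otimes_C\ol C\cong\ol C\ne0$. Over the field $k$, a tensor product $U\otimes V$ with $V\in\D^b(k)$ nonzero has bounded cohomology if and only if $U$ does, because its cohomology is $H^*(U)\otimes H^*(V)$. Hence the $(B\otimes C)$-condition is equivalent to $\ol B\otimes^L_BW\otimes^L_B\ol B\in\D^b(k)$, which is the $B$-condition. This proves the lemma and hence the proposition.

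The main obstacle is identifying the evaluation morphism for $X\otimes C$ with $\ev^B_X\otimes C$ compatibly with the isomorphism of duals. I would handle it by taking ${\bf p}(X\otimes C)={\bf p}X\otimes C$, which is cofibrant over $A\otimes C$-$B\otimes C$ bimodules. I would then verify the formula $f\otimes c\mapsto f\otimes c$ on the Hom complexes directly, using that the natural map $\Hom_{B^\op}({\bf p}X,B)\otimes C\to\Hom_{(B\otimes C)^\op}({\bf p}X\otimes C,B\otimes C)$ is a quasi-isomorphism for $X_B$ perfect.
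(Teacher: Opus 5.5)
Your proposal follows the paper's proof essentially step for step. You reduce via Theorem~\ref{Thm-EvHomIso-TensorPerfect-BimodComplex} to $\otimes$-perfectness of the evaluation cones, identify $\Cone(\ev^{B\otimes C}_{X\otimes C})\cong\Cone(\ev^B_X)\otimes C$, and then apply criterion (1'') of Proposition~\ref{Prop-BimodComplex-TensorPerfect}, using $\overline{B\otimes C}\cong\ol B\otimes\ol C$ and the K\"unneth formula against the nonzero bounded complex $\ol C\otimes^L_C\ol C$.

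One small correction concerns your treatment of the evaluation map. ${\bf p}X\otimes C$ is in general not cofibrant as a complex of $(A\otimes C)$-$(B\otimes C)$-bimodules: for $A=B=X=k$ it is $C$ itself, which is projective over $C^e$ only when $C$ is separable. It is, however, K-projective as a right $(B\otimes C)$-module and as a left $(A\otimes C)$-module. That suffices to compute the $\RHom$ and $\otimes^L$. It also lets you identify the two evaluation morphisms in $\D((B\otimes C)^e)$ via a comparison quasi-isomorphism ${\bf p}(X\otimes C)\to{\bf p}X\otimes C$.
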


\begin{proof}
	Note that the assumption $\gl C<\infty$ implies $\ol{C}\otimes^L_C\ol{C}\in\D^b(k)$ where $\ol{C}:=C/\rad C$. And the assumption that $C/\rad C$ is separable over $k$ implies $\ol{B\otimes C}=\ol{B}\otimes\ol{C}$ by Lemma~\ref{Lem-TensorProdTop=TopTensorProd}. 
	
	By Theorem~\ref{Thm-EvHomIso-TensorPerfect-BimodComplex}, a right perfect complex $X$ of $A$-$B$-bimodules induces an eventually homological isomorphism $X\otimes^L_B-: \D^b(B)\to\D^b(A)$ if and only if the $B$-bimodule complex $\Cone(\ev^B_X:\RHom_{B^\op}(X,B)\otimes_AX\to B)$ is $\otimes$-perfect.
	
	The triangle functor $_AX\otimes^L_B-: \D^b(B\otimes C^\op)\to\D^b(A\otimes C^\op)$ is naturally isomorphic to the triangle functor $_{A\otimes C^\op}(X\otimes C^\op)\otimes^L_{B\otimes C^\op}-: \D^b(B\otimes C^\op)\to\D^b(A\otimes C^\op)$. By Theorem~\ref{Thm-EvHomIso-TensorPerfect-BimodComplex}, the triangle functor $_{A\otimes C^\op}(X\otimes C^\op)\otimes^L_{B\otimes C^\op}-: \D^b(B\otimes C^\op)\to\D^b(A\otimes C^\op)$ is an eventually homological isomorphism if and only if $\Cone(\ev^{B\otimes C^\op}_{X\otimes C^\op})$ is $\otimes$-perfect in $\D((B\otimes C^\op)^e)$. 
	
	We have the following isomorphisms
	$$\begin{array}{ll}
		& \RHom_{B^\op}(X,B)\otimes^L_AX\otimes C \\ [2mm]
		\cong & (\RHom_{B^\op}(X,B)\otimes C)\otimes^L_{A\otimes C}(X\otimes C) \\ [2mm]
		\cong & \RHom_{(B\otimes C)^\op}(X\otimes C,B\otimes C)\otimes^L_{A\otimes C}(X\otimes C)
	\end{array}$$
	in $\D((B\otimes C)^e)$ which give the following commutative diagram	
	$$\begin{tikzcd}[column sep=30]
		\RHom_{B^\op}(X,B)\otimes^L_AX\otimes C \arrow[r,"\ev^B_X\otimes C"] \arrow[d,"\cong"]  & B\otimes C  \\
		\RHom_{(B\otimes C)^\op}(X\otimes C,B\otimes C)\otimes^L_{A\otimes C}(X\otimes C) \arrow[ur,"\ev^{B\otimes C}_{X\otimes C}"'] &  
		\end{tikzcd}$$
	\noindent in $\D((B\otimes C)^e)$. Thus $\Cone(\ev^{B\otimes C}_{X\otimes C})\cong \Cone(\ev^B_X\otimes C) \cong \Cone(\ev^B_X)\otimes C$. Therefore, $\Cone(\ev^{B\otimes C}_{X\otimes C})$ is $\otimes$-perfect in $\D((B\otimes C)^e)$ if and only if $\Cone(\ev^B_X)\otimes C$ is $\otimes$-perfect in $\D((B\otimes C)^e)$, if and only if $(\ol{B}\otimes\ol{C})\otimes^L_{B\otimes C}(\Cone(\ev^B_X)\otimes C)\otimes^L_{B\otimes C}(\ol{B}\otimes\ol{C}) \cong (\ol{B}\otimes^L_B\Cone(\ev^B_X)\otimes^L_B\ol{B})\otimes(\ol{C}\otimes^L_C\ol{C})\in\D^b(k)$ by Proposition~\ref{Prop-BimodComplex-TensorPerfect}, if and only if $\ol{B}\otimes^L_B\Cone(\ev^B_X)\otimes^L_B\ol{B}\in\D^b(k)$ by K\"{u}nneth formula for complexes of $k$-vector spaces (Ref. \cite[Theorem 3.6.3]{Weibel94}) since $\ol{C}\otimes^L_C\ol{C}\in\D^b(k)$ is nonzero, if and only if $\Cone(\ev^B_X)$ is $\otimes$-perfect in $\D(B^e)$ by Proposition~\ref{Prop-BimodComplex-TensorPerfect}.
	
	Finally, the triangle functor $(X\otimes C)\otimes^L_{B\otimes C}-: \D^b(B\otimes C)\to\D^b(A\otimes C)$ is an eventually homological isomorphism if and only if the $(B\otimes C)$-bimodule complex $\Cone(\ev^{B\otimes C}_{X\otimes C})$ is $\otimes$-perfect, if and only if the $B$-bimodule complex $\Cone(\ev^B_X)$ is $\otimes$-perfect in $\D(B^e)$, if and only if the triangle functor $X\otimes^L_B-: \D^b(B)\to\D^b(A)$ is an eventually homological isomorphism. 	
\end{proof}

\begin{remark}{\rm
	In Proposition~\ref{Prop-EHI-TensorProdAlg}, the condition that the algebra $C$ satisfies $\gl C  <\infty$ and the algebra $C/\rad C$ is separable over $k$ is necessary. Indeed, from the proof of Proposition~\ref{Prop-EHI-TensorProdAlg}, it is clear that the condition $\gl C<\infty$ is necessary even though the algebra $C/\rad C$ is separable over $k$. 	
	Moreover, the extension $k:=\mathbb{Z}_p(t) \subset K:= k[x]/(x^p-t)$ of fields induces an eventually homological isomorphism $_kK\otimes_K-: \D^b(K)\to \D^b(k)$ with a left adjoint $_KK\otimes_k-: \D^b(k)\to \D^b(K)$ by Theorem~\ref{Thm-EvHomIso-TensorPerfect-BimodComplex}, since the $K$-bimodule complex $\Cone(\ev^K_K: K\otimes_kK\to K)$ is $\otimes$-perfect by Proposition~\ref{Prop-TensorPerfect-Property} and Proposition~\ref{Prop-HomSmooth-TensorPerfect-BimodComplex}. In fact, $K$ is a $\otimes$-perfect $K$-bimodule complex due to $\gl K=0$, and $K\otimes_kK$ is a free $K$-bimodule, in particular, a perfect $K$-bimodule complex. However, the triangle functor $_kK\otimes_K-: \D^b(K\otimes_kK)\to \D^b(K)$ is not an eventually homological isomorphism since $K\otimes_kK\cong K[y]/(y^p)$ is a $K$-algebra of infinite global dimension,  $\Hom_{\D^b(K\otimes_kK)}(K,K[i])\ne 0$ for all $i\ge 1$, but $\Hom_{\D^b(K)}(K,K[i])= 0$ for all $i\ge 1$.
}\end{remark}

\subsection{Examples}

In this subsection, we observe the examples of standard eventually homological isomorphisms induced by homomorphisms of algebras and idempotents.

\medspace

\noindent{\bf EHI induced by homomorphisms of algebras.} A homomorphism $f:A\to B$ of algebras induces an adjoint pair 
$_BB\otimes^L_A-:\D(A)\rightleftarrows\D(B):\ _AB\otimes^L_B-$.

\begin{corollary} \label{Cor-EHI-AlgHom}
	Let $f:A\to B$ be a homomorphism of finite dimensional algebras. Then the following statements hold:
	
	{\rm (1)} The triangle functor $_AB\otimes^L_B-:\D^b(B) \to \D^b(A)$ is an eventually homological isomorphism if and only if the $B$-bimodule complex $\Cone(\ev^B_B:B\otimes^L_AB\to B)$ is $\otimes$-perfect.
	
	{\rm (2)} Under the assumption $\pd_{A^\op}B<\infty$, the triangle functor $_BB\otimes^L_A-:\D^b(A) \to \D^b(B)$ is an eventually homological isomorphism if and only if the $A$-bimodule complex $\Cone(f)$ is $\otimes$-coperfect.	 
\end{corollary}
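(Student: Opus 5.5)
Both parts are direct specializations of the two characterization theorems of the previous subsection, applied to the bimodule $B$ viewed through the homomorphism $f$.

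For (1), take $X={}_AB_B$, a complex of $A$-$B$-bimodules concentrated in degree $0$, with left $A$-action through $f$. Since $X_B=B_B$ is projective, $X$ is right perfect, so Theorem~\ref{Thm-EvHomIso-TensorPerfect-BimodComplex} applies. It says that $_AB\otimes^L_B-:\D^b(B)\to\D^b(A)$ is an eventually homological isomorphism if and only if $\Cone(\ev^B_X)$ is $\otimes$-perfect. It remains to identify $\ev^B_X$. We have $\RHom_{B^\op}(B,B)\cong\Hom_{B^\op}(B,B)\cong B$ as $B$-$A$-bimodules, so the source $\RHom_{B^\op}(B,B)\otimes^L_AB$ becomes $B\otimes^L_AB$. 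Under this identification $\ev^B_{B}(g\otimes x)=g(x)$ turns into $b\otimes b'\mapsto bb'$, which is the multiplication map; this is what the statement denotes $\ev^B_B:B\otimes^L_AB\to B$. One should check that the identification is compatible at the level of cofibrant replacements. This is routine: $\mathbf{p}X$ may be taken to be $B$ itself on the right, and $\otimes^L_A$ is computed by replacing the left factor.

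For (2), take $X={}_BB_A$, a complex of $B$-$A$-bimodules, with the roles of the two algebras swapped. Then $_BB$ is projective, so $X$ is left perfect, and $B_A$ is perfect exactly when $\pd_{A^\op}B<\infty$. So under the hypothesis $X$ is biperfect, and Theorem~\ref{Thm-EvHomIso-TensorPerfect-BimodComplex-RightAdjoint}, in its (1)$\Leftrightarrow$(3) form with $A,B$ interchanged, says the following: $_BB\otimes^L_A-$ is an eventually homological isomorphism if and only if $\Cone(r^A_X)$ is $\otimes$-coperfect, where $r^A_X:A\to\RHom_B(B,B)$. Now $\RHom_B(B,B)\cong\Hom_B(B,B)\cong B$ as $A$-bimodules, via evaluation at $1$ (right multiplication maps $B^\op$-isomorphically). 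Under this isomorphism $r^A_X(a)=(x\mapsto xf(a))\mapsto f(a)$. So $r^A_X$ is identified with $f:A\to B$ in $\D(A^e)$, giving $\Cone(r^A_X)\cong\Cone(f)$. Since $\otimes$-coperfectness is invariant under isomorphism in $\D(A^e)$, the claim follows.

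The only step requiring care is checking that these identifications are isomorphisms of bimodule complexes commuting with the given morphisms, so that the cones are isomorphic in $\D(B^e)$ and $\D(A^e)$ respectively. The hypothesis $\pd_{A^\op}B<\infty$ in (2) is what makes Theorem~\ref{Thm-EvHomIso-TensorPerfect-BimodComplex-RightAdjoint} applicable. In (1) no such hypothesis is needed, because only right perfectness over $B$ is required, and that holds automatically.
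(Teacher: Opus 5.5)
Your proposal is correct and is essentially the paper's proof. Part (1) applies Theorem~\ref{Thm-EvHomIso-TensorPerfect-BimodComplex} directly to the right perfect bimodule ${}_AB_B$. Part (2) applies the (1)$\Leftrightarrow$(3) part of Theorem~\ref{Thm-EvHomIso-TensorPerfect-BimodComplex-RightAdjoint}, with $A$ and $B$ swapped, to ${}_BB_A$ (biperfect since ${}_BB$ is projective and $\pd_{A^\op}B<\infty$), together with $\Cone(r^A_B)\cong\Cone(f)$ in $\D(A^e)$. Your check that evaluation at $1$ carries $r^A_B$ to $f$ simply spells out the commutative diagram the paper draws.
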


\begin{proof}
	{\rm (1)} It follows from Theorem~\ref{Thm-EvHomIso-TensorPerfect-BimodComplex}.
	
	{\rm (2)} Due to $_BB\in\per(B)$, it follows from Theorem~\ref{Thm-EvHomIso-TensorPerfect-BimodComplex-RightAdjoint} that the triangle functor $_BB\otimes^L_A-:\D^b(A) \to \D^b(B)$ is an eventually homological isomorphism if and only if the $A$-bimodule complex $\Cone(r^A_B: A\to \RHom_B(B,B))\cong \Cone(f)$ is $\otimes$-coperfect.
	$$\begin{tikzcd}
		A \arrow[r,"r^A_B"] \arrow[d,equal] & \RHom_B(B,B) \arrow[r] \arrow[d,"\cong"] & \Cone(r^A_B) \arrow[r] \arrow[d,"\cong"] & A[1] \arrow[d,equal] \\
		A \arrow[r] & B \arrow[r,two heads] & \Cone(f) \arrow[r] & A[1]
	\end{tikzcd}$$
\end{proof}

A homomorphism $f:A\to B$ of algebras is called a {\it homological epimorphism of algebras} (\cite[Definition 4.5 and Theorem 4.4]{Geigle-Lenzing91}) if the multiplication map $B\otimes_AB \to B$ is an isomorphism and $\Tor^A_i(B,B)=0$ for all $i\ge 1$, or equivalently, the evaluation morphism $\ev^B_B: B\otimes^L_AB \to B$ is an isomorphism in $\D(B^e)$, or equivalently, the derived tensor product functor $_AB\otimes^L_B-:\D^b(B)\to\D^b(A)$ is fully faithful. Let $f:A\to B$ be a homological epimorphism of finite dimensional algebras. Then the fully faithful triangle functor $_AB\otimes^L_B-:\D^b(B)\to\D^b(A)$ is an eventually homological isomorphism. In this case, since the evaluation morphism $\ev^B_B: B\otimes^L_AB\to B$ is an isomorphism, the $B$-bimodule complex $\Cone(\ev^B_B)$ is zero in $\D(B^e)$, thus $\otimes$-perfect.

In particular, if $A$ is a finite dimensional algebra, $e$ is an idempotent of $A$, and $AeA$ is a {\it stratifying ideal} of $A$, i.e., the natural morphism $Ae\otimes^L_{eAe}eA\to AeA$ is an isomorphism (Ref. \cite[Definition 2.1.1]{Cline-Parshall-Scott96}), or equivalently, the canonical surjection $A\twoheadrightarrow A/AeA=: B$ is a homological epimorphism (Ref. \cite[Remark 2.1.2]{Cline-Parshall-Scott96}), then the derived tensor product functor $_AB\otimes^L_B-:\D^b(B)\to\D^b(A)$ is an eventually homological isomorphism.

\medspace

\noindent{\bf EHI induced by extensions of algebras.} If $B$ is a subalgebra of an algebra $A$ with identity $1_B=1_A$, i.e., the inclusion map $B\hookrightarrow A$ is a homomorphism of algebras, then  $B\subseteq A$ is called an {\it extension of algebras}.

\begin{corollary} \label{Cor-EHI-AlgExt}
	Let $B\subseteq A$ be an extension of finite dimensional algebras. Then the following statements hold:
	
	{\rm (1)} The triangle functor $_BA\otimes^L_A-:\D^b(A) \to \D^b(B)$ is an eventually homological isomorphism if and only if the $A$-bimodule complex $\Cone(\ev^A_A)$ is $\otimes$-perfect where the evaluation morphism $\ev^A_A:A\otimes^L_BA\to A$ is given by the counit of the adjoint pair $_AA\otimes^L_B-:\D(B\otimes A^\op) \rightleftarrows \D(A\otimes A^\op): \RHom_A(A-)\cong\ _BA\otimes^L_A-$.
	
	{\rm (2)} Under the assumption $\pd_{B^\op}A<\infty$, the triangle functor $_AA\otimes^L_B-:\D^b(B) \to \D^b(A)$ is an eventually homological isomorphism if and only if the $B$-bimodule complex $A/B$ is $\otimes$-coperfect.	 
\end{corollary}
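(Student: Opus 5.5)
This is the special case of Corollary~\ref{Cor-EHI-AlgHom} for the inclusion homomorphism $f:B\hookrightarrow A$. The plan is to substitute the roles $A\leftrightarrow B$ into that corollary and then identify $\Cone(f)$ with the module $A/B$.

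For (1), apply Corollary~\ref{Cor-EHI-AlgHom}~(1) to $f:B\to A$. The relevant bimodule complex is $_BA_A$, viewed as a complex of $B$-$A$-bimodules. It is right perfect, since $A_A$ is free, so Theorem~\ref{Thm-EvHomIso-TensorPerfect-BimodComplex} applies directly. The functor $_BA\otimes^L_A-:\D^b(A)\to\D^b(B)$ is then an eventually homological isomorphism if and only if $\Cone(\ev^A_A)$ is $\otimes$-perfect. Here
$$\ev^A_A:\RHom_{A^\op}(A,A)\otimes^L_BA\cong A\otimes^L_BA\to A.$$
It remains to identify this with the counit of the stated adjoint pair. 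This is exactly the diagram in the proof of Theorem~\ref{Thm-EvHomIso-TensorPerfect-BimodComplex} comparing $\ev^B_X$ with $\eta^B_B$, now with $X={}_BA_A$ and $h_X$ the canonical isomorphism $A\cong\RHom_A(A,A)$. Under these identifications the counit becomes the multiplication map, which is (1).

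For (2), apply Corollary~\ref{Cor-EHI-AlgHom}~(2) to $f:B\hookrightarrow A$. The hypothesis there is $\pd_{B^\op}A<\infty$, which is exactly the assumption, so $_AA_B$ is biperfect. We obtain that $_AA\otimes^L_B-:\D^b(B)\to\D^b(A)$ is an eventually homological isomorphism if and only if the $B$-bimodule complex $\Cone(f)$ is $\otimes$-coperfect. Since $f$ is injective, the short exact sequence of $B$-bimodules $0\to B\xrightarrow{f}A\to A/B\to 0$ gives a triangle $B\to A\to A/B\to$ in $\D(B^e)$. Hence $\Cone(f)\cong A/B$, viewed as a stalk complex in degree $0$, in $\D(B^e)$, and (2) follows.

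There is no real obstacle. The only step needing care is bookkeeping: in (1), checking that the evaluation morphism of Theorem~\ref{Thm-EvHomIso-TensorPerfect-BimodComplex} agrees with the counit described in the statement. This is the same commutative triangle already verified in that proof, specialized to $X={}_BA_A$.
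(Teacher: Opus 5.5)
Your proposal is correct and takes essentially the same route as the paper. The paper gives no separate proof of this corollary: it is the specialization of Corollary~\ref{Cor-EHI-AlgHom} to the inclusion $B\hookrightarrow A$, with $\Cone(B\hookrightarrow A)\cong A/B$ coming from the short exact sequence $0\to B\to A\to A/B\to 0$ of $B$-bimodules, exactly as you do (the paper uses the same identification in the proof of Corollary~\ref{Cor-SingEqu-AlgExt}).
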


Now we consider three special extensions of algebras.

An extension $B \subseteq A$ of algebras is {\it left} (resp. {\it right}) {\it bounded} (\cite[Definition 2.7]{Cibils-Lanzilotta-Marcos-Solotar22}) if it satisfies the following three conditions:

(1) $A/B$ is of finite projective dimension as a $B$-bimodule.

(2) $A/B$ is a left (resp. right) projective $B$-module.

(3) $A/B$ is {\it $B$-tensor nilpotent}, i.e., $(A/B)^{\otimes_Bi}=0$ for $i\gg 0$.

An extension $B \subseteq A$ of algebras is {\it bounded} (\cite[Definition 1.1]{Qin-Xu-Zhang-Zhou24}) if it satisfies the following three conditions:

(1) $A/B$ has finite projective dimension as a $B$-bimodule.

(2) $A/B$ is $B$-tensor nilpotent.

(3) $\Tor^B_i(A/B,(A/B)^{\otimes_Bj}) = 0$ for all $i,j \ge 1$.

Clearly, a left or right bounded extension must be a bounded extension. Conversely, the example \cite[Example 1.4]{Qin-Xu-Zhang-Zhou24} implies that a bounded extension need not to be a left or right bounded extension.	

Let $B\subseteq A$ be an extension of algebras. An $A$-module $M$ is said to be {\it relatively $B$-projective} or {\it $(A, B)$-projective} \cite{Hochschild56} if it satisfies either of the following equivalent conditions:
(1) $M$ is isomorphic to a direct summand of the induced module $A \otimes_B V$ for some left $B$-module $V$;
(2) Every $A$-module epimorphism to $M$ splitting as a $B$-module homomorphism splits as an $A$-module homomorphism.
An exact sequence of $A$-modules
$\cdots \to M_{i+1} \xrightarrow{f_{i+1}} M_i \xrightarrow{f_i} M_{i-1} \to\cdots$
is said to be  {\it $(A, B)$-exact} if $\Ker f_i$ is a direct summand of $M_i$ as $B$-modules for all $i\in\mathbb{Z}$.
The {\it relative $B$-projective dimension} or {\it $(A, B)$-projective dimension} $\pd_{(A,B)}M$ of an $A$-module $M$ is the minimal non-negative integer $n$ such that there is an $(A, B)$-exact sequence $0\to P_n\to \cdots\to P_1\to P_0\to M\to 0$ where $P_0,\cdots,P_n$ are $(A, B)$-projective $A$-modules. If such an exact sequence does not exist, the $(A, B)$-projective dimension of $M$ is infinite.

An extension $B \subseteq A$ of algebras is said to be {\it strongly proj-bounded} (\cite[Definition 4.1]{Iusenko-MacQuarrie25}) if it satisfies
the following four conditions:

(1) $A/B$ is of finite projective dimension as a $B$-bimodule;

(2) $A/B$ is projective as either a left or a right $B$-module;

(3) There exists an integer $p \ge 1$ such that the tensor power $(A/B)^{\otimes_Bi}$ is projective as a $B^e$-module for all $i \ge p$;

(4) $A$ has finite $(A^e, B^e)$-projective dimension.

Clearly, a left or right bounded extensions must be a strongly proj-bounded extension (\cite[Example 4.2]{Iusenko-MacQuarrie25}). The examples in \cite[4.2]{Iusenko-MacQuarrie25} implies that strongly proj-bounded extensions need not to be a left or right bounded extension. The example \cite[Example 1.4]{Qin-Xu-Zhang-Zhou24} implies that a bounded extension need not to be a strongly proj-bounded extension.

\begin{corollary} \label{Cor-BoundExt-EHI} {\rm(cf. \cite[Theorem 2.10]{Qin-Xu-Zhang-Zhou24})}
	Let $B \subseteq A$ be a bounded or strongly proj-bounded extension of finite dimensional algebras. Then the triangle functor $_BA\otimes^L_A-:\D^b(A) \to \D^b(B)$ is an eventually homological isomorphism.
\end{corollary}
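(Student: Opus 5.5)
By Corollary~\ref{Cor-EHI-AlgExt}~(1), the functor $_BA\otimes^L_A-:\D^b(A)\to\D^b(B)$ is an eventually homological isomorphism if and only if the $A$-bimodule complex $\Cone(\ev^A_A)$ is $\otimes$-perfect, where $\ev^A_A:A\otimes^L_BA\to A$ is the multiplication. By Proposition~\ref{Prop-HomSmooth-TensorPerfect-BimodComplex}, it suffices to show that $\Cone(\ev^A_A)$ is perfect over $A^e$. So the plan is to compute this cone and place it in $\per(A^e)$, using the structure of $A/B$.

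The first step is to identify the cone. Write $N:=A/B$, a $B$-bimodule. The short exact sequence $0\to B\to A\to N\to 0$ of $B$-bimodules gives, after applying $A\otimes^L_B-\otimes^L_B A$ and comparing with the bar-type resolution, that $\Cone(A\otimes^L_BA\to A)$ is governed by the tensor powers of $N$.

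More precisely, I expect to show by induction that
\[
\Cone(\ev^A_A)[-1]\cong \Ker(\mu)\simeq A\otimes^L_B N\otimes^L_B A
\]
up to iterated extensions. Here I would use the relative bar resolution of $A$ over $B$, namely $\cdots\to A\otimes_B N^{\otimes_B 2}\otimes_B A\to A\otimes_B N\otimes_B A\to A\otimes_B A\to A\to 0$, which is exact (the standard normalized relative bar complex).

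In the bounded case, condition (3), $\Tor^B_i(N,N^{\otimes_Bj})=0$, ensures that the underived tensor powers agree with the derived ones $N^{\otimes^L_B j}$. Tensor nilpotency makes the resolution finite. Hence $A$ lies in $\thick(A\otimes^L_B N^{\otimes^L_B j}\otimes^L_B A : j\ge 0)$, and $\Cone(\ev^A_A)$ lies in $\thick(A\otimes^L_B N^{\otimes^L_Bj}\otimes^L_B A: 1\le j\le n)$.

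The next step is to show that each $A\otimes^L_B N^{\otimes^L_B j}\otimes^L_B A$ with $j\ge1$ is perfect over $A^e$. Since $N\in\per(B^e)$ by condition (1), the functor $A\otimes^L_B-\otimes^L_BA$ sends $\per(B^e)$ to $\per(A^e)$, because it sends $B\otimes B$ to $A\otimes A$. So it suffices that $N^{\otimes^L_Bj}\in\per(B^e)$. Inductively, $N^{\otimes^L_Bj}=N\otimes^L_B N^{\otimes^L_B(j-1)}$, and $N\otimes^L_B-$ preserves $\per(B^e)$ because $B\otimes B$ is sent to $N\otimes B$, which is perfect over $B^e$ as $_BN$ is perfect (it is perfect over $B^e$, hence on each side).

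In the strongly proj-bounded case, I would instead use condition (4): $A$ has a finite $(A^e,B^e)$-projective resolution by summands of $A\otimes_B V\otimes_B A$. Condition (3) handles the high tensor powers directly, since they are projective over $B^e$, so the truncated relative bar complex ends in perfect terms. Condition (2) makes the underived tensors coincide with the derived ones.

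The main obstacle is making the cone identification rigorous: verifying that $\Cone(\ev^A_A)$ truly coincides with the totalization of the truncated relative bar complex, with the $\Tor$-vanishing ensuring derived equals underived. For the strongly proj-bounded case, one must show that the $(A^e,B^e)$-exactness of finite length forces the cone to be generated by perfect pieces. I would first try the bounded case directly via the bar complex, following \cite{Qin-Xu-Zhang-Zhou24}, and then adapt the argument to the proj-bounded case using the syzygies of the relative resolution.
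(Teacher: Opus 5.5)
Your overall route matches the paper's. You reduce, via Corollary~\ref{Cor-EHI-AlgExt}~(1) and Proposition~\ref{Prop-HomSmooth-TensorPerfect-BimodComplex}, to showing $\Cone(\ev^A_A)\in\per(A^e)$. You then compute the cone with the normalized relative bar resolution $P_i:=A\otimes_BN^{\otimes_Bi}\otimes_BA$. In the bounded case your argument is complete and agrees with the paper's:
\begin{itemize}
\item tensor nilpotency makes the resolution finite;
\item condition (3), together with $0\to B\to A\to N\to 0$, gives $A\otimes^L_BA\cong A\otimes_BA$ (which you use implicitly) and makes all the relevant tensor products underived;
\item your induction $N^{\otimes^L_Bj}\in\per(B^e)$ shows that every $P_i$ with $i\ge 1$ is perfect over $A^e$.
\end{itemize}

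The strongly proj-bounded case, however, is not proved. You leave it as ``the main obstacle'', and the mechanisms you suggest do not close it. Without tensor nilpotency the bar resolution is infinite, and condition (3) cannot truncate it. Knowing that $P_i$ is $A^e$-projective for $i\ge p$ still leaves an infinite complex, which need not be perfect. For instance, for $k\subseteq A=k[x]/(x^2)$ conditions (1)--(3) all hold, yet $\Cone(A\otimes A\to A)$ is not perfect because $\pd_{A^e}A=\infty$. Nor does an abstract finite $(A^e,B^e)$-projective resolution of $A$ by summands of $A\otimes_BV\otimes_BA$ help, since the $V$'s need not be perfect over $B^e$.

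The missing idea is to transport the finiteness in (4) to the bar resolution itself by the relative Schanuel lemma. This is \cite[Lemma 3.1]{Iusenko-MacQuarrie25}, as used in the paper.
\begin{itemize}
\item The bar resolution is $(A^e,B^e)$-exact with $(A^e,B^e)$-projective terms. So for $d=\pd_{(A^e,B^e)}A$ (taken $\ge 1$), its $d$-th syzygy $K=\Im(P_d\to P_{d-1})$ is $(A^e,B^e)$-projective.
\item The sequence $0\to \Omega^{d+1}\to P_d\to K\to 0$ splits over $B^e$, hence over $A^e$. So $K$ is a direct summand of $P_d$ and is perfect.
\item Then $\Cone(\ev^A_A)$ is, up to shift, the finite complex $0\to K\to P_{d-1}\to\cdots\to P_1\to 0$ of perfect bimodules.
\end{itemize}
Here the perfectness of the $P_i$ and the isomorphism $A\otimes^L_BA\cong A\otimes_BA$ follow from (1) and (2) alone; condition (3) plays no role.
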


\begin{proof}
	If $B \subseteq A$ is a bounded extension then by \cite[Lemma 2.3 (1)]{Qin-Xu-Zhang-Zhou24} we have $A\otimes^L_BA\cong A\otimes_BA$. Moreover, $A$ admits an $(A^e,B^e)$-projective resolution $0\to A\otimes_B(A/B)^{\otimes_Bp-1}\otimes_BA \to\cdots\to A\otimes_B(A/B)\otimes_BA \to A\otimes_BA\to A\to 0$ for some non-negative integer $p$ (Ref. \cite[The proof of Theorem 2.7]{Qin-Xu-Zhang-Zhou24}). Thus the $A$-bimodule complex $\Cone(\ev^A_A:A\otimes^L_BA\to A)$ is perfect, hence $\otimes$-perfect by Proposition~\ref{Prop-HomSmooth-TensorPerfect-BimodComplex}. 
	
	If $B \subseteq A$ is a strongly proj-bounded extension then $A\otimes^L_BA\cong A\otimes_BA$. 
	By \cite[Proposition 2.1]{Cibils-Lanzilotta-Marcos-Solotar22}, $A$ admits an $(A^e,B^e)$-projective resolution
	$$\cdots \xrightarrow{b} A\otimes_B(A/B)^{\otimes_Bi}\otimes_BA \xrightarrow{b} \cdots \xrightarrow{b} A\otimes_BA/B\otimes_BA \xrightarrow{b} A\otimes_BA \xrightarrow{b} A\to 0.$$
	Due to $\pd_{(A^e,B^e)}A<\infty$, by \cite[Lemma 3.1]{Iusenko-MacQuarrie25}, there is a direct summand $K$ of $A$-bimodule $A\otimes_B(A/B)^{\otimes_Bd}\otimes_BA$ with $d:=\pd_{(A^e,B^e)}A$, such that the following sequence is exact.
	$$0\to K\to A\otimes_B(A/B)^{\otimes_Bd-1}\otimes_BA \xrightarrow{b} \cdots \xrightarrow{b} A\otimes_BA/B\otimes_BA \xrightarrow{b} A\otimes_BA \xrightarrow{b} A\to 0$$
	Since $A/B$ is of finite projective dimension as a $B^e$-module, we have $A\otimes_B(A/B)^{\otimes_Bi}\otimes_BA\in\per(A^e)$ for all $i\ge 1$.
	Then the $A$-bimodule complex $0\to K\to A\otimes_B(A/B)^{\otimes_Bd-1}\otimes_BA \xrightarrow{b} \cdots \xrightarrow{b} A\otimes_BA/B\otimes_BA\to 0$ is perfect. Thus the $A$-bimodule complex $\Cone(\ev^A_A:A\otimes^L_BA\to A)$ is perfect, hence $\otimes$-perfect by Proposition~\ref{Prop-HomSmooth-TensorPerfect-BimodComplex}.	
	
	In both cases, it follows from Corollary~\ref{Cor-EHI-AlgExt} (1) that the triangle functor $_BA\otimes^L_A-:\D^b(A) \to \D^b(B)$ is an eventually homological isomorphism.
\end{proof}

\medspace

\noindent{\bf EHI induced by idempotents of algebras.} The following result characterizes the eventually homological isomorphisms induced by idempotents of algebras.

\begin{proposition} \label{Prop-Exist-EvHomIso-Idemp} 
	Let $A$ be a finite dimensional algebra, and $e$ an idempotent in $A$. Then the following statements are equivalent:
	
	{\rm (1)} The triangle functor $eA\otimes^L_A-: \D^b(A)\to \D^b(eAe)$ is an eventually homological isomorphism.
	
	{\rm (2)} $\pd_A(\frac{A/AeA}{\rad(A/AeA)}) < \infty$ and $\pd_{(eAe)^\op} Ae < \infty$.
	
	{\rm (3)} $\id_A(\frac{A/AeA}{\rad(A/AeA)}) < \infty$ and $\pd_{eAe}eA < \infty$.
\end{proposition}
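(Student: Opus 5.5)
By Theorem~\ref{Thm-EvHomIso-TensorPerfect-BimodComplex}, applied to the right perfect (indeed right projective) $eAe$-$A$-bimodule $X=eA$, condition (1) holds if and only if the $A$-bimodule complex $C:=\Cone(\ev^A_{eA})$ is $\otimes$-perfect. By the Example on idempotents preceding that theorem, $\ev^A_{eA}$ is identified with the multiplication morphism $Ae\otimes^L_{eAe}eA\to A$. So the whole proof reduces to testing $\otimes$-perfectness of
$$C=\Cone(Ae\otimes^L_{eAe}eA\to A),$$
which I will do with the criteria of Proposition~\ref{Prop-BimodComplex-TensorPerfect}. For (1)$\Leftrightarrow$(2) I use (2'): $C\otimes^L_AN\in\per(A)$ for all $N\in\mod A$. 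For (1)$\Leftrightarrow$(3) I use the mirror criterion (3'): $M\otimes^L_AC\in\per(A^\op)$ for all $M\in\mod A^\op$.

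\textbf{Key computation.} For $N\in\mod A$ there is a triangle
$$Ae\otimes^L_{eAe}eN\to N\to C\otimes^L_AN\to.$$
Its long exact cohomology sequence gives:
\begin{itemize}
\item $H^0(C\otimes^L_AN)\cong N/AeN$;
\item $H^{-1}$ is the kernel of the multiplication map $Ae\otimes_{eAe}eN\to N$;
\item $H^{-i}\cong\Tor^{eAe}_{i-1}(Ae,eN)$ for $i\ge2$.
\end{itemize}
Multiplying by $e$ kills all of these: $eAe\otimes_{eAe}eN\to eN$ is an isomorphism, and $\Tor^{eAe}_{j}(eAe,eN)=0$ for $j\ge1$. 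Hence every cohomology module of $C\otimes^L_AN$ is an $A/AeA$-module. Moreover, if $eS=0$ for a simple $S$, then $C\otimes^L_AS\cong S$.

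\textbf{(2)$\Rightarrow$(1).} Since $\pd_{(eAe)^\op}Ae<\infty$, Lemma~\ref{Lem-RightModComplex-Perfect} (2') gives $Ae\otimes^L_{eAe}eN\in\D^b(A)$. Hence $C\otimes^L_AN\in\D^b(A)$, and all its (finitely many nonzero) cohomology modules are finitely generated $A/AeA$-modules. Every $A/AeA$-module has a radical filtration whose factors are direct sums of simple tops of $A/AeA$, and these have finite projective dimension over $A$ by assumption. So each cohomology module lies in $\per(A)$, and therefore $C\otimes^L_AN\in\per(A)$.

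\textbf{(1)$\Rightarrow$(2).} If $C$ is $\otimes$-perfect, then for every simple $S$ with $eS=0$ we get $S\cong C\otimes^L_AS\in\per(A)$. These $S$ are exactly the simple tops of $A/AeA$, so $\pd_A\bigl(\frac{A/AeA}{\rad(A/AeA)}\bigr)<\infty$. Next take any $M\in\mod eAe$ and set $N:=Ae\otimes_{eAe}M$, so that $eN\cong M$. The triangle above, together with $C\otimes^L_AN\in\per(A)\subseteq\D^b(A)$ and $N\in\D^b(A)$, forces $Ae\otimes^L_{eAe}M\in\D^b(A)$. By Lemma~\ref{Lem-RightModComplex-Perfect} (2') this gives $Ae_{eAe}\in\per((eAe)^\op)$, i.e. $\pd_{(eAe)^\op}Ae<\infty$.

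\textbf{(1)$\Leftrightarrow$(3).} Run the mirror argument with right modules, using the triangle
$$Me\otimes^L_{eAe}eA\to M\to M\otimes^L_AC\to$$
for $M\in\mod A^\op$. This yields the equivalence of (1) with: $\pd_{eAe}eA<\infty$, and finite projective dimension over $A^\op$ of the simple right $A/AeA$-modules. These simple right modules are precisely the $k$-duals $S^*$ of the simple left $A/AeA$-modules $S$, and $\pd_{A^\op}S^*=\id_AS$, which gives exactly (3).

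\textbf{Main obstacle.} The delicate point is the key computation: checking that the cohomology of $C\otimes^L_AN$ is annihilated by $e$, in particular in degree $-1$ (the kernel of the multiplication map), and ensuring that boundedness is really available so that the finite-filtration argument applies.
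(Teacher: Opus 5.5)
Your proof is correct. It agrees with the paper on the reduction, where Theorem~\ref{Thm-EvHomIso-TensorPerfect-BimodComplex} turns (1) into $\otimes$-perfectness of $C=\Cone(Ae\otimes^L_{eAe}eA\to A)$. It also agrees on (1)$\Rightarrow$(2): the paper applies the same triangle to $N=Ae/(\rad A)e$ and to $N=\frac{A/AeA}{\rad(A/AeA)}$, while you use the general $N=Ae\otimes_{eAe}M$.

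The routes diverge in the other two steps.

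\textbf{(2)$\Rightarrow$(1).} The paper uses the Jin--Yang--Zhou recollement with a dg algebra $Q$. It uses $\pd_{(eAe)^\op}Ae<\infty$ to extend this recollement one step upward and to restrict it to a left recollement $\D_{\fd}(Q)\to\D^b(A)\to\D^b(eAe)$. It then identifies $C\otimes^L_A-\cong Q\otimes^L_A-$, which shows that $C\otimes^L_A\D^b(A)$ lands in $\D^b(A)_{\mod A/AeA}$. Your computation of $H^*(C\otimes^L_AN)$ proves exactly this containment by hand, with no dg machinery: boundedness comes from perfectness of $Ae_{eAe}$, and each cohomology module is killed by $e$.

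\textbf{(1)$\Leftrightarrow$(3).} The paper passes to $A^\op$ through Proposition~\ref{Prop-EHI-OppositeAlg} and reuses (1)$\Leftrightarrow$(2). You instead use the left--right symmetry already built into $\otimes$-perfectness, namely criterion (3') of Proposition~\ref{Prop-BimodComplex-TensorPerfect}, and rerun the argument on right modules. Both routes finish with $\pd_{A^\op}S^*=\id_AS$.

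\textbf{Trade-off.} Your version is self-contained and shows exactly where each hypothesis enters. The paper's version gives a conceptual reading of $C\otimes^L_A-$ as the left adjoint $Q\otimes^L_A-$ in the recollement, and this is the same framework the paper uses for the singularity-category statement in Corollary~\ref{Cor-Exist-SingEqu-Idemp}.
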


\begin{proof} 
	
	The idempotent $e$ induces a recollement (Ref. \cite[Proposition 2.16]{Jin-Yang-Zhou23})
	$$\begin{tikzcd}[column sep=80]
		\D(Q) \arrow[r,"{_AQ\otimes^L_Q-}"] & \D(A) \arrow[r,"{_{eAe}eA\otimes^L_A-}"] \arrow[l,shift left=6,"{\RHom_A(Q,-)}"'] \arrow[l,shift right=6,"_QQ\otimes^L_A-"'] & \D(eAe) \arrow[l,shift left=6,"{\RHom_{eAe}(eA,-)}"'] \arrow[l,shift right=6,"{_AAe\otimes^L_{eAe}-}"']
	\end{tikzcd}$$
	where $Q$ is a cohomology non-positive dg algebra. 	
	We have an adjoint pair $Ae\otimes^L_{eAe}- : \D(eAe) \rightleftarrows \D(A) : eA\otimes^L_A-$. The counit of the adjoint pair is the natural transformation $\eta : Ae\otimes^L_{eAe}eA\otimes^L_A- \to A\otimes^L_A-$. We have an adjoint pair $Ae\otimes^L_{eAe}- : \D(eAe\otimes A^\op) \rightleftarrows \D(A\otimes A^\op) : eA\otimes^L_A-$. The counit of the adjoint pair is the natural transformation $\eta^A : Ae\otimes^L_{eAe}eA\otimes^L_A- \to A\otimes^L_A-$. Since $_AAe$ is projective, by Lemma~\ref{Lem-Cone-Cone}, we have $\Cone(\eta_X)\cong \Cone(\eta^A_A)\otimes^L_AX$ for all $X\in\D(A)$. For simplicity, we denote $\Cone(\eta^A_A)$ by $C$.
	
	\medspace
	
	(1)$\Rightarrow$(2): Acting the triangle functor $-\otimes^L_AAe/\rad Ae$ to the triangle $$Ae\otimes^L_{eAe}eA \to A \to C \to$$ in $\D(A^e)$, we obtain a triangle
	$$Ae\otimes^L_{eAe}eAe/\rad eAe \to Ae/\rad Ae \to C\otimes^L_AAe/\rad Ae \to$$ in $\D(A)$. 
	By Theorem~\ref{Thm-EvHomIso-TensorPerfect-BimodComplex}, the $A$-bimodule complex $C$ is $\otimes$-perfect. Then we have $C\otimes^L_AAe/\rad Ae\in\per(A)$. Thus $Ae\otimes^L_{eAe}eAe/\rad eAe\in\D^b(A)$. By Lemma~\ref{Lem-RightModComplex-Perfect}, we get $Ae_{eAe}\in\per((eAe)^\op)$. So $\pd_{(eAe)^\op} Ae < \infty$.  
	
	Acting the triangle functor $-\otimes^L_A\frac{A/AeA}{\rad(A/AeA)}$ to the triangle $$Ae\otimes^L_{eAe}eA \to A \to C \to$$ in $\D(A^e)$, we obtain a triangle 
	$$Ae\otimes^L_{eAe}eA\otimes^L_A\frac{A/AeA}{\rad(A/AeA)} \to \frac{A/AeA}{\rad(A/AeA)} \to C\otimes^L_A\frac{A/AeA}{\rad(A/AeA)} \to$$
	in $\D(A)$.	
	Due to $eAeA=eA$, we have $eA\otimes^L_AA/AeA=0$. Then $eA\otimes^L_A\frac{A/AeA}{\rad(A/AeA)}=0$, and further $Ae\otimes^L_{eAe}eA\otimes^L_A\frac{A/AeA}{\rad(A/AeA)} = 0$. 	
	Since the $A$-bimodule complex $C$ is $\otimes$-perfect, we have $\frac{A/AeA}{\rad(A/AeA)} \cong C\otimes^L_A\frac{A/AeA}{\rad(A/AeA)}\in\per(A)$. So $\pd_A\frac{A/AeA}{\rad(A/AeA)}<\infty$.
	
	\medspace
	
	(2)$\Rightarrow$(1): Due to $\pd_{(eAe)^\op} Ae < \infty$, the recollement above extends one step upwards. Moreover, $Q$ is a cohomology non-positive finite dimensional dg algebra. By \cite[Lemma 2.18]{Jin-Yang-Zhou23}, it can restrict to a left recollement:
	$$\begin{tikzcd}[column sep=55]
		\D_\fd(Q) \arrow[r,"{_AQ\otimes^L_Q-}"] & \D^b(A) \arrow[r,"{_{eAe}eA\otimes^L_A-}"] \arrow[l,shift right=6,"_QQ\otimes^L_A-"'] & \D^b(eAe). \arrow[l,shift right=6,"{_AAe\otimes^L_{eAe}-}"']
	\end{tikzcd}$$
	Then we have $_QQ\otimes^L_A\D^b(A) = \D_\fd(Q)$ and $_AQ\otimes^L_Q\D_\fd(Q) = \Ker(eA\otimes^L_A-) = \D^b(A)_{\mod A/AeA}$.
	Thus $\pd_A\frac{A/AeA}{\rad (A/AeA)}<\infty$ if and only if $\frac{A/AeA}{\rad (A/AeA)}\in\per(A)$,  if and only if $\mod A/AeA\subseteq \per(A)$, if and only if $_AQ\otimes^L_A\D^b(A) =\ _AQ\otimes^L_QQ\otimes^L_A\D^b(A) \subseteq\ _AQ\otimes^L_Q\D_\fd(Q) = \D^b(A)_{\mod A/AeA}\subseteq \per(A).$
	From the triangles $$Ae\otimes^L_{eAe}eA\otimes^L_AX \xrightarrow{\eta^A_A\otimes^L_AX} A\otimes^L_AX \to \Cone(\eta^A_A)\otimes^L_AX \to$$ and $$Ae\otimes^L_{eAe}eA\otimes^L_AX \xrightarrow{\eta_X} A\otimes^L_AX \to Q\otimes^L_AX \to$$ we obtain $Q\otimes^L_AX \cong C\otimes^L_AX$ for all $X\in\D(A)$. So $C\otimes^L_A\D^b(A) \subseteq \per(A).$ By Proposition~\ref{Prop-BimodComplex-TensorPerfect}, the $A$-bimodule complex $C$ is $\otimes$-perfect. Thus the triangle functor $eA\otimes^L_A-: \D^b(A)\to \D^b(eAe)$ is an eventually homological isomorphism by Theorem~\ref{Thm-EvHomIso-TensorPerfect-BimodComplex}.
	
	\medspace
   
    (1)$\Leftrightarrow$(3): The triangle functor $eA\otimes^L_A-: \D^b(A)\to \D^b(eAe)$ is an eventually homological isomorphism if and only if the triangle functor $\RHom_{A^\op}(eA,A)\otimes^L_{A^\op}- \cong e^\op A^\op \otimes^L_{A^\op} -:\D^b(A^\op)\to\D^b((eAe)^\op)$ is an eventually homological isomorphism by Proposition~\ref{Prop-EHI-OppositeAlg}, if and only if $\pd_{A^\op}(\frac{A^\op/A^\op e^\op A^\op}{\rad(A^\op/A^\op e^\op A^\op)})<\infty$ and $\pd_{(e^\op A^\op e^\op)^\op}A^\op e^\op<\infty$ by the fact (1)$\Leftrightarrow$(2), if and only if $\id_A(\frac{A/AeA}{\rad(A/AeA)})<\infty$ and $\pd_{eAe}eA<\infty$ due to the equations $\id_A(\frac{A/AeA}{\rad(A/AeA)}) = \pd_{A^\op}(\frac{A/AeA}{\rad(A/AeA)})   = \pd_{A^\op}(\frac{A^\op/A^\op e^\op A^\op}{\rad(A^\op/A^\op e^\op A^\op)})$ and $_{eAe}eA_A=\ _{A^\op}A^\op e^\op_{e^\op A^\op e^\op}$ which are induced by the $A$-bimodule isomorphism $A\xrightarrow{\cong} A^\op, a\mapsto a^\op$.
\end{proof}

\begin{corollary} {\rm (cf. \cite[Main theorem (i)]{Psaroudakis-Skartsaeterhagen-Solberg14})}  
	Let $A$ be a finite dimensional algebra, and $e$ an idempotent in $A$. Then the following statements are equivalent:
	
	{\rm (1)} The exact functor $eA\otimes_A-: \mod A\to \mod eAe$ is an eventually homological isomorphism in the sense of Definition~\ref{Def-EHI-ModCat}.
	
	{\rm (2)} $\pd_A(\frac{A/AeA}{\rad(A/AeA)}) < \infty$ and $\pd_{(eAe)^\op} Ae < \infty$.
	
	{\rm (3)} $\id_A(\frac{A/AeA}{\rad(A/AeA)}) < \infty$ and $\pd_{eAe}eA < \infty$.
\end{corollary}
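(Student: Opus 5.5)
The corollary reduces to Proposition~\ref{Prop-Exist-EvHomIso-Idemp} via Proposition~\ref{Prop-EHI-ModCat-DerCat}.

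The $A$-bimodule $eA$ makes $eA\otimes_A-$ an exact functor $\mod A\to\mod eAe$, and $eA$ is projective as a right $A$-module, being a direct summand of $A_A$. Hence Proposition~\ref{Prop-EHI-ModCat-DerCat} applies with $M={}_{eAe}eA_A$ (the roles of $A$ and $B$ there are played by $eAe$ and $A$). It says that $eA\otimes_A-:\mod A\to\mod eAe$ is an eventually homological isomorphism in the sense of Definition~\ref{Def-EHI-ModCat} if and only if the triangle functor $eA\otimes_A-:\D^b(A)\to\D^b(eAe)$ is one in the sense of Definition~\ref{Def-EHI-DerCat}. Because $eA_A$ is projective, this triangle functor coincides with the derived tensor functor $eA\otimes^L_A-$. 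So statement (1) of the corollary is equivalent to statement (1) of Proposition~\ref{Prop-Exist-EvHomIso-Idemp}.

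The equivalences (1)$\Leftrightarrow$(2)$\Leftrightarrow$(3) then follow verbatim from Proposition~\ref{Prop-Exist-EvHomIso-Idemp}, whose conditions (2) and (3) are literally the same as those here.

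The only point needing care is the identification of the non-derived functor on $\D^b$ with $eA\otimes^L_A-$, together with checking that the hypotheses of Proposition~\ref{Prop-EHI-ModCat-DerCat} are met: finite dimensionality is clear, and the projective side is the right $A$-side, matching the requirement that $M_B$ be projective. I do not expect a genuine obstacle here. The comparison with \cite[Main theorem (i)]{Psaroudakis-Skartsaeterhagen-Solberg14} is only a remark: their definition does not demand that the isomorphisms be induced by the functor, while ours does.
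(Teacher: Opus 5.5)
Your proposal is correct and matches the paper's proof: Proposition~\ref{Prop-EHI-ModCat-DerCat} applies because $eA_A$ is projective, so statement (1) is equivalent to $eA\otimes^L_A-:\D^b(A)\to\D^b(eAe)$ being an eventually homological isomorphism. The equivalences with (2) and (3) then come directly from Proposition~\ref{Prop-Exist-EvHomIso-Idemp}.
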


\begin{proof}
	By Proposition~\ref{Prop-EHI-ModCat-DerCat}, the exact functor $eA\otimes_A-: \mod A\to \mod eAe$ is an eventually homological isomorphism if and only if so is the triangle functor $eA\otimes_A-: \D^b(A)\to \D^b(eAe)$. Then the corollary follows from Proposition~\ref{Prop-Exist-EvHomIso-Idemp}.
\end{proof}

\subsection{Application to reduce Gorenstein symmetry conjecture}

In this subsection, we apply essentially surjective eventually homological isomorphisms to transfer Gorensteinness and reduce Gorenstein symmetry conjecture. In the last section, we will give more applications. 

Recall that a ring is said to be {\it Gorenstein} if it is two-sided Noetherian and has finite left and right injective dimension as a module over itself \cite{Iwanaga79}. 

\medspace

\noindent{\bf Gorenstein symmetry conjecture.} (\cite[Conjecture (13)]{Auslander-Reiten-Smalo95}) An Artin algebra has finite left injective dimension if and only if it has finite right injective dimension as a module over itself.

Gorenstein symmetry conjecture can be reduced by the recollements of derived categories \cite{Qin-Han16} and essentially surjective eventually homological isomorphisms on module categories \cite{Qin-Shen24}. More generally, the following result implies that it also can be reduced by essentially surjective eventually homological isomorphisms between derived categories.

\begin{theorem} \label{Thm-EHI-GorSymConj} 
	Let $A$ and $B$ be finite dimensional algebras, and $F: \D^b(B)\to\D^b(A)$ an essentially surjective eventually homological isomorphism. Then the following statements hold: 
	
	{\rm(1)} $A$ is Gorenstein if and only if so is $B$.
	
	{\rm(2)} $A$ satisfies the Gorenstein symmetry conjecture if and only if so does $B$.
\end{theorem}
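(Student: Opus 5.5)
The plan is to translate Gorensteinness and the injective dimensions of the regular modules into statements about eventual vanishing of $\Hom$-spaces in $\D^b$, which $F$ preserves in high degrees by definition. For a finite dimensional algebra $\Lambda$, $\id({}_\Lambda\Lambda)<\infty$ holds if and only if there is an integer $n$ with $\Hom_{\D^b(\Lambda)}(M,\Lambda[i])=0$ for all $M\in\mod\Lambda$ and all $i>n$. Equivalently, since $\D^b(\Lambda)=\thick(\mod\Lambda)$ and $\Lambda/\rad\Lambda$ generates, $\Hom_{\D^b(\Lambda)}(Y,\Lambda[i])=0$ for $i\gg0$ for every $Y\in\D^b(\Lambda)$. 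Here "for $i\gg 0$" may depend on $Y$, and it suffices to test $Y=\Lambda/\rad\Lambda$.

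The key intrinsic reformulation is the following. ${}_\Lambda\Lambda$ has finite injective dimension if and only if the object $\Lambda$ lies in $K^b(\inj\Lambda)$, and this is equivalent to $\per(\Lambda)\subseteq K^b(\inj\Lambda)$. The latter can be phrased as: every perfect object $P$ satisfies $\Hom(Y,P[i])=0$ for $i\gg0$ and all $Y\in\D^b(\Lambda)$. Dually, $\id(\Lambda_\Lambda)<\infty$ is equivalent to $K^b(\inj\Lambda)\subseteq\per(\Lambda)$, via the duality $(-)^*$. In categorical terms, this says every object $I$ with $\Hom(Y,I[i])=0$ for $i\gg0$ and all $Y$ is \emph{compact-like}: it satisfies $\Hom(I,Y[i])=0$ for $i\gg0$ and all $Y\in\D^b(\Lambda)$. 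Thus I would characterize intrinsically in the triangulated category $\D^b(\Lambda)$ the two subcategories
$$\mathcal P=\{P: \forall Y,\ \Hom(P,Y[i])=0,\ i\gg0\}=\per(\Lambda),\qquad \mathcal I=\{I:\forall Y,\ \Hom(Y,I[i])=0,\ i\gg0\}=K^b(\inj\Lambda).$$
These equalities are standard: for $\mathcal P$, test against $\Lambda/\rad\Lambda$ and use a minimal projective resolution, as in the proof of Lemma~\ref{Lem-LeftModComplex-Perfect}; dually for $\mathcal I$, as in Lemma~\ref{Lem-BimodComplex-LeftCoper}. Then $\Lambda$ is Gorenstein iff $\mathcal P=\mathcal I$. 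Moreover, left injective dimension finite iff $\mathcal P\subseteq\mathcal I$, and right injective dimension finite iff $\mathcal I\subseteq\mathcal P$.

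Next, since $F$ is an eventually homological isomorphism, for each pair $X,Y$ the map $\Hom(X,Y[i])\to\Hom(FX,FY[i])$ is bijective for $i\gg0$. Hence $X\in\mathcal P_B$ iff $\Hom(FX,FY[i])=0$ for all $Y\in\D^b(B)$ and $i\gg0$. Because $F$ is essentially surjective, every object of $\D^b(A)$ is of the form $FY$, so this holds iff $FX\in\mathcal P_A$. Likewise $X\in\mathcal I_B$ iff $FX\in\mathcal I_A$, and every object of $\mathcal P_A$ or $\mathcal I_A$ is some $FX$. Consequently $\mathcal P_B\subseteq\mathcal I_B$ iff $\mathcal P_A\subseteq\mathcal I_A$, and similarly for the reverse inclusions. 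This yields (1), and also shows that $F$ preserves finiteness of left (resp. right) self-injective dimension separately. Hence (2) follows: "left finite $\Leftrightarrow$ right finite" for $A$ is equivalent to the same statement for $B$.

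The main obstacle is the intrinsic characterizations with the quantifier "for $i\gg0$ depending on $Y$". For $\mathcal P$: if $\Hom(P,\ol\Lambda[i])=0$ for $i\gg0$, the minimal semi-projective resolution of $P$ has only finitely many nonzero terms. One must first know $P$ is bounded, which holds since $P\in\D^b(\Lambda)$, so the resolution is bounded on one side. For $\mathcal I$, one argues dually with minimal injective resolutions, tested against $\ol\Lambda$. One must also check the identification of $\mathcal P\subseteq\mathcal I$ with $\id{}_\Lambda\Lambda<\infty$ (as $\mathcal P=\thick(\Lambda)$), and of $\mathcal I\subseteq\mathcal P$ with $\Lambda^*\in\per(\Lambda)$, i.e. $\id\Lambda_\Lambda<\infty$ after applying $(-)^*$. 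Two-sided Noetherianity is automatic for finite dimensional algebras.
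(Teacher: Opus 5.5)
Your proposal is correct and follows essentially the same route as the paper. The paper's Claims 1--4 are your observations that $F$ preserves and reflects membership in $\per$ and $K^b(\inj)$ (characterized by eventual vanishing of $\Hom(-,Y[i])$, resp.\ $\Hom(Y,-[i])$) and hits every such object by essential surjectivity. Its Claim 5 is your identification of $\per\subseteq K^b(\inj)$ and $K^b(\inj)\subseteq\per$ with finiteness of the left and right self-injective dimensions, respectively.
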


\begin{proof} 
	For a triangulated subcategory $\T$ of $\D^b(B)$, denote by $F(\T)$ the essential image of $\T$ under $F$, i.e., the full triangulated subcategory of $\D^b(A)$ consisting of all objects isomorphic to $F(T)$ for some $T\in\T$.
	Since $F$ is an eventually homological isomorphism, we have linear isomorphisms $\Hom_{\D^b(B)}(Y,Z[i])\cong \Hom_{\D^b(A)}(F(Y),F(Z)[i])$ for all $Y,Z\in \D^b(B)$ and $i\gg 0$. 
	
	{\it Claim 1.} $F(K^b(\proj B)) = K^b(\proj A)$. 
	
	For any $Y\in K^b(\proj B)$ and $Z'\in \D^b(A)$, since $F$ is essentially surjective, there exists an object $Z\in\D^b(B)$ such that $F(Z)\cong Z'$. Then we have $\Hom_{\D^b(A)}(F(Y),Z'[i])\cong \Hom_{\D^b(B)}(Y,Z[i])=0$ for all $i\gg 0$. Thus 	
	$F(Y)\in K^b(\proj A)$. Hence $F(K^b(\proj B)) \subseteq K^b(\proj A)$.  
	
	For any $Y'\in K^b(\proj A)$, since $F$ is essentially surjective, there exists an object $Y\in\D^b(B)$ such that $F(Y)\cong Y'$. Then we have $\Hom_{\D^b(B)}(Y,Z[i])\cong \Hom_{\D^b(A)}(Y',F(Z)[i])=0$ for all $Z\in \D^b(B)$ and $i\gg 0$. Thus $Y\in K^b(\proj B)$. Hence $K^b(\proj A) \subseteq F(K^b(\proj B))$.	
	
	{\it Claim 2.} $F(K^b(\inj B)) = K^b(\inj A)$.
	
	For any $Z\in K^b(\inj B)$ and $Y'\in \D^b(A)$, since $F$ is essentially surjective, there exists an object $Y\in\D^b(B)$ such that $F(Y)\cong Y'$. Then we have $\Hom_{\D^b(A)}(Y',F(Z)[i])\cong \Hom_{\D^b(B)}(Y,Z[i])=0$ for all $i\gg 0$. Thus 	
	$F(Z)\in K^b(\inj A)$. Hence $F(K^b(\inj B)) \subseteq K^b(\inj A)$.
	
	For any $Z'\in K^b(\inj A)$, since $F$ is essentially surjective, there exists an object $Z\in\D^b(B)$ such that $F(Z)\cong Z'$. Then we have $\Hom_{\D^b(B)}(Y,Z[i])\cong \Hom_{\D^b(A)}(F(Y),Z'[i])=0$ for all $Y\in \D^b(B)$ and $i\gg 0$. Thus $Z\in K^b(\inj B)$. Hence $K^b(\inj A) \subseteq F(K^b(\inj B))$.
	
	{\it Claim 3.} $Y\in K^b(\proj B) \Leftrightarrow F(Y)\in K^b(\proj A)$.
	
	If $Y\in K^b(\proj B)$ then $F(Y)\in K^b(\proj A)$ by Claim 1.
	Conversely, if $F(Y)\in K^b(\proj A)$ then we have $\Hom_{\D^b(B)}(Y,Z[i])\cong \Hom_{\D^b(A)}(F(Y),F(Z)[i])  =0$ for all $Z\in \D^b(B)$ and $i\gg 0$. Thus $Y\in K^b(\proj B)$.
	
	{\it Claim 4.} $Z\in K^b(\inj B) \Leftrightarrow F(Z)\in K^b(\inj A)$.
	
	If $Z\in K^b(\inj B)$ then $F(Z)\in K^b(\inj A)$ by Claim 2.
	Conversely, if $F(Z)\in K^b(\inj A)$ then we have $\Hom_{\D^b(B)}(Y,Z[i])\cong \Hom_{\D^b(A)}(F(Y),F(Z)[i])=0$ for all $Y\in \D^b(B)$ and $i\gg 0$. Thus $Z\in K^b(\inj B)$.
	
	{\it Claim 5.} 	$A$ is Gorenstein if and only if $K^b(\proj A) = K^b(\inj A)$. 
	
	Clearly, $\id_AA<\infty$ if and only if $K^b(\proj A) \subseteq K^b(\inj A)$.	
	Moreover, $\id_{A^\op}A<\infty$ if and only if $\pd_AA^*<\infty$, if and only if $K^b(\inj A) \subseteq K^b(\proj A)$.
	
	(1) $A$ is Gorenstein if and only if $K^b(\proj A) = K^b(\inj A)$ by Claim 5, if and only if $F(K^b(\proj B)) = F(K^b(\inj B))$ by Claims 1 and 2, if and only if $K^b(\proj B) = K^b(\inj B)$ by Claims 3 and 4, if and only if $B$ is Gorenstein by Claim 5. 
	
	(2) $A$ satisfies the Gorenstein symmetry conjecture if and only if ``$K^b(\proj A)   \subseteq K^b(\inj A) \Leftrightarrow K^b(\inj A) \subseteq K^b(\proj A)$'' by the proof of Claim 5, if and only if ``$F(K^b(\proj B)) \subseteq F(K^b(\inj B)) \Leftrightarrow F(K^b(\inj B)) \subseteq F(K^b(\proj B))$'' by Claims 1 and 2, if and only if ``$K^b(\proj B) \subseteq K^b(\inj B) \Leftrightarrow K^b(\inj B) \subseteq K^b(\proj B)$'' by Claims 3 and 4, if and only if $B$ satisfies the Gorenstein symmetry conjecture by the proof of Claim 5.
\end{proof}

\begin{remark} \label{Rem-ESEHI-Gorenstein-General} {\rm
	(i) The essentially surjective eventually homological isomorphism in Theorem~\ref{Thm-EHI-GorSymConj} needs not to be standard, i.e., a derived tensor product functor. 
	
	(ii) Theorem~\ref{Thm-EHI-GorSymConj} still holds even if the linear isomorphisms in the definition of eventually homological isomorphism are not induced by the functor, i.e., we use the definition of eventually homological isomorphism given by Qin in \cite{Qin20}.
	
	(iii) Every derived equivalence $F:\D^b(B)\to \D^b(A)$ is an essentially surjective eventually homological isomorphism.
}\end{remark}

\section{Singular equivalences of $n$-adjoint type}

In this section, we introduce singular equivalences of $n$-adjoint type which can be viewed as standard singular equivalences of different levels. For $n\ge 2$, we give their characterizations in terms of $\otimes$-perfect bimodule complexes, constructions by opposite algebras and tensor product algebras, examples induced by homomorphisms of algebras and idempotents, and applications to reduce homological conjectures and transfer homological properties.

\subsection{Standard triangle functors between singularity categories}

The following result is a characterization of a {\it standard} triangle functor, i.e., a derived tensor product functor given by a bimodule complex, between singularity categories, which improves \cite[Proposition 3.3]{Oppermann-Psaroudakis-Stai19}.

\begin{lemma} \label{Lem-Exist-TriFunSingCat-FD-Complex} 
	Let $A$ and $B$ be finite dimensional algebras. Then a complex $X$ of $A$-$B$-bimodules induces a triangle functor $_AX\otimes^L_B-: \sg(B)\to \sg(A)$ if and only if $X$ is biperfect.	
\end{lemma}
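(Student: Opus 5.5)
The plan is to interpret ``$X$ induces a triangle functor $\sg(B)\to\sg(A)$'' as follows: the functor $X\otimes^L_B-:\D(B)\to\D(A)$ sends $\D^b(B)$ into $\D^b(A)$ and $\per(B)$ into $\per(A)$. Then it descends to the Verdier quotients by the universal property of $\sg(B)=\D^b(B)/\per(B)$. The proof then splits into the two implications, each reduced to Lemma~\ref{Lem-LeftModComplex-Perfect} and Lemma~\ref{Lem-RightModComplex-Perfect}.

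\emph{Sufficiency.} Assume $X$ is biperfect. Since $X_B\in\per(B^\op)$, Lemma~\ref{Lem-RightModComplex-Perfect} (1)$\Rightarrow$(2) shows that $X\otimes^L_B-$ restricts to a triangle functor $\D^b(B)\to\D^b(A)$. Since $_AX\in\per(A)$, Lemma~\ref{Lem-LeftModComplex-Perfect} (1)$\Rightarrow$(4) shows that it sends $\per(B)$ into $\per(A)$. Hence the composite $\D^b(B)\to\D^b(A)\to\sg(A)$ kills $\per(B)$, and so it factors uniquely through a triangle functor $\sg(B)\to\sg(A)$.

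\emph{Necessity.} Assume $X\otimes^L_B-$ induces a triangle functor on singularity categories, so by the convention above it maps $\D^b(B)$ into $\D^b(A)$ and $\per(B)$ into $\per(A)$. The first property is condition (2) of Lemma~\ref{Lem-RightModComplex-Perfect}, so $X_B\in\per(B^\op)$. For the second, apply it to $Y=B$: then $_AX\cong X\otimes^L_BB\in\per(A)$, which is condition (4) of Lemma~\ref{Lem-LeftModComplex-Perfect} specialized to $Y=B$. Hence $X$ is biperfect.

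The main obstacle is making precise what ``induces'' means. If it only means that the functor restricts to $\D^b$ and descends to the quotient, one cannot immediately conclude that $\per(B)$ lands in $\per(A)$. Descent requires only that $X\otimes^L_BP\cong 0$ in $\sg(A)$ for $P\in\per(B)$, and in the Verdier quotient by the thick subcategory $\per(A)$ an object becomes zero exactly when it lies in $\per(A)$. So the condition ``$X\otimes^L_BB$ is zero in $\sg(A)$'' already gives $_AX\in\per(A)$, and the argument goes through. I would state this convention explicitly, that the induced functor is compatible with the quotient functors, and the rest is routine.
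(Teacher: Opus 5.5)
Your proposal is correct and follows the paper's proof. Both reduce ``induces a triangle functor $\sg(B)\to\sg(A)$'' to $X\otimes^L_B-$ restricting to $\D^b(B)\to\D^b(A)$ and $\per(B)\to\per(A)$, then invoke Lemma~\ref{Lem-RightModComplex-Perfect} and Lemma~\ref{Lem-LeftModComplex-Perfect}. Your extra remark is a valid refinement that the paper leaves implicit: mere descent to the quotient already forces $_AX\cong X\otimes^L_BB\in\per(A)$, because $\per(A)$ is thick and hence equals the kernel of $\D^b(A)\to\sg(A)$.
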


\begin{proof}
	A complex $X$ of $A$-$B$-bimodules always induces a triangle functor $_AX\otimes^L_B-: \D(B)\to \D(A)$ which induces a triangle functor $_AX\otimes^L_B-: \sg(B)\to \sg(A)$ if and only if it can be restricted to triangle functors $_AX\otimes^L_B-: \D^b(B)\to \D^b(A)$ and $_AX\otimes^L_B-: \per(B)\to \per(A)$, if and only if $X$ is biperfect by Lemma~\ref{Lem-RightModComplex-Perfect} and Lemma~\ref{Lem-LeftModComplex-Perfect} respectively.
\end{proof}

The following result is a characterization of a standard triangle functor on singularity categories with a canonical left adjoint.

\begin{lemma} \label{Lem-SingCatTriFunLeftAdj-FD-Complex} 
	Let $A$ and $B$ be finite dimensional algebras. Then a complex $X$ of $A$-$B$-bimodules induces a triangle functor $_AX\otimes^L_B-: \sg(B)\to \sg(A)$ with a left adjoint $\RHom_{B^\op}(X,B)\otimes^L_A-:\sg(A)\to\sg(B)$ if and only if both $X\in\D(A\otimes B^\op)$ and $\RHom_{B^\op}(X,B)\in\D(B\otimes A^\op)$ are biperfect.
\end{lemma}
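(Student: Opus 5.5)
The plan is to reduce everything to Lemma~\ref{Lem-Exist-TriFunSingCat-FD-Complex}, applied to both $X$ and $X^\vee:=\RHom_{B^\op}(X,B)$, and then to produce the adjunction on singularity categories by descending the derived adjunction used in the proof of Theorem~\ref{Thm-EvHomIso-TensorPerfect-BimodComplex}.

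\emph{Necessity.} The statement asks for two triangle functors between singularity categories given by derived tensor products: $X\otimes^L_B-:\sg(B)\to\sg(A)$ and $X^\vee\otimes^L_A-:\sg(A)\to\sg(B)$. By Lemma~\ref{Lem-Exist-TriFunSingCat-FD-Complex}, the first forces $X$ to be biperfect and the second forces $X^\vee$ to be biperfect. No adjunction argument is needed in this direction.

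\emph{Sufficiency.} Assume $X$ and $X^\vee$ are biperfect.
\begin{enumerate}
\item Lemma~\ref{Lem-Exist-TriFunSingCat-FD-Complex} shows that both functors are defined on the singularity categories.
\item Since $X_B\in\per(B^\op)$, the proof of Theorem~\ref{Thm-EvHomIso-TensorPerfect-BimodComplex} gives an adjoint pair on unbounded derived categories:
$$X^\vee\otimes^L_A- : \D(A)\rightleftarrows \D(B) : \RHom_B(X^\vee,-)\cong X\otimes^L_B-.$$
Its unit $\varepsilon$ and counit $\eta$ are natural transformations between endofunctors that all preserve $\D^b$ and $\per$, by the biperfectness hypotheses together with Lemmas~\ref{Lem-LeftModComplex-Perfect} and~\ref{Lem-RightModComplex-Perfect}.
\item Hence the adjunction restricts to an adjunction between $\D^b(A)$ and $\D^b(B)$. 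The restriction also carries $\per$ into $\per$ in both directions.
\item A general fact about Verdier quotients then finishes the proof. Let $L\dashv R$ be triangle functors between triangulated categories, compatible with the triangulated structure. If $L$ maps the thick subcategory $\mathcal{N}$ into $\mathcal{N}'$ and $R$ maps $\mathcal{N}'$ into $\mathcal{N}$, then the induced functors on the quotients are again adjoint. Moreover, the unit and counit are the images of the original unit and counit.
\end{enumerate}

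For the fact in step 4, it suffices to check that the images of $\varepsilon$ and $\eta$ still satisfy the triangle identities. This is immediate, since the quotient functors are functorial and the induced functors commute with the quotient functors. Naturality is also automatic, because every morphism in a Verdier quotient is a roof $s^{-1}f$ and the induced transformations are natural with respect to the inverted morphisms $s$. Applying this with $L=X^\vee\otimes^L_A-$, $R=X\otimes^L_B-$, $\mathcal{N}=\per(A)$ and $\mathcal{N}'=\per(B)$ gives the canonical left adjoint.

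The main obstacle is step 4: making precise that the adjunction descends to the Verdier quotients. I would handle it via the triangle identities as described, rather than through a Hom-set bijection computed with roofs. I would also note that "left adjoint" in the statement means the adjunction is induced canonically from the derived one. This is what makes the converse direction purely a matter of both functors existing.
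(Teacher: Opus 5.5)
Your proposal is correct and follows essentially the same route as the paper: both necessity and well-definedness come from Lemma~\ref{Lem-Exist-TriFunSingCat-FD-Complex} applied to $X$ and to $\RHom_{B^\op}(X,B)$, and the adjunction on singularity categories comes from descending the derived adjunction $(\RHom_{B^\op}(X,B)\otimes^L_A-,\ X\otimes^L_B-)$, restricted to bounded derived categories. The only difference is that the paper cites \cite[Lemma 1.2]{Orlov04} for the descent of an adjoint pair to Verdier quotients, whereas you verify it directly via the triangle identities and naturality with respect to roofs, which is a valid argument.
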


\begin{proof}
	It follows from Lemma~\ref{Lem-Exist-TriFunSingCat-FD-Complex} that the triangle functors $\RHom_{B^\op}(X,B)  \otimes^L_A-:\sg(A)\to\sg(B)$ and $_AX\otimes^L_B-: \sg(B)\to \sg(A)$ are well-defined if and only if both $X\in\D(A\otimes B^\op)$ and $\RHom_{B^\op}(X,B)\in\D(B\otimes A^\op)$ are biperfect. In this case, since the triangle functors $\RHom_{B^\op}(X,B)\otimes^L_A-:\D^b(A)\to\D^b(B)$ and $_AX\otimes^L_B-: \D^b(B)\to \D^b(A)$ form an adjoint pair, the triangle functors $\RHom_{B^\op}(X,B)\otimes^L_A-:\sg(A)\to\sg(B)$ and $_AX\otimes^L_B-: \sg(B)\to \sg(A)$ also form an adjoint pair by \cite[Lemma 1.2]{Orlov04}. 
\end{proof}

The following is a characterization of a fully faithful standard triangle functor on singularity categories with a canonical left adjoint.

\begin{lemma} \label{Lem-SingCatFFTriFunLeftAdj-FD-Complex} 
	Let $A$ and $B$ be finite dimensional algebras. Then a complex $X$ of $A$-$B$-bimodules induces a fully faithful triangle functor $_AX\otimes^L_B-: \sg(B)\to \sg(A)$ with a left adjoint $\RHom_{B^\op}(X,B)\otimes^L_A-:\sg(A)\to\sg(B)$ if and only if $X$ and $\RHom_{B^\op}(X,B)$ are biperfect, and the $B$-bimodule complex $\Cone(\ev^B_X:\RHom_{B^\op}(X,B)\otimes^L_AX\to B)$ is $\otimes$-perfect.	
\end{lemma}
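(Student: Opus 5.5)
By Lemma~\ref{Lem-SingCatTriFunLeftAdj-FD-Complex}, the existence of the functor $F:=\ _AX\otimes^L_B-:\sg(B)\to\sg(A)$ together with its left adjoint $L:=\RHom_{B^\op}(X,B)\otimes^L_A-$ is equivalent to the biperfectness of $X$ and $\RHom_{B^\op}(X,B)$. So I will assume both are biperfect and show that $F$ is fully faithful if and only if $\Cone(\ev^B_X)$ is $\otimes$-perfect.

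First I reduce full faithfulness to a statement about the counit. For an adjoint pair $(L,F)$, the right adjoint $F$ is fully faithful exactly when the counit $\eta:LF\to\Id_{\sg(B)}$ is an isomorphism. This follows from Lemma~\ref{Lem-F_XY-Unit}(1) and Yoneda: $F_{Y,Z}=\alpha\circ\Hom(\eta_Y,Z)$, so $F_{Y,Z}$ is bijective for all $Z$ iff $\Hom(\eta_Y,-)$ is, iff $\eta_Y$ is invertible. The counit on $\sg$ is induced by the counit on $\D^b$ (by the construction in \cite[Lemma 1.2]{Orlov04}). Its cone on $Y\in\D^b(B)$ is, by Lemma~\ref{Lem-Cone-Cone} applied as in the proof of Theorem~\ref{Thm-EvHomIso-TensorPerfect-BimodComplex}, isomorphic to $\Cone(\ev^B_X)\otimes^L_BY$. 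Here we use that $\RHom_{B^\op}(X,B)\in\per(B)$ and the identification $\eta^B_B\cong\ev^B_X$ from that proof. Hence $\eta_Y$ is an isomorphism in $\sg(B)$ iff $\Cone(\ev^B_X)\otimes^L_BY\in\per(B)$. Requiring this for all $Y\in\D^b(B)$ is condition (2) of Proposition~\ref{Prop-BimodComplex-TensorPerfect}, which is $\otimes$-perfectness.

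The main obstacle is justifying the step ``$\eta_Y$ is invertible in $\sg(B)$ iff its cone in $\D^b(B)$ lies in $\per(B)$''. Two points need checking.

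\emph{The cone lies in $\D^b(B)$.} Since $X$ and $\RHom_{B^\op}(X,B)$ are biperfect, both $LF(Y)$ and $Y$ lie in $\D^b(B)$, so $\Cone(\eta_Y)\in\D^b(B)$.

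\emph{Compatibility of counits.} The counit of the induced adjunction on singularity categories must be the image of the $\D^b$-counit. Orlov's lemma produces the quotient adjunction from the derived one, and the triangle identities pass to the quotient, so this holds. In the Verdier quotient, a morphism in $\D^b(B)$ becomes invertible iff its cone lies in the thick subcategory $\per(B)$.

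It remains to check that every object of $\sg(B)$ is the image of some $Y\in\D^b(B)$, which is immediate, so testing $\eta_Y$ over all $Y\in\D^b(B)$ suffices. This gives both directions of the equivalence.
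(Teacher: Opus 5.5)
Your proposal is correct and follows essentially the same route as the paper. Both arguments reduce to biperfectness via Lemma~\ref{Lem-SingCatTriFunLeftAdj-FD-Complex}, characterize full faithfulness of the right adjoint by invertibility of the counit, identify $\Cone(\eta_Y)\cong\Cone(\ev^B_X)\otimes^L_BY$ via Lemma~\ref{Lem-Cone-Cone}, and conclude with Proposition~\ref{Prop-BimodComplex-TensorPerfect}. The only differences are that you derive the counit criterion from Lemma~\ref{Lem-F_XY-Unit}(1) and Yoneda where the paper cites Kashiwara--Schapira, and that you spell out the Verdier-quotient and counit-compatibility details the paper leaves implicit.
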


\begin{proof}
	By Lemma~\ref{Lem-SingCatTriFunLeftAdj-FD-Complex}, the triangle functor $X\otimes^L_B-: \D(B) \to \D(A)$ induces a triangle functor $_AX\otimes^L_B-: \sg(B)\to \sg(A)$ with a left adjoint $\RHom_{B^\op}(X,B)\otimes^L_A-:\sg(A)\to \sg(B)$ if and only if both $X\in\D(A\otimes B^\op)$ and $\RHom_{B^\op}(X,B)\in\D(B\otimes A^\op)$ are biperfect. In this case, we have an adjoint pair $\RHom_{B^\op}(X,B)\otimes^L_A-:\sg(A) \rightleftarrows \sg(B):X\otimes^L_B-$. The triangle functor $_AX\otimes^L_B-: \sg(B)\to \sg(A)$ is fully faithful if and only if the counit $\eta:\RHom_{B^\op}(X,B)\otimes^L_AX\otimes^L_B- \to \Id_{\sg(B)}$ of the adjoint pair is a natural isomorphism by \cite[Proposition 1.5.6 (ii)]{Kashiwara-Schapira06}, if and only if the morphism $\eta_Y:\RHom_{B^\op}(X,B)\otimes^L_AX\otimes^L_BY \to Y$ in $\D^b(B)$ becomes an isomorphism in $\sg(B)$ for all $Y\in \D^b(B)$, if and only if $\Cone(\eta_Y)\in\per(B)$ for all $Y\in \D^b(B)$, if and only if $\Cone(\ev^B_X)\otimes^L_BY\in\per(B)$ for all $Y\in\D^b(B)$ by Lemma~\ref{Lem-Cone-Cone}, if and only if the $B$-bimodule complex $\Cone(\ev^B_X)$ is $\otimes$-perfect by Proposition~\ref{Prop-BimodComplex-TensorPerfect}.
\end{proof}

The following result is a characterization of a standard triangle functor on singularity categories with a fully faithful canonical left adjoint.

\begin{lemma} \label{Lem-jSingCatTriFunFFLeftAdj-FD-Complex} 
	Let $A$ and $B$ be finite dimensional algebras. Then a complex $X$ of $A$-$B$-bimodules induces a triangle functor $_AX\otimes^L_B-: \sg(B)\to \sg(A)$ with a fully faithful left adjoint $\RHom_{B^\op}(X,B)\otimes^L_A-:\sg(A)\to\sg(B)$ if and only if both $X$ and $\RHom_{B^\op}(X,B)$ are biperfect, and the $A$-bimodule complex $\Cone(l^A_X:A\to \RHom_{B^\op}(X,X))$ is $\otimes$-perfect, where the morphism $l^A_X:A\to \RHom_{B^\op}(X,X)$ in $\D(A^e)$ is induced by the morphism $l^A_{{\bf p}X}:A\to \Hom_{B^\op}({\bf p}X,{\bf p}X)$ in ${\cal H}(A^e)$ given by $l^A_{{\bf p}X}(a)(x):=ax$ for all $a\in A$ and $x\in{\bf p}X$.	
\end{lemma}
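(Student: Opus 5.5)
The proof follows the pattern of Lemma~\ref{Lem-SingCatFFTriFunLeftAdj-FD-Complex}, now applied to the unit of the adjunction instead of the counit. Write $X^\vee:=\RHom_{B^\op}(X,B)$. By Lemma~\ref{Lem-SingCatTriFunLeftAdj-FD-Complex}, $X\otimes^L_B-:\sg(B)\to\sg(A)$ exists with left adjoint $X^\vee\otimes^L_A-$ if and only if $X$ and $X^\vee$ are biperfect. So we may assume this throughout and only need to characterize full faithfulness of the left adjoint. By \cite[Proposition 1.5.6]{Kashiwara-Schapira06}, the left adjoint $L=X^\vee\otimes^L_A-$ of $R=X\otimes^L_B-$ is fully faithful if and only if the unit $\varepsilon:\Id_{\sg(A)}\to RL=X\otimes^L_BX^\vee\otimes^L_A-$ is a natural isomorphism. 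This in turn holds if and only if $\Cone(\varepsilon_Z)\in\per(A)$ for every $Z\in\D^b(A)$, where $\varepsilon_Z:Z\to X\otimes^L_BX^\vee\otimes^L_AZ$ is the unit of the adjunction on $\D(A)$, which induces the unit on $\sg(A)$ by \cite[Lemma 1.2]{Orlov04}.

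Next we identify this unit with an adjunction of the kind treated in Lemma~\ref{Lem-Cone-Cone}. Since $X_B$ is perfect, $X\otimes^L_B-\cong\RHom_B(X^\vee,-)$. Hence the pair $(X^\vee\otimes^L_A-,\RHom_B(X^\vee,-))$ is of the form $(Y\otimes^L_A-,\RHom_B(Y,-))$ with $Y=X^\vee$ a complex of $B$-$A$-bimodules, and $_BY\in\per(B)$. Lemma~\ref{Lem-Cone-Cone}, applied with $C=k$ and with the roles of the algebras swapped, then gives $\Cone(\varepsilon_Z)\cong\Cone(\varepsilon^A_A)\otimes^L_AZ$ for all $Z$. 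Here $\varepsilon^A_A:A\to\RHom_B(X^\vee,X^\vee)$ is the right action morphism $r^A_{X^\vee}$ of Remark~\ref{Rmk-UnitAct-CounitEv}(i).

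The remaining step, which I expect to be the main obstacle, is to identify $\Cone(r^A_{X^\vee})$ with $\Cone(l^A_X)$ in $\D(A^e)$. The plan is to use the isomorphism $h_X:X\to\RHom_B(X^\vee,B)$ from the proof of Theorem~\ref{Thm-EvHomIso-TensorPerfect-BimodComplex}. Together with tensor--Hom adjunction and the perfectness of $X^\vee$, it gives
\[
\RHom_B(X^\vee,X^\vee)\cong \RHom_B(X^\vee,B)\otimes^L_BX^\vee\cong X\otimes^L_B\RHom_{B^\op}(X,B)\cong\RHom_{B^\op}(X,X),
\]
the last isomorphism using that $X_B$ is perfect. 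One then checks on cofibrant replacements that, under this composite, $r^A_{{\bf p}X^\vee}$ corresponds to $l^A_{{\bf p}X}$. Concretely, $a$ acts on $f\in\Hom_{B^\op}({\bf p}X,B)$ by $(fa)(x)=f(ax)$, and this is dual to left multiplication by $a$ on ${\bf p}X$. The sign conventions need care, but the statement only concerns cones, so a commutative square up to isomorphism suffices, as in the proof of Proposition~\ref{Prop-EHI-OppositeAlg}. It follows that $\Cone(r^A_{X^\vee})\cong\Cone(l^A_X)$.

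Finally, the unit is an isomorphism in $\sg(A)$ exactly when $\Cone(l^A_X)\otimes^L_AZ\in\per(A)$ for all $Z\in\D^b(A)$. By Proposition~\ref{Prop-BimodComplex-TensorPerfect}(2), this is equivalent to $\Cone(l^A_X)$ being $\otimes$-perfect, which proves the lemma.
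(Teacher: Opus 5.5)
Your proposal is correct and follows the paper's proof essentially step for step. You reduce to the biperfect case via Lemma~\ref{Lem-SingCatTriFunLeftAdj-FD-Complex}, translate full faithfulness of the left adjoint into invertibility of the unit in $\sg(A)$ via Kashiwara--Schapira, and use Lemma~\ref{Lem-Cone-Cone} (with $_B\RHom_{B^\op}(X,B)$ perfect) to get $\Cone(\varepsilon_Z)\cong\Cone(r^A_{\RHom_{B^\op}(X,B)})\otimes^L_AZ$; you then identify this cone with $\Cone(l^A_X)$ and conclude with Proposition~\ref{Prop-BimodComplex-TensorPerfect}. The only difference is that you build the isomorphism $\RHom_{B^\op}(X,X)\cong\RHom_B(\RHom_{B^\op}(X,B),\RHom_{B^\op}(X,B))$ explicitly through $h_X$ and tensor--Hom identities, whereas the paper just records the commutative triangle; your composite is, up to sign, the duality map $f\mapsto\RHom_{B^\op}(f,B)$, which indeed sends $l^A_X(a)$ to $r^A_{\RHom_{B^\op}(X,B)}(a)$.
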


\begin{proof}
	By Lemma~\ref{Lem-SingCatTriFunLeftAdj-FD-Complex}, a complex $X$ of $A$-$B$-bimodules induces a triangle functor $_AX\otimes^L_B-: \sg(B)\to \sg(A)$ with a left adjoint $\RHom_{B^\op}(X,B)\otimes^L_A-:\sg(A)\to \sg(B)$ if and only if both $X$ and $\RHom_{B^\op}(X,B)$ are biperfect. In this case, we have an adjoint pair $$\RHom_{B^\op}(X,B)\otimes^L_A-:\sg(A) \rightleftarrows \sg(B):\RHom_B(\RHom_{B^\op}(X,B),-) \cong X\otimes^L_B-.$$ The triangle functor $\RHom_{B^\op}(X,B) \otimes^L_A-:\sg(A)\to\sg(B)$ is fully faithful if and only if the unit $\varepsilon:\Id_{\sg(A)} \to \RHom_B(\RHom_{B^\op}(X,B),\RHom_{B^\op}(X,B)\otimes^L_A-)$ of the adjoint pair is a natural isomorphism by \cite[Proposition 1.5.6 (i)]{Kashiwara-Schapira06}, if and only if the morphism $\varepsilon_Y:Y \to \RHom_B(\RHom_{B^\op}(X,B),\linebreak\RHom_{B^\op}(X,B)\otimes^L_AY)$ in $\D^b(A)$ is an isomorphism in $\sg(A)$ for all $Y\in \D^b(A)$, if and only if $\Cone(\varepsilon_Y)\in\per(A)$ for all $Y\in \D^b(A)$, if and only if $\Cone(\varepsilon^A_A:A\to \RHom_B(\RHom_{B^\op}(X,B),\RHom_{B^\op}(X,B)))\otimes^L_AY\in\per(A)$ for all $Y\in\D^b(A)$ by Lemma~\ref{Lem-Cone-Cone}, if and only if the $A$-bimodule complex $\Cone(\varepsilon^A_A)= \Cone(r^A_{\RHom_{B^\op}(X,B)}) \cong \Cone(l^A_X)$ is $\otimes$-perfect by Proposition~\ref{Prop-BimodComplex-TensorPerfect}. 
	$$\begin{tikzcd}
		A \arrow[r,"l^A_X"] \arrow[d,"r^A_{\RHom_{B^\op}(X,B)}"'] & \RHom_{B^\op}(X,X) \arrow[dl,"\cong"] \\ 
		\RHom_B(\RHom_{B^\op}(X,B),\RHom_{B^\op}(X,B)) &
	\end{tikzcd}$$
\end{proof}

Similarly, we can obtain the following characterization of a standard triangle functor on singularity categories with a canonical right adjoint.

\begin{lemma} \label{Lem-SingCatTriFunRightAdj-FD-Complex} 
	Let $A$ and $B$ be finite dimensional algebras. Then a complex $X$ of $A$-$B$-bimodules induces a triangle functor $_AX\otimes^L_B-: \sg(B)\to \sg(A)$ with a right adjoint $\RHom_A(X,A)\otimes^L_A-:\sg(A)\to\sg(B)$ if and only if both $X$ and $\RHom_A(X,A)$ are biperfect.
\end{lemma}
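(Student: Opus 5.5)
The plan mirrors the proof of Lemma~\ref{Lem-SingCatTriFunLeftAdj-FD-Complex}, with the roles of the two functors interchanged. Throughout, write $X^\vee:=\RHom_A(X,A)$, viewed as a complex of $B$-$A$-bimodules.

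First, by Lemma~\ref{Lem-Exist-TriFunSingCat-FD-Complex}, the functors $_AX\otimes^L_B-:\sg(B)\to\sg(A)$ and $X^\vee\otimes^L_A-:\sg(A)\to\sg(B)$ are well defined if and only if $X\in\D(A\otimes B^\op)$ and $X^\vee\in\D(B\otimes A^\op)$ are both biperfect. This already gives necessity: if the right adjoint exists as a well-defined triangle functor of the stated form, both complexes must be biperfect.

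For sufficiency, assume both complexes are biperfect. Since $_AX\in\per(A)$, the natural morphism
$$X^\vee\otimes^L_A-=\RHom_A(X,A)\otimes^L_A-\longrightarrow\RHom_A(X,-)$$
is an isomorphism of functors $\D(A)\to\D(B)$. This is the morphism $m$ of Lemma~\ref{Lem-Cone-Cone}, which is an isomorphism whenever $_AX$ is perfect. Hence on unbounded derived categories we have the adjoint pair
$$X\otimes^L_B-\ :\ \D(B)\rightleftarrows\D(A)\ :\ \RHom_A(X,-)\cong X^\vee\otimes^L_A-.$$

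Next, I restrict this pair to the bounded derived categories. Because $X_B$ is perfect, Lemma~\ref{Lem-RightModComplex-Perfect} shows that $X\otimes^L_B-$ preserves $\D^b$. Because $_AX$ is perfect, Lemma~\ref{Lem-LeftModComplex-Perfect}~(3) shows that $\RHom_A(X,-)$ preserves $\D^b$. Since $\D^b(B)$ and $\D^b(A)$ are full subcategories, the adjunction restricts to
$$X\otimes^L_B-\ :\ \D^b(B)\rightleftarrows\D^b(A)\ :\ X^\vee\otimes^L_A-.$$

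Finally, both functors send perfect complexes to perfect complexes, by the biperfectness of $X$ and of $X^\vee$ respectively. Therefore \cite[Lemma 1.2]{Orlov04} applies, and the adjoint pair descends to the Verdier quotients. This gives the adjunction $X\otimes^L_B-\dashv X^\vee\otimes^L_A-$ between $\sg(B)$ and $\sg(A)$.

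The only point needing care is the identification $\RHom_A(X,A)\otimes^L_A-\cong\RHom_A(X,-)$, which is where left perfectness of $X$ is used. It follows by d\'evissage from $_AX\in\thick(A)$, because the natural map is an isomorphism for $X=A$ and both sides are triangle functors in $X$.
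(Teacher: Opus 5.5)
Your proposal is correct and follows the same route as the paper: the paper proves this lemma ``similarly'' to Lemma~\ref{Lem-SingCatTriFunLeftAdj-FD-Complex}. It gets well-definedness of both functors from Lemma~\ref{Lem-Exist-TriFunSingCat-FD-Complex}, and then descends the $\D^b$-level adjunction to the singularity categories via \cite[Lemma 1.2]{Orlov04}. You additionally spell out two steps the paper leaves implicit: the identification $\RHom_A(X,A)\otimes^L_A-\cong\RHom_A(X,-)$, which uses that $_AX$ is perfect, and the restriction of the unbounded adjunction to $\D^b$.
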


The following result is a characterization of a fully faithful standard triangle functor on singularity categories with a canonical right adjoint.

\begin{lemma} \label{Lem-SingCatFFTriFunRightAdj-FD-Complex} 
	Let $A$ and $B$ be finite dimensional algebras. Then a complex $X$ of $A$-$B$-bimodules induces a fully faithful triangle functor $_AX\otimes^L_B-: \sg(B)\to \sg(A)$ with a right adjoint $\RHom_A(X,A)\otimes^L_A-:\sg(A)\to\sg(B)$ if and only if both $X$ and $\RHom_A(X,A)$ are biperfect, and the $B$-bimodule complex $\Cone(r^B_X:B\to\RHom_A(X,X))$ is $\otimes$-perfect.
\end{lemma}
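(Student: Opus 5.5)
By Lemma~\ref{Lem-SingCatTriFunRightAdj-FD-Complex}, the functor $_AX\otimes^L_B-:\sg(B)\to\sg(A)$ is well defined and has the right adjoint $\RHom_A(X,A)\otimes^L_A-:\sg(A)\to\sg(B)$ exactly when $X$ and $\RHom_A(X,A)$ are biperfect. So the biperfectness conditions in the statement are necessary, and it remains to show that, under them, full faithfulness is equivalent to $\otimes$-perfectness of $\Cone(r^B_X)$. The argument mirrors the proof of Lemma~\ref{Lem-SingCatFFTriFunLeftAdj-FD-Complex}, with the left adjoint replaced by the right adjoint and the counit by the unit.

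First I set up the adjunction at the level of derived categories. Since $_AX\in\per(A)$, there is a natural isomorphism $\RHom_A(X,-)\cong\RHom_A(X,A)\otimes^L_A-$. Hence $(X\otimes^L_B-,\RHom_A(X,A)\otimes^L_A-)$ is an adjoint pair $\D^b(B)\rightleftarrows\D^b(A)$. By \cite[Lemma 1.2]{Orlov04} it descends to an adjoint pair between $\sg(B)$ and $\sg(A)$, and the unit there is the image of the derived unit $\varepsilon:\Id\to\RHom_A(X,X\otimes^L_B-)$. One point needs care: I must check that the isomorphism $\RHom_A(X,-)\cong\RHom_A(X,A)\otimes^L_A-$ is compatible with the units, so that the descended unit really is the image of $\varepsilon$. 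This is the same kind of bookkeeping as with the map $m_Z$ in Lemma~\ref{Lem-Cone-Cone}.

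By \cite[Proposition 1.5.6 (i)]{Kashiwara-Schapira06}, the left adjoint $X\otimes^L_B-$ on singularity categories is fully faithful if and only if the unit is a natural isomorphism. That holds if and only if $\Cone(\varepsilon_Y)\in\per(B)$ for every $Y\in\D^b(B)$. Because $_AX\in\per(A)$, Lemma~\ref{Lem-Cone-Cone} (with $C=k$) gives $\Cone(\varepsilon_Y)\cong\Cone(\varepsilon^B_B)\otimes^L_BY$. By Remark~\ref{Rmk-UnitAct-CounitEv}~(i), $\varepsilon^B_B=r^B_X$. So the condition becomes $\Cone(r^B_X)\otimes^L_BY\in\per(B)$ for all $Y\in\D^b(B)$. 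By Proposition~\ref{Prop-BimodComplex-TensorPerfect}~(2), this is exactly the $\otimes$-perfectness of the $B$-bimodule complex $\Cone(r^B_X)$.

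The main obstacle is the identification of the unit of the induced adjunction on $\sg$ with the image of the derived unit $\varepsilon$, together with the corresponding isomorphism on cones. The first thing I would try is to check this on cofibrant replacements: for ${\bf p}X$, the map $\Hom_A({\bf p}X,A)\otimes_A{\bf p}Z\to\Hom_A({\bf p}X,{\bf p}Z)$ is a quasi-isomorphism, since it is an isomorphism for ${\bf p}X=A$ and both sides are exact in ${\bf p}X$.
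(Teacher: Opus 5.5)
Your proposal is correct and follows the argument the paper intends; the paper states this lemma without proof as the right-adjoint analogue of Lemma~\ref{Lem-SingCatFFTriFunLeftAdj-FD-Complex}, and the intended steps are exactly yours: Lemma~\ref{Lem-SingCatTriFunRightAdj-FD-Complex} with Orlov's lemma for the adjunction on $\sg$, Kashiwara--Schapira 1.5.6(i) for the unit, Lemma~\ref{Lem-Cone-Cone} to get $\Cone(\varepsilon_Y)\cong\Cone(r^B_X)\otimes^L_BY$, and Proposition~\ref{Prop-BimodComplex-TensorPerfect}(2). The compatibility issue you flag is harmless: full faithfulness is a property of $X\otimes^L_B-$ alone, and the units of two adjunctions with the same left adjoint and isomorphic right adjoints differ by an isomorphism, so you may work directly with the right adjoint $\RHom_A(X,-)$ and its unit $\varepsilon$.
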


The following result is a characterization of a standard triangle functor on singularity categories with a fully faithful canonical right adjoint.

\begin{lemma} \label{Lem-SingCatTriFunFFRightAdj-FD-Complex} 
	Let $A$ and $B$ be finite dimensional algebras. Then a complex $X$ of $A$-$B$-bimodules induces a triangle functor $_AX\otimes^L_B-: \sg(B)\to \sg(A)$ with a fully faithful right adjoint $\RHom_A(X,A)\otimes^L_A-:\sg(A)\to\sg(B)$ if and only if both $X$ and $\RHom_A(X,A)$ are biperfect, and the $A$-bimodule complex $\Cone(\ev^A_X:X\otimes^L_B\RHom_A(X,A)\to A)$ is $\otimes$-perfect.
\end{lemma}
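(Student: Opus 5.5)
The plan mirrors the proof of Lemma~\ref{Lem-jSingCatTriFunFFLeftAdj-FD-Complex}, with the roles of unit and counit exchanged.

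\emph{Step 1: the adjoint pair.} By Lemma~\ref{Lem-SingCatTriFunRightAdj-FD-Complex}, $X\otimes^L_B-:\sg(B)\to\sg(A)$ has the right adjoint $\RHom_A(X,A)\otimes^L_A-:\sg(A)\to\sg(B)$ precisely when $X$ and $\RHom_A(X,A)$ are biperfect. So throughout we assume both are biperfect. Since $_AX\in\per(A)$, we have $\RHom_A(X,A)\otimes^L_A-\cong\RHom_A(X,-)$. The adjoint pair on singularity categories is then induced, via \cite[Lemma 1.2]{Orlov04}, by the adjoint pair
$$X\otimes^L_B-:\D^b(B)\rightleftarrows\D^b(A):\RHom_A(X,-)$$
on bounded derived categories.

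\emph{Step 2: reduce to the counit.} By \cite[Proposition 1.5.6]{Kashiwara-Schapira06}, the right adjoint is fully faithful if and only if the counit $\eta: X\otimes^L_B\RHom_A(X,-)\to\Id_{\sg(A)}$ is a natural isomorphism. This holds if and only if, for every $Z\in\D^b(A)$, the morphism $\eta_Z:X\otimes^L_B\RHom_A(X,Z)\to Z$ becomes invertible in $\sg(A)$. Equivalently, $\Cone(\eta_Z)\in\per(A)$ for all $Z\in\D^b(A)$.

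A point to check here is that the counit on $\sg(A)$ is the image of the derived-level counit. This should follow from the construction in \cite[Lemma 1.2]{Orlov04}, where the adjunction on the Verdier quotients is induced by the one on $\D^b$.

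\emph{Step 3: move the cone to the bimodule level.} Since $_AX\in\per(A)$, apply Lemma~\ref{Lem-Cone-Cone} with $C=k$. It gives $\Cone(\eta_Z)\cong\Cone(\eta^A_A)\otimes^L_AZ$. By Remark~\ref{Rmk-UnitAct-CounitEv}(ii), $\eta^A_A=\ev^A_X:X\otimes^L_B\RHom_A(X,A)\to A$. Hence the condition from Step 2 becomes
$$\Cone(\ev^A_X)\otimes^L_AZ\in\per(A)\quad\text{for all } Z\in\D^b(A).$$
By Proposition~\ref{Prop-BimodComplex-TensorPerfect}, condition (2), this says exactly that the $A$-bimodule complex $\Cone(\ev^A_X)$ is $\otimes$-perfect.

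Each step is reversible, so the equivalence follows. I expect Step 2 to be the only delicate point, namely the identification of the counit just noted; the rest is formal.
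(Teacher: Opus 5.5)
Your proposal is correct and follows the route the paper intends. The paper omits this proof as ``similar'' to Lemma~\ref{Lem-SingCatFFTriFunLeftAdj-FD-Complex}, and your steps are that argument: reduce to the biperfect case via Lemma~\ref{Lem-SingCatTriFunRightAdj-FD-Complex}; use that the right adjoint is fully faithful iff the counit is invertible in $\sg(A)$; identify $\Cone(\eta_Z)\cong\Cone(\ev^A_X)\otimes^L_AZ$ by Lemma~\ref{Lem-Cone-Cone} and Remark~\ref{Rmk-UnitAct-CounitEv}(ii); and conclude with Proposition~\ref{Prop-BimodComplex-TensorPerfect}(2). The point you flag, that the counit on $\sg(A)$ is the image of the derived counit, is also taken for granted by the paper via \cite[Lemma 1.2]{Orlov04}, and it does hold because the triangle identities pass through the quotient functor.
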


\subsection{Definitions and characterizations}

Two finite dimensional algebras $A$ and $B$ are said to be {\it singular equivalent} if there is a triangle equivalence between $\sg(A)$ and $\sg(B)$. In general, it is difficult to characterize a singular equivalence even a {\it standard} singular equivalence, i.e., a singular equivalence induced by a derived tensor product functor on derived categories. However, it is easier to characterize a standard singular equivalence with canonical adjoints.

\begin{definition}{\rm 
	Let $A$ and $B$ be finite dimensional algebras. We say that a complex $X$ of $A$-$B$-bimodules induces a {\it singular equivalence of $(m,n)$-adjoint type} for positive integers $m$ and $n$ if the triangle functor $_AX\otimes^L_B-: \D(B)\to \D(A)$ induces a triangle equivalence $_AX\otimes^L_B-: \sg(B)\to \sg(A)$, and the bimodule complexes $X^m,\cdots,X^2,X^1:=X=:X_1,X_2\cdots,X_n$ are biperfect,
	where $X^{2i}:=\RHom_{B^\op}(X^{2i-1},B),\linebreak X^{2i+1} :=\RHom_{A^\op}(X^{2i},A),X_{2i}:=\RHom_A(X_{2i-1},A),   X_{2i+1}:=\RHom_B(X_{2i},B)$, for all $i\ge 1$. In particular, a singular equivalence of $(1,n)$-adjoint type is called a {\it singular equivalence of (right) $n$-adjoint type}, and a singular equivalence of $(m,1)$-adjoint type is called a {\it singular equivalence of left $m$-adjoint type}.
}\end{definition}

If a complex $X$ of $A$-$B$-bimodules induces a singular equivalence of $(m,n)$-adjoint type then we have 
an adjoint $(m+n-1)$-tuple
$$\begin{tikzcd}[column sep=60]
	\sg(B) \arrow[r,"{X\otimes^L_B-}"] \arrow[r,shift left=12,"{X^3\otimes^L_B-}"] \arrow[r,no head,shift left=24,"{X^m\otimes^L_{B_m}-}"] \arrow[r,shift right=12,"{X_3\otimes^L_B-}"] \arrow[r,no head,shift right=18,"\vdots","{X_n\otimes^L_{B_n}-}"'] \arrow[r,no head,shift right=24] & \sg(A) \arrow[l,shift left=6,"{X_2\otimes^L_A-}"'] \arrow[l,shift right=6,"{X^2\otimes^L_A-}"'] \arrow[l,shift right=18,"\vdots"']
\end{tikzcd}$$
where $B_p:=B$ if $p$ is odd, and $B_p:=A$ if $p$ is even.
Note that a singular equivalence of $(1,1)$-adjoint type induced by $X\in\D(A\otimes B^\op)$ is just a singular equivalence $_AX\otimes^L_B-: \sg(B)\to \sg(A)$. A singular equivalence of $(2,1)$-adjoint type induced by $X\in\D(A\otimes B^\op)$ is just a singular equivalence $_AX\otimes^L_B-: \sg(B)\to \sg(A)$ with a left adjoint $\RHom_{B^\op}(X,B)\otimes^L_A-: \sg(A)\to \sg(B)$. A singular equivalence of $(1,2)$-adjoint type induced by $X\in\D(A\otimes B^\op)$ is just a singular equivalence $_AX\otimes^L_B-: \sg(B)\to \sg(A)$ with a right adjoint $\RHom_A(X,A)\otimes^L_A-: \sg(A)\to \sg(B)$. Moreover, a singular equivalence of $(m,n)$-adjoint type induced by $X\in\D(A\otimes B^\op)$ must be a singular equivalence of $(m',n')$-adjoint type induced by $X\in\D(A\otimes B^\op)$ for all $1\le m'\le m$ and $1\le n'\le n$.

\begin{remark}{\rm 
	A singular equivalence of $(m,n)$-adjoint type induced by $X\in\D(A\otimes B^\op)$ is a singular equivalence of left $(m+n-1)$-adjoint type induced by $X_n$, and a singular equivalence of (right) $(m+n-1)$-adjoint type induced by $X^m$. 
	Thus usually we only study singular equivalences of (right) $n$-adjoint type.
}\end{remark}

The following result is a characterization of singular equivalences of 2-adjoint type, which is more subtle than \cite[Theorem 3.6]{Dalezios21}.

\begin{theorem} \label{Thm-Exist-SingEquRightAdjoit-FD} 
	Let $A$ and $B$ be finite dimensional algebras. Then a complex $X$ of $A$-$B$-bimodules induces a singular equivalence $_AX\otimes^L_B-: \sg(B)\to \sg(A)$ with right adjoint $\RHom_A(X,A)\otimes^L_A-:\sg(A)\to \sg(B)$ if and only if the following conditions are satisfied:
	
	{\rm (1)} The bimodule complexes $X$ and $\RHom_A(X,A)$ are biperfect.
	
	{\rm (2)} The $B$-bimodule complex $\Cone(r^B_X:B\to \RHom_A(X,X))$ and the $A$-bimodule complex $\Cone(\ev^A_X:X\otimes^L_B\RHom_A(X,A)\to A)$ are $\otimes$-perfect. 
\end{theorem}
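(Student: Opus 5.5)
The plan is to combine Lemma~\ref{Lem-SingCatTriFunRightAdj-FD-Complex}, Lemma~\ref{Lem-SingCatFFTriFunRightAdj-FD-Complex} and Lemma~\ref{Lem-SingCatTriFunFFRightAdj-FD-Complex} with the standard categorical fact that, for an adjoint pair $(F,G)$, the left adjoint $F$ is an equivalence if and only if both $F$ and $G$ are fully faithful. Equivalently, the unit and the counit must both be natural isomorphisms (cf.\ \cite[Proposition 1.5.6]{Kashiwara-Schapira06}).

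\emph{Necessity.} Suppose $X$ induces a singular equivalence $F:=\ _AX\otimes^L_B-:\sg(B)\to\sg(A)$ with right adjoint $G:=\RHom_A(X,A)\otimes^L_A-$.
\begin{enumerate}
\item Lemma~\ref{Lem-SingCatTriFunRightAdj-FD-Complex} gives condition (1).
\item Since $F$ is an equivalence, it is fully faithful. Lemma~\ref{Lem-SingCatFFTriFunRightAdj-FD-Complex} then shows that $\Cone(r^B_X)$ is $\otimes$-perfect.
\item The right adjoint of an equivalence is a quasi-inverse, hence fully faithful. Lemma~\ref{Lem-SingCatTriFunFFRightAdj-FD-Complex} then shows that $\Cone(\ev^A_X)$ is $\otimes$-perfect.
\end{enumerate}

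\emph{Sufficiency.} Assume conditions (1) and (2).
\begin{enumerate}
\item By Lemma~\ref{Lem-SingCatTriFunRightAdj-FD-Complex}, condition (1) gives the adjoint pair $(F,G)$ on singularity categories. Here I rely on \cite[Lemma 1.2]{Orlov04}, as in the proof of Lemma~\ref{Lem-SingCatTriFunLeftAdj-FD-Complex}, to ensure that this pair descends from the derived-level adjunction $X\otimes^L_B-\dashv\RHom_A(X,-)\cong\RHom_A(X,A)\otimes^L_A-$. The latter isomorphism holds because ${}_AX\in\per(A)$.
\item The $\otimes$-perfectness of $\Cone(r^B_X)$ and Lemma~\ref{Lem-SingCatFFTriFunRightAdj-FD-Complex} make $F$ fully faithful.
\item The $\otimes$-perfectness of $\Cone(\ev^A_X)$ and Lemma~\ref{Lem-SingCatTriFunFFRightAdj-FD-Complex} make $G$ fully faithful.
\item So the unit $\Id\to GF$ and the counit $FG\to\Id$ of this adjunction are both isomorphisms. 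Hence $F$ is a triangle equivalence with quasi-inverse $G$.
\end{enumerate}

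The main point to check carefully is consistency of the adjunctions. The unit and counit used in Lemmas~\ref{Lem-SingCatFFTriFunRightAdj-FD-Complex} and \ref{Lem-SingCatTriFunFFRightAdj-FD-Complex} must be those of the same adjoint pair $(F,G)$ on $\sg$. That pair is induced from $(X\otimes^L_B-,\RHom_A(X,-))$ on $\D^b$, where Lemma~\ref{Lem-Cone-Cone} identifies $\Cone(\varepsilon_Z)\cong\Cone(r^B_X)\otimes^L_BZ$ and $\Cone(\eta_Z)\cong\Cone(\ev^A_X)\otimes^L_AZ$. Since those two lemmas are only stated (proofs said to be similar), I would verify them along the lines of Lemmas~\ref{Lem-SingCatFFTriFunLeftAdj-FD-Complex} and \ref{Lem-jSingCatTriFunFFLeftAdj-FD-Complex}. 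In each case, full faithfulness of a functor on $\sg$ holds if and only if the relevant unit or counit has cone in $\per$ for all objects of $\D^b$, which by Lemma~\ref{Lem-Cone-Cone} and Proposition~\ref{Prop-BimodComplex-TensorPerfect} is exactly the stated $\otimes$-perfectness.
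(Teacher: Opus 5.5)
Your proposal is correct and follows the paper's own proof. Lemma~\ref{Lem-SingCatTriFunRightAdj-FD-Complex} gives condition (1) and the adjoint pair on singularity categories, \cite[Proposition 1.5.6 (iii)]{Kashiwara-Schapira06} reduces the equivalence to full faithfulness of both adjoints, and Lemmas~\ref{Lem-SingCatFFTriFunRightAdj-FD-Complex} and \ref{Lem-SingCatTriFunFFRightAdj-FD-Complex} convert this into $\otimes$-perfectness of $\Cone(r^B_X)$ and $\Cone(\ev^A_X)$. Your extra check that both lemmas use the unit and counit of the same adjunction (via Lemma~\ref{Lem-Cone-Cone} and ${}_AX\in\per(A)$) is what the paper leaves implicit.
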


\begin{proof}
	It follows from Lemma~\ref{Lem-SingCatTriFunRightAdj-FD-Complex} that a complex $X$ of $A$-$B$-bimodules induces a triangle functor $_AX\otimes^L_B-: \sg(B)\to \sg(A)$ with a right adjoint $\RHom_A(X,A)\otimes^L_A-:\sg(A)\to \sg(B)$ if and only if the condition (1) holds. In this case, we have an adjoint pair $X\otimes^L_B-:\sg(B) \rightleftarrows \sg(A): \RHom_A(X,A)\otimes^L_A-$.
	Furthermore, the triangle functor $X\otimes^L_B-: \sg(B)\to \sg(A)$ is a singular equivalence if and only if the triangle functors $X\otimes^L_B-$ and $\RHom_A(X,A)\otimes^L_A-$ are fully faithful (Ref. \cite[Proposition 1.5.6 (iii) (c)]{Kashiwara-Schapira06}), if and only if the $B$-bimodule complex $\Cone(r^B_X:B\to \RHom_A(X,X))$ and the $A$-bimodule complex $\Cone(\ev^A_X:X\otimes^L_B\RHom_A(X,A)\to A)$ are $\otimes$-perfect by Lemma~\ref{Lem-SingCatFFTriFunRightAdj-FD-Complex} and Lemma~\ref{Lem-SingCatTriFunFFRightAdj-FD-Complex}.
\end{proof}

Similarly, we have the following characterization of singular equivalences of left 2-adjoint type.

\begin{theorem} \label{Thm-Exist-SingEquLeftAdjoit-FD} 
	Let $A$ and $B$ be finite dimensional algebras. Then a complex $X$ of $A$-$B$-bimodules induces a singular equivalence $_AX\otimes^L_B-: \sg(B)\to \sg(A)$ with a left adjoint $\RHom_{B^\op}(X,B)\otimes^L_A-:\sg(A)\to \sg(B)$ if and only if the following conditions are satisfied:
	
	{\rm (1)} The bimodule complex $X$ and $\RHom_{B^\op}(X,B)$ are biperfect.
	
	{\rm (2)} The $A$-bimodule complex $\Cone(l^A_X:A\to \RHom_{B^\op}(X,X))$ and the $B$-bimodule complex $\Cone(\ev^B_X:\RHom_{B^\op}(X,B)\otimes^L_AX\to B)$ are $\otimes$-perfect.	
\end{theorem}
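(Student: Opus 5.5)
The plan mirrors the proof of Theorem~\ref{Thm-Exist-SingEquRightAdjoit-FD}, with the roles of the left and right adjoints exchanged. The argument has three steps.

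First, I apply Lemma~\ref{Lem-SingCatTriFunLeftAdj-FD-Complex}. It says that $X$ induces a triangle functor $_AX\otimes^L_B-:\sg(B)\to\sg(A)$ with left adjoint $\RHom_{B^\op}(X,B)\otimes^L_A-:\sg(A)\to\sg(B)$ if and only if condition (1) holds. So assume (1) throughout. We then have an adjoint pair
$$\RHom_{B^\op}(X,B)\otimes^L_A-:\sg(A)\rightleftarrows\sg(B):X\otimes^L_B-.$$
This pair is induced from the corresponding adjoint pair on bounded derived categories by \cite[Lemma 1.2]{Orlov04}.

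Second, I reduce the equivalence to full faithfulness of both functors. By \cite[Proposition 1.5.6 (iii)]{Kashiwara-Schapira06}, the right adjoint $X\otimes^L_B-$ is an equivalence if and only if both functors in the pair are fully faithful. Equivalently, the unit and the counit are both natural isomorphisms.

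Third, I translate each full faithfulness condition into the cone conditions in (2). Full faithfulness of $X\otimes^L_B-$ on singularity categories is exactly Lemma~\ref{Lem-SingCatFFTriFunLeftAdj-FD-Complex}. Under (1), it is equivalent to $\Cone(\ev^B_X)$ being $\otimes$-perfect. Full faithfulness of the left adjoint $\RHom_{B^\op}(X,B)\otimes^L_A-$ is exactly Lemma~\ref{Lem-jSingCatTriFunFFLeftAdj-FD-Complex}. Under (1), it is equivalent to $\Cone(l^A_X:A\to\RHom_{B^\op}(X,X))$ being $\otimes$-perfect. Putting the two together gives condition (2), and the chain of equivalences proves both directions at once.

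Essentially no step is a genuine obstacle, since the work is already packaged in the preceding lemmas. The only point to check is that both Lemma~\ref{Lem-SingCatFFTriFunLeftAdj-FD-Complex} and Lemma~\ref{Lem-jSingCatTriFunFFLeftAdj-FD-Complex} refer to the same adjoint pair. Both lemmas are stated for the pair $(\RHom_{B^\op}(X,B)\otimes^L_A-,\,X\otimes^L_B-)$, with the identification $\RHom_B(\RHom_{B^\op}(X,B),-)\cong X\otimes^L_B-$, so the unit and counit being tested match. In this way Lemma~\ref{Lem-Cone-Cone} turns the cones of the unit and counit into the cones of $l^A_X$ and $\ev^B_X$ tensored with the argument.
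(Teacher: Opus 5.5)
Your proposal is correct and follows essentially the same argument as the paper. The paper gives no separate proof, stating this theorem ``similarly'' to Theorem~\ref{Thm-Exist-SingEquRightAdjoit-FD}, and it combines Lemma~\ref{Lem-SingCatTriFunLeftAdj-FD-Complex}, \cite[Proposition 1.5.6 (iii)]{Kashiwara-Schapira06}, and Lemmas~\ref{Lem-SingCatFFTriFunLeftAdj-FD-Complex} and~\ref{Lem-jSingCatTriFunFFLeftAdj-FD-Complex} exactly as you do.
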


From Theorem~\ref{Thm-Exist-SingEquRightAdjoit-FD}, we can obtain the following characterization of singular equivalences of $n$-adjoint type for $n\ge 2$.

\begin{theorem} \label{Thm-Exist-SingEquAdjTye-FD} 
	Let $A$ and $B$ be finite dimensional algebras. Then a complex $X$ of $A$-$B$-bimodules induces a singular equivalence $X\otimes^L_B-:\sg(B)\to \sg(A)$ of $n$-adjoint type for some $n\ge 2$ if and only if the following conditions {\rm (1)} and {\rm (2)}, or {\rm (1)} and {\rm (2')}, are satisfied:
	
	{\rm (1)} The bimodule complexes $X_1,\cdots,X_n$ are biperfect,
	where $X_1:=X, X_{2i}:=\RHom_A(X_{2i-1},A),\linebreak X_{2i+1}:=\RHom_B(X_{2i},B)$, for all $i\ge 1$.
	
	{\rm (2)} The $B$-bimodule complex $\Cone(r^B_{X_{2i-1}}:B\to \RHom_A(X_{2i-1},X_{2i-1}))$ and the $A$-bimodule complex $\Cone(\ev^A_{X_{2i-1}}:X_{2i-1}\otimes^L_B\RHom_A(X_{2i-1},A)\to A)$ are $\otimes$-perfect for some or all $1\le i\le [n/2]$.
	
	{\rm (2')} The $A$-bimodule complex $\Cone(r^A_{X_{2i}}:A\to \RHom_B(X_{2i},X_{2i}))$ and the $B$-bimodule complex $\Cone(\ev^B_{X_{2i}}:X_{2i}\otimes^L_A\RHom_B(X_{2i},B)\to B)$ are $\otimes$-perfect for some or all $1\le i\le [(n-1)/2]$. 	
\end{theorem}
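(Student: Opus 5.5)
The plan is to reduce everything to Theorem~\ref{Thm-Exist-SingEquRightAdjoit-FD}, applied to successive members of the chain $X_1,X_2,\dots,X_n$, and to use the fact that in an adjoint chain of functors one member is an equivalence exactly when all of them are. First I unpack the definition. By the definition, $X$ induces a singular equivalence of $n$-adjoint type if and only if $X\otimes^L_B-:\sg(B)\to\sg(A)$ is a triangle equivalence and $X_1,\dots,X_n$ are biperfect. This is exactly condition (1) together with the equivalence requirement. Assume (1) from now on. Applying Lemma~\ref{Lem-SingCatTriFunRightAdj-FD-Complex} to $X_j$ (with the roles of $A$ and $B$ swapped when $j$ is even), and using that $X_j$ and $X_{j+1}=\RHom(X_j,-)$ are both biperfect for $1\le j\le n-1$, I obtain an adjoint chain on singularity categories
$$X_1\otimes^L_B-\ \dashv\ X_2\otimes^L_A-\ \dashv\ X_3\otimes^L_B-\ \dashv\ \cdots\ \dashv\ X_n\otimes^L_{B_n}-.$$

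The key step is then the following standard observation. Suppose $F\dashv G\dashv H$. If $F$ is an equivalence, then its right adjoint $G$ is a quasi-inverse, so $G$ is an equivalence, and then $H$, being right adjoint to an equivalence, is one as well. Conversely, if $G$ is an equivalence, its left adjoint $F$ is a quasi-inverse and hence an equivalence. By induction along the chain, $X_1\otimes^L_B-$ is an equivalence if and only if $X_j\otimes^L_-$ is an equivalence for some (equivalently all) $1\le j\le n$. Moreover, $X_j\otimes^L-$ is an equivalence if and only if the pair $(X_j\otimes^L-,\ X_{j+1}\otimes^L-)$ is an adjoint equivalence, and this is characterized by Theorem~\ref{Thm-Exist-SingEquRightAdjoit-FD} applied to $X_j$. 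Its condition (1) is already covered by our (1), since $j+1\le n$.

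I then read off the two cases.
\begin{itemize}
\item For odd $j=2i-1$ with $2i\le n$, that is $1\le i\le[n/2]$, Theorem~\ref{Thm-Exist-SingEquRightAdjoit-FD} for the $A$-$B$-complex $X_{2i-1}$ gives exactly condition (2): $\otimes$-perfectness of $\Cone(r^B_{X_{2i-1}})$ and $\Cone(\ev^A_{X_{2i-1}})$.
\item For even $j=2i$ with $2i+1\le n$, that is $1\le i\le[(n-1)/2]$, $X_{2i}$ is a $B$-$A$-complex and $X_{2i+1}=\RHom_B(X_{2i},B)$. Theorem~\ref{Thm-Exist-SingEquRightAdjoit-FD}, with $A$ and $B$ interchanged, gives (2'): $\otimes$-perfectness of $\Cone(r^A_{X_{2i}})$ and $\Cone(\ev^B_{X_{2i}})$.
\end{itemize}
Since being an equivalence at one index is the same as at every index, "some" and "all" are equivalent in both (2) and (2'). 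This proves the stated equivalence.

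The main obstacle is bookkeeping rather than mathematics. One must check that the right adjoint produced by Lemma~\ref{Lem-SingCatTriFunRightAdj-FD-Complex} for $X_j$ is literally $X_{j+1}\otimes^L-$ with the correct base algebra. One must also check that the biperfectness hypotheses used at each step only involve $X_1,\dots,X_n$; the index bounds $[n/2]$ and $[(n-1)/2]$ are chosen precisely so that $X_{j+1}$ with $j+1\le n$ is available. Finally, one should note that $n\ge2$ is needed, so that at least one instance of (2) exists.
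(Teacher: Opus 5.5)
Your proposal is correct and is the paper's argument: both reduce the claim to Theorem~\ref{Thm-Exist-SingEquRightAdjoit-FD}, applied to $X_{2i-1}$ to get (2), or with $A$ and $B$ interchanged to $X_{2i}$ to get (2'). Your write-up additionally makes explicit a fact the paper uses tacitly, namely that one functor in the adjoint chain $X_1\otimes^L_B-\dashv X_2\otimes^L_A-\dashv\cdots$ on singularity categories is an equivalence if and only if all of them are.

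One small caveat, which the paper shares: for $n=2$ the index range in (2') is empty, so your remark that ``some'' and ``all'' are interchangeable fails there. Condition (2') has to be read as ``some'' (hence never satisfied), since under the ``all'' reading condition (1) alone would wrongly suffice.
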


\begin{proof}
	If a complex $X$ of $A$-$B$-bimodules induces a singular equivalence $X\otimes^L_B-:\sg(B)\to \sg(A)$ of $n$-adjoint type for some $n\ge 2$ then the complex $X_{2i-1}$ of $A$-$B$-bimodules induces a singular equivalence $X_{2i-1}\otimes^L_B-:\sg(B)\to\sg(A)$ of 2-adjoint type for all $1\le i\le [n/2]$, and the complex $X_{2i}$ of $B$-$A$-bimodules induces a singular equivalence $X_{2i}\otimes^L_A-:\sg(A)\to\sg(B)$ of 2-adjoint type for all $1\le i\le [(n-1)/2]$. It follows from Theorem~\ref{Thm-Exist-SingEquRightAdjoit-FD} that (1), (2) and (2') hold. Conversely, if (1) and (2) hold then, by Theorem~\ref{Thm-Exist-SingEquRightAdjoit-FD}, the complex $X_{2i-1}$ of $A$-$B$-bimodules induces a singular equivalence $X_{2i-1}\otimes^L_B-:\sg(B)\to\sg(A)$ of 2-adjoint type. Thus the complex $X$ of $A$-$B$-bimodules induces a singular equivalence $X\otimes^L_B-:\sg(B)\to \sg(A)$ of $n$-adjoint type. Similarly, if (1) and (2') hold then, by Theorem~\ref{Thm-Exist-SingEquRightAdjoit-FD}, the complex $X_{2i}$ of $B$-$A$-bimodules induces a singular equivalence $X_{2i}\otimes^L_A-:\sg(A)\to\sg(B)$ of 2-adjoint type. Thus the complex $X$ of $A$-$B$-bimodules induces a singular equivalence $X\otimes^L_B-:\sg(B)\to \sg(A)$ of $n$-adjoint type.	
\end{proof}

\medspace

\noindent{\bf Comparison with singular equivalences of Morita type with level.}
If two finite dimensional $k$-algebras $A$ and $B$ are singularly equivalent of 2-adjoint type, and $A/\rad A$ and $B/\rad B$ are separable over $k$, then analogous to \cite[Theorem 2.3]{Wang15}, we can show that $A$ and $B$ are singularly equivalent of Morita type with level $l$ for some $l\ge 0$ (Ref. \cite[Theorem 1.1]{Qin22}). 

The following is an example of singular equivalences of 2-adjoint type but not singular equivalences of Morita type with level.

\begin{example} \label{Ex-SE2AT-NotSEMTWL} {\rm 
		The inseparable extension of fields $k:=\mathbb{Z}_p(t) \subset K:= k[x]/(x^p-t)$ induces a trivial singular equivalence $_KK\otimes^L_k-=0: \sg(k)=0\to \sg(K)=0$ with a right adjoint $_kK\otimes^L_K-=0: \sg(K)=0\to \sg(k)=0$. However, $k$ and $K$ are not singularly equivalent of Morita type with level $l$ for all $l\ge 0$. Otherwise, there exist a $k$-$K$-bimodule $M=K^m$ and a $K$-$k$-bimodule $N=K^n$ such that $M\otimes_KN\cong \Omega^l_{k^e}(k)$ in $\underline{\mod} k^e$ and $N\otimes_kM\cong \Omega^l_{K^e}(K)$ in $\underline{\mod}K^e$ for some $l\ge 0$. Due to $K^e\cong K[y]/(y^p)$, we have $\Omega^l_{K^e}(K)\notin \proj(K^e)$ for all $l\ge 0$, but $N\otimes_kM\cong (K^e)^{mn}\in \proj(K^e)$. It contradicts to $N\otimes_kM\cong \Omega^l_{K^e}(K)$ in $\underline{\mod}K^e$.	
}\end{example}

We have no any example of singular equivalence (of Morita type with level) but not singular equivalence of 2-adjoint type.

\begin{question}{\rm
	Is every singular equivalence standard, i.e., of 1-adjoint type, up to natural isomorphisms?
}\end{question}

\begin{question}{\rm
	Is there a singular equivalence of 1-adjoint type but not 2-adjoint type?
}\end{question}

\subsection{Constructions}

In this subsection, we provide two constructions of singular equivalences of $n$-adjoint type by opposite algebras and tensor products of algebras for all $n\ge 2$.

The following result is a construction of singular equivalences of $n$-adjoint type by opposite algebras for all $n\ge 2$.

\begin{proposition} \label{Prop-SE-OppositeAlg}
	Let $A$ and $B$ be two finite dimensional algebras. Then a complex $X$ of $B$-$A$-bimodules induces a singular equivalence $_AX\otimes^L_B-: \sg(B)\to\sg(A)$ of (right) $n$-adjoint type for some $n\ge 2$ if and only if $X$ induces a singular equivalence $X\otimes^L_{A^\op}- = -\otimes^L_AX: \sg(A^\op)\to\sg(B^\op)$ of left $n$-adjoint type.	
\end{proposition}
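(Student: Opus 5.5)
We regard $X$ as a complex of $A$-$B$-bimodules (equivalently of $B^\op$-$A^\op$-bimodules). The proof compares, term by term, the right adjoint tower of $X$ over $(A,B)$ with the left adjoint tower of $X$ over $(B^\op,A^\op)$. For the functor $X\otimes^L_{A^\op}-:\D(A^\op)\to\D(B^\op)$, the left adjoint tower is built as in the definition of $(m,n)$-adjoint type, with the roles $A\leftrightarrow B^\op$, $B\leftrightarrow A^\op$. Concretely, $X^{(1)}:=X$, $X^{(2)}:=\RHom_{(A^\op)^\op}(X^{(1)},A^\op)=\RHom_A(X,A)$, $X^{(3)}:=\RHom_{(B^\op)^\op}(X^{(2)},B^\op)=\RHom_B(X_2,B)$, and so on. By induction on $j$ we identify $X^{(j)}=X_j$ as bimodule complexes, up to the canonical identification of $A^\op$-modules with right $A$-modules. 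Biperfectness is symmetric in the two sides, so condition (1) of Theorem~\ref{Thm-Exist-SingEquAdjTye-FD} for $X$ over $(A,B)$ is equivalent to biperfectness of $X^{(1)},\dots,X^{(n)}$ for the opposite setting.

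It remains to match the equivalence conditions. First, $-\otimes^L_AX:\sg(A^\op)\to\sg(B^\op)$ being a triangle equivalence should be equivalent to $X\otimes^L_B-:\sg(B)\to\sg(A)$ being one. I would not try to prove this via $k$-duality of singularity categories, since $(-)^*$ exchanges $\per$ with $K^b(\inj)$. Instead I would apply Theorem~\ref{Thm-Exist-SingEquRightAdjoit-FD} on the one side and Theorem~\ref{Thm-Exist-SingEquLeftAdjoit-FD} on the opposite side.

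On the $(A,B)$ side, Theorem~\ref{Thm-Exist-SingEquRightAdjoit-FD} says that, given biperfectness of $X$ and $X_2$, the functor is an equivalence with right adjoint iff $\Cone(r^B_X:B\to\RHom_A(X,X))$ and $\Cone(\ev^A_X:X\otimes^L_B\RHom_A(X,A)\to A)$ are $\otimes$-perfect. On the opposite side, we view $X$ as a $B^\op$-$A^\op$-bimodule complex. Theorem~\ref{Thm-Exist-SingEquLeftAdjoit-FD}, applied with left adjoint $\RHom_{A}(X,A)\otimes^L_{B^\op}-$, requires $\otimes$-perfectness of $\Cone(l^{B^\op}_X:B^\op\to\RHom_{A}(X,X))$ and $\Cone(\ev^{A^\op}_X:\RHom_A(X,A)\otimes^L_{B^\op}X\to A^\op)$. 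I will check on cofibrant models that these morphisms are exactly $r^B_X$ and $\ev^A_X$ under the identifications $B^\op$-bimodules $=$ $B$-bimodules and $A^\op$-bimodules $=$ $A$-bimodules, just as in the commutative square used in the proof of Proposition~\ref{Prop-EHI-OppositeAlg}. The only care needed is with Koszul signs in $\ev$, which change the map by an automorphism of the source and hence leave the cone unchanged up to isomorphism.

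Finally, $\otimes$-perfectness of an $A$-bimodule complex $C$ is invariant under passing to $A^\op$. Condition (1'') of Proposition~\ref{Prop-BimodComplex-TensorPerfect} for $C$ over $A^\op$ reads $\ol{A}\otimes^L_{A}C\otimes^L_A\ol{A}\in\D^b(k)$ with the factors swapped, which is the same complex up to the symmetry isomorphism. Combining these observations with the identification $X^{(j)}=X_j$, the characterization in Theorem~\ref{Thm-Exist-SingEquAdjTye-FD} (conditions (1) and (2) for $i=1$) translates verbatim into the corresponding left-adjoint characterization, proving both directions.

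The main obstacle is bookkeeping rather than mathematics: verifying that $l^{B^\op}_X$ and $\ev^{A^\op}_X$ coincide with $r^B_X$ and $\ev^A_X$ as morphisms in $\D(B^e)$ and $\D(A^e)$. I would do this at the level of ${\cal H}(B^e)$ and ${\cal H}(A^e)$ using explicit formulas on ${\bf p}X$.
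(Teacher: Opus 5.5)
Your argument is correct, but it is organized differently from the paper's.

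\textbf{The paper's route.} It converts the left $n$-adjoint condition into a right $n$-adjoint condition for the last term of the tower, working with $X_n$ over $(B_n^\op,A_n^\op)$. It uses the double-dual isomorphisms $\RHom_{B_i^\op}(X_i,B_i^\op)\cong X_{i-1}$ to see that the right tower of $X_n$ is $X_n,\dots,X_1$. It then applies Theorem~\ref{Thm-Exist-SingEquAdjTye-FD} at the slot occupied by $X_2$. This forces it to identify $\Cone(r^{B^\op}_{X_2})\cong\Cone(r^B_X)$ and $\Cone(\ev^{A^\op}_{X_2})\cong\Cone(\ev^A_X)$, via $\RHom_{A^\op}(X_2,X_2)\cong\RHom_A(X,X)$ and $\RHom_{A^\op}(X_2,A^\op)\cong X$.

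\textbf{Your route.} You observe that the left tower of $X$ over $(B^\op,A^\op)$ is, by unwinding the definitions, literally the right tower $X_1,\dots,X_n$, so no double duals are needed. You then test the equivalence with Theorem~\ref{Thm-Exist-SingEquLeftAdjoit-FD} applied to $X$ itself. There $l^{B^\op}_X$ is on the nose $r^B_X$, and $\ev^{A^\op}_X$ is $\ev^A_X$ precomposed with the symmetry isomorphism $f\otimes x\mapsto(-1)^{|f||x|}x\otimes f$. With the paper's sign conventions the Koszul signs actually cancel, and in any case the cone is unaffected.

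\textbf{Trade-off.} Your route avoids the double duals and the reindexing of the tower, so the bookkeeping is lighter. The price is reliance on the left-adjoint characterization Theorem~\ref{Thm-Exist-SingEquLeftAdjoit-FD}, which the paper only asserts ``similarly''. It does follow from the proved Lemmas~\ref{Lem-SingCatFFTriFunLeftAdj-FD-Complex} and \ref{Lem-jSingCatTriFunFFLeftAdj-FD-Complex} by the same adjoint-equivalence criterion used for Theorem~\ref{Thm-Exist-SingEquRightAdjoit-FD}. The paper's route, by contrast, only uses the right-adjoint characterization it actually proves.

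In both routes the hypothesis $n\ge 2$ is used only to make $X_2$ biperfect, so that the relevant two-sided characterization applies on each side.
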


\begin{proof}
	It follows from Theorem~\ref{Thm-Exist-SingEquAdjTye-FD} that a complex $X$ of $B$-$A$-bimodules induces a singular equivalence $_AX\otimes^L_B-: \sg(B)\to\sg(A)$ of (right) $n$-adjoint type for some $n\ge 2$ if and only if the bimodule complexes $X_1,\cdots,X_n$ are biperfect,
	where $X_1:=X, X_{2i}:=\RHom_A(X_{2i-1},A), X_{2i+1}:=\RHom_B(X_{2i},B)$, for all $i\ge 1$, and the $B$-bimodule complex $\Cone(r^B_X:B\to \RHom_A(X,X))$ and the $A$-bimodule complex $\Cone(\ev^A_X:X\otimes^L_B\RHom_A(X,A)\to A)$ are $\otimes$-perfect.
	
	Note that $X_i$ is a complex of $A_i$-$B_i$-bimodules, or equivalently, a complex of $B_i^\op$-$A_i^\op$-bimodules, where $A_i:=A$ and $B_i:=B$ if $i$ is odd, and $A_i:=B$ and $B_i:=A$ if $i$ is even. It is clear that $\RHom_{B_i^\op}(X_i,B_i^\op) \cong \RHom_{B_i^\op}(\RHom_{A_{i-1}}(X_{i-1},A_{i-1}),B_i^\op) \cong X_{i-1}$ for all $i\ge 2$. Moreover, the $B^\op$-bimodule complex $\Cone(r^{B^\op}_{X_2}:B^\op\to \RHom_{A^\op}(X_2,X_2)) \cong \Cone(r^B_X:B\to \RHom_A(X,X))$ in $\D(B^e)$ and the $A^\op$-bimodule complex $\Cone(\ev^{A^\op}_{X_2}:X_2\otimes^L_{B^\op}\RHom_{A^\op}(X_2,A^\op)\to A^\op)\cong \Cone(\ev^A_X:X\otimes^L_B\RHom_A(X,A)\to A)$ in $\D(A^e)$.	
	$$\begin{tikzcd}[column sep=100]
		\sg(A^\op) \arrow[r,"{X\otimes^L_{A^\op}-\ =\ -\otimes_AX}"] \arrow[r,shift left=12,"{X_3\otimes^L_{A^\op}-\ =\ -\otimes^L_AX_3}"] \arrow[r,no head,shift left=24,"{X_n\otimes^L_{A_n^\op}-\ =\ -\otimes^L_{A_n}X_n}"] & \sg(B^\op). \arrow[l,shift right=6,"{X_2\otimes^L_{B^\op}-\ =\ -\otimes^L_BX_2}"'] \arrow[l,shift right=18,"\vdots"']
	\end{tikzcd}$$
	Thus $X$ induces a singular equivalence $X\otimes^L_{A^\op}-= -\otimes^L_AX: \sg(A^\op)\to\sg(B^\op)$ of left $n$-adjoint type, if and only if the complex $X_n$ of $A_n$-$B_n$-bimodules induces a singular equivalence $_{B_n^\op}X_n\otimes^L_{A_n^\op}-: \sg(A_n^\op)\to\sg(B_n^\op)$ of (right) $n$-adjoint type, if and only if the bimodule complexes $X_1,\cdots,X_n$ are biperfect, and the $B$-bimodule complex $\Cone(r^B_X:B\to \RHom_A(X,X))$ and the $A$-bimodule complex $\Cone(\ev^A_X:X\otimes^L_B\RHom_A(X,A)\to A)$ are $\otimes$-perfect by Theorem~\ref{Thm-Exist-SingEquAdjTye-FD}, if and only if the complex $X$ of $A$-$B$-bimodules induces a singular equivalence $_AX\otimes^L_B-: \sg(B)\to\sg(A)$ of (right) $n$-adjoint type.	
\end{proof}

The following result is a construction of singular equivalences of $n$-adjoint type by tensor products of algebras for all $n\ge 2$.

\begin{proposition} \label{Prop-SE-TensorProdAlg}
	Let $A$ and $B$ be finite dimensional algebras, and $C$ a finite dimensional algebra satisfying $\gl C<\infty$ and $C/\rad C$ is separable over $k$. Then a complex $X$ of $A$-$B$-bimodules induces a singular equivalence $X\otimes^L_B-: \sg(B)\to\sg(A)$ of $n$-adjoint type for some $n\ge 2$ if and only if the complex $X\otimes C$ of $(A\otimes C)$-$(B\otimes C)$-bimodules induces a singular equivalence $(X\otimes C)\otimes^L_{B\otimes C}- \cong X\otimes^L_B-: \sg(B\otimes C)\to\sg(A\otimes C)$ of $n$-adjoint type.
\end{proposition}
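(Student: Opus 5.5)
Via Theorem~\ref{Thm-Exist-SingEquAdjTye-FD}, I would reduce the statement to two things: biperfectness of the iterated duals, and $\otimes$-perfectness of the two cones in condition (2) for $i=1$. Each is then compared for $X$ and for $X\otimes C$, in the same way as in the proof of Proposition~\ref{Prop-EHI-TensorProdAlg}. Write $\tilde X:=X\otimes C$, $\tilde A:=A\otimes C$ and $\tilde B:=B\otimes C$, and define $\tilde X_j$ from $\tilde X$ as $X_j$ is defined from $X$.

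\textbf{Step 1: identify the iterated duals.} Since $C$ is finite dimensional and $\RHom$ over $\tilde A$ of $X\otimes C$ splits as a tensor product, we get natural isomorphisms
\[
\RHom_{\tilde A}(X_{2i-1}\otimes C,\tilde A)\cong \RHom_A(X_{2i-1},A)\otimes C,
\]
and similarly over $\tilde B$. By induction, $\tilde X_j\cong X_j\otimes C$ for all $j$.

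\textbf{Step 2: compare biperfectness.} Next I would show that $X_j$ is biperfect if and only if $X_j\otimes C$ is. For the left side, if ${}_AX_j\in\per(A)$ then ${}_{\tilde A}(X_j\otimes C)\in\per(\tilde A)$, since $A\otimes C$ is free. Conversely, Lemma~\ref{Lem-LeftModComplex-Perfect}(2'') tests left perfectness with $\overline{\tilde A}=\ol A\otimes\ol C$; this identification uses Lemma~\ref{Lem-TensorProdTop=TopTensorProd} and the separability of $\ol C$. We have
\[
(\ol A\otimes \ol C)\otimes^L_{\tilde A}(X_j\otimes C)\cong(\ol A\otimes^L_A X_j)\otimes \ol C,
\]
and this is bounded if and only if $\ol A\otimes^L_AX_j$ is bounded, by the K\"unneth formula, because $\ol C\ne0$. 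The right side is handled in the same way. Note that $\gl C<\infty$ is not needed in this step.

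\textbf{Step 3: identify the cones.} The proof of Proposition~\ref{Prop-EHI-TensorProdAlg} gives $\Cone(\ev^{\tilde A}_{\tilde X})\cong \Cone(\ev^A_X)\otimes C$. The analogous argument for the right action morphism gives $\Cone(r^{\tilde B}_{\tilde X})\cong\Cone(r^B_X)\otimes C$. Here I check that the commutative diagrams identifying the two morphisms transport the evaluation and action maps, which holds because the $C$-factor is simply $\id_C$ and the action of $C$ on itself. Condition (2) of Theorem~\ref{Thm-Exist-SingEquAdjTye-FD} is then expressed through these two cones.

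\textbf{Step 4: compare $\otimes$-perfectness (the main obstacle).} The key step is showing that a $B$-bimodule complex $Y$ is $\otimes$-perfect if and only if $Y\otimes C$ is $\otimes$-perfect as a $\tilde B$-bimodule complex. By Proposition~\ref{Prop-BimodComplex-TensorPerfect}(1'') together with Lemma~\ref{Lem-TensorProdTop=TopTensorProd},
\[
(\ol B\otimes\ol C)\otimes^L_{\tilde B}(Y\otimes C)\otimes^L_{\tilde B}(\ol B\otimes \ol C)\cong(\ol B\otimes^L_BY\otimes^L_B\ol B)\otimes(\ol C\otimes^L_C\ol C).
\]
The hypothesis $\gl C<\infty$ makes $\ol C\otimes^L_C\ol C$ bounded, and it is nonzero because $\ol C\otimes_C\ol C\ne0$. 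K\"unneth then gives the equivalence. The same argument applies to $A$-bimodule complexes.

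Finally, combining Steps 2 and 4 with Theorem~\ref{Thm-Exist-SingEquAdjTye-FD}, applied once to $X$ and once to $\tilde X$, yields the stated equivalence. The natural isomorphism $(X\otimes C)\otimes^L_{\tilde B}-\cong X\otimes^L_B-$ on $\D(\tilde B)$ is the same observation already used in the proof of Proposition~\ref{Prop-EHI-TensorProdAlg}.
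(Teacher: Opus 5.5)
Your proposal is correct and follows essentially the same route as the paper. Both arguments reduce via Theorem~\ref{Thm-Exist-SingEquAdjTye-FD}, identify $\tilde X_j\cong X_j\otimes C$, $\Cone(r^{B\otimes C}_{X\otimes C})\cong\Cone(r^B_X)\otimes C$ and $\Cone(\ev^{A\otimes C}_{X\otimes C})\cong\Cone(\ev^A_X)\otimes C$, and then apply the K\"unneth formula with the nonzero bounded complex $\ol C\otimes^L_C\ol C$. Your Step~2 also proves the biperfectness comparison, which the paper only asserts; note, though, that the proof of Proposition~\ref{Prop-EHI-TensorProdAlg} treats $\ev^B_X$ rather than $\ev^A_X$, so Step~3 needs the mirror-image diagram, which the paper writes out explicitly.
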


\begin{proof}
	Note that the assumption $\gl C<\infty$ implies $\ol{C}\otimes^L_C\ol{C}\in\D^b(k)$ where $\ol{C}=C/\rad C$. And the assumption that $C/\rad C$ is separable over $k$ implies $\ol{B\otimes C}=\ol{B}\otimes\ol{C}$ by Lemma~\ref{Lem-TensorProdTop=TopTensorProd}. 
	
	It follows from Theorem~\ref{Thm-Exist-SingEquAdjTye-FD} that a complex $X$ of $A$-$B$-bimodules induces a singular equivalence $X\otimes^L_B-: \sg(B)\to\sg(A)$ of $n$-adjoint type if and only if the bimodule complexes $X_1,\cdots,X_n$ are biperfect, where $X_1:=X, X_{2i}:=\RHom_A(X_{2i-1},A), X_{2i+1}:=\RHom_B(X_{2i},B)$, for all $i\ge 1$, and both the $B$-bimodule complex $\Cone(r^B_X:B\to \RHom_A(X,X))$ and the $A$-bimodule complex $\Cone(\ev^A_X:X\otimes^L_B\RHom_A(X,A)\to A)$ are $\otimes$-perfect.
	
	Note that $X_{2i}\otimes C \cong \RHom_{A\otimes C}(X_{2i-1}\otimes C,A\otimes C)$ in $\D((B\otimes C)\otimes(A\otimes C)^\op)$ and $X_{2i+1}\otimes C \cong \RHom_{B\otimes C}(X_{2i}\otimes C,B\otimes C)$ in $\D((A\otimes C)\otimes(B\otimes C)^\op)$ for all $i\ge 1$. And the bimodule complexes $X_1,\cdots,X_n$ are biperfect if and only if the bimodule complexes $X_1\otimes C,\cdots,X_n\otimes C$ are biperfect.	
	Moreover, we have the commutative diagram	
	$$\begin{tikzcd}[column sep=30]
		B\otimes C \arrow[r,"r^B_X\otimes C"] \arrow[d,equal]  & \RHom_A(X,X)\otimes C \arrow[r]\arrow[d,"\cong"] & \Cone(r^B_X)\otimes C\to \arrow[d,"\cong"] \\
		B\otimes C \arrow[r,"r^{B\otimes C}_{X\otimes C}"] & \RHom_{A\otimes C}(X\otimes C,X\otimes C) \arrow[r]  & \Cone(r^{B\otimes C}_{X\otimes C})\to
	\end{tikzcd}$$
	 in $\D((B\otimes C)^e)$ and the commutative diagram	
	$$\begin{tikzcd}[column sep=30]
		X\otimes^L_B\RHom_A(X,A)\otimes C \arrow[r,"\ev^A_X\otimes C"] \arrow[d,"\cong"]  & A\otimes C \arrow[r] \arrow[d,equal] & \Cone(\ev^A_X)\otimes C\to \arrow[d,"\cong"] \\
		(X\otimes C)\otimes^L_{B\otimes C}\RHom_{A\otimes C}(X\otimes C,A\otimes C) \arrow[r,"\ev^{A\otimes C}_{X\otimes C}"] & A\otimes C \arrow[r] & \Cone(\ev^{A\otimes C}_{X\otimes C})\to
	\end{tikzcd}$$
	\noindent in $\D((A\otimes C)^e)$. Thus the $(B\otimes C)$-bimodule complex $\Cone(r^{B\otimes C}_{X\otimes C})$ is $\otimes$-perfect if and only if so is $\Cone(r^B_X)\otimes C$, if and only if $(\ol{B}\otimes\ol{C})\otimes^L_{B\otimes C}(\Cone(r^B_X)\otimes C)\otimes^L_{B\otimes C}(\ol{B}\otimes\ol{C}) \cong (\ol{B}\otimes^L_B\Cone(r^B_X)\otimes^L_B\ol{B})\otimes(\ol{C}\otimes^L_C\ol{C})\in\D^b(k)$ by Proposition~\ref{Prop-BimodComplex-TensorPerfect}, if and only if $\ol{B}\otimes^L_B\Cone(r^B_X)\otimes^L_B\ol{B}\in\D^b(k)$ by the K\"{u}nneth formula for complexes of $k$-vector spaces (Ref. \cite[Theorem 3.6.3]{Weibel94}) since $\ol{C}\otimes^L_C\ol{C}\in\D^b(k)$ is nonzero, if and only if the $B$-bimodule complex $\Cone(r^B_X)$ is $\otimes$-perfect by Proposition~\ref{Prop-BimodComplex-TensorPerfect}. Similarly, the $(A\otimes C)$-bimodule complex  $\Cone(\ev^{A\otimes C}_{X\otimes C})$ is $\otimes$-perfect if and only if the $A$-bimodule complex  $\Cone(\ev^A_X)$ is $\otimes$-perfect.
	
	It follows from Theorem~\ref{Thm-Exist-SingEquAdjTye-FD} that the complex $X\otimes C$ of $(A\otimes C)$-$(B\otimes C)$-bimodules induces a singular equivalence $(X\otimes C)\otimes^L_{B\otimes C}-: \sg(B\otimes C)\to\sg(A\otimes C)$ of $n$-adjoint type if and only if the bimodule complexes $X_1\otimes C,\cdots,X_n\otimes C$ are biperfect, and the $(B\otimes C)$-bimodule complex $\Cone(r^{B\otimes C}_{X\otimes C})$ and the $(A\otimes C)$-bimodule complex $\Cone(\ev^{A\otimes C}_{X\otimes C})$ are $\otimes$-perfect, if and only if the bimodule complexes $X_1,\cdots,X_n$ are biperfect, and the $B$-bimodule complex $\Cone(r^B_X)$ and the $A$-bimodule complex $\Cone(\ev^A_X)$ are $\otimes$-perfect, if and only if the complex $X$ of $A$-$B$-bimodules induces a singular equivalence $X\otimes^L_B-: \sg(B)\to\sg(A)$ of $n$-adjoint type.
\end{proof}

\begin{remark} \label{Rem-SE-TensorNotSE} {\rm
	In Proposition~\ref{Prop-SE-TensorProdAlg}, the condition that the algebra $C$ satisfies $\gl C<\infty$ and the algebra $C/\rad C$ is separable over $k$ are necessary. From the proof of Proposition~\ref{Prop-SE-TensorProdAlg}, it is clear that the condition $\gl C<\infty$ is necessary even though the algebra $C/\rad C$ is separable over $k$. 	
	Moreover, the extension $k:=\mathbb{Z}_p(t) \subset K:= k[x]/(x^p-t)$ of fields induces a triangle functor $_KK\otimes_k-: \D(k)\to \D(K)$ with a right adjoint $_kK\otimes_K-: \D(K)\to \D(k)$, which induces a trivial singular equivalence $_KK\otimes_k-=0: \sg(k)=0\to \sg(K)=0$ with a right adjoint $_kK\otimes_K-=0: \sg(K)=0\to \sg(k)=0$. However, the triangle functor $_KK\otimes_k-: \D(K)\to \D(K\otimes_kK)$ with a right adjoint $_kK\otimes_K-: \D(K\otimes_kK)\to \D(K)$ cannot induce a singular equivalence $_KK\otimes_k-: \sg(K)=0\to \sg(K\otimes_kK)$ with a right adjoint $_kK\otimes_K-: \sg(K\otimes_kK)\to \sg(K)=0$ due to $\gl K=0$ and $\gl (K\otimes_kK)=\infty$. 
}\end{remark}

\subsection{Examples}

In this subsection, we give the examples of singular equivalences of $n$-adjoint type induced by homomorphisms of algebras and idempotents. By Theorem~\ref{Thm-Exist-SingEquAdjTye-FD}, it suffices to consider the examples of singular equivalences of 2-adjoint type, since singular equivalences of $n$-adjoint type are just the singular equivalences of 2-adjoint type for which some bimodule complexes satisfy biperfectness. 

\medspace

\noindent{\bf singular equivalences of $n$-adjoint type induced by homomorphisms of algebras.}

\begin{corollary} \label{Cor-SingEqu-AlgHom}
	Let $f:A\to B$ be a homomorphism of finite dimensional algebras. Then the triangle functor $_AB\otimes^L_B-:\sg(B)\to\sg(A)$ is a triangle equivalence with a left adjoint inverse $_BB\otimes^L_A-:\sg(A)\to\sg(B)$ if and only if the $A$-bimodule complex $\Cone(f)$ and the $B$-bimodule complex $\Cone(\ev^B_B:B\otimes^L_AB\to B)$ are $\otimes$-perfect.
\end{corollary}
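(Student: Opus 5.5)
We apply Theorem~\ref{Thm-Exist-SingEquLeftAdjoit-FD} with $X:={}_AB_B$, the $A$-$B$-bimodule given by restricting scalars along $f$ on the left. First we identify the canonical left adjoint. Since $B_B$ is free, $\RHom_{B^\op}(B,B)\cong B$ as a $B$-$A$-bimodule, and so $\RHom_{B^\op}(X,B)\otimes^L_A-\cong {}_BB\otimes^L_A-$. Hence the adjoint pair in the statement is exactly the one covered by Theorem~\ref{Thm-Exist-SingEquLeftAdjoit-FD}. Under this identification, the evaluation morphism $\ev^B_X$ becomes the multiplication $\ev^B_B:B\otimes^L_AB\to B$. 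Also, $l^A_X:A\to\RHom_{B^\op}(B,B)\cong B$ is the map $a\mapsto f(a)$, so $\Cone(l^A_X)\cong\Cone(f)$ in $\D(A^e)$. The justification is the same commutative diagram with vertical isomorphisms that was used in the proof of Corollary~\ref{Cor-EHI-AlgHom}(2), here with left multiplication in place of right.

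The substantive step is the biperfectness condition (1) of Theorem~\ref{Thm-Exist-SingEquLeftAdjoit-FD}. This condition asks that ${}_AB_B$ and ${}_BB_A$ be biperfect, which amounts to $B_B$ and ${}_BB$ being perfect (automatic) together with ${}_AB\in\per(A)$ and $B_A\in\per(A^\op)$. These last two are not stated as hypotheses, so we must show they follow from $\otimes$-perfectness of $\Cone(f)$.

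By the Remark after Example~\ref{Ex-TensorPerfect-Bimod}, a $\otimes$-perfect complex is biperfect, so $\Cone(f)$ is perfect as a left $A$-module and as a right $A$-module. The triangle $A\xrightarrow{f}B\to\Cone(f)\to$ in $\D(A^e)$ then gives ${}_AB\in\thick({}_AA,{}_A\Cone(f))=\per(A)$, and likewise $B_A\in\per(A^\op)$. So biperfectness follows from the $\otimes$-perfectness of $\Cone(f)$.

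For the converse we need nothing extra: if the triangle functor ${}_AB\otimes^L_B-$ induces an equivalence on singularity categories with the stated left adjoint, then Theorem~\ref{Thm-Exist-SingEquLeftAdjoit-FD} yields both conditions (1) and (2), and (2) is exactly the $\otimes$-perfectness of $\Cone(f)$ and $\Cone(\ev^B_B)$ under our identifications. Finally, since the functor is an equivalence, its left adjoint is a quasi-inverse, which accounts for the phrase ``left adjoint inverse''.

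The main obstacle is keeping the bimodule identifications straight. We must check that $\Cone(l^A_X)\cong\Cone(f)$ as $A$-bimodule complexes, not merely as one-sided complexes, and that the evaluation morphism corresponds to the multiplication map. Both are handled by writing the explicit homotopy-level diagrams, using ${\bf p}X$ replaced by $B$ itself, which is projective on the right.
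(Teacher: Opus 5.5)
Your proposal is correct and takes the same route as the paper. You apply Theorem~\ref{Thm-Exist-SingEquLeftAdjoit-FD} to $X={}_AB_B$, identify $\Cone(l^A_B)\cong\Cone(f)$ through the evident commutative diagram, and deduce ${}_AB\in\per(A)$ and $B_A\in\per(A^\op)$ from the triangle $A\xrightarrow{f}B\to\Cone(f)\to$, using that $\otimes$-perfect complexes are biperfect.
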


\begin{proof}
	By Theorem~\ref{Thm-Exist-SingEquLeftAdjoit-FD}, the triangle functor $_AB\otimes^L_B-:\sg(B)\to\sg(A)$ is a singular equivalence with a left adjoint inverse $_BB\otimes^L_A-:\sg(A)\to\sg(B)$ if and only if $_AB\in\per(A)$, $B_A\in\per(A^\op)$, and the $A$-bimodule complex $\Cone(l^A_B:A\to\RHom_{B^\op}(B,B))$ and the $B$-bimodule complex $\Cone(\ev^B_B:B\otimes^L_AB\to B)$ are $\otimes$-perfect. The following commutative diagram implies $\Cone(l^A_B) \cong \Cone(f)$ in $\D(A^e)$. 
	$$\begin{tikzcd}
		A \arrow[r,"l^A_B"] \arrow[d,equal] & \RHom_{B^\op}(B,B) \arrow[d,"\cong"] \arrow[r] & \Cone(l^A_B) \arrow[d,"\cong"] \arrow[r] & A[1] \arrow[d,equal] \\
		A \arrow[r,"f"] & B \arrow[r] & \Cone(f) \arrow[r] & A[1]
	\end{tikzcd}$$
	Moreover, if the $A$-bimodule complex $\Cone(f)$ is $\otimes$-perfect then $_A\Cone(f)\in\per(A)$ and $\Cone(f)_A\in\per(A^\op)$. From the triangle $A\xrightarrow{f}B\to\Cone(f)\to$, we obtain $_AB\in\per(A)$ and $B_A\in\per(A^\op)$. 
\end{proof}

\begin{corollary} \label{Cor-SingEqu-HomEpi}
	Let $f:A\to B$ be a homological epimorphism of finite dimensional algebras. Then the triangle functor $_AB\otimes^L_B-:\sg(B)\to\sg(A)$ is a triangle equivalence with a left adjoint inverse $_BB\otimes^L_A-:\sg(A)\to\sg(B)$ if and only if the $A$-bimodule complex $\Cone(f)$ is $\otimes$-perfect.
\end{corollary}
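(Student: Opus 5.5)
This follows directly from Corollary~\ref{Cor-SingEqu-AlgHom}. That corollary says the functor $_AB\otimes^L_B-:\sg(B)\to\sg(A)$ is a triangle equivalence with left adjoint inverse $_BB\otimes^L_A-$ if and only if two conditions hold: the $A$-bimodule complex $\Cone(f)$ is $\otimes$-perfect, and the $B$-bimodule complex $\Cone(\ev^B_B:B\otimes^L_AB\to B)$ is $\otimes$-perfect. So it suffices to show that, for a homological epimorphism $f$, the second condition holds automatically. Then the corollary's criterion reduces exactly to the $\otimes$-perfectness of $\Cone(f)$.

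To check the second condition, I would use the definition of homological epimorphism recalled before Corollary~\ref{Cor-EHI-AlgHom}. It says the multiplication map $B\otimes_AB\to B$ is an isomorphism and $\Tor^A_i(B,B)=0$ for all $i\ge 1$. Equivalently, the evaluation morphism $\ev^B_B:B\otimes^L_AB\to B$ is an isomorphism in $\D(B^e)$. Its cone is therefore zero in $\D(B^e)$, and the zero complex is trivially $\otimes$-perfect, for instance by condition (1'') of Proposition~\ref{Prop-BimodComplex-TensorPerfect}. This is the same observation the paper made after Corollary~\ref{Cor-EHI-AlgHom}.

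The only point needing care is that the map $\ev^B_B$ appearing in Corollary~\ref{Cor-SingEqu-AlgHom} really is the multiplication map, or at least the derived counit identified with it. By Remark~\ref{Rmk-UnitAct-CounitEv}, $\ev^B_B$ is the counit of the adjunction between $B\otimes^L_A-$ and restriction along $f$, evaluated at $B$. On the level of the underived tensor product, this counit is multiplication $b\otimes b'\mapsto bb'$. Since $\Tor^A_{>0}(B,B)=0$, the derived tensor product $B\otimes^L_AB$ is quasi-isomorphic to $B\otimes_AB$, so $\ev^B_B$ is identified with the multiplication isomorphism.

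Both directions of the equivalence then follow at once from Corollary~\ref{Cor-SingEqu-AlgHom}. I expect no real obstacle beyond this identification.
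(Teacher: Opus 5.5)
Your proposal is correct and is the paper's own argument: for a homological epimorphism, $\ev^B_B\colon B\otimes^L_AB\to B$ is an isomorphism in $\D(B^e)$, so $\Cone(\ev^B_B)=0$ is trivially $\otimes$-perfect, and Corollary~\ref{Cor-SingEqu-AlgHom} then reduces exactly to the $\otimes$-perfectness of $\Cone(f)$. Your extra check that $\ev^B_B$ is the multiplication map is harmless. The paper's direct formula $f\otimes x\mapsto f(x)$ from Theorem~\ref{Thm-EvHomIso-TensorPerfect-BimodComplex}, combined with $\RHom_{B^\op}(B,B)\cong B$, gives this identification without routing through Remark~\ref{Rmk-UnitAct-CounitEv}.
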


\begin{proof}
	Since $f:A\to B$ is a homological epimorphism, the morphism $\ev^B_B:B\otimes^L_AB\to B$ is an isomorphism in $\D(B^e)$. Then the $B$-bimodule complex $\Cone(\ev^B_B)$ is zero, thus $\otimes$-perfect. So the corollary follows from Corollary~\ref{Cor-SingEqu-AlgHom}
\end{proof}

Recall that a two-sided ideal $I$ of a finite dimensional algebra $A$ is called a {\it homological ideal} \cite{Pena-Xi06} if the canonical homomorphism $f: A \to A/I$ is a homological epimorphism. 

A direct corollary of Corollary~\ref{Cor-SingEqu-HomEpi} is the following result which improves \cite[Theorem]{Chen14}.

\begin{corollary} {\rm (cf. \cite[Theorem]{Chen14})}
	Let $A$ be a finite dimensional algebra, $I$ a homological ideal of $A$, and $B:=A/I$. Then the triangle functor $_AB\otimes^L_B-:\sg(B)\to\sg(A)$ is a triangle equivalence with a left adjoint inverse $_BB\otimes^L_A-:\sg(A)\to\sg(B)$ if and only if the $A$-bimodule (complex) $I$ is $\otimes$-perfect.
\end{corollary}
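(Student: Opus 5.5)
The plan is to apply Corollary~\ref{Cor-SingEqu-HomEpi} directly to the canonical surjection $f:A\to A/I=B$. Since $I$ is a homological ideal, $f$ is a homological epimorphism by definition. Corollary~\ref{Cor-SingEqu-HomEpi} then says that $_AB\otimes^L_B-:\sg(B)\to\sg(A)$ is a triangle equivalence with left adjoint inverse $_BB\otimes^L_A-$ if and only if the $A$-bimodule complex $\Cone(f)$ is $\otimes$-perfect. So everything reduces to relating $\Cone(f)$ with $I$.

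For that I will use the short exact sequence of $A$-bimodules $0\to I\to A\xrightarrow{f} A/I\to 0$. It yields a triangle $I\to A\xrightarrow{f} B\to I[1]$ in $\D(A^e)$. Rotating and comparing with the defining triangle $A\xrightarrow{f}B\to\Cone(f)\to A[1]$, the uniqueness of cones up to isomorphism gives $\Cone(f)\cong I[1]$ in $\D(A^e)$.

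By Proposition~\ref{Prop-TensorPerfect-Property}~(3), shifts preserve $\otimes$-perfectness in both directions. Hence $\Cone(f)$ is $\otimes$-perfect if and only if $I$ is, which gives the claimed equivalence.

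There is no real obstacle here. The only point to check is that the identification $\Cone(f)\cong I[1]$ holds in the derived category of bimodules, and not merely of one-sided modules. This is clear because $f$ is surjective and the sequence above is a sequence of bimodules.
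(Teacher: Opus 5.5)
Your proposal is correct and follows essentially the same route as the paper: apply Corollary~\ref{Cor-SingEqu-HomEpi} to $f:A\to A/I$, identify $\Cone(f)\cong I[1]$ in $\D(A^e)$, and use shift-invariance from Proposition~\ref{Prop-TensorPerfect-Property}. Your explicit justification of $\Cone(f)\cong I[1]$ via the bimodule short exact sequence is the only detail the paper leaves implicit.
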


\begin{proof}
	Note that the $A$-bimodule complex $\Cone(f:A\to A/I) \cong I[1]$ is $\otimes$-perfect if and only if so is $I$ by Proposition~\ref{Prop-TensorPerfect-Property}. Then the corollary follows from Corollary~\ref{Cor-SingEqu-HomEpi}. 
\end{proof}

The following result implies that bounded extensions and strongly proj-bounded extensions induce singular equivalences. 
For bounded extension, it is dues to \cite[Theorem 2.7]{Qin-Xu-Zhang-Zhou24}.

\begin{corollary} \label{Cor-SingEqu-AlgExt} 
	Let $B\subseteq A$ be a bounded or strongly proj-bounded extension of finite dimensional algebras. Then the triangle functor $_BA\otimes^L_A-: \sg(A)\to\sg(B)$ is a triangle equivalence with a left adjoint $_AA\otimes^L_B-:\sg(B)\to\sg(A)$.
\end{corollary}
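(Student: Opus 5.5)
We apply Corollary~\ref{Cor-SingEqu-AlgHom} to the inclusion homomorphism $f:B\hookrightarrow A$, with the roles of the letters interchanged: the source algebra is $B$ and the target is $A$. That corollary says the triangle functor $_BA\otimes^L_A-:\sg(A)\to\sg(B)$ is a triangle equivalence with left adjoint inverse $_AA\otimes^L_B-:\sg(B)\to\sg(A)$ if and only if two conditions hold:
\begin{itemize}
\item[(a)] the $B$-bimodule complex $\Cone(f)$ is $\otimes$-perfect;
\item[(b)] the $A$-bimodule complex $\Cone(\ev^A_A:A\otimes^L_BA\to A)$ is $\otimes$-perfect.
\end{itemize}
So the proof reduces to checking (a) and (b) for bounded and for strongly proj-bounded extensions.

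For (b) we reuse the proof of Corollary~\ref{Cor-BoundExt-EHI}. There it was shown, in both cases, that $\Cone(\ev^A_A)$ is a perfect $A$-bimodule complex. For bounded extensions this uses the finite $(A^e,B^e)$-projective resolution built from $A\otimes_B(A/B)^{\otimes_B j}\otimes_BA$. For strongly proj-bounded extensions it uses the truncated resolution with the summand $K$. Proposition~\ref{Prop-HomSmooth-TensorPerfect-BimodComplex} then upgrades perfect to $\otimes$-perfect, so (b) follows with no new work.

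For (a), the short exact sequence $0\to B\to A\to A/B\to 0$ of $B$-bimodules gives $\Cone(f)\cong A/B$ in $\D(B^e)$. In both kinds of extension, condition (1) says $A/B$ has finite projective dimension over $B^e$, i.e.\ $A/B\in\per(B^e)$. By Proposition~\ref{Prop-HomSmooth-TensorPerfect-BimodComplex}, every perfect complex of bimodules is $\otimes$-perfect, so $\Cone(f)$ is $\otimes$-perfect. Combining (a) and (b) with Corollary~\ref{Cor-SingEqu-AlgHom} gives the equivalence, and the left adjoint is the canonical one $_AA\otimes^L_B-$, as stated.

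I expect no serious obstacle, because the hard homological input (perfectness of $\Cone(\ev^A_A)$) was already established for Corollary~\ref{Cor-BoundExt-EHI}. The one point to check carefully is the bookkeeping: Corollary~\ref{Cor-SingEqu-AlgHom} is phrased for $f:A\to B$, and here the letters are swapped, so we must confirm that its $\ev^B_B:B\otimes^L_AB\to B$ becomes exactly $\ev^A_A:A\otimes^L_BA\to A$. The one-sided perfectness of $_BA$ and $A_B$ needed by that corollary follows automatically from $\Cone(f)$ being $\otimes$-perfect, as noted at the end of its proof.
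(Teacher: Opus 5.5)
Your proposal is correct and follows essentially the same route as the paper. The paper applies Theorem~\ref{Thm-Exist-SingEquLeftAdjoit-FD} directly, which is exactly the content of Corollary~\ref{Cor-SingEqu-AlgHom} for the inclusion $B\hookrightarrow A$. It then gets $\otimes$-perfectness of $\Cone(B\hookrightarrow A)\cong A/B$ from $A/B\in\per(B^e)$ via Proposition~\ref{Prop-HomSmooth-TensorPerfect-BimodComplex}, deduces biperfectness of $A$ from the triangle $B\to A\to A/B\to$, and cites the proof of Corollary~\ref{Cor-BoundExt-EHI} for $\otimes$-perfectness of $\Cone(\ev^A_A)$.
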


\begin{proof}	
	By Theorem~\ref{Thm-Exist-SingEquLeftAdjoit-FD}, the triangle functor $_BA\otimes^L_A-: \sg(A)\to\sg(B)$ is a triangle equivalence with a left adjoint inverse $_AA\otimes^L_B-:\sg(B)\to\sg(A)$ if and only if $_BA\in\per(B)$, $A_B\in\per(B^\op)$, and the $B$-bimodule complex $\Cone(B\hookrightarrow A)\cong A/B$ and the $A$-bimodule complex $\Cone(\ev^A_A:A\otimes^L_BA\to A)$ are $\otimes$-perfect.
	
	By the definition of bounded or strongly proj-bounded extension, we have $A/B\in\per(B^e)$. By Proposition~\ref{Prop-HomSmooth-TensorPerfect-BimodComplex}, the $B$-bimodule complex $A/B$ is $\otimes$-perfect. In particular, the $B$-bimodule (complex) $A/B$ is biperfect. From the triangle $B\hookrightarrow A\to A/B\to$, we obtain $_BA\in\per(B)$ and $A_B\in\per(B^\op)$.
	By the proof of Corollary~\ref{Cor-BoundExt-EHI}, the $A$-bimodule complex $\Cone(\ev^A_A)$ is $\otimes$-perfect.
\end{proof}

\medspace

\noindent{\bf Singular equivalences induced by idempotents of algebras.} Now we can recover the following characterization of singular equivalences induced by an idempotent in \cite{Psaroudakis-Skartsaeterhagen-Solberg14} and \cite{Shen21} respectively for finite dimensional algebras.

\begin{corollary} \label{Cor-Exist-SingEqu-Idemp} 
	Let $A$ be a finite dimensional algebra, and $e$ an idempotent in $A$. Then the following statements hold.
	
	{\rm (1)} {\rm (\cite[Main theorem (ii)]{Psaroudakis-Skartsaeterhagen-Solberg14})} The triangle functor $eA\otimes^L_A-: \sg(A)\to \sg(eAe)$ is a triangle equivalence if and only if $\pd_{eAe}eA<\infty$ and $\pd_A\frac{A/AeA}{\rad(A/AeA)}<\infty$.
	
	{\rm (2)} {\rm (\cite[Theorem II]{Shen21})} The triangle functor $Ae\otimes^L_{eAe}-: \sg(eAe)\to \sg(A)$ is a triangle equivalence if and only if $\pd_{(eAe)^\op} Ae<\infty$ and $\id_A\frac{A/AeA}{\rad(A/AeA)}<\infty$.	
\end{corollary}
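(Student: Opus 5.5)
Both parts follow from Theorem~\ref{Thm-Exist-SingEquRightAdjoit-FD} and Theorem~\ref{Thm-Exist-SingEquLeftAdjoit-FD}. The bimodule data are read off from the recollement attached to $e$ used in the proof of Proposition~\ref{Prop-Exist-EvHomIso-Idemp}. Write $C:=\Cone(\ev^A_{Ae}:Ae\otimes^L_{eAe}eA\to A)$ and $\overline{S}:=\frac{A/AeA}{\rad(A/AeA)}$.

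For (1), set $X:=eA$, an $eAe$-$A$-bimodule, so that $eA\otimes^L_A-:\sg(A)\to\sg(eAe)$ is the functor in question. It has the canonical left adjoint $\RHom_{A^\op}(eA,A)\otimes^L_{eAe}-\cong Ae\otimes^L_{eAe}-$, so we apply Theorem~\ref{Thm-Exist-SingEquLeftAdjoit-FD}. The first step checks biperfectness. Since $eA_A$ is projective, $X$ is biperfect iff $\pd_{eAe}eA<\infty$. Since $_AAe$ is projective, $\RHom_{A^\op}(eA,A)\cong Ae$ is biperfect iff $\pd_{(eAe)^\op}Ae<\infty$. The second step treats the cone $\Cone(l^{eAe}_{eA}:eAe\to\RHom_{A^\op}(eA,eA))$. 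The target is $\Hom_{A^\op}(eA,eA)\cong eAe$, so this map is an isomorphism and its cone vanishes. The third step treats $\Cone(\ev^A_{eA})$, which by the example after Remark~\ref{Rmk-UnitAct-CounitEv} is identified with $C$. By the proof of Proposition~\ref{Prop-Exist-EvHomIso-Idemp}, $C$ is $\otimes$-perfect iff $\pd_A\overline{S}<\infty$ and $\pd_{(eAe)^\op}Ae<\infty$. Combining the three steps, the functor is an equivalence with that left adjoint iff $\pd_{eAe}eA<\infty$, $\pd_{(eAe)^\op}Ae<\infty$ and $\pd_A\overline{S}<\infty$.

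The main obstacle in (1) is that the statement omits the condition $\pd_{(eAe)^\op}Ae<\infty$ and does not assume the adjoint. It is needed in two places: the functor $Ae\otimes^L_{eAe}-$ must be well defined on singularity categories, and the implication (1)$\Rightarrow$(2) of Proposition~\ref{Prop-Exist-EvHomIso-Idemp} uses it. I will derive it from the other two conditions. First, $\pd_A\overline{S}<\infty$ gives $\mod A/AeA\subseteq\per(A)$. Then $eA\otimes^L_A-$ is a Verdier quotient onto $\D^b(eAe)$ with kernel inside $\per(A)$, which lets the left adjoint restrict. For the converse direction, I will show that an arbitrary triangle equivalence $eA\otimes^L_A-$ on singularity categories forces $\mod A/AeA\subseteq\per(A)$. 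Every $M\in\mod A/AeA$ is killed by $eA\otimes^L_A-$, so $M$ vanishes in $\sg(A)$ and lies in $\per(A)$. To recover $\pd_{(eAe)^\op}Ae<\infty$ from this, I will use the recollement restricted as in \cite[Lemma 2.18]{Jin-Yang-Zhou23}. If this reduction fails, I will instead cite \cite[Main theorem (ii)]{Psaroudakis-Skartsaeterhagen-Solberg14} for that implication.

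For (2), set $X:=Ae$, an $A$-$eAe$-bimodule. Its canonical right adjoint is $\RHom_A(Ae,A)\otimes^L_A-\cong eA\otimes^L_A-$, so we apply Theorem~\ref{Thm-Exist-SingEquRightAdjoit-FD}. Biperfectness of $Ae$ and of $eA$ is again equivalent to $\pd_{(eAe)^\op}Ae<\infty$ together with $\pd_{eAe}eA<\infty$. The cone $\Cone(r^{eAe}_{Ae})$ is zero because $r^{eAe}_{Ae}$ is an isomorphism. The remaining cone $\Cone(\ev^A_{Ae})=C$ is $\otimes$-perfect iff $\id_A\overline{S}<\infty$ and $\pd_{eAe}eA<\infty$, by the equivalence (1)$\Leftrightarrow$(3) of Proposition~\ref{Prop-Exist-EvHomIso-Idemp}. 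The condition $\pd_{eAe}eA<\infty$ is therefore absorbed, and what remains is exactly $\pd_{(eAe)^\op}Ae<\infty$ and $\id_A\overline{S}<\infty$. Alternatively, (2) follows from (1) applied to $A^\op$ with the idempotent $e^\op$, using Proposition~\ref{Prop-SE-OppositeAlg}.
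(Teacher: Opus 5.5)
Your necessity argument for (1) is fine: $\pd_{eAe}eA<\infty$ is needed for the functor to exist, and $\mod A/AeA$ is killed by the equivalence, hence lies in $\per(A)$. The sufficiency argument has a genuine gap.

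Your argument for (1) goes through Theorem~\ref{Thm-Exist-SingEquLeftAdjoit-FD}, i.e.\ through the left adjoint $Ae\otimes^L_{eAe}-$, and you correctly notice that this requires $\pd_{(eAe)^\op}Ae<\infty$. That condition does \emph{not} follow from $\pd_{eAe}eA<\infty$ and $\pd_A\overline{S}<\infty$, so your planned derivation cannot succeed. The fallback citation does not help either, since \cite[Main theorem (ii)]{Psaroudakis-Skartsaeterhagen-Solberg14} is statement (1) itself.

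The paper's Example~\ref{Ex-SE2AT-NotEssSurjEHI} with $e=e_2$ is a counterexample:
\begin{itemize}
\item $e_2A=e_2Ae_2$, so $\pd_{e_2Ae_2}e_2A=0$;
\item $\overline{S}=S_1=P_1$, so $\pd_A\overline{S}=0$;
\item $Ae_2=e_2Ae_2\oplus ky$ with $ky$ the simple right $k[x]/(x^2)$-module, so $\pd_{(e_2Ae_2)^\op}Ae_2=\infty$.
\end{itemize}
By Lemma~\ref{Lem-RightModComplex-Perfect}, $Ae_2\otimes^L_{e_2Ae_2}-$ then does not even send $\D^b(e_2Ae_2)$ into $\D^b(A)$. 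So the candidate left adjoint does not induce any functor $\sg(e_2Ae_2)\to\sg(A)$, although (1) asserts an equivalence here.

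Your heuristic that a Verdier quotient with kernel in $\per(A)$ ``lets the left adjoint restrict'' is therefore wrong. The left adjoint is $Ae\otimes^L_{eAe}-$ in any case, and whether it preserves boundedness depends only on $Ae_{eAe}$. What your three steps actually characterize are equivalences \emph{with} this left adjoint. Their conditions are exactly those of Proposition~\ref{Prop-Exist-EvHomIso-Idemp}, which is a strictly smaller class.

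Part (2) has the mirror-image defect. Theorem~\ref{Thm-Exist-SingEquRightAdjoit-FD} with $X=Ae$ needs $eA$ biperfect, i.e.\ $\pd_{eAe}eA<\infty$. This is an extra hypothesis and is not ``absorbed''. Your computation requires all four of $\pd_{eAe}eA$, $\pd_{(eAe)^\op}Ae$, $\pd_A\overline{S}$ and $\id_A\overline{S}$ to be finite. The opposite algebra of the same example satisfies the hypotheses of (2) while $\pd_{eAe}eA=\infty$.

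The paper avoids adjoints in (1). Once $\pd_{eAe}eA<\infty$, the recollement attached to $e$ extends one step downwards and $Q$ is a finite dimensional cohomologically non-positive dg algebra. Then \cite[Theorem 4.1]{Jin-Yang-Zhou23} gives a short exact sequence $\sg(Q)\to\sg(A)\to\sg(eAe)$. Hence $eA\otimes^L_A-$ is an equivalence if and only if $\D^b(A)_{\mod A/AeA}\subseteq\per(A)$, i.e.\ $\pd_A\overline{S}<\infty$. Part (2) is then obtained from (1) for $A^\op$ via Proposition~\ref{Prop-SE-OppositeAlg}.

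If you want to stay within the adjoint-type criteria, use the \emph{other} adjoint in each part. In (1), apply Theorem~\ref{Thm-Exist-SingEquRightAdjoit-FD} to $X=eA$ with right adjoint $\RHom_{eAe}(eA,eAe)\otimes^L_{eAe}-$:
\begin{itemize}
\item Left perfectness of $\RHom_{eAe}(eA,eAe)$ over $A$ follows from the recollement triangle $\RHom_A(Q,Ae)\to Ae\to\RHom_{eAe}(eA,eAe)\to$, whose first term lies in $\D^b(A)_{\mod A/AeA}\subseteq\per(A)$.
\item The evaluation $eA\otimes^L_A\RHom_{eAe}(eA,eAe)\to eAe$ is an isomorphism, so its cone vanishes.
\item By Lemma~\ref{Lem-Cone-Cone}, $\Cone(r^A_{eA})\otimes^L_AZ\cong\RHom_A(Q,Z)[1]\in\per(A)$ for all $Z\in\D^b(A)$.
\end{itemize}
Part (2) is symmetric, using the left adjoint of $Ae\otimes^L_{eAe}-$.
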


\begin{proof}	
	The idempotent $e$ induces a recollement (Ref. \cite[Proposition 2.16]{Jin-Yang-Zhou23})
	$$\begin{tikzcd}[column sep=80]
		\D(Q) \arrow[r,"{_AQ\otimes^L_Q-}"] & \D(A) \arrow[r,"{_{eAe}eA\otimes^L_A-}"] \arrow[l,shift left=6,"{\RHom_A(Q,-)}"'] \arrow[l,shift right=6,"_QQ\otimes^L_A-"'] & \D(eAe) \arrow[l,shift left=6,"{\RHom_{eAe}(eA,-)}"'] \arrow[l,shift right=6,"{_AAe\otimes^L_{eAe}-}"']
	\end{tikzcd}$$
	where $Q$ is a cohomology non-positive dg algebra. 
	
	(1) By Lemma~\ref{Lem-Exist-TriFunSingCat-FD-Complex}, the triangle functor $eA\otimes^L_A-: \D(A)\to \D(eAe)$ induces a triangle functor $eA\otimes^L_A-: \sg(A)\to \sg(eAe)$ if and only if $\pd_{eAe}eA<\infty$. 	
	Under the assumption $\pd_{eAe}eA<\infty$, the recollement extends one step downwards by \cite[Lemma 2.11]{Jin-Yang-Zhou23}. Moreover, from the triangle $Ae\otimes^L_{eAe}eA\to A\to Q\to$ in $\D(A)$, we know that $Q$ is a cohomology non-positive and finite dimensional dg algebra.  
	By \cite[Theorem 4.1]{Jin-Yang-Zhou23}, we have a short exact sequence 
	$$\sg(Q)\xrightarrow{_AQ\otimes^L_Q-}\sg(A)\xrightarrow{_{eAe}eA\otimes^L_A-}\sg(eAe).$$	
	Then the triangle functor $eA\otimes^L_A-: \sg(A)\to \sg(eAe)$ is a triangle equivalence if and only if $_AQ\otimes^L_Q\D_\fd(Q) = \D^b(A)_{\mod A/AeA} \subseteq \per(A)$, if and only if $\mod A/AeA\subseteq \per(A)$, if and only if $\frac{A/AeA}{\rad (A/AeA)}\in\per(A)$, if and only if $\pd_A\frac{A/AeA}{\rad(A/AeA)}<\infty$.
	
	(2) The triangle functor $Ae\otimes^L_{eAe}-: \sg(eAe)\to \sg(A)$ is a triangle equivalence if and only if the triangle functor $e^\op A^\op\otimes^L_{A^\op}-: \sg(A^\op)\to \sg(e^\op A^\op e^\op)$ is a triangle equivalence by Proposition~\ref{Prop-SE-OppositeAlg}, if and only if $\pd_{e^\op A^\op e^\op}e^\op A^\op<\infty$ and $\pd_{A^\op}\frac{A^\op/A^\op e^\op A^\op}{\rad(A^\op/A^\op e^\op A^\op)}<\infty$ by (1), if and only if $\pd_{(eAe)^\op} Ae<\infty$ and $\id_A\frac{A/AeA}{\rad(A/AeA)}<\infty$.
\end{proof}

\medspace

\noindent{\bf Triangular matrix algebras.} The criteria for a triangular matrix algebra to be singularly equivalent to its diagonal subalgebras were given in \cite[Corollary 4.2]{Jin-Yang-Zhou23}. Now we recognize singular equivalences of 2 or 3-adjoint type. 
A singular equivalence of 2-adjoint type but not 3-adjoint type will be given in Example~\ref{Ex-SE2AT-NotEssSurjEHI}.

\begin{proposition} \label{Prop-TriMatAlg-SEAT}
	Let $B$ and $C$ be finite dimensional algebras, $M$ a finite dimensional $C$-$B$-bimodule, 
	$A=\begin{pmatrix} B&0\\ M&C \end{pmatrix}$, 
	$e_1=\begin{pmatrix} 1_B&0\\ 0&0 \end{pmatrix}$, and 
	$e_2=\begin{pmatrix} 0&0\\ 0&1_C \end{pmatrix}$. 
	
	{\rm (1)} If $\gl C<\infty$ then the triangle functor $e_1A\otimes^L_A-:\D(A)\to\D(B)$ induces a singular equivalence $e_1A\otimes^L_A-:\sg(A)\to\sg(B)$ of 2-adjoint type. 
	
	{\rm (2)} If $\gl C<\infty$ and $\pd_{B^\op}M<\infty$ then the triangle functor $Ae_1\otimes^L_B-:\D(B)\to\D(A)$ induces a singular equivalence $Ae_1\otimes^L_B-:\sg(B)\to\sg(A)$ of 3-adjoint type. 
	
	{\rm (3)} If $\gl B<\infty$ then the triangle functor $A/Ae_1A\otimes^L_A-:\D(A)\to\D(C)$ induces a singular equivalence $A/Ae_1A\otimes^L_A-:\sg(A)\to\sg(C)$ of 2-adjoint type. 
	
	{\rm (4)} If $\gl B<\infty$ and $\pd_CM<\infty$ then the triangle functor $A/Ae_1A\otimes^L_A-:\D(A)\to\D(C)$ induces a singular equivalence $A/Ae_1A\otimes^L_A-:\sg(A)\to\sg(C)$ of 3-adjoint type. 	
\end{proposition}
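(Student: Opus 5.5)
We work throughout with the idempotents $e_1,e_2$ and the identifications $e_1Ae_1=B$, $e_2Ae_2=C$, $Ae_1=\begin{pmatrix}B\\M\end{pmatrix}$, $e_1A=(B\ \ 0)$, and $A/Ae_1A\cong C$ (since $Ae_1A=\begin{pmatrix}B&0\\M&0\end{pmatrix}$). The overall strategy is to realise each functor as one adjoint in the idempotent recollement and then check the conditions of Theorem~\ref{Thm-Exist-SingEquAdjTye-FD} via the explicit bimodule complexes $X_1,X_2,X_3$.

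For (1), take $X=e_1A$, a $B$-$A$-bimodule. It is projective over $B$ (it equals $B$ on the left) and projective over $A^\op$, so it is biperfect. Next, $X_2=\RHom_B(e_1A,B)\cong Ae_1$, which is projective as a left $A$-module and equals $B$ as a right $B$-module, so it is also biperfect. We then apply Theorem~\ref{Thm-Exist-SingEquRightAdjoit-FD}.
\begin{itemize}
\item The cone of $r^A_{e_1A}:A\to\RHom_B(e_1A,e_1A)\cong Ae_1\otimes_Be_1A$ is, up to shift, $\Cone(\ev^A_{Ae_1})$ from the idempotent example. This is the cone of $Ae_1\otimes_B e_1A\to A$, which is injective with cokernel $A/Ae_1A=C$. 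Its $\otimes$-perfectness reduces to $\gl C<\infty$: by Proposition~\ref{Prop-BimodComplex-TensorPerfect} (2'') we need $C\otimes^L_A\ol A\in\per(A)$. Here $C\otimes_A^L\ol A$ lies in $\D^b(\mod C)$ because $C_A=e_2A$ is projective. Every $C$-module has finite projective dimension over $C$, and $_AC$ is projective as a left $A$-module ($C=Ae_2$ as a left module, because the lower-right corner is a summand), so such modules are perfect over $A$.
\item The cone of $\ev^B_{e_1A}:e_1A\otimes_A Ae_1\to B$ is zero.
\end{itemize}
Hence both cones are $\otimes$-perfect, and (1) follows.

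For (2), take $X=Ae_1$, an $A$-$B$-bimodule, together with its right adjoints $X_2=\RHom_A(Ae_1,A)\cong e_1A$ and $X_3=\RHom_B(e_1A,B)\cong Ae_1$. For biperfectness we need $X_B=Ae_1$ perfect over $B^\op$, that is, $B\oplus M$; this holds exactly when $\pd_{B^\op}M<\infty$. The other sides are projective. The cone of $r^B_{Ae_1}:B\to\RHom_A(Ae_1,Ae_1)=B$ is $0$. The cone of $\ev^A_{Ae_1}$ is $\otimes$-perfect as in (1). So $X$ induces a singular equivalence of 2-adjoint type. Since $X_3\cong X_1$, it is in fact of 3-adjoint type by Theorem~\ref{Thm-Exist-SingEquAdjTye-FD}; one could also invoke (1) and note that $Ae_1\otimes^L_B-$ is the inverse of $e_1A\otimes^L_A-$.

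For (3) and (4), use $X={}_CC_A=e_2A/e_2Ae_1A\cong e_2A\,/\,Me_1$, regarded as a $C$-$A$-bimodule with $C\otimes_A-$ the functor. Concretely, $_CC$ is free, and $C_A$ has the projective resolution $0\to M\otimes_B e_1A\to e_2A\to C\to0$, so $C_A$ is perfect once $\pd_{B^\op}$-type finiteness holds. Here $\gl B<\infty$ makes $M\otimes_Be_1A$ perfect. Then $X_2=\RHom_C(C_A,C)$, and the cones to check are $\Cone(r^A_X:A\to\RHom_C(C,C)=C)$, which is $Ae_1A$, and $\Cone(\ev^C_X)$, which is $C\otimes_A^L C\to C$ and reduces to $\Tor^A(C,C)$, built from $M\otimes_B e_1A\otimes_A C=0$. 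So this cone vanishes. The bimodule $Ae_1A=Ae_1\otimes_Be_1A$ is $\otimes$-perfect because $\gl B<\infty$: for finite-dimensional $N$, the module $Ae_1\otimes_B e_1N$ is perfect over $A$ when $e_1N$ is perfect over $B$. For (4), additionally $X_3$ requires $X_2$ biperfect, which amounts to $\pd_CM<\infty$.

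The main obstacle will be correctly identifying $X_2,X_3$ in (4) and checking that their one-sided perfectness is governed exactly by $\pd_CM$.
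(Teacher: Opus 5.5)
\textbf{There is a genuine error in part (1), and it carries over into (2).} You identified $X_2=\RHom_B(e_1A,B)$ with $Ae_1$, but this is wrong. Since ${}_Be_1A\cong B$, we have $X_2\cong B$ with $A$ acting on the left through the projection $A\to A/Ae_2A\cong B$. So $X_2\cong A/Ae_2A$, not $Ae_1$; already the dimensions differ ($\dim B$ versus $\dim B+\dim M$). The bimodule $Ae_1\cong\RHom_{A^\op}(e_1A,A)$ belongs to the \emph{left} adjoint, that is, to Theorem~\ref{Thm-Exist-SingEquLeftAdjoit-FD}, not to Theorem~\ref{Thm-Exist-SingEquRightAdjoit-FD}.

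The same confusion affects your cone. We have $\RHom_B(e_1A,e_1A)\cong B$, not $Ae_1\otimes_Be_1A$. Hence $r^A_{e_1A}$ is the surjection $A\to A/Ae_2A$, and $\Cone(r^A_{e_1A})\cong Ae_2A[1]\cong Ae_2\otimes_Ce_2A[1]$, not $C$.

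The hypothesis $\gl C<\infty$ is needed for exactly the two things you never check:
\begin{itemize}
\item ${}_A(A/Ae_2A)$ is perfect, because of the exact sequence $0\to Ae_2\otimes_CM\to Ae_1\to A/Ae_2A\to0$ and $\pd_CM<\infty$.
\item $Ae_2\otimes_Ce_2A\otimes^L_A\ol A\cong Ae_2\otimes_C e_2\ol A\in Ae_2\otimes_C\per(C)\subseteq\per(A)$, which gives $\otimes$-perfectness of the cone by Proposition~\ref{Prop-BimodComplex-TensorPerfect}~(2'').
\end{itemize}

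What you do verify is actually false under the hypotheses of (1). As a right $B$-module, $Ae_1$ is $B\oplus M$, not $B$. Also $C_A\cong e_2A/e_2Ae_1$, not $e_2A$, and it sits in $0\to M\otimes_Be_1A\to e_2A\to C_A\to0$. So both $Ae_1$ and $C$ are right perfect only when $\pd_{B^\op}M<\infty$. For example, take $B=k[x]/(x^2)$ and $M=C=k$. Then $Ae_1$ is not biperfect and $C$ is not $\otimes$-perfect, yet (1) holds. In effect, you have checked the left-adjoint data for $e_1A$, which requires the hypotheses of (2).

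\textbf{In (2)} the same slip produces ``$X_3\cong X_1$''. In fact $X_3=\RHom_B(e_1A,B)\cong A/Ae_2A$, and its biperfectness is exactly the $\pd_CM$ point above. Your verification that $Ae_1$ induces a singular equivalence of 2-adjoint type can be salvaged, because $\pd_{B^\op}M<\infty$ is assumed there. That makes $C_A$ perfect, and then $\gl C<\infty$ together with ${}_AC\cong Ae_2$ shows $C=\Cone(\ev^A_{Ae_1})$ is $\otimes$-perfect. This justification is needed in place of ``$C_A=e_2A$''. The paper instead deduces (2) from the corrected (1) plus biperfectness of $Ae_1$, using condition (2') of Theorem~\ref{Thm-Exist-SingEquAdjTye-FD}.

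\textbf{Part (3)} is correct and matches the paper: the cones are $Ae_1A[1]\cong Ae_1\otimes_Be_1A[1]$ and $0$. You should add that $X_2\cong Ae_2$ is biperfect.

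\textbf{In (4)} the missing step is identifying $X_3=\RHom_A(Ae_2,A)\cong e_2A$. This is biperfect if and only if ${}_Ce_2A\cong C\oplus M$ is perfect, that is, if and only if $\pd_CM<\infty$. It is $X_3$, not $X_2$, whose biperfectness is at stake.
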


\begin{proof}
	Clearly, $e_1A=e_1Ae_1=\begin{pmatrix} B&0\\ 0&0 \end{pmatrix}\cong B$, 
	$Ae_1A=Ae_1=\begin{pmatrix}	B&0\\ M&0 \end{pmatrix}$, 
	$A/Ae_1A=\begin{pmatrix} 0&0\\ 0&C \end{pmatrix}\cong C$,
	$Ae_2=e_2Ae_2=\begin{pmatrix} 0&0\\ 0&C \end{pmatrix}\cong C$, 
	$Ae_2A=e_2A=\begin{pmatrix} 0&0\\ M&C \end{pmatrix}$, and
	$A/Ae_2A=\begin{pmatrix} B&0\\ 0&0 \end{pmatrix}\cong B$.
	Thus $Ae_1\otimes^L_{e_1Ae_1}e_1A\cong Ae_1=Ae_1A$ and $Ae_2\otimes^L_{e_2Ae_2}e_2A\cong e_2A=Ae_2A$. So $Ae_1A$ and $Ae_2A$ are stratifying ideals. Hence we have the following standard 2-recollement of derived categories of algebras \cite{Qin-Han16}.
	$$\begin{tikzcd} [column sep=95]
		\D(C) \arrow[r,"A/Ae_1A\otimes^L_C-\ \cong\ Ae_2\otimes^L_C-"] \arrow[r,shift right=12,"{\RHom_C(e_2A,-)}"] & \D(A) \arrow[r,"e_1A\otimes^L_A-\ \cong\ A/Ae_2A\otimes^L_A-"] \arrow[r,shift right=12,"{\RHom_A(A/Ae_2A,-)}"] \arrow[l,shift right=6,"{A/Ae_1A\otimes^L_A-}"'] \arrow[l,shift left=6,"e_2A\otimes^L_A-"'] & \D(B) \arrow[l,shift right=6,"Ae_1\otimes^L_B-"'] \arrow[l,shift left=6,"{A/Ae_2A\otimes^L_B-}"']
	\end{tikzcd}$$	
	
	(1) Due to $\gl C<\infty$, we have $\D^b(C)=\per(C)$ and $e_2A\in\per(C)$. Then $A/Ae_2A\in\per(A)$. So both $A/Ae_2A\in\D^b(B\otimes A^\op)$ and $A/Ae_2A\in\D^b(A\otimes B^\op)$ are biperfect. By \cite[Theorem 1]{Han14}, we have the following recollement {\bf (R)}: 
	$$\begin{tikzcd} [column sep=95]
		\D(C\otimes A^\op) \arrow[r,"A/Ae_1A\otimes^L_C-\ \cong\ Ae_2\otimes^L_C-"] & \D(A^e) \arrow[r,"e_1A\otimes^L_A-\ \cong\ A/Ae_2A\otimes^L_A-"] \arrow[l,shift right=6,"{A/Ae_1A\otimes^L_A-}"'] \arrow[l,shift left=6,"e_2A\otimes^L_A-"'] & \D(B\otimes A^\op). \arrow[l,shift right=6,"Ae_1\otimes^L_B-"'] \arrow[l,shift left=6,"{A/Ae_2A\otimes^L_B-}"']
	\end{tikzcd}$$
	Then we have a triangle 
	$$Ae_2\otimes^L_Ce_2A \xrightarrow{\ev^A_{Ae_2}} A\xrightarrow{r^A_{A/Ae_2A}} A/Ae_2A\otimes^L_BA/Ae_2A \to$$
	in $\D(A^e)$. Thus $\Cone(r^A_{A/Ae_2A})\otimes^L_A\ol{A} \cong Ae_2\otimes^L_Ce_2A[1]\otimes^L_A\ol{A} \in Ae_2\otimes^L_C\D^b(C) = Ae_2\otimes^L_C\per(C) \subseteq \per(A)$. So the $A$-bimodule complex $\Cone(r^A_{A/Ae_2A})$ is $\otimes$-perfect by Proposition~\ref{Prop-BimodComplex-TensorPerfect} (2''). Moreover, by \cite[Theorem 1]{Han14}, we have the following recollement: 
	$$\begin{tikzcd} [column sep=95]
		\D(C\otimes B^\op) \arrow[r,"A/Ae_1A\otimes^L_C-\ \cong\ Ae_2\otimes^L_C-"] & \D(A\otimes B^\op) \arrow[r,"e_1A\otimes^L_A-\ \cong\ A/Ae_2A\otimes^L_A-"] \arrow[l,shift right=6,"{A/Ae_1A\otimes^L_A-}"'] \arrow[l,shift left=6,"e_2A\otimes^L_A-"'] & \D(B^e). \arrow[l,shift right=6,"Ae_1\otimes^L_B-"'] \arrow[l,shift left=6,"{A/Ae_2A\otimes^L_B-}"']
	\end{tikzcd}$$
	Thus $\ev^B_{A/Ae_2A}:A/Ae_2A\otimes^L_AA/Ae_2A\to B$ is an isomorphism in $\D(B^e)$. Hence the $B$-bimodule complex $\Cone(\ev^B_{A/Ae_2A})=0$ is $\otimes$-perfect. By Theorem~\ref{Thm-Exist-SingEquAdjTye-FD}, the triangle functor $e_1A\otimes^L_A-:\D(A)\to\D(B)$ induces a singular equivalence $e_1A\otimes^L_A-:\sg(A)\to\sg(B)$ of 2-adjoint type .
	
	(2) Due to $\pd_{B^\op}M<\infty$, we have $Ae_1\in\per(B^\op)$. Then $Ae_1\in\D^b(A\otimes B^\op)$ is biperfect. Thus the triangle functor $Ae_1\otimes^L_B-:\D(B)\to\D(A)$ induces a singular equivalence $Ae_1\otimes^L_B-:\sg(B)\to\sg(A)$ of 3-adjoint type by (1) and Theorem~\ref{Thm-Exist-SingEquAdjTye-FD}. 
	
	(3) Due to $\gl B<\infty$, we have $\D^b(B)=\per(B)$ and $Ae_1\in\per(B^\op)$. Then $A/Ae_1A\in\per(A^\op)$. So both $A/Ae_1A\in\D^b(C\otimes A^\op)$ and $A/Ae_1A\in\D^b(A\otimes C^\op)$ are biperfect. From the recollement {\bf (R)}, we obtain a triangle 
	$$Ae_1\otimes^L_Be_1A \xrightarrow{\ev^A_{Ae_1}} A\xrightarrow{r^A_{A/Ae_1A}} A/Ae_1A\otimes^L_CA/Ae_1A \to$$
	in $\D(A^e)$. Thus $\Cone(r^A_{A/Ae_1A})\otimes^L_A\ol{A} \cong Ae_1\otimes^L_Be_1A[1]\otimes^L_A\ol{A} \in Ae_1\otimes^L_B\D^b(B) = Ae_1\otimes^L_B\per(B) \subseteq \per(A)$. So the $A$-bimodule complex $\Cone(r^A_{A/Ae_1A})$ is $\otimes$-perfect by Proposition~\ref{Prop-BimodComplex-TensorPerfect} (2''). Moreover, by \cite[Theorem 1]{Han14}, we have the following recollement: 
	$$\begin{tikzcd} [column sep=95]
		\D(C^e) \arrow[r,"A/Ae_1A\otimes^L_C-\ \cong\ Ae_2\otimes^L_C-"] & \D(A\otimes C^\op) \arrow[r,"e_1A\otimes^L_A-\ \cong\ A/Ae_2A\otimes^L_A-"] \arrow[l,shift right=6,"{A/Ae_1A\otimes^L_A-}"'] \arrow[l,shift left=6,"e_2A\otimes^L_A-"'] & \D(B\otimes C^\op). \arrow[l,shift right=6,"Ae_1\otimes^L_B-"'] \arrow[l,shift left=6,"{A/Ae_2A\otimes^L_B-}"']
	\end{tikzcd}$$
	Thus $\ev^C_{A/Ae_1A}:A/Ae_1A\otimes^L_AA/Ae_1A\to C$ is an isomorphism in $\D(C^e)$. Hence the $C$-bimodule complex $\Cone(\ev^C_{A/Ae_1A})=0$ is $\otimes$-perfect. By Theorem~\ref{Thm-Exist-SingEquAdjTye-FD}, the triangle functor $A/Ae_1A\otimes^L_A-:\D(A)\to\D(C)$ induces a singular equivalence $A/Ae_1A\otimes^L_A-:\sg(A)\to\sg(C)$ of 2-adjoint type.
	
	(4) Due to $\pd_CM<\infty$, we have $e_2A\in\per(C)$. Then $e_2A\in\D^b(C\otimes A^\op)$ is biperfect. Thus the triangle functor $A/Ae_1A\otimes^L_A-:\D(A)\to\D(C)$ induces a singular equivalence $A/Ae_1A\otimes^L_A-:\sg(A)\to\sg(C)$ of 3-adjoint type by (3) and Theorem~\ref{Thm-Exist-SingEquAdjTye-FD}. 
\end{proof}

The following example gives singular equivalences of $n$-adjoint type for all $n\ge 1$. Indeed, all derived discrete algebras are such kind of examples by \cite[Proposition 4]{Qin16}.

\begin{example}{\rm	 
		Let $A$ be the 5-dimensional bound quiver algebra $kQ/I$ where the quiver $Q$ is
		$$\xymatrix{
			1 \ar[r]^-y & 2 \ar@(ur,dr)[]^x			 
		}$$		
		\noindent and the admissible ideal $I:=(x^2)$. 
		Then $e_2Ae_2=Ae_2=k[x]/(x^2)$ and $Ae_2A=e_2A=e_2Ae_2\oplus e_2Ae_1$ where $e_2Ae_1\cong e_2Ae_2$ as left $e_2Ae_2$-modules. Thus $\pd_{e_2Ae_2}e_2A=0$ and $\pd_{(e_2Ae_2)^\op}Ae_2=0$. Moreover, $A/Ae_2A\cong k$ is a simple algebra.  
		Since $Ae_2\otimes^L_{e_2Ae_2}e_2A \cong Ae_2\otimes_{e_2Ae_2}e_2A \cong e_2A = Ae_2A$ in $\D(A^e)$, $Ae_2A$ is a stratifying ideal of $A$, and the factor map $A\twoheadrightarrow A/Ae_2A$ is a homological epimorphism of algebras. Then we have the following recollement:
		$$\begin{tikzcd}[column sep=100]
			\D(A/Ae_2A) \arrow[r,"{A/Ae_2A\otimes^L_{A/Ae_2A}-}"] & \D(A) \arrow[r,"{e_2A\otimes^L_A-}"] \arrow[l,shift left=6,"{\RHom_A(A/Ae_2A,-)}"'] \arrow[l,shift right=6,"A/Ae_2A\otimes^L_A-"'] & \D(e_2Ae_2). \arrow[l,shift left=6,"{\RHom_{e_2Ae_2}(e_2A,-)}"'] \arrow[l,shift right=6,"{Ae_2\otimes^L_{e_2Ae_2}-}"']
		\end{tikzcd}$$
		The triangle functor $_AA/Ae_2A\otimes^L_{A/Ae_2A}-:\D^b(A/Ae_2A)\to\D^b(A)$ is fully faithful, so it is an eventually homological isomorphism, but the triangle functor $_AA/Ae_2A\otimes^L_{A/Ae_2A}-:\sg(A/Ae_2A)\to\sg(A)$ is not a singular equivalence due to $\gl A/Ae_2A=0$ and $\gl A=\infty$.
		
		Note that $\frac{A/Ae_2A}{\rad(A/Ae_2A)}\cong A/Ae_2A\cong S_1=I_1$ is both simple and injective left $A$-module corresponding to the vertex 1. Thus $\id_A\frac{A/Ae_2A}{\rad(A/Ae_2A)} =\id_AI_1=0$. Since $0\to P_2\to P_1\to S_1\to 0$ is a minimal projective resolution of $S_1$, we have $\pd_A\frac{A/Ae_2A}{\rad(A/Ae_2A)}=\pd_AS_1=1$. 
		Hence the triangle functor $e_2A\otimes^L_A-:\D^b(A)\to\D^b(e_2Ae_2)$ is an eventually homological isomorphism by Proposition~\ref{Prop-Exist-EvHomIso-Idemp}, and the triangle functor $e_2A\otimes^L_A-:\sg(A)\to\sg(e_2Ae_2)$ is a singular equivalence by Corollary~\ref{Cor-Exist-SingEqu-Idemp}. 
		
		Since the algebra $A$ is just the derived discrete algebra $\Lambda(1,1,1)$, the recollement above can be extended downward to an $n$-recollement for all $n\ge 1$ by \cite[Proposition 4]{Qin16}. Then all triangle functors in the right half part of the $n$-recollement are singular equivalences of $m$-adjoint type for all $m\ge 1$ by Theorem~\ref{Thm-EssSurjEHI=SingEqWithLeftAdj}.
}\end{example}

\subsection{Applications to reduce homological conjectures}

In this subsection, we apply singular equivalences of 2 or 3-adjoint type to reduce homological conjectures and transfer homological properties.

\medspace

\noindent{\bf Finitistic dimension conjecture.} (\cite[Page 487]{Bass60}) The finitistic dimension of an algebra $A$, i.e., the supremum of the projective dimensions of finitely generated $A$-modules with finite projective dimension, is finite.

Finitistic dimension conjecture is stronger than Nakayama conjecture and Gorenstein symmetry conjecture.
It holds true for monomial algebras \cite{Green-Kirkman-Kuzmanovich91}, radical cube zero algebras \cite{Green-Zimmermann-Huisgen91}, etc. We refer to \cite{Zimmermann-Huisgen94} for more history. Moreover, it can be reduced by recollements of derived categories \cite{Happel93,Chen-Xi17} and singular equivalences of Morita type with levels \cite{Wang15}.

The following result implies that the finitistic dimension conjecture can be reduced by singular equivalences of $2$-adjoint type.

\begin{theorem} \label{Thm-SE2AT-FDC}
	Let two finite dimensional algebras $A$ and $B$ be singularly equivalent of 2-adjoint type. Then $A$ satisfies the finitistic dimension conjecture if and only if so does $B$. 
\end{theorem}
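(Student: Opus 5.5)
Let $X$ be a complex of $A$-$B$-bimodules inducing the singular equivalence $X\otimes^L_B-:\sg(B)\to\sg(A)$ of 2-adjoint type. By symmetry it suffices to show that $\mathrm{fin.dim}\,B<\infty$ implies $\mathrm{fin.dim}\,A<\infty$. By Theorem~\ref{Thm-Exist-SingEquRightAdjoit-FD}, $X$ and $Y:=\RHom_A(X,A)$ are biperfect, and the cones $C_B:=\Cone(r^B_X)$ and $C_A:=\Cone(\ev^A_X)$ are $\otimes$-perfect. We may replace $X$ and $Y$ by bounded complexes of bimodules which are finitely generated projective on each side, concentrated in degrees $[-s,s]$ for some $s$. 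The plan is to reproduce, at the level of these cones, the argument Wang uses for singular equivalences of Morita type with level.

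\emph{Step 1: uniform bounds.} Apply Proposition~\ref{Prop-BimodComplex-TensorPerfect}(2): $C_A\otimes^L_A M\in\per(A)$ for every $M\in\mod A$. The first task is to make this uniform. Since $C_A\otimes^L_A\ol A\in\per(A)$ and every module lies in $\thick(\ol A)$ within at most $\ell$ extension steps, where $\ell$ is the Loewy length of $A$, we expect an integer $N$ such that, for every module $M$, the complex $C_A\otimes^L_A M$ is isomorphic to a perfect complex with cohomology and projective amplitude contained in a fixed window $[-N,N]$. Indeed, the property ``is quasi-isomorphic to a complex of projectives concentrated in $[a,b]$'' is stable under extensions, with the window widening by one at each step. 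In the same way $C_B\otimes^L_B N$ has uniformly bounded perfect amplitude for all $N\in\mod B$.

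\emph{Step 2: transport.} Let $M\in\mod A$ with $\pd_A M<\infty$, and put $N:=Y\otimes^L_A M\in\D^b(B)$. Because $Y$ is left perfect, $N$ is perfect over $B$. We want to convert $N$ into modules: a bounded perfect complex concentrated in $[-s,s]$ yields, by a brutal truncation, a module $N'$ of finite projective dimension. Take $N'$ to be a high syzygy-type module attached to $N$; its projective dimension is at most $\mathrm{fin.dim}\,B+2s$. Then apply $X\otimes^L_B-$ to the perfect complex $N$. This gives $X\otimes^L_B Y\otimes^L_A M$, a perfect $A$-complex whose projective amplitude is bounded by $\mathrm{fin.dim}\,B$ plus constants depending only on $s$, because $X$ is right projective and concentrated in $[-s,s]$. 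The triangle
$$X\otimes^L_B Y\otimes^L_A M\to M\to C_A\otimes^L_A M\to$$
together with Step 1 then shows that $M$ is perfect with amplitude inside a window $[-K,K]$ whose size is independent of $M$. Hence $\pd_A M\le K$, with $K$ depending only on $\mathrm{fin.dim}\,B$, $s$ and $N$.

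\emph{Main obstacle.} The delicate step is the precise amplitude bookkeeping. A perfect complex with cohomology in degree $0$, such as $M$, has projective dimension equal to the lowest nonzero degree of its minimal projective model. We therefore need to bound the amplitude of the minimal model of $Y\otimes^L_A M$ using only $\mathrm{fin.dim}\,B$. For this I would use the standard fact that for a perfect complex $P$ over $B$ with cohomology in $[a,b]$, the minimal model has length at most $(b-a)+\mathrm{fin.dim}\,B$. This follows by truncating at degree $a$: the truncated module has finite projective dimension, so its projective dimension is at most $\mathrm{fin.dim}\,B$. Combining this fact with Step 1 (applied also to $C_A$, so as to control cohomology) and the triangle above gives the uniform bound, and hence $\mathrm{fin.dim}\,A<\infty$.
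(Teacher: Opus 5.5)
Your proposal is correct and is essentially the paper's proof. It uses the same counit triangle $X\otimes^L_B\RHom_A(X,A)\otimes^L_AM\to M\to\Cone(\ev^A_X)\otimes^L_AM\to$, and for the converse the unit triangle with $\Cone(r^B_X)$, which the paper writes out rather than invoking symmetry. It also has the same key step: truncate the minimal projective resolution of $\RHom_A(X,A)\otimes^L_AM$ over $B$ below its lowest cohomology, so that $\mathrm{fin.dim}\,B$ bounds its length. And it uses $\otimes$-perfectness of the cone to get a bound independent of $M$.

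The differences are only bookkeeping. The paper reads off $\pd_AM$ as the lowest nonvanishing cohomological degree of $\ol{A}\otimes^L_AM$. It bounds the cone term at once from $\ol{A}\otimes^L_A\Cone(\ev^A_X)\in\per(A^\op)$, using only one-sided bounded representatives. This avoids your Loewy-length induction, and also your replacement of $X$ and $Y$ by two-sided projective bimodule complexes, which is true but unproved in your sketch. One small slip: it is projectivity of the components of $X$ as left $A$-modules, not as right $B$-modules, that makes $X\otimes_BQ$ a complex of projective $A$-modules.
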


\begin{proof}
	Since $A$ and $B$ are singularly equivalent of 2-adjoint type, there exists a biperfect $A$-$B$-bimodule complex $X$ which induces a singular equivalence $X\otimes^L_B-:\sg(B)\to\sg(A)$ with a right adjoint \linebreak $\RHom_A(X,-)\cong \RHom_A(X,A)\otimes^L_A-:\sg(A)\to\sg(B)$. By Theorem~\ref{Thm-Exist-SingEquRightAdjoit-FD}, the bimodule complexes $\Cone(r^B_X:B\to \RHom_A(X,A)\otimes^L_AX)$ and $\Cone(\ev^A_X:X\otimes^L_B\RHom_A(X,A)\to A)$ are $\otimes$-perfect.
	
	{\it Necessity.} Assume that $A$ satisfies the finitistic dimension conjecture, i.e., its finitistic dimension ${\rm fin.dim} A=m<\infty$. We want to prove ${\rm fin.dim} B<\infty$. For all $N\in\mod B$ with $\pd_BN<\infty$, we need to show that $\pd_BN$ has a common upper bound. From the triangle $B\to \RHom_A(X,A)\otimes^L_AX\to \Cone(r^B_X)\to$ in $\D(B^e)$, we obtain a triangle $$\ol{B}\otimes^L_BN\to \ol{B}\otimes^L_B\RHom_A(X,A)\otimes^L_AX\otimes^L_BN\to \ol{B}\otimes^L_B\Cone(r^B_X)\otimes^L_BN\to.$$
	
	Due to $_B\RHom_A(X,A)\in\per(B)$, we have $\ol{B}\otimes^L_B\RHom_A(X,A)_A\in \D^b(A^\op)$ by Lemma~\ref{Lem-LeftModComplex-Perfect}. Then $\ol{B}\otimes^L_B\RHom_A(X,A)$ is isomorphic to a bounded complex $L^l\to\cdots\to L^{l'}$ in $\D^b(A^\op)$ where $L^i\in\mod A^\op$ for all $l\le i\le l'$.
	Due to $X_B\in\per(B^\op)$, $X$ is isomorphic to a bounded complex $P^p\to\cdots\to P^{p'}$ in $\per(B^\op)$ where $P^i\in\proj B^\op$ for all $p\le i\le p'$. Then $X\otimes^L_BN$ is isomorphic to a bounded complex $P^p\otimes_BN\to\cdots\to P^{p'}\otimes_BN$ in $\D^b(k)$. Thus $H^i(X\otimes^L_BN)=0$ for all $i\le p-1$ and $i\ge p'+1$. Let $Q:=(\cdots\to Q^{p'-1}\xrightarrow{d^{p'-1}_Q} Q^{p'})$ be a minimal semi-projective resolution of $X\otimes^L_BN\in\D^b(A)$ where $Q^i\in\proj A$ for all $i\le p'$. Since $_AX_B$ is biperfect, we have $_AX\otimes^L_BN\in\per(A)$ by Lemma~\ref{Lem-RightModComplex-Perfect}. From the triangle $\Im d^{p-1}_Q[p]\to \tau^{\ge p}Q\to X\otimes^L_BN\to$ in $\D(A)$, where $\tau^{\ge p}Q:=(Q^p\to\cdots\to Q^{p'})$ is the brutal truncation of $Q$, we obtain $\Im d^{p-1}_Q\in\per(A)$. Thanks to ${\rm fin.dim} A=m<\infty$, we have $\pd_A\Im d^{p-1}_Q\le m$. Then $Q^i=0$ for all $i\le p-m-2$. Thus $\ol{B}\otimes^L_B\RHom_A(X,A)\otimes^L_AX\otimes^L_BN$ is isomorphic to a bounded complex $L^l\otimes_AQ^{p-m-1}\to\cdots\to L^{l'}\otimes_AQ^{p'}$. Hence $H^i(\ol{B}\otimes^L_B\RHom_A(X,A)\otimes^L_AX\otimes^L_BN)=0$ for all $i\le l+p-m-2$.  
	
	Since $\Cone(r^B_X)$ is $\otimes$-perfect, we have $\ol{B}\otimes^L_B\Cone(r^B_X)\in\per(B^\op)$. Then $\ol{B}\otimes^L_B\Cone(r^B_X)$ is isomorphic to a bounded complex $R^r\to\cdots\to R^{r'}$ where $R^i\in \proj(B^\op)$ for all $r\le i\le r'$. Thus $\ol{B}\otimes^L_B\Cone(r^B_X)\otimes^L_BN$ is isomorphic to the bound complex $R^r\otimes_BN\to\cdots\to R^{r'}\otimes_BN$ in $\D^b(k)$. Hence $H^i(\ol{B}\otimes^L_B\Cone(r^B_X)\otimes^L_BN)=0$ for all $i\le r-1$. 
	
	From the triangle $\ol{B}\otimes^L_BN\to \ol{B}\otimes^L_B\RHom_A(X,A)\otimes^L_AX\otimes^L_BN\to \ol{B}\otimes^L_B\Cone(r^B_X)\otimes^L_BN\to$, we get a long exact sequence $\cdots\to H^{i-1}(\ol{B}\otimes^L_B\Cone(r^B_X)\otimes^L_BN) \to H^i(\ol{B}\otimes^L_BN) \to H^i(\ol{B}\otimes^L_B\RHom_A(X,A)\otimes^L_AX\otimes^L_BN) \to \cdots.$ By the above analyses, we have $H^i(\ol{B}\otimes^L_BN)=0$ for all $i\le \min\{r,l+p-m-2\}$. Thus $\pd_BN\le\max\{-r,m-l-p+2\}$. Hence ${\rm fin.dim} B\le \max\{-r,m-l-p+2\}<\infty$.

	{\it Sufficiency.} Assume ${\rm fin.dim} B=n<\infty$. We want to prove ${\rm fin.dim} A<\infty$. For all $M\in\mod A$ with $\pd_AM<\infty$, we need to show that $\pd_AM$ has a common upper bound. From the triangle $X\otimes^L_B\RHom_A(X,A)\to A\to \Cone(\ev^A_X)\to$ in $\D(A^e)$, we obtain a triangle $$\ol{A}\otimes^L_AX\otimes^L_B\RHom_A(X,A)\otimes^L_AM\to \ol{A}\otimes^L_AM\to \ol{A}\otimes^L_A\Cone(\ev^A_X)\otimes^L_AM\to.$$
	
	Due to $_AX\in\per(A)$, we have $\ol{A}\otimes^L_AX_B\in \D^b(B^\op)$ by Lemma~\ref{Lem-LeftModComplex-Perfect}. Then $\ol{A}\otimes^L_AX$ is isomorphic to a bounded complex $L^l\to\cdots\to L^{l'}$ in $\D^b(B^\op)$ where $L^i\in\mod B^\op$ for all $l\le i\le l'$.
	Due to $\RHom_A(X,A)_A\in\per(A^\op)$, $\RHom_A(X,A)$ is isomorphic to a bounded complex $P^p\to\cdots\to P^{p'}$ in $\per(A^\op)$ where $P^i\in\proj A^\op$ for all $p\le i\le p'$. Then $\RHom_A(X,A)\otimes^L_AM$ is isomorphic to a bounded complex $P^p\otimes_AM\to\cdots\to P^{p'}\otimes_AM$ in $\D^b(k)$. Thus $H^i(\RHom_A(X,A)\otimes^L_AM)=0$ for all $i\le p-1$ and $i\ge p'+1$. Let $Q:=(\cdots\to Q^{p'-1}\xrightarrow{d^{p'-1}_Q} Q^{p'})$ be a minimal semi-projective resolution of $\RHom_A(X,A)\otimes^L_AM\in\D^b(B)$ where $Q^i\in\proj B$ for all $i\le p'$. Since $_B\RHom_A(X,A)_A$ is biperfect, we have $_B\RHom_A(X,A)\otimes^L_AM\in\per(B)$ by Lemma~\ref{Lem-RightModComplex-Perfect}. From the triangle $\Im d^{p-1}_Q[p]\to \tau^{\ge p}Q\to \RHom_A(X,A)\otimes^L_AM\to$, we obtain $\Im d^{p-1}_Q\in\per(B)$. Thanks to ${\rm fin.dim} B=n<\infty$, we have $\pd_B\Im d^{p-1}_Q\le n$. Then $Q^i=0$ for all $i\le p-n-2$. Thus $\ol{A}\otimes^L_AX\otimes^L_B\RHom_A(X,A)\otimes^L_AM$ is isomorphic to a bounded complex $L^l\otimes_BQ^{p-n-1}\to\cdots\to L^{l'}\otimes_BQ^{p'}$. Hence $H^i(\ol{A}\otimes^L_AX\otimes^L_B\RHom_A(X,A)\otimes^L_AM)=0$ for all $i\le l+p-m-2$.  
	
	Since $\Cone(\ev^A_X)$ is $\otimes$-perfect, we have $\ol{A}\otimes^L_A\Cone(\ev^A_X)\in\per(A^\op)$. Then $\ol{A}\otimes^L_A\Cone(\ev^A_X)$ is isomorphic to a bounded complex $R^r\to\cdots\to R^{r'}$ where $R^i\in \proj(A^\op)$ for all $r\le i\le r'$. Thus $\ol{A}\otimes^L_A\Cone(\ev^A_X)\otimes^L_AM$ is isomorphic to the bound complex $R^r\otimes_AM\to\cdots\to R^{r'}\otimes_AM$. Hence $H^i(\ol{A}\otimes^L_A\Cone(\ev^A_X)\otimes^L_AM)=0$ for all $i\le r-1$. 
	
	From the triangle $\ol{A}\otimes^L_AX\otimes^L_B\RHom_A(X,A)\otimes^L_AM\to \ol{A}\otimes^L_AM\to \ol{A}\otimes^L_A\Cone(\ev^A_X)\otimes^L_AM\to$, we get a long exact sequence $\cdots\to H^i(\ol{A}\otimes^L_AX\otimes^L_B\RHom_A(X,A)\otimes^L_AM) \to H^i(\ol{A}\otimes^L_AM) \to H^i(\ol{A}\otimes^L_A\Cone(\ev^A_X)\otimes^L_AM) \to \cdots.$ By the above analyses, we have $H^i(\ol{A}\otimes^L_AM)=0$ for all $i\le \min\{r,l+p-n-2\}$. Thus $\pd_AM\le\max\{-r,n-l-p+2\}$. Hence ${\rm fin.dim} A\le \max\{-r,n-l-p+2\}<\infty$.
\end{proof}

\noindent{\bf Han's property.} (\cite[Definition 3.9]{Cruz23}) We say that an algebra $A$ satisfies {\it Han's property} if its global dimension is finite once its $n$-th Hochschild homology group $HH_n(A)$ vanishes for $n\gg 0$.

Han's conjecture \cite{Han06} says that all finite dimensional $k$-algebras $A$ with $A/\rad A$ being separable over $k$ satisfy Han's property. It holds true for monomial algebras \cite{Han06}, algebras with 2-truncated cycles \cite{Bergh-Han-Madsen12}, etc. We refer to \cite{Cruz23} for a survey on Han's conjecture. Recently, a counterexample of Han's conjecture was given by Kong, Liu and Shen in \cite{Kong-Liu-Shen26}. Han's property can be transferred by recollements \cite{Wang-Xu-Zhang-Zhou24} and extensions of algebras \cite{Cibils-Lanzilotta-Marcos-Solotar22,Qin-Xu-Zhang-Zhou24,Iusenko-MacQuarrie25}. 

The following result implies that Han's property can be transferred by singular equivalences of 2-adjoint type.

\begin{theorem} \label{Thm-SE2AT-HH-Han}
	Let finite dimensional algebras $A$ and $B$ be singularly equivalent of 2-adjoint type, and $A/\rad A$ and $B/\rad B$ be separable over $k$. Then the following statements hold:
	
	{\rm(1)} $HH_i(A) \cong HH_i(B)$ for $i\gg 0$.
	
	{\rm(2)} $A$ satisfies Han's property if and only if so does $B$.
\end{theorem}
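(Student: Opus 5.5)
Hochschild homology is $HH_i(A)=H^{-i}(A\otimes^L_{A^e}A)$. The plan is to compare $A\otimes^L_{A^e}A$ with $B\otimes^L_{B^e}B$ through the bimodule complexes $X$ and $Y:=\RHom_A(X,A)$. By Theorem~\ref{Thm-Exist-SingEquRightAdjoit-FD}, $X$ and $Y$ are biperfect, and $\Cone(r^B_X)$ and $\Cone(\ev^A_X)$ are $\otimes$-perfect. Since $A/\rad A$ and $B/\rad B$ are separable, Proposition~\ref{Prop-HomSmooth-TensorPerfect-BimodComplex} shows that these cones are perfect, lying in $\per(B^e)$ and $\per(A^e)$ respectively. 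We also note that $\RHom_A(X,X)\cong Y\otimes^L_AX$ because ${}_AX\in\per(A)$.

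Step 1: apply $-\otimes^L_{A^e}A$ to the triangle $X\otimes^L_BY\xrightarrow{\ev^A_X}A\to\Cone(\ev^A_X)\to$ in $\D(A^e)$. This gives a triangle of complexes of vector spaces whose third term is $\Cone(\ev^A_X)\otimes^L_{A^e}A$. Since $\Cone(\ev^A_X)\in\per(A^e)=\thick(A^e)$ and $A^e\otimes^L_{A^e}A\cong A$ is bounded, this third term lies in $\D^b(k)$. The long exact cohomology sequence then yields $H^{-i}(A\otimes^L_{A^e}A)\cong H^{-i}((X\otimes^L_BY)\otimes^L_{A^e}A)$ for $i\gg0$. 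In the same way, the triangle $B\xrightarrow{r^B_X}Y\otimes^L_AX\to\Cone(r^B_X)\to$ gives $H^{-i}(B\otimes^L_{B^e}B)\cong H^{-i}((Y\otimes^L_AX)\otimes^L_{B^e}B)$ for $i\gg0$.

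Step 2 is the trace identity $(X\otimes^L_BY)\otimes^L_{A^e}A\cong(Y\otimes^L_AX)\otimes^L_{B^e}B$. Both sides are the derived ``cyclic tensor product'' of $X$ and $Y$ over $A$ and $B$. Concretely, replace $X$ and $Y$ by bimodule-projective (cofibrant) resolutions. Then $(M\otimes_B N)\otimes_{A^e}A\cong M\otimes_{B\otimes A^\op}N$ (gluing $M$ and $N$ along both $A$ and $B$) is symmetric in the roles of $A$ and $B$, up to the Koszul sign twist. This is the standard argument for $HH_*(X\otimes Y)\cong HH_*(Y\otimes X)$, and it needs care only to ensure the derived tensors are computed correctly, which cofibrant replacements over $A\otimes B^\op$ and $B\otimes A^\op$ guarantee. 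Combining Steps 1 and 2 gives (1).

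The main obstacle I expect is in Step 1: making sure the bounded third terms really are bounded so that high-degree cohomology is unaffected. Here the separability hypothesis is exactly what upgrades $\otimes$-perfect to perfect over the enveloping algebra, which is why it is assumed. For (2), singular equivalence preserves finiteness of global dimension, since $\gl A<\infty$ if and only if $\sg(A)=0$. By (1), $HH_i(A)=0$ for $i\gg0$ if and only if $HH_i(B)=0$ for $i\gg0$. Hence Han's property transfers in both directions.
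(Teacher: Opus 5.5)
Your proposal is correct and follows the paper's proof essentially step by step. The paper uses the same two triangles, coming from the unit $r^B_X$ and the counit $\ev^A_X$, and upgrades their cones from $\otimes$-perfect to perfect over the enveloping algebras via separability; it then invokes the trace identity $B\otimes^L_{B^e}(Y\otimes^L_AX)\cong A\otimes^L_{A^e}(X\otimes^L_BY)$, which it simply asserts as clear, and uses the same global-dimension argument for (2).
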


\begin{proof} 
	(1) Since $A$ and $B$ are singularly equivalent of 2-adjoint type, there is a complex $X_1\in\D(A\otimes B^\op)$ which induces an adjoint pair $X_1\otimes^L_B-: \sg(B) \rightleftarrows \sg(A) :X_2\otimes^L_A-$ where $X_2:=\RHom_A(X_1,A)$.
	
	We have an adjoint pair $X_1\otimes^L_B-:\D(B\otimes B^\op)\rightleftarrows\D(A\otimes B^\op):X_2\otimes^L_A-$. Its unit induces a triangle $B\to X_2\otimes^L_AX_1\to \Cone(r^B_{X_1})\to$ in $\D(B^e)$. then a triangle $B\otimes^L_{B^e}B\to B\otimes^L_{B^e}(X_2\otimes^L_AX_1)\to B\otimes^L_{B^e}\Cone(r^B_{X_1})\to$ in $\D(k)$. By Theorem~\ref{Thm-Exist-SingEquRightAdjoit-FD}, $\Cone(r^B_{X_1})$ is $\otimes$-perfect in $\D(B^e)$. By the assumption that $A/\rad A$ and $B/\rad B$ be separable over $k$ and Proposition~\ref{Prop-HomSmooth-TensorPerfect-BimodComplex}, $\Cone(r^B_{X_1})$ is perfect in $\D(B^e)$. Thus $B\otimes^L_{B^e}\Cone(r^B_{X_1})\in \D^b(k)$. Hence $H_i(B\otimes^L_{B^e}B)\cong H_i(B\otimes^L_{B^e}(X_2\otimes^L_AX_1))$ for $i\gg 0$.
	
	We have an adjoint pair $X_1\otimes^L_B-:\D(B\otimes A^\op)\rightleftarrows\D(A\otimes A^\op):X_2\otimes^L_A-$. Its counit induces a triangle $X_1\otimes^L_BX_2\to A\to \Cone(\ev^A_{X_1})\to$ in $\D(A^e)$, then a triangle $A\otimes^L_{A^e}(X_1\otimes^L_BX_2)\to A\otimes^L_{A^e}A\to A\otimes^L_{A^e}\Cone(\ev^A_{X_1})\to$ in $\D(k)$. By Theorem~\ref{Thm-Exist-SingEquRightAdjoit-FD}, $\Cone(\ev^A_{X_1})$ is $\otimes$-perfect in $\D(A^e)$. By the assumption that $A/\rad A$ and $B/\rad B$ be separable over $k$ and Proposition~\ref{Prop-HomSmooth-TensorPerfect-BimodComplex}, $\Cone(\ev^A_{X_1})$ is perfect in $\D(A^e)$. Thus $A\otimes^L_{A^e}\Cone(\ev^A_{X_1})\in \D^b(k)$. Hence $H_i(A\otimes^L_{A^e}A)\cong H_i(A\otimes^L_{A^e}(X_1\otimes^L_BX_2))$ for $i\gg 0$.
	
	Clearly, $B\otimes^L_{B^e}(X_2\otimes^L_AX_1) \cong A\otimes^L_{A^e}(X_1\otimes^L_BX_2)$. Thus $HH_i(A) \cong HH_i(B)$ for $i\gg 0$.
	
	(2) Due to $\sg(A)\simeq \sg(B)$, we know $\gl A<\infty$ if and only if $\gl B<\infty$. It follows from (1) that $A$ satisfies Han's property if and only if so does $B$.
\end{proof}

\begin{remark}{\rm
		Alternately, Theorem~\ref{Thm-SE2AT-HH-Han} can be proved as follows: It follows from \cite[Theorem 1.1]{Qin22} that $A$ and $B$ are singularly equivalent with level. By \cite[Proposition 3.8]{Wang15}, we have $HH_i(A) \cong HH_i(B)$ for $i> 0$. Furthermore, $A$ satisfies Han's property if and only if so does $B$.
}\end{remark}

A direct corollary of Theorem~\ref{Thm-SE2AT-HH-Han} and Corollary~\ref{Cor-SingEqu-AlgExt} is the following result which was proved in different ways.

\begin{corollary} {\rm(\cite[Theorem 4.6]{Cibils-Lanzilotta-Marcos-Solotar22}, \cite[Corollary 6.18]{Iusenko-MacQuarrie25}, \cite[Theorem 4.1 (2)]{Qin-Xu-Zhang-Zhou24})}
	Let $B\subseteq A$ be a bounded or strongly proj-bounded extension of finite dimensional algebras, and $A/\rad A$ and $B/\rad B$ be separable over $k$. Then $A$ satisfies Han's property if and only if so does $B$. 
\end{corollary}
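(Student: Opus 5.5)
The corollary should follow by combining Corollary~\ref{Cor-SingEqu-AlgExt} with Theorem~\ref{Thm-SE2AT-HH-Han}. The only real work is to check that the singular equivalence produced in Corollary~\ref{Cor-SingEqu-AlgExt} is of 2-adjoint type in the sense required by Theorem~\ref{Thm-SE2AT-HH-Han}.

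\textbf{Step 1: a singular equivalence with canonical left adjoint.} By Corollary~\ref{Cor-SingEqu-AlgExt}, the complex $X:={}_BA_A$ of $B$-$A$-bimodules induces a singular equivalence $X\otimes^L_A-:\sg(A)\to\sg(B)$ with left adjoint ${}_AA\otimes^L_B-$. Since $\RHom_{A^\op}(A,A)\cong A$, this left adjoint is the canonical one $\RHom_{A^\op}(X,A)\otimes^L_B-$. So $X$ induces a singular equivalence of $(2,1)$-adjoint type, i.e.\ of left $2$-adjoint type.

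\textbf{Step 2: pass to right $2$-adjoint type.} By the remark after the definition, a singular equivalence of $(2,1)$-adjoint type induced by $X$ is a singular equivalence of (right) $2$-adjoint type induced by $X^2=\RHom_{A^\op}(X,A)\cong {}_AA_B$. Concretely, the $A$-$B$-bimodule complex ${}_AA_B$ induces the equivalence ${}_AA\otimes^L_B-:\sg(B)\to\sg(A)$. Its right adjoint is $\RHom_A(A,-)\cong {}_BA\otimes^L_A-$, which is again an equivalence.

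The biperfectness of ${}_AA_B$ and of $\RHom_A({}_AA_B,A)\cong {}_BA_A$ is exactly what was verified in the proof of Corollary~\ref{Cor-SingEqu-AlgExt}: there $A/B\in\per(B^e)$ gave ${}_BA\in\per(B)$ and $A_B\in\per(B^\op)$, and the $A$-sides are free. Hence $A$ and $B$ are singularly equivalent of 2-adjoint type. Alternatively, one can invoke Theorem~\ref{Thm-Exist-SingEquRightAdjoit-FD} directly, since its two cone conditions are exactly $A/B$ and $\Cone(\ev^A_A)$ being $\otimes$-perfect, as established there.

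\textbf{Step 3: conclude.} With $A/\rad A$ and $B/\rad B$ separable, Theorem~\ref{Thm-SE2AT-HH-Han}(2) applies and gives the equivalence of Han's property for $A$ and $B$.

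The only point needing care is Step~2, namely identifying the adjoints correctly with the canonical ones. This amounts to routine isomorphisms $\RHom_A(A,A)\cong A$ as bimodules, so I expect no genuine obstacle.
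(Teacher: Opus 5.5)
Your proposal is correct and follows the paper's route: the paper presents this as a direct consequence of Corollary~\ref{Cor-SingEqu-AlgExt} and Theorem~\ref{Thm-SE2AT-HH-Han}. Your Step~2 makes explicit the bookkeeping the paper leaves implicit, namely that the left $2$-adjoint equivalence given by ${}_BA_A$ yields a right $2$-adjoint one given by ${}_AA_B$. Equivalently, it checks that $\Cone(B\hookrightarrow A)\cong A/B$ and $\Cone(\ev^A_A)$ are $\otimes$-perfect, as Theorem~\ref{Thm-Exist-SingEquRightAdjoit-FD} requires.
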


\medspace

\noindent{\bf Auslander-Reiten's conjecture.} (\cite[Page 72]{Auslander-Reiten75-GNC}) If a finitely generated module $X$ over an Artin algebra $A$ satisfies $\Ext^i_A(X, X\oplus A) = 0$ for all $i \ge 1$ then $X$ is projective.

\noindent{\bf Gorenstein projective conjecture.} \cite{Luo-Huang08} A finitely generated Gorenstein projective module $M$ over an Artin algebra $A$ is projective if $\Ext^i_A(M, M) =0$, for all $i \ge 1$.

Auslander-Reiten's conjecture is stronger than Nakayama conjecture and Gorenstein projective conjecture.

\begin{theorem} \label{Thm-SE3AT-G-GSC-ARC-GPC}
	Let finite dimensional algebras $A$ and $B$ be singularly equivalent of $3$-adjoint type. Then the following statements hold:
	
	{\rm(1)} $A$ is Gorenstein if and only if so is $B$. 
	
	{\rm(2)} $A$ satisfies Gorenstein symmetry conjecture if and only if so does $B$. 
	
	{\rm(3) \cite{Chen-Hu-Qin-Wang23}} $A$ satisfies Auslander-Reiten's conjecture if and only if so does $B$.
	
	{\rm(4) \cite{Chen-Hu-Qin-Wang23}} $A$ satisfies Gorenstein projective conjecture if and only if so does $B$.	
\end{theorem}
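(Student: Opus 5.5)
The plan is to show that a singular equivalence of $3$-adjoint type restricts, on bounded derived categories, to functors that match perfect complexes and complexes in $K^b(\inj)$ in both directions. Then (1) and (2) follow from the same lattice argument as in Claims 3--5 of the proof of Theorem~\ref{Thm-EHI-GorSymConj}, and (3), (4) follow by transporting modules through the adjoint triple.

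Let $X=X_1$ induce the singular equivalence, with $X_2=\RHom_A(X_1,A)$ and $X_3=\RHom_B(X_2,B)$ biperfect. We get an adjoint triple $X_2\otimes^L_A- \dashv X_1\otimes^L_B- \dashv X_2\otimes^L_A-$ on singularity categories, in the sense that $X_3\otimes^L_B-$ is right adjoint to $X_2\otimes^L_A-$.

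\textbf{Transfer of perfect and injective objects.} First, $F:=X\otimes^L_B-:\D^b(B)\to\D^b(A)$ and $G:=X_2\otimes^L_A-:\D^b(A)\to\D^b(B)$ preserve perfect complexes by biperfectness (Lemma~\ref{Lem-LeftModComplex-Perfect}). They detect perfection because they are inverse equivalences modulo $\per$. Concretely, by Theorem~\ref{Thm-Exist-SingEquRightAdjoit-FD} the cones of $r^B_X$ and $\ev^A_X$ are $\otimes$-perfect, so $\Cone(Y\to GFY)\in\per(B)$ and $\Cone(FGZ\to Z)\in\per(A)$.

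For injectives, I would use that $X_2$ is also a left adjoint on derived categories. The functor $\RHom_A(X_2^*{}^{\ldots})$ is awkward; instead I would use $\RHom_{A}(X,-)\cong X_2\otimes^L_A-$ on $\D^b(A)$. This is a right adjoint of $F$ on $\D(A)$, so it sends $K^b(\inj A)$ into complexes $Z'$ with $\Hom(\D^b(B),Z'[i])=0$ for $i\gg0$. Indeed, $\Hom_B(Y,\RHom_A(X,I)[i])\cong\Hom_A(X\otimes^L_BY,I[i])=0$ for $i\gg0$ since $X\otimes^L_BY\in\D^b(A)$, so $G(K^b(\inj A))\subseteq K^b(\inj B)$.

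Symmetrically, $X_3$ biperfect gives $F\cong\RHom_B(X_2,-)$ on $\D^b(B)$, because $X_3=\RHom_B(X_2,B)$ and $X_2$ is left perfect. So $F$ is right adjoint to $G$ on derived categories, and $F(K^b(\inj B))\subseteq K^b(\inj A)$. This is exactly where the third adjoint is needed.

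Detection for injectives then uses the same cones. If $FZ\in K^b(\inj A)$, then $GFZ\in K^b(\inj B)$. The cone $C\otimes^L_BZ$ of $Z\to GFZ$ lies in $\per(B)$, since the cone is $\otimes$-perfect. This only shows that $Z$ lies in $K^b(\inj B)$ up to a perfect error, which is the main obstacle.

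\textbf{The main obstacle.} I would aim to show that the cone $\Cone(r^B_X)\otimes^L_BZ$ is also in $K^b(\inj B)$ whenever $Z\in K^b(\inj B)$ or $FZ\in K^b(\inj A)$. A cleaner route is to formulate everything via the ``eventual'' Hom vanishing. By Theorem~\ref{Thm-EvHomIso-TensorPerfect-BimodComplex-RightAdjoint}-type arguments, I would try to prove $\Hom(Y,\Cone(\varepsilon_Z)[i])=0$ for $i\gg0$, using that $\Cone(r^B_X)$ is $\otimes$-coperfect, i.e.\ that $G\circ F$ is an eventually homological isomorphism. The input for that is the adjunction $G\dashv F$ on derived categories, which yields the coperfectness. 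The plan is therefore to check that, when $X_1,X_2,X_3$ are biperfect, $\otimes$-perfectness of $\Cone(r^B_X)$ together with the unit/counit identities of both adjunctions forces $\otimes$-coperfectness. Then Claims 1--4 of Theorem~\ref{Thm-EHI-GorSymConj} hold for $F$ and $G$ in the weaker form ``$Y\in K^b(\proj B)$ iff $FY\in K^b(\proj A)$, likewise for $\inj$, and $G$ symmetric''. This suffices for (1) and (2) by Claim 5.

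\textbf{Parts (3) and (4).} These are attributed to \cite{Chen-Hu-Qin-Wang23}. I would reduce them to their criterion, namely a singular equivalence whose adjoints induce stable functors preserving $\Ext^{\ge1}$-vanishing eventually. Given $M$ over $B$ with $\Ext^{\ge1}(M,M\oplus B)=0$, I would take a high syzygy of $FM$ and use that the comparison maps on $\Hom(-,-[i])$ are isomorphisms for $i\gg0$, obtained from the two eventually-homological-isomorphism statements above. From this, $\Ext^{\ge1}$-vanishing transfers to a Gorenstein projective module $M'$ over $A$. Projectivity of $M'$ then forces $M\in\per(B)$ by detection, hence $M$ is projective, since $\Ext^{\ge1}(M,B)=0$ and $\pd M<\infty$.
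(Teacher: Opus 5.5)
There is a genuine gap, and it sits exactly at the step you flagged. First, the identification $F\cong\RHom_B(X_2,-)$ is wrong. Since ${}_BX_2$ is perfect, $\RHom_B(X_2,-)\cong\RHom_B(X_2,B)\otimes^L_B-=X_3\otimes^L_B-$, and $X_3$ is not $X_1$. The functors $X_1\otimes^L_B-$ and $X_3\otimes^L_B-$ agree only on $\sg(B)$, where both are quasi-inverse to $X_2\otimes^L_A-$. On derived categories the adjunction is $G\dashv X_3\otimes^L_B-$, and $F$ itself need neither preserve nor detect $K^b(\inj)$.

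Concretely, let $\Lambda=\begin{pmatrix}k&0\\ C&C\end{pmatrix}$ with $\id_CC=\infty$. By Proposition~\ref{Prop-TriMatAlg-SEAT}(4) (with $B=k$, $M=C$), $X_1=\Lambda/\Lambda e_1\Lambda$ induces a singular equivalence $\sg(\Lambda)\to\sg(C)$ of $3$-adjoint type, with $X_2=\Lambda e_2$ and $X_3=e_2\Lambda$; so your $B$ is $\Lambda$ and your $A$ is $C$. The simple module $S_1=\Lambda/\Lambda e_2\Lambda$ is injective and has projective resolution $0\to\Lambda e_2\to\Lambda e_1\to S_1\to0$. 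Hence $F(S_1)\cong C[1]\notin K^b(\inj C)$, whereas $X_3\otimes^L_\Lambda S_1=e_2S_1=0$. Also $F(\Lambda e_1)=0$, but $\RHom_\Lambda(\Lambda e_2\otimes_CW,\Lambda e_1)\cong\RHom_C(W,e_2\Lambda e_1)=\RHom_C(W,C)$ is unbounded for suitable $W\in\mod C$, so $\Lambda e_1\notin K^b(\inj\Lambda)$.

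Your proposed way around the obstacle fails too. By Theorem~\ref{Thm-EvHomIso-TensorPerfect-BimodComplex-RightAdjoint}, $\otimes$-coperfectness of $\Cone(r^B_X)$ is equivalent to $F$ being an eventually homological isomorphism on $\D^b(B)$. In the example $F$ is essentially surjective (it is left inverse to $\Lambda e_2\otimes_C-$), so by Claim~2 in the proof of Theorem~\ref{Thm-EHI-GorSymConj} it would have to send $K^b(\inj\Lambda)$ into $K^b(\inj C)$, which it does not. Thus $\Cone(r^\Lambda_{X_1})$ is not $\otimes$-coperfect, and you would be trying to prove a false statement. Your sketch of (3)--(4) rests on the same eventual isomorphisms, so it does not stand as written either; the paper simply invokes \cite{Chen-Hu-Qin-Wang23} there.

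The missing idea is to put the perfect error in the contravariant slot of $\RHom$, never the covariant one, and to use $X_3\otimes^L_B-$ instead of $F$. Being right adjoint to $G$, $X_3\otimes^L_B-$ preserves $K^b(\inj)$, and by biperfectness it preserves $\per$. For instance, assume $\per(A)\subseteq K^b(\inj A)$, let $Y\in\per(B)$, and take any $Z\in\D^b(B)$. Apply $\RHom_B(-,Y)$ to the unit triangle $Z\to X_2\otimes^L_AX_1\otimes^L_BZ\to\Cone(r^B_{X_1})\otimes^L_BZ\to$. The third term is perfect, so $\RHom_B(\Cone(r^B_{X_1})\otimes^L_BZ,Y)$ is bounded. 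Meanwhile $\RHom_B(X_2\otimes^L_AX_1\otimes^L_BZ,Y)\cong\RHom_A(X_1\otimes^L_BZ,X_3\otimes^L_BY)$ is bounded because $X_3\otimes^L_BY\in\per(A)\subseteq K^b(\inj A)$. Hence $Y\in K^b(\inj B)$.

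The implication $K^b(\inj A)\subseteq\per(A)\Rightarrow K^b(\inj B)\subseteq\per(B)$ works the same way. Apply $\RHom_B(-,Z)$ to the counit triangle $X_2\otimes^L_AX_3\otimes^L_BY\to Y\to\Cone(\ev^B_{X_2})\otimes^L_BY\to$, and use $\RHom_B(X_2\otimes^L_AX_3\otimes^L_BY,Z)\cong\RHom_A(X_3\otimes^L_BY,X_3\otimes^L_BZ)$. The two converse implications use $\Cone(\ev^A_{X_1})$ together with $G$. These four implications, which are the paper's Claims 1--4, give (1) and (2) directly, with no need for $F$ to preserve or detect $K^b(\inj)$.
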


\begin{proof} 
	Let $X_1\otimes^L_B-: \sg(B)\to\sg(A)$ be a singular equivalence of $3$-adjoint type induced by $X_1\in\D(A\otimes B^\op)$. Then we have an adjoint 6-tuple 
	$$\begin{tikzcd}[column sep=110]
		\D(B) \arrow[r,"{X_1\otimes^L_B-}"] \arrow[r,shift right=12,"{X_3\otimes^L_B-}"] \arrow[r,shift right=24,"{\RHom_B(\RHom_A(X_3,A),-)}"] & \D(A) \arrow[l,shift right=6,"{\RHom_{B^\op}(X_1,B)\otimes^L_A-}"'] \arrow[l,shift left=6,"{X_2\otimes^L_A-}"'] \arrow[l,shift left=18,"{\RHom_A(X_3,A)\otimes^L_A-}"']
	\end{tikzcd}$$	
	where the complexes $X_1, X_2:=\RHom_A(X_1,A)$ and $ X_3:=\RHom_B(X_2,B)$ are biperfect.
	
	{\it Claim 1.} $K^b(\proj A)\subseteq K^b(\inj A) \Rightarrow K^b(\proj B)\subseteq K^b(\inj B)$.
	
	For any $Y\in K^b(\proj B)$, we need to show $Y\in K^b(\inj B)$. It suffices to prove $\RHom_B(Z,Y)\in\D^b(k)$ for all $Z\in\D^b(B)$. We have an adjoint pair $X_1\otimes^L_B-:\D(B\otimes B^\op)\rightleftarrows\D(A\otimes B^\op):X_2\otimes^L_A-$. Its unit induces a triangle $B\to X_2\otimes^L_AX_1\to \Cone(r^B_{X_1})\to$ in $\D(B^e)$, then a triangle $Z\to X_2\otimes^L_AX_1\otimes^L_BZ\to \Cone(r^B_{X_1})\otimes^L_BZ\to$ in $\D(B)$, and further a triangle $\RHom_B(\Cone(r^B_{X_1})\otimes^L_BZ,Y)\to \RHom_B(X_2\otimes^L_AX_1\otimes^L_BZ,Y)\to \RHom_B(Z,Y)\to$ in $\D(k)$.
	By Theorem~\ref{Thm-Exist-SingEquRightAdjoit-FD}, the $B$-bimodule complex $\Cone(r^B_{X_1})$ is $\otimes$-perfect. Thus $\Cone(r^B_{X_1})\otimes^L_BZ\in K^b(\proj B)$ by Proposition~\ref{Prop-BimodComplex-TensorPerfect}. Hence $\RHom_B(\Cone(r^B_{X_1})\otimes^L_BZ,Y)\in\D^b(k)$. Due to the adjoint pair $X_2\otimes^L_A-:\D(A)\rightleftarrows\D(B):X_3\otimes^L_B-$, we have $\RHom_B(X_2\otimes^L_AX_1\otimes^L_BZ,Y)\cong \RHom_A(X_1\otimes^L_BZ,X_3\otimes^L_BY)$. Since the triangle functor $X_3\otimes^L_B-$ sends $K^b(\proj B)$ into $K^b(\proj A)$, we have $X_3\otimes^L_BY\in K^b(\proj A)$. Furthermore, by the assumption $K^b(\proj A)\subseteq K^b(\inj A)$, we get $X_3\otimes^L_BY\in K^b(\inj A)$. Moreover, since the triangle functor $X_3\otimes^L_B-$ sends $\D^b(B)$ into $\D^b(A)$, we have $X_1\otimes^L_BZ\in\D^b(A)$. Thus $\RHom_B(X_2\otimes^L_AX_1\otimes^L_BZ,Y)\cong \RHom_A(X_1\otimes^L_BZ,X_3\otimes^L_AY)\in \D^b(k)$. Hence $\RHom_B(Z,Y)\in\D^b(k)$.
	
	{\it Claim 2.} $K^b(\inj A)\subseteq K^b(\proj A) \Rightarrow K^b(\inj B)\subseteq K^b(\proj B)$.
	
	For any $Y\in K^b(\inj B)$, we need to show $Y\in K^b(\proj B)$. It suffices to prove $\RHom_B(Y,Z)\in\D^b(k)$ for all $Z\in\D^b(B)$. We have an adjoint pair $X_2\otimes^L_A-:\D(A\otimes B^\op)\rightleftarrows\D(B\otimes B^\op):X_3\otimes^L_B-$. Its counit induces a triangle $X_2\otimes^L_AX_3\to B\to \Cone(\ev^B_{X_2})\to$ in $\D(B^e)$, then a triangle $X_2\otimes^L_AX_3\otimes^L_BY\to Y\to  \Cone(\ev^B_{X_2})\otimes^L_BY\to$ in $\D(B)$, and further a triangle $\RHom_B(\Cone(\ev^B_{X_2})\otimes^L_BY,Z)\to \RHom_B(Y,Z)\to \RHom_B(X_2\otimes^L_AX_3\otimes^L_BY,Z)\to$ in $\D(k)$.
	By Theorem~\ref{Thm-Exist-SingEquRightAdjoit-FD}, the $B$-bimodule complex $\Cone(\ev^B_{X_2})$ is $\otimes$-perfect. Thus $\Cone(\ev^B_{X_2})\otimes^L_BY\in K^b(\proj B)$ by Proposition~\ref{Prop-BimodComplex-TensorPerfect}. Hence $\RHom_B(\Cone(\ev^B_{X_2})\otimes^L_BY,Z)\in\D^b(k)$. Due to the adjoint pair $X_2\otimes^L_A-:\D(A)\rightleftarrows\D(B):X_3\otimes^L_B-$, we have $\RHom_B(X_2\otimes^L_AX_3\otimes^L_BY,Z)\cong \RHom_A(X_3\otimes^L_BY,X_3\otimes^L_BZ)$. Since the triangle functor $X_3\otimes^L_B-:\D(B)\to\D(A)$ sends $K^b(\inj B)$ into $K^b(\inj A)$, we have $X_3\otimes^L_BY\in K^b(\inj A)$. Furthermore, by the assumption $K^b(\inj A)\subseteq K^b(\proj A)$, we get $X_3\otimes^L_BY\in K^b(\proj A)$. Moreover, since the triangle functor $X_3\otimes^L_B-$ sends $\D^b(B)$ into $\D^b(A)$, we have $X_3\otimes^L_BZ\in\D^b(A)$. Thus $\RHom_B(X_2\otimes^L_AX_3\otimes^L_BY,Z)\cong \RHom_A(X_3\otimes^L_BY,X_3\otimes^L_BZ)\in \D^b(k)$. Hence $\RHom_B(Y,Z)\in\D^b(k)$.
	
	{\it Claim 3.} $K^b(\proj B)\subseteq K^b(\inj B) \Rightarrow K^b(\proj A)\subseteq K^b(\inj A)$.
	
	For any $Y\in K^b(\proj A)$, we need to show $Y\in K^b(\inj A)$. It suffices to prove $\RHom_A(Z,Y)\in\D^b(k)$ for all $Z\in\D^b(A)$. We have an adjoint pair $X_1\otimes^L_B-:\D(B\otimes A^\op)\rightleftarrows\D(A\otimes A^\op):X_2\otimes^L_A-$. Its counit induces a triangle $X_1\otimes^L_BX_2\to A\to \Cone(\ev^A_{X_1})\to$ in $\D(A^e)$, then a triangle $X_1\otimes^L_BX_2\otimes^L_AZ\to Z\to  \Cone(\ev^A_{X_1})\otimes^L_AZ\to$ in $\D(A)$, and further a triangle $\RHom_A(\Cone(\ev^A_{X_1})\otimes^L_AZ,Y)\to \RHom_A(Z,Y)\to \RHom_A(X_1\otimes^L_BX_2\otimes^L_AZ,Y)\to$ in $\D(k)$.
	By Theorem~\ref{Thm-Exist-SingEquRightAdjoit-FD}, the $A$-bimodule complex $\Cone(\ev^A_{X_1})$ is $\otimes$-perfect. Thus $\Cone(\ev^A_{X_1})\otimes^L_AZ\in K^b(\proj A)$ by Proposition~\ref{Prop-BimodComplex-TensorPerfect}. Hence $\RHom_A(\Cone(\ev^A_{X_1})\otimes^L_AZ,Y)\in\D^b(k)$. Due to the adjoint pair $X_1\otimes^L_B-:\D(B)\rightleftarrows\D(A):X_2\otimes^L_A-$, we have $\RHom_A(X_1\otimes^L_BX_2\otimes^L_AZ,Y)\cong \RHom_B(X_2\otimes^L_AZ,X_2\otimes^L_AY)$. Since the triangle functor $X_2\otimes^L_A-:\D(A)\to\D(B)$ sends $K^b(\proj A)$ into $K^b(\proj B)$, we have $X_2\otimes^L_AY\in K^b(\proj B)$. Furthermore, by the assumption $K^b(\proj B)\subseteq K^b(\inj B)$, we get $X_2\otimes^L_AY\in K^b(\inj B)$. Moreover, since the triangle functor $X_2\otimes^L_A-:\D(A)\to\D(B)$ sends $\D^b(A)$ into $\D^b(A)$, we have $X_2\otimes^L_AZ\in\D^b(B)$. Thus $\RHom_A(X_1\otimes^L_BX_2\otimes^L_AZ,Y)\cong \RHom_B(X_2\otimes^L_AZ,X_2\otimes^L_AY)\in \D^b(k)$. Hence $\RHom_A(Z,Y)\in\D^b(k)$.
	
	{\it Claim 4.} $K^b(\inj B)\subseteq K^b(\proj B) \Rightarrow K^b(\inj A)\subseteq K^b(\proj A)$.
	
	For any $Y\in K^b(\inj A)$, we need to show $Y\in K^b(\proj A)$. It suffices to prove $\RHom_A(Y,Z)\in\D^b(k)$ for all $Z\in\D^b(A)$. We have an adjoint pair $X_1\otimes^L_B-:\D(B\otimes A^\op)\rightleftarrows\D(A\otimes A^\op):X_2\otimes^L_A-$. Its counit induces a triangle $X_1\otimes^L_BX_2\to A\to \Cone(\ev^A_{X_1})\otimes^L_AY\to$ in $\D(A^e)$, then a triangle $X_1\otimes^L_BX_2\otimes^L_AY\to Y\to  \Cone(\ev^A_{X_1})\otimes^L_AY\to$ in $\D(A)$, and further a triangle $\RHom_A(\Cone(\ev^A_{X_1})\otimes^L_AY,Z)\to \RHom_A(Y,Z)\to \RHom_A(X_1\otimes^L_BX_2\otimes^L_AY,Z)\to$ in $\D(k)$.
	By Theorem~\ref{Thm-Exist-SingEquRightAdjoit-FD}, the $A$-bimodule complex $\Cone(\ev^A_{X_1})$ is $\otimes$-perfect. Thus $\Cone(\ev^A_{X_1})\otimes^L_AY\in K^b(\proj A)$ by Proposition~\ref{Prop-BimodComplex-TensorPerfect}. Hence $\RHom_A(\Cone(\ev^A_{X_1})\otimes^L_AY,Z)\in\D^b(k)$. Due to the adjoint pair $X_1\otimes^L_B-:\D(B)\rightleftarrows\D(A):X_2\otimes^L_A-$, we have $\RHom_A(X_1\otimes^L_BX_2\otimes^L_AY,Z)\cong \RHom_B(X_2\otimes^L_AY,X_2\otimes^L_AZ)$. Since the triangle functor $X_2\otimes^L_A-:\D(A)\to\D(B)$ sends $K^b(\inj A)$ into $K^b(\inj B)$, we have $X_2\otimes^L_AY\in K^b(\inj B)$. Furthermore, by the assumption $K^b(\inj B)\subseteq K^b(\proj B)$, we get $X_2\otimes^L_AY\in K^b(\proj B)$. Moreover, since the triangle functor $X_2\otimes^L_A-:\D(A)\to\D(B)$ sends $\D^b(A)$ into $\D^b(B)$, we have $X_2\otimes^L_AZ\in \D^b(B)$. Thus $\RHom_A(X_1\otimes^L_BX_2\otimes^L_AY,Z)\cong \RHom_B(X_2\otimes^L_AY,X_2\otimes^L_AZ)\in \D^b(k)$. Hence $\RHom_A(Y,Z)\in\D^b(k)$.
	
	(1) It follows from Claims 1-4 that $K^b(\proj A)= K^b(\inj A) \Leftrightarrow K^b(\proj B)= K^b(\inj B)$. Thus $A$ is Gorenstein if and only if $K^b(\proj A)= K^b(\inj A)$, if and only if  $K^b(\proj B)= K^b(\inj B)$, if and only if $B$ is Gorenstein. 
	
	(2) It follows from Claims 1-4 that ``$K^b(\proj A)\subseteq K^b(\inj A) \Leftrightarrow K^b(\proj B)\subseteq K^b(\inj B)$'' and ``$K^b(\inj A)\subseteq K^b(\proj A)\Leftrightarrow K^b(\inj B)\subseteq K^b(\proj B)$''. Thus $A$ satisfies Gorenstein symmetry conjecture if and only if ``$K^b(\proj A)\subseteq K^b(\inj A) \Leftrightarrow K^b(\inj A)\subseteq K^b(\proj A)$'', if and only if ``$K^b(\proj B)\subseteq K^b(\inj B) \Leftrightarrow K^b(\inj B)\subseteq K^b(\proj B)$'', if and only if $B$ satisfies Gorenstein symmetry conjecture. 
	
	(3) It follows from \cite[Theorem 1.1 or Theorem 1.3]{Chen-Hu-Qin-Wang23}.
	
	(4) See \cite[Page 55]{Chen-Hu-Qin-Wang23}.
\end{proof}

\begin{remark}{\rm
	(i) A singular equivalence of 2-adjoint type does not preserve Gorensteinness in general by \cite[Example 5.5]{Psaroudakis-Skartsaeterhagen-Solberg14}.
	
	(ii) Gorenstein symmetry conjecture can be reduced by essentially surjective eventually homological isomorphisms (Theorem~\ref{Thm-EHI-GorSymConj}) and singular equivalences of $3$-adjoint type (Theorem~\ref{Thm-SE3AT-G-GSC-ARC-GPC}) respectively. These two theorems are independent (Ref. Remark~\ref{Rem-ESEHI-Gorenstein-General}) though any standard essentially surjective eventually homological isomorphism can induces a singular equivalence of 3-adjoint type (Theorem~\ref{Thm-EssSurjEHI=SingEqWithLeftAdj}).  
}\end{remark}

\medspace

\noindent{\bf Happel's property.} (\cite[Definition 3.9]{Cruz23}) We say that a finite dimensional algebra $A$ satisfies {\it Happel's property} if its global dimension is finite once its $n$-th Hochschild cohomology group $HH^n(A)$ vanishes for $n\gg 0$.

All commutative algebras \cite{Avramov-Iyengar05}, truncated quiver algebras \cite{Xu-Han-Jiang07} and exterior algebras \cite{Xu-Han07} satisfy  Happel's property. The first example that does not satisfy Happel's property was given in \cite{Buchweitz-Green-Madsen-Solberg05}. 

\medspace

\noindent{\bf Fg condition.} \cite{Erdmann-Holloway-Snashall-Solberg-Taillefer04,Snashall-Solberg04}
An algebra $A$ satisfies the {\it Fg condition} if its Hochschild cohomology ring $HH^\bullet(A)$ is Noetherian and its Yoneda algebra $\Ext^\bullet_A(A/\rad A,   A/\rad A)$ is a finitely generated $HH^\bullet(A)$-module. 

Fg condition enables developing support variety theory via Hochschild cohomology \cite{Erdmann-Holloway-Snashall-Solberg-Taillefer04,Snashall-Solberg04}.

\begin{theorem}  \label{Thm-SE3AT-HCH-Happel-Fg}
	Let finite dimensional algebras $A$ and $B$ be singularly equivalent of 3-adjoint type, and $A/\rad A$ and $B/\rad B$ be separable over $k$. Then the following statements hold:
	
	{\rm(1)} $HH^i(A) \cong HH^i(B)$ for $i\gg 0$.
	
	{\rm(2)} $A$ satisfies Happel's property if and only if so does $B$.
	
	{\rm(3)} $A$ satisfies the Fg condition if and only if so does $B$.	
\end{theorem}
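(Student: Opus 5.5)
Let $X_1\in\D(A\otimes B^\op)$ induce the singular equivalence of 3-adjoint type. Set $X_2:=\RHom_A(X_1,A)$ and $X_3:=\RHom_B(X_2,B)$, all biperfect. The plan is to compare the Hochschild cochain complexes $\RHom_{A^e}(A,A)$ and $\RHom_{B^e}(B,B)$ through a common middle term, as in the proof of Theorem~\ref{Thm-SE2AT-HH-Han}, but now using Hom instead of tensor. This is where the third adjoint is needed.

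\textbf{Step 1: control the cones.} Since $A/\rad A$ and $B/\rad B$ are separable, Proposition~\ref{Prop-HomSmooth-TensorPerfect-BimodComplex} together with Theorem~\ref{Thm-Exist-SingEquAdjTye-FD} shows that the following bimodule complexes are perfect:
\begin{itemize}
\item over $B^e$: $\Cone(r^B_{X_1})$ and $\Cone(\ev^B_{X_2})$;
\item over $A^e$: $\Cone(\ev^A_{X_1})$ and $\Cone(r^A_{X_2})$.
\end{itemize}

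\textbf{Step 2: compare with a middle term.} Apply $\RHom_{B^e}(-,B)$ to the triangle $X_2\otimes^L_AX_3\to B\to\Cone(\ev^B_{X_2})\to$. Since $\Cone(\ev^B_{X_2})\in\per(B^e)$ and $B\in\D^b(B^e)$, the complex $\RHom_{B^e}(\Cone(\ev^B_{X_2}),B)$ lies in $\D^b(k)$. Hence
\[HH^i(B)\cong H^i\RHom_{B^e}(X_2\otimes^L_AX_3,B)\quad\text{for } i\gg0.\]
By adjunction, using that $X_3$ is left adjoint to $X_2$ (coming from the 3-adjoint structure, $X_3\otimes^L_B-\dashv X_2\otimes^L_A-$ on derived categories), the middle term becomes
\[\RHom_{B^e}(X_2\otimes^L_AX_3,B)\cong \RHom_{A\otimes B^\op}(X_3,\RHom_B(X_2,B))=\RHom_{A\otimes B^\op}(X_3,X_3).\]
Symmetrically, apply $\RHom_{A^e}(A,-)$ to $A\to X_3\otimes^L_BX_2\to\Cone(r^A_{X_2})\to$. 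Here $\RHom_{A^e}(A,\Cone(r^A_{X_2}))$ needs care, because $A$ need not be perfect over $A^e$. I will instead dualize: since $\Cone(r^A_{X_2})$ is perfect over $A^e$, we have $\RHom_{A^e}(A,P)\cong \RHom_{A^e}(P^\vee,A^{e})\otimes\ldots$. To avoid this, I would rather use the triangle $X_1\otimes^L_BX_2\to A\to\Cone(\ev^A_{X_1})\to$ together with $\RHom_{A^e}(-,A)$, which gives
\[HH^i(A)\cong H^i\RHom_{A^e}(X_1\otimes^L_BX_2,A)\cong H^i\RHom_{A\otimes B^\op}(X_1,X_1)\quad\text{for } i\gg0,\]
via the adjunction $X_2$ right adjoint to $X_1$.

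\textbf{Step 3: the main obstacle, namely $\RHom(X_1,X_1)$ versus $\RHom(X_3,X_3)$.} Use the unit $B\to X_2\otimes^L_AX_1$ with perfect cone, and apply $\RHom_{B^e}(-,B)$:
\[\RHom_{B^e}(X_2\otimes^L_AX_1,B)\cong\RHom_{A\otimes B^\op}(X_1,X_3),\]
which agrees with $HH^\bullet(B)$ in high degrees. Then compare $\RHom(X_1,X_3)$ with $\RHom(X_1,X_1)$ and with $\RHom(X_3,X_3)$ using the cone of the natural map $X_1\to X_3$. Here $X_3\cong\RHom_B(\RHom_A(X_1,A),B)$, and the map $X_1\to X_3$ is the composite $\ev$/unit map. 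I expect its cone to be shown to vanish under $\RHom_{A\otimes B^\op}(X_1,-)$ in high degrees by tensoring with the perfect cones of Step 1. This is the step I expect to be delicate. A cleaner alternative I would try first is to use the derived-category adjoint triple $X_2\dashv X_3$, $X_1\dashv X_2$ to show that $-\otimes^L_AX_1\otimes^L_B X_2\otimes^L_A-$ induces the comparison directly, giving (1).

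\textbf{Step 4: deduce (2) and (3).} Since $\sg(A)\simeq\sg(B)$, we have $\gl A<\infty$ if and only if $\gl B<\infty$, and (1) then gives (2). For (3), the isomorphisms in Steps 2 and 3 are compatible with Yoneda products in high degrees, and $\Ext^\bullet(\ol A,\ol A)$ and $\Ext^\bullet(\ol B,\ol B)$ are matched eventually via Theorem~\ref{Thm-SE3AT-G-GSC-ARC-GPC} and the eventual isomorphism. With these, the Fg condition transfers as in \cite{Psaroudakis-Skartsaeterhagen-Solberg14,Qin20}, since Fg only depends on high-degree behaviour as a ring and module.
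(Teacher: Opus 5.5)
Your Step 1 and the two counit triangles you settle on in Step 2 are the ones the paper uses. Namely, $X_1\otimes^L_BX_2\to A\to\Cone(\ev^A_{X_1})\to$ and $X_2\otimes^L_AX_3\to B\to\Cone(\ev^B_{X_2})\to$ have cones perfect over $A^e$ and $B^e$. Hence $HH^i(A)\cong H^i\RHom_{A^e}(X_1\otimes^L_BX_2,A)$ and $HH^i(B)\cong H^i\RHom_{B^e}(X_2\otimes^L_AX_3,B)$ for $i\gg0$.

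\textbf{The gap in (1).} You never identify these two middle terms. You applied tensor--Hom adjunction in the variable that produces $\RHom_{A\otimes B^\op}(X_1,X_1)$ and $\RHom_{A\otimes B^\op}(X_3,X_3)$. Step 3 then rests on a ``natural map $X_1\to X_3$'', but there is no canonical such morphism in $\D(A\otimes B^\op)$. A morphism $X_1\to\RHom_B(X_2,B)$ is the same as a morphism $X_2\otimes^L_AX_1\to B$, whereas the adjunction data only give $r^B_{X_1}\colon B\to X_2\otimes^L_AX_1$. One only gets roofs through $X_3\otimes^L_BX_2\otimes^L_AX_1$, and the high-degree vanishing you ``expect'' for their cones is not argued.

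The missing step is to adjoin in the other variable:
\begin{itemize}
\item $\RHom_{A^e}(X_1\otimes^L_BX_2,A)\cong\RHom_{B\otimes A^\op}(X_2,\RHom_A(X_1,A))=\RHom_{B\otimes A^\op}(X_2,X_2)$;
\item $\RHom_{B^e}(X_2\otimes^L_AX_3,B)\cong\RHom_{B\otimes A^\op}(X_2,\RHom_{B^\op}(X_3,B))\cong\RHom_{B\otimes A^\op}(X_2,X_2)$, the last isomorphism by biduality since ${}_BX_2\in\per(B)$.
\end{itemize}
Equivalently, $\RHom(X_1,X_1)\cong\RHom(X_2,X_2)\cong\RHom(X_3,X_3)$ via the dualities $\RHom_A(-,A)$ and $\RHom_B(-,B)$ on one-sided perfect complexes, so your own two computations already agree. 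This is exactly the paper's chain. It proves (1), and (2) then follows as you say.

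A minor slip: the adjunctions are $X_1\otimes^L_B-\dashv X_2\otimes^L_A-\dashv X_3\otimes^L_B-$, not $X_3\dashv X_2$. Your Step 2 isomorphism uses only tensor--Hom adjunction and the definition of $X_3$, so it is unaffected.

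\textbf{The gap in (3).} Part (3) is not proved. The isomorphisms in (1) come from long exact sequences with bounded cones, and nothing shows they are multiplicative. Nothing shows they are compatible with the $HH^\bullet$-actions on $\Ext^\bullet_A(\ol A,\ol A)$ and $\Ext^\bullet_B(\ol B,\ol B)$ either. Theorem~\ref{Thm-SE3AT-G-GSC-ARC-GPC} gives no matching of these Ext algebras, and there is no reason the equivalence relates $\ol A$ to $\ol B$. Citing \cite{Psaroudakis-Skartsaeterhagen-Solberg14,Qin20} does not close this. You would have to verify their hypotheses for the present functors and supply exactly the ring-level comparison you skip.

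The paper avoids all of this:
\begin{itemize}
\item Fg forces Gorensteinness \cite[Theorem 1.5 (a)]{Erdmann-Holloway-Snashall-Solberg-Taillefer04}.
\item Gorensteinness transfers by Theorem~\ref{Thm-SE3AT-G-GSC-ARC-GPC}(1); this is where the 3-adjoint hypothesis enters (3).
\item By separability, the singular equivalence of 2-adjoint type is one of Morita type with level \cite[Theorem 1.1]{Qin22}.
\item \cite[Theorem 7.4]{Skartsaeterhagen16} transfers Fg along such equivalences between Gorenstein algebras.
\end{itemize}
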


\begin{proof} 
	(1) Since $A$ and $B$ are singularly equivalent of 3-adjoint type, there is a complex $X_1\in\D(A\otimes B^\op)$ which induces an adjoint triple
	$$\begin{tikzcd}[column sep=50]
		\sg(B) \arrow[r,"X_1\otimes^L_B-"] \arrow[r,shift right=12, "X_3\otimes^L_B-"] & \sg(A) \arrow[l,shift left=6,"X_2\otimes^L_A-"'] 
	\end{tikzcd}$$
	where $X_1, X_2:=\RHom_A(X_1,A)$ and $X_3:=\RHom_B(X_2,B)$ are biperfect.
	
	We have an adjoint pair $X_1\otimes^L_B-:\D(B\otimes A^\op)\rightleftarrows\D(A\otimes A^\op):X_2\otimes^L_A-$. Its counit induces a triangle $X_1\otimes^L_BX_2\to A\to \Cone(\ev^A_{X_1})\to$ in $\D(A^e)$, then a triangle $\RHom_{A^e}(\Cone(\ev^A_{X_1}),A)\to \RHom_{A^e}(A,A)\to \RHom_{A^e}(X_1\otimes^L_BX_2,A)\to$ in $\D(k)$. By Theorem~\ref{Thm-Exist-SingEquRightAdjoit-FD}, $\Cone(\ev^A_{X_1})$ is $\otimes$-perfect in $\D(A^e)$. By the assumption that $A/\rad A$ and $B/\rad B$ be separable over $k$ and Proposition~\ref{Prop-HomSmooth-TensorPerfect-BimodComplex}, $\Cone(\ev^A_{X_1})$ is perfect in $\D(A^e)$. Thus $\RHom_{A^e}(\Cone(\ev^A_{X_1}),A)\in \D^b(k)$. Hence $H^i\RHom_{A^e}(A,A)\cong H^i\RHom_{A^e}(X_1\otimes^L_BX_2,A)$ for $i\gg 0$.
	
	We have an adjoint pair $X_2\otimes^L_A-:\D(A\otimes B^\op)\rightleftarrows\D(B\otimes B^\op):X_3\otimes^L_B-$. Its counit induces a triangle $X_2\otimes^L_AX_3\to B\to \Cone(\ev^B_{X_2})\to$ in $\D(B^e)$, then a triangle $\RHom_{B^e}(\Cone(\ev^B_{X_2}),B)\to \RHom_{B^e}(B,B)\to \RHom_{B^e}(X_2\otimes^L_AX_3,B)\to$ in $\D(k)$. By Theorem~\ref{Thm-Exist-SingEquRightAdjoit-FD}, $\Cone(\ev^B_{X_2})$ is $\otimes$-perfect in $\D(B^e)$. By the assumption that $A/\rad A$ and $B/\rad B$ be separable over $k$ and Proposition~\ref{Prop-HomSmooth-TensorPerfect-BimodComplex}, $\Cone(\ev^B_{X_2})$ is perfect in $\D(B^e)$. Thus $\RHom_{B^e}(\Cone(\ev^B_{X_2}),B)\in \D^b(k)$. Hence $H^i\RHom_{B^e}(B,B)\cong H^i\RHom_{B^e}(X_2\otimes^L_AX_3,B)$ for $i\gg 0$.
	
	We have isomorphisms 
	$$\begin{array}{ll}
		& \RHom_{A^e}(X_1\otimes^L_BX_2,A) \\
		\cong & \RHom_{B\otimes A^\op}(X_2,\RHom_A(X_1,A)) \\
		= & \RHom_{B\otimes A^\op}(X_2,X_2) \\
		\cong & \RHom_{B\otimes A^\op}(X_2,\RHom_{B^\op}(\RHom_B(X_2,B),B)) \\
		= & \RHom_{B\otimes A^\op}(X_2,\RHom_{B^\op}(X_3,B)) \\
		\cong & \RHom_{B^e}(X_2\otimes^L_AX_3,B).
	\end{array}$$ 
	Thus $HH^i(A) \cong HH^i(B)$ for $i\gg 0$.
	
	(2) Due to $\sg(A)\simeq \sg(B)$, we know $\gl A<\infty$ if and only if $\gl B<\infty$. It follows from (1) that $A$ satisfies Happel's property if and only if so does $B$.
	
	(3) By Theorem~\ref{Thm-SE3AT-G-GSC-ARC-GPC}, $A$ is Gorenstein if and only if so does $B$. Assume that $A$ satisfies the Fg condition. Then $A$ is a Gorenstein by \cite[Theorem 1.5 (a)]{Erdmann-Holloway-Snashall-Solberg-Taillefer04}, and so does $B$. Since $A/\rad A$ and $B/\rad B$ are separable over $k$, by \cite[Theorem 1.1]{Qin22}, $A$ and $B$ are singularly equivalent of Morita type with level. Thus $B$ satisfies the Fg condition by \cite[Theorem 7.4]{Skartsaeterhagen16}. Similarly, if $B$ satisfies the Fg condition then so does $A$.	
\end{proof}

\begin{remark}{\rm
		A singular equivalence of 2-adjoint type does not preserve Fg condition in general by \cite[Example 7.5]{Skartsaeterhagen16}.
}\end{remark}

\section{EHI and singular equivalences}

In this section, we show that standard eventually homological isomorphisms correspond to fully faithful standard triangle functors between singularity categories with canonical left adjoint, and essentially surjective standard eventually homological isomorphisms between derived categories correspond to singular equivalences of 3-adjoint type, thereby incorporating essentially surjective standard eventually homological isomorphisms into the framework of singular equivalences of 3-adjoint type.

\subsection{EHI and fully faithful triangle functors between singularity categories}

The following result clarifies the relation between standard eventually homological isomorphisms and fully faithful standard triangle functors between singularity categories with canonical left adjoints.

\begin{proposition} \label{Prop-EHI-FullFaithFunSingCat-LeftAdj}
	Let $A$ and $B$ be finite dimensional algebras, and both $X\in\D(A\otimes B^\op)$ and\linebreak $\RHom_{B^\op}(X,B)\in\D(B\otimes A^\op)$ biperfect. Then the triangle functor $X\otimes^L_B-: \D^b(B)\to\D^b(A)$ is an eventually homological isomorphism if and only if the triangle functor $X\otimes^L_B-:\sg(B)\to\sg(A)$ is fully faithful and admits a left adjoint $\RHom_{B^\op}(X,B)\otimes^L_A-:\sg(A)\to\sg(B)$. 
\end{proposition}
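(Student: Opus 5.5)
This is a direct combination of two earlier characterizations, both of which reduce to the same condition on the evaluation morphism. No new computation is needed.

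First, since $X$ is biperfect, it is in particular right perfect. So Theorem~\ref{Thm-EvHomIso-TensorPerfect-BimodComplex} applies to the triangle functor $X\otimes^L_B-:\D^b(B)\to\D^b(A)$. It says this functor is an eventually homological isomorphism if and only if the $B$-bimodule complex $\Cone(\ev^B_X:\RHom_{B^\op}(X,B)\otimes^L_AX\to B)$ is $\otimes$-perfect.

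Second, the hypothesis is that both $X$ and $\RHom_{B^\op}(X,B)$ are biperfect. Under exactly this hypothesis, Lemma~\ref{Lem-SingCatTriFunLeftAdj-FD-Complex} gives two facts:
\begin{itemize}
\item $X\otimes^L_B-$ descends to a triangle functor $\sg(B)\to\sg(A)$;
\item this functor automatically has the left adjoint $\RHom_{B^\op}(X,B)\otimes^L_A-:\sg(A)\to\sg(B)$, via \cite[Lemma 1.2]{Orlov04} applied to the adjoint pair on bounded derived categories.
\end{itemize}
So the clause "admits a left adjoint $\RHom_{B^\op}(X,B)\otimes^L_A-$" in the statement is automatic. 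The right-hand side reduces to full faithfulness of $X\otimes^L_B-:\sg(B)\to\sg(A)$.

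Third, Lemma~\ref{Lem-SingCatFFTriFunLeftAdj-FD-Complex} characterizes exactly this full faithfulness, in the presence of the biperfectness hypotheses. The condition it gives is again that $\Cone(\ev^B_X)$ is $\otimes$-perfect. Chaining the two equivalences through this common condition proves the proposition.

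The only point needing care is that the two results use literally the same morphism $\ev^B_X$. In Theorem~\ref{Thm-EvHomIso-TensorPerfect-BimodComplex} it arises as the counit of the derived-category adjunction $(\RHom_{B^\op}(X,B)\otimes^L_A-,\,X\otimes^L_B-)$, identified with $\ev^B_X$ via $h_X$. In Lemma~\ref{Lem-SingCatFFTriFunLeftAdj-FD-Complex} it arises as the counit of the induced adjunction on singularity categories. The latter is the image of the former under the Verdier quotient functors, by the construction in \cite[Lemma 1.2]{Orlov04}, so the two cones agree. I do not expect any genuine obstacle.
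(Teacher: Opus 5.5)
Your proposal is correct and follows the paper's proof exactly. Under the biperfectness hypotheses, Theorem~\ref{Thm-EvHomIso-TensorPerfect-BimodComplex} and Lemma~\ref{Lem-SingCatFFTriFunLeftAdj-FD-Complex} each reduce their condition to $\otimes$-perfectness of $\Cone(\ev^B_X)$, and chaining the two equivalences proves the statement. Your extra check that the two cones agree is harmless but unnecessary, since that lemma is already stated in terms of the derived-level morphism $\ev^B_X$.
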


\begin{proof}
	Under the assumption that both $X\in\D(A\otimes B^\op)$ and $\RHom_{B^\op}(X,B)\in\D(B\otimes A^\op)$ are biperfect, the triangle functor $X\otimes^L_B-: \D^b(B)\to\D^b(A)$ is an eventually homological isomorphism if and only if the $B$-bimodule complex $\Cone(\ev^B_X:\RHom_{B^\op}(X,B)\otimes^L_AX\to B)$ is $\otimes$-perfect by Theorem~\ref{Thm-EvHomIso-TensorPerfect-BimodComplex}, if and only if the triangle functor $X\otimes^L_B-:\sg(B)\to\sg(A)$ is fully faithful and admits a left adjoint $\RHom_{B^\op}(X,B)\otimes^L_A-:\sg(A)\to\sg(B)$ by Lemma~\ref{Lem-SingCatFFTriFunLeftAdj-FD-Complex}.
\end{proof}

\subsection{Essentially surjective EHI and singular equivalences}

Let $e$ be an idempotent of a finite dimensional algebra $A$. Then by Proposition~\ref{Prop-Exist-EvHomIso-Idemp} and Corollary~\ref{Cor-Exist-SingEqu-Idemp}, the essentially surjective triangle functor $eA\otimes^L_A-:\D^b(A) \to \D^b(eAe)$ is an eventually homological isomorphism if and only if both $eA\otimes^L_A-:\sg(A)\to\sg(eAe)$ and $Ae\otimes^L_{eAe}-:\sg(eAe)\to\sg(A)$ are singular equivalences. 
More generally, we have the following result which clarifies the relations between essentially surjective standard eventually homological isomorphism and singular equivalences of 3-adjoint type.

\begin{theorem} \label{Thm-EssSurjEHI=SingEqWithLeftAdj}
	Let $A$ and $B$ be finite dimensional algebras, and a right perfect complex $X$ of $A$-$B$-bimodules induce an essentially surjective triangle functor $X\otimes^L_B-: \D^b(B)\to\D^b(A)$. Then the following statements are equivalent:
	
	{\rm(1)} The triangle functor $X\otimes^L_B-: \D^b(B)\to\D^b(A)$ is an eventually homological isomorphism.
	
	{\rm(2)} The triangle functor $X\otimes^L_B-:\sg(B)\to\sg(A)$ is a triangle equivalence with a left adjoint $\RHom_{B^\op}(X,B)\otimes^L_A-:\sg(A)\to\sg(B)$.
	
	{\rm(3)} The triangle functor $X\otimes^L_B-:\sg(B)\to\sg(A)$ is a triangle equivalence with a left adjoint $\RHom_{B^\op}(X,B)\otimes^L_A-:\sg(A)\to\sg(B)$ and a right adjoint $\RHom_A(X,A)\otimes^L_A-:\sg(A)\to\sg(B)$.
\end{theorem}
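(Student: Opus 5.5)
The plan is to prove (1)$\Rightarrow$(3)$\Rightarrow$(2)$\Rightarrow$(1). Write $X^2:=\RHom_{B^\op}(X,B)$ and $X_2:=\RHom_A(X,A)$.

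For (1)$\Rightarrow$(3), the first job is to establish the biperfectness needed to apply Lemma~\ref{Lem-SingCatFFTriFunLeftAdj-FD-Complex} and Lemma~\ref{Lem-SingCatTriFunRightAdj-FD-Complex}. Since $F:=X\otimes^L_B-$ is an essentially surjective eventually homological isomorphism, Claims 1--4 in the proof of Theorem~\ref{Thm-EHI-GorSymConj} apply. Claim 1 gives $F(\per(B))\subseteq\per(A)$, so $X\cong F(B)\in\per(A)$, and $X$ is biperfect. Next, $X^2\otimes^L_A-$ is left adjoint to $F$ on $\D(A)$. By Claim 3, for $Z\in\per(A)$ we can write $Z\cong F(Y)$ with $Y\in\per(B)$. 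Then $\Hom(X^2\otimes^L_AZ,W[i])\cong\Hom(Z,F(W)[i])=0$ for all $W\in\D^b(B)$ and $i\gg0$, so $X^2\otimes^L_AZ\in\per(B)$. Hence $X^2$ is left perfect by Lemma~\ref{Lem-LeftModComplex-Perfect}(4); it is right perfect because $X_B$ is perfect. So $X^2$ is biperfect, and Proposition~\ref{Prop-EHI-FullFaithFunSingCat-LeftAdj} shows that $F:\sg(B)\to\sg(A)$ is fully faithful with left adjoint $X^2\otimes^L_A-$. It is essentially surjective because $F$ is essentially surjective on $\D^b$, so it is an equivalence.

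For the right adjoint in (3), Lemma~\ref{Lem-SingCatTriFunRightAdj-FD-Complex} requires $X_2$ to be biperfect. Left perfectness is automatic since $_AX$ is perfect. For right perfectness I will use the dual argument with injectives. By Lemma~\ref{Lem-RightModComplex-Perfect}(4), it suffices to show $Y\otimes^L_AX_2\in\per(A^\op)$ for $Y\in\per(A^\op)$. Dualizing, this becomes $\RHom_A(X,Y^*)\in K^b(\inj B)$ for $Y^*\in K^b(\inj A)$. Now $\RHom_A(X,-)$ is right adjoint to $F$, and Claim 4 gives $Y^*\cong F(Z)$ with $Z\in K^b(\inj B)$. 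Then $\Hom(W,\RHom_A(X,F(Z))[i])\cong\Hom(F(W),F(Z)[i])\cong\Hom(W,Z[i])=0$ for $i\gg0$, so $\RHom_A(X,Y^*)\in K^b(\inj B)$ by Lemma~\ref{Lem-BimodComplex-LeftCoper}. I expect this step to be the main obstacle: one must be careful that the duality and the adjunction isomorphisms really land in the right categories. Once $X_2$ is biperfect, the right adjoint exists, and it is automatically a quasi-inverse of the equivalence.

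(3)$\Rightarrow$(2) is trivial. For (2)$\Rightarrow$(1), Lemma~\ref{Lem-SingCatTriFunLeftAdj-FD-Complex} shows that $X$ and $X^2$ are biperfect. The functor is fully faithful, so Proposition~\ref{Prop-EHI-FullFaithFunSingCat-LeftAdj} yields that $X\otimes^L_B-:\D^b(B)\to\D^b(A)$ is an eventually homological isomorphism.
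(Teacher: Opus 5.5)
Your proposal has a genuine gap. In both places where essential surjectivity has to do real work, you swap the left and right sides of the bimodule complexes. As a result you prove only the automatic half of biperfectness and declare the nontrivial half automatic.

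\textbf{The left adjoint $X^2$.} Here $X^2=\RHom_{B^\op}(X,B)$ is a complex of $B$-$A$-bimodules. The condition ${}_BX^2\in\per(B)$ holds simply because $X_B\in\per(B^\op)$. That is all your Hom-vanishing argument with $Z\in\per(A)$ yields: indeed $X^2\otimes^L_AZ\in\thick({}_BX^2)$ without any EHI input. The condition that matters is $X^2_A\in\per(A^\op)$, i.e.\ that $X^2\otimes^L_A-$ preserves $\D^b$. This does not follow from $X_B$ being perfect; in the idempotent situation of Proposition~\ref{Prop-Exist-EvHomIso-Idemp} it is the extra condition $\pd_{(eAe)^\op}Ae<\infty$. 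Without it, $X^2\otimes^L_A-$ does not even induce a functor on $\sg(A)$, so Proposition~\ref{Prop-EHI-FullFaithFunSingCat-LeftAdj} cannot be invoked, and already (1)$\Rightarrow$(2) is incomplete. The missing idea is the paper's. By Claim 2 in the proof of Theorem~\ref{Thm-EHI-GorSymConj}, $X\otimes^L_BB^*\in K^b(\inj A)$. Hence $(X^2\otimes^L_AY)^*\cong\RHom_A(Y,X\otimes^L_BB^*)\in\D^b(k)$ for all $Y\in\D^b(A)$, and Lemma~\ref{Lem-RightModComplex-Perfect} gives $X^2_A\in\per(A^\op)$.

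\textbf{The right adjoint $X_2$.} The same inversion occurs for $X_2=\RHom_A(X,A)$. Perfectness of ${}_AX$ only makes $X_2$ perfect as a right $A$-module. What Lemma~\ref{Lem-SingCatTriFunRightAdj-FD-Complex} additionally needs is ${}_BX_2\in\per(B)$, i.e.\ that $\RHom_A(X,-)$ preserves perfect complexes. Your injective argument shows $\RHom_A(X,K^b(\inj A))\subseteq K^b(\inj B)$. Since $K^b(\inj A)=\thick(A^*)$ and $\RHom_A(X,A^*)\cong X^*$, this merely restates the hypothesis $X_B\in\per(B^\op)$.

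Be aware that this step cannot be repaired in general. The paper argues via $Y\in\per(B)$ with $X\otimes^L_BY\cong A$ and the triangle $Y\to\RHom_A(X,A)\to\Cone(r^B_X)\otimes^L_BY\to$. However, Theorem~\ref{Thm-EvHomIso-TensorPerfect-BimodComplex-RightAdjoint} only makes $\Cone(r^B_X)$ $\otimes$-coperfect, so the third term lies in $K^b(\inj B)$, not in $\per(B)$.

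\textbf{Counterexample to (1)$\Rightarrow$(3).}
\begin{itemize}
\item Let $\Gamma_0$ be the $4$-dimensional algebra of Example~\ref{Ex-SE2AT-NotEssSurjEHI}, where $\pd_{\Gamma_0}I_1=\infty$. Let $\Gamma=\Gamma_0\times k$, with the new vertex $3$.
\item Let $\Lambda\supseteq\Gamma$ be obtained by adding one arrow $\alpha\colon 1\to 3$ and no relations. Then $\Lambda=\Gamma\oplus N$ with $N=\Gamma e_3\otimes e_1\Gamma$ and $N^2=0$.
\item Take $B=\Lambda$, $A=\Gamma$, $X={}_\Gamma\Lambda_\Lambda$. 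Restriction is essentially surjective: let $\alpha$ act by $0$.
\item Restriction is an eventually homological isomorphism. The kernel of $\Lambda\otimes_\Gamma M\to M$ is a sum of copies of the simple projective $\Lambda e_3$; alternatively, apply Corollary~\ref{Cor-BoundExt-EHI}.
\item There is an exact sequence $0\to\Lambda/N\to\Hom_\Gamma(\Lambda,\Gamma)\to(e_1\Lambda)^*\to 0$ of left $\Lambda$-modules, with $\pd_\Lambda(\Lambda/N)\le 1$ and $\pd_\Lambda(e_1\Lambda)^*=\infty$.
\item Hence ${}_\Lambda\RHom_\Gamma(\Lambda,\Gamma)\notin\per(\Lambda)$, so (3) fails, although (1) and (2) hold.
\end{itemize}

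So (1)$\Leftrightarrow$(2) is provable once you add the argument for $X^2$ above. Statement (3) needs an extra hypothesis, for instance $B$ Gorenstein, where $K^b(\inj B)=\per(B)$.
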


\begin{proof} 
	(1)$\Rightarrow$(3): 	
	Since the triangle functor $X\otimes^L_B-: \D^b(B)\to\D^b(A)$ is an essentially surjective eventually homological isomorphism, by the Claim 1 in the proof of Theorem~\ref{Thm-EHI-GorSymConj}, we have $_AX\in\per(A)$. So $X$ is biperfect. Since $B^*$ is injective, by the Claim 2 in the proof of Theorem~\ref{Thm-EHI-GorSymConj}, we have $X\otimes^L_BB^*\in K^b(\inj A)$. Then for all $Y\in\D^b(A)$, $(\RHom_{B^\op}(X,B)\otimes^L_AY)^* \cong \RHom_B(\RHom_{B^\op}(X,B)\otimes^L_AY,B^*) \cong \RHom_A(Y,X\otimes^L_BB^*) \in \D^b(k)$. Thus $\RHom_{B^\op}(X,B)\otimes^L_AY \in \D^b(k)$. Hence $\RHom_{B^\op}(X,B)_A\in\per(A^\op)$ by Proposition~\ref{Prop-BimodComplex-TensorPerfect}. So $\RHom_{B^\op}(X,B)$ is biperfect. It follows from Theorem~\ref{Thm-EvHomIso-TensorPerfect-BimodComplex} that the $B$-bimodule complex $\Cone(\ev^B_X)$ is $\otimes$-perfect. By Lemma~\ref{Lem-SingCatFFTriFunLeftAdj-FD-Complex}, the triangle functor $X\otimes^L_B-:\sg(B)\to\sg(A)$ is fully faithful and admits a left adjoint $\RHom_{B^\op}(X,B)\otimes^L_A-:\sg(A)\to\sg(B)$. Since the triangle functor $X\otimes^L_B-: \D^b(B)\to\D^b(A)$ is essentially surjective, so is the triangle functor $X\otimes^L_B-:\sg(B)\to\sg(A)$. Therefore, the triangle functor $X\otimes^L_B-:\sg(B)\to\sg(A)$ is a triangle equivalence with a left adjoint $\RHom_{B^\op}(X,B)\otimes^L_A-:\sg(A)\to\sg(B)$.	
	
	Since the triangle functor $X\otimes^L_B-: \D^b(B)\to\D^b(A)$ is essentially surjective, there exists $Y\in\D^b(B)$ such that $X\otimes^L_BY\cong A$ in $\D^b(A)$. By the Claim 3 in the proof of Theorem~\ref{Thm-EHI-GorSymConj}, we have $_BY\in\per(B)$. Due to $_AX\in\per(A)$, by Theorem~\ref{Thm-EvHomIso-TensorPerfect-BimodComplex-RightAdjoint}, the $B$-bimodule complex $\Cone({r^B_X}:B\to \RHom_A(X,X))$ is $\otimes$-perfect. Then $\Cone({r^B_X})\otimes^L_BY\in\per(B)$. Apply the derived tensor product functor $-\otimes^L_BY:\D(B^e)\to\D(B)$ to the triangle $B \xrightarrow{r^B_X} \RHom_A(X,X)\to \Cone({r^B_X})\to$ in $\D(B^e)$, we obtain a triangle $Y \to \RHom_A(X,A)\to \Cone({r^B_X})\otimes^L_BY\to$. Thus $\RHom_A(X,A)\in\per(B)$. So $\RHom_A(X,A)\in\D(B\otimes A^\op)$ is biperfect. Therefore, the triangle functor $X\otimes^L_B-:\sg(B)\to\sg(A)$ admits a right adjoint $\RHom_A(X,A)\otimes^L_A-:\sg(A)\to\sg(B)$.	
	
	(3)$\Rightarrow$(2): It is trivial.
	
	(2)$\Rightarrow$(1): 
	Since the triangle functor $X\otimes^L_B-:\sg(B)\to\sg(A)$ is a triangle equivalence with a left adjoint $\RHom_{B^\op}(X,B)\otimes^L_A-:\sg(A)\to\sg(B)$, by Theorem~\ref{Thm-Exist-SingEquLeftAdjoit-FD}, the bimodule complexes $X$ and $\RHom_{B^\op}(X,B)$ are biperfect, and the bimodule complexes $\Cone(\ev^B_X)\in\D(B^e)$ and $\Cone(l^A_X)\in\D(A^e)$ are $\otimes$-perfect. By Theorem~\ref{Thm-EvHomIso-TensorPerfect-BimodComplex}, the triangle functor $X\otimes^L_B-: \D^b(B)\to\D^b(A)$ is an eventually homological isomorphism.
\end{proof}

\begin{remark}{\rm 
	Thanks to Theorem~\ref{Thm-EssSurjEHI=SingEqWithLeftAdj}, essentially surjective standard eventually homological isomorphisms can reduce the finitistic dimension conjecture by Theorem~\ref{Thm-SE2AT-FDC}, and Auslander-Reiten conjecture and Gorenstein projective conjecture by Theorem~\ref{Thm-SE3AT-G-GSC-ARC-GPC} (cf. \cite[Theorem I]{Qin-Shen24}). Moreover, they can transfer Han's property by Theorem~\ref{Thm-SE2AT-HH-Han}, Happel's property and Fg condition by Theorem~\ref{Thm-SE3AT-HCH-Happel-Fg} (cf. \cite[Theorem 5]{Qin20}), under the assumption of separability of factor algebras of the algebras modulo their Jacobson radicals.
}\end{remark}

\subsection{Examples}

The following example implies that for two derived equivalent algebras there is always an essentially surjective standard eventually homological isomorphism between their derived categories but generally no essentially surjective eventually homological isomorphism between their module categories. So Theorem~\ref{Thm-EssSurjEHI=SingEqWithLeftAdj} is not only stronger than but also a strict generalization of the singular equivalence part of \cite[Theorem I]{Qin-Shen24}.

\begin{example} \label{Ex-EssSurEHI-DerCat-NotModCat} {\rm 
	Let $A$ be the path algebra $kQ$ of quiver $Q:=(1\xleftarrow{a} 2\xleftarrow{b} 3)$ and $B$ be the bound quiver algebra $kQ/I$ with admissible ideal $I:=(ab)$. Then $A$ and $B$ are tilting equivalent (Ref. \cite[Chapter VI, 3.11. Examples (a)]{Assem-Simson-Skowronski06}), thus derived equivalent. So there is a two-sided tilting complex $X\in\D^b(A\otimes B^\op)=\per(A\otimes B^\op)$ such that the derived tensor product functor $_AX\otimes^L_B-:\D^b(B)\to\D^b(A)$ is a derived equivalence. 
	Then $_AX\otimes^L_B-:\D^b(B)\to\D^b(A)$ is an essentially surjective eventually homological isomorphism.	
	Assume that there exists an essentially surjective eventually homological isomorphism $F:\mod A\to\mod B$. 
	Note that the Auslander-Reiten quivers of $A$ and $B$ are the following:	
	$$\begin{tikzcd} [column sep=4]
		&& P_3=I_1 \arrow[dr] &&\\
		& P_2 \arrow[ur]\arrow[dr] \arrow[rr,no head,dotted] && I_2 \arrow[dr] &\\
		P_1=S_1 \arrow[ur] \arrow[rr,no head,dotted] && S_2 \arrow[ur] \arrow[rr,no head,dotted] && S_3=I_3
	\end{tikzcd}\quad\quad
	\begin{tikzcd} [column sep=4]
		& P'_2=I'_1 \arrow[dr] &&& \\
		P'_1=S'_1 \arrow[ur] \arrow[rr,no head,dotted] && S'_2 \arrow[dr] \arrow[rr,no head,dotted] && S'_3=I'_3\\
		&&& P'_3=I'_2 \arrow[ur] &
	\end{tikzcd}$$
	
	\noindent where $S_i,P_i,I_i$ (resp. $S'_i,P'_i,I'_i$) are simple, indecomposable projective, and indecomposable injective modules over $A$ (resp. $B$) corresponding to the vertex $i$ for all $1\le i\le 3$. Since the additive functor $F$ preserves finite direct sums and is essentially surjective, $F$ induces a surjection from the set $\ind B =\{S'_1,S'_2,S'_3,P'_2,P'_3\}$ to the set $\ind A =\{S_1,S_2,S_3,P_2,I_2,P_3\}$. It is a contradiction.   
}\end{example}

By Theorem~\ref{Thm-EssSurjEHI=SingEqWithLeftAdj}, an essentially surjective standard eventually homological isomorphism can induce a singular equivalence with a canonical left adjoint. However, a standard singular equivalence with a canonical left adjoint need not be induced by an essentially surjective standard eventually homological isomorphism. Moreover, a standard eventually homological isomorphism can not induce a singular equivalence in general. The following examples give a standard singular equivalence with a canonical left adjoint which is not induced by an essentially surjective eventually homological isomorphism, a standard eventually homological isomorphisms which can not induce a singular equivalence, and a singular equivalence of 2-adjoint type but not a singular equivalence of 3-adjoint type.

\begin{example} \label{Ex-SE2AT-NotEssSurjEHI} {\rm	
	Let $A$ be the 4-dimensional bound quiver algebra $kQ/I$ where the quiver $Q$ is
	$$\xymatrix{
		1 & 2 \ar[l]_-y \ar@(ur,dr)[]^x			 
	}$$		
	\noindent and the admissible ideal $I:=(x^2,yx)$. 
	Then $e_2A=e_2Ae_2\cong k[x]/(x^2)$ as $k$-algebras, and $Ae_2A=Ae_2=e_2Ae_2\oplus e_1Ae_2$ where $e_1Ae_2\cong e_2Ae_2/\rad e_2Ae_2$ as a right $e_2Ae_2$-module. Thus $\pd_{e_2Ae_2}e_2A=0$ and $\pd_{(e_2Ae_2)^\op}Ae_2=\infty$. Clearly, $A/Ae_2A\cong k$ as $k$-algebra. 
	Moreover, $Ae_1=e_1Ae_1\cong k$ as $k$-algebras, $e_1A=Ae_1A$, and $A/Ae_1A \cong e_2Ae_2$ as $k$-algebras. Then $\pd_{(e_1Ae_1)^\op}Ae_1=0=\pd_{e_1Ae_1}e_1A$. Obviously, $\pd_A\frac{A/Ae_1A}{\rad(A/Ae_1A)} =\pd_AS_2 = \infty$ and $\id_A\frac{A/Ae_1A}{\rad(A/Ae_1A)} = \id_AS_2 = \infty$. Thus the triangle functor $e_1A\otimes^L_A-:\D^b(A)\to\D^b(e_1Ae_1)$ is not an eventually homological isomorphism by Proposition~\ref{Prop-Exist-EvHomIso-Idemp}, and the triangle functor $e_1A\otimes^L_A-:\sg(A)\to\sg(e_1Ae_1)$ is not a singular equivalence due to $\gl e_1Ae_1=0$ and $\gl A=\infty$, or by Corollary~\ref{Cor-Exist-SingEqu-Idemp}.
		
	Since $Ae_2\otimes^L_{e_2Ae_2}e_2A \cong Ae_2\otimes_{e_2Ae_2}e_2A \cong Ae_2 = Ae_2A$ in $\D(A^e)$, $Ae_2A$ is a stratifying ideal of $A$, and the factor map $A\twoheadrightarrow A/Ae_2A$ is a homological epimorphism of algebras. Then we have the following recollement:
	$$\begin{tikzcd}[column sep=80]
		\D(A/Ae_2A) \arrow[r,"{A/Ae_2A\otimes^L_{A/Ae_2A}-}"] & \D(A) \arrow[r,"{e_2A\otimes^L_A-}"] \arrow[l,shift left=6,"{\RHom_A(A/Ae_2A,-)}"'] \arrow[l,shift right=6,"A/Ae_2A\otimes^L_A-"'] & \D(e_2Ae_2). \arrow[l,shift left=6,"{\RHom_{e_2Ae_2}(e_2A,-)}"'] \arrow[l,shift right=6,"{Ae_2\otimes^L_{e_2Ae_2}-}"']
	\end{tikzcd}$$
	Since $Ae_1\otimes^L_{e_1Ae_1}e_1A \cong Ae_1\otimes_{e_1Ae_1}e_1A \cong e_1A = Ae_1A$ in $\D(A^e)$, $Ae_1A$ is a stratifying ideal of $A$, and the factor map $A\twoheadrightarrow A/Ae_1A$ is a homological epimorphism of algebras. Then we have the following 2-recollement \cite{Qin-Han16}:
	$$\begin{tikzcd}[column sep=125]
		\D(A/Ae_2A) \arrow[r,"{A/Ae_2A\otimes^L_{A/Ae_2A}-\ \cong\ Ae_1\otimes^L_{e_1Ae_1}-}"] \arrow[r,shift right=12,"{\RHom_{e_1Ae_1}(e_1A,-)}"] & \D(A) \arrow[r,"{e_2A\otimes^L_A-\ \cong\ A/Ae_1A\otimes^L_A-}"] \arrow[r,shift right=12,"{\RHom_A(A/Ae_1A,-)}"] \arrow[l,shift left=6,"{e_1A\otimes^L_A-}"'] \arrow[l,shift right=6,"A/Ae_2A\otimes^L_A-"'] & \D(e_2Ae_2). \arrow[l,shift left=6,"{A/Ae_1A\otimes^L_{A/Ae_1A}-}"'] \arrow[l,shift right=6,"{Ae_2\otimes^L_{e_2Ae_2}-}"']
	\end{tikzcd}$$
	The triangle functor $_AA/Ae_2A\otimes^L_{A/Ae_2A}-\cong Ae_1\otimes^L_{e_1Ae_1}-:\D^b(A/Ae_2A)\to\D^b(A)$ is fully faithful, so it is an eventually homological isomorphism, but the triangle functor $_AA/Ae_2A\otimes^L_{A/Ae_2A}-\cong Ae_1\otimes^L_{e_1Ae_1}-:\sg(A/Ae_2A)\to\sg(A)$ is not a singular equivalence due to $\gl A/Ae_2A=0$ and $\gl A=\infty$, or by Corollary~\ref{Cor-Exist-SingEqu-Idemp}.
		
	Note that $\frac{A/Ae_2A}{\rad(A/Ae_2A)}\cong A/Ae_2A\cong S_1=P_1$, the simple projective left $A$-module corresponding to the vertex 1. Thus $\pd_A\frac{A/Ae_2A}{\rad(A/Ae_2A)}=\pd_AP_1=0$. Since $0\to S_1\to I_1\to I_2\to I_2\to\cdots$ is a minimal injective resolution of $S_1$, we have $\id_A\frac{A/Ae_2A}{\rad(A/Ae_2A)} =\id_AS_1=\infty$. 
	Hence the triangle functor $e_2A\otimes^L_A-:\sg(A)\to\sg(e_2Ae_2)$ is a singular equivalence by Corollary~\ref{Cor-Exist-SingEqu-Idemp}, but the triangle functor $e_2A\otimes^L_A-:\D^b(A)\to\D^b(e_2Ae_2)$ is not an eventually homological isomorphism by Proposition~\ref{Prop-Exist-EvHomIso-Idemp}. 
		
	We have a short exact sequence $0\to Ae_1\to Ae_2\to A/Ae_1A\to 0$ of left $A$-modules, so $\pd_AA/Ae_1A=1$. Then the $A$-$(A/Ae_1A)$-bimodule $A/Ae_1A$ is biperfect. By Lemma~\ref{Lem-SingCatTriFunLeftAdj-FD-Complex}, the triangle functor\linebreak $A/Ae_1A\otimes^L_{A/Ae_1A}-:\sg(A)\to \sg(e_2Ae_2)$ is a singular equivalence since its left adjoint $A/Ae_1A\otimes^L_A-:\sg(e_2Ae_2)  \to\sg(A)$ is. Moreover, the triangle functor $A/Ae_1A\otimes^L_{A/Ae_1A}-:\D^b(e_2Ae_2)\to \D^b(A)$ is an eventually homological isomorphism since it is fully faithful.
		
	Clearly, $\RHom_{e_1Ae_1}(e_1A,e_1Ae_1)\cong (e_1A)^*\cong I_1$, the indecomposable injective left $A$-module corresponding to the vertex $1$, which admits a minimal projective resolution $\cdots \to P_1\oplus P_2\to\cdots\to P_1\oplus P_2\to P_2\to I_1\to 0$. Then $\pd_AI_1=\infty$, i.e., $\RHom_{e_1Ae_1}(e_1A,e_1Ae_1)\in\D(A)$ is not perfect. Thus $\RHom_A(A/Ae_1A,A)\in\D(e_2Ae_2)$ is not perfect by \cite[Lemma 4.3 (a)]{Angeleri Hugel-Konig-Liu-Yang17}. Hence the triangle functor $e_2A\otimes^L_A-:\sg(A)\to \sg(e_2Ae_2)$ is a singular equivalence of 2-adjoint type but not 3-adjoint type by Theorem~\ref{Thm-Exist-SingEquAdjTye-FD}.
}\end{example}

\end{document}